\numberwithin{equation}{section}
\tikzstyle{tikzfig}=[baseline=-0.25em,scale=0.5]
\tikzstyle{none}=[inner sep=0mm]
\newcommand{\tikzfig}[1]{%
{\tikzstyle{every picture}=[tikzfig]
\IfFileExists{#1.tikz}
  {\input{#1.tikz}}
  {%
    \IfFileExists{./figures/#1.tikz}
      {\input{./figures/#1.tikz}}
      {\tikz[baseline=-0.5em]{\node[draw=red,font=\color{red},fill=red!10!white] {\textit{#1}};}}%
  }}%
}
\tikzstyle{every loop}=[]
\tikzstyle{edge_arrow}=[<-]
\tikzstyle{edge_dot}=[--, fill=none]
\tikzstyle{edge_arrow_2}=[->]
\theoremstyle{definition}
\newtheorem{thm}{Theorem}[section]
\newtheorem{cor}[thm]{Corollary}
\newtheorem{lem}[thm]{Lemma}
\newtheorem{exa}[thm]{Example}
\newtheorem{prop}[thm]{Proposition}
\newtheorem{defi}[thm]{Definition}
\newtheorem{rem}[thm]{Remark}
\DeclareMathOperator{\Coh}{\mathrm{Coh}}
\DeclareMathOperator{\GL}{\mathrm{GL}}
\DeclareMathOperator{\Hom}{\mathrm{Hom}}
\DeclareMathOperator{\Spec}{\mathrm{Spec}}
\DeclareMathOperator{\End}{\mathrm{End}}
\DeclareMathOperator{\Ext}{\mathrm{Ext}}
\DeclareMathOperator{\HO}{\mathrm{H}}
\DeclareMathOperator{\supp}{\mathrm{supp}}
\DeclareMathOperator{\Sim}{\mathrm{Sim}}
\DeclareMathOperator{\Aut}{\mathrm{Aut}}
\DeclareMathOperator{\Cone}{\mathrm{Cone}}
\DeclareMathOperator{\nil}{\mathrm{nil}}
\DeclareMathOperator{\Stab}{\mathrm{Stab}}
\DeclareMathOperator{\Rep}{\mathrm{Rep}}
\DeclareMathOperator{\rep}{\mathrm{rep}}
\DeclareMathOperator{\thick}{\mathrm{thick}}
\DeclareMathOperator{\Tw}{Tw}
\DeclareMathOperator{\reg}{reg}
\DeclareMathOperator{\Tot}{Tot}
\newcommand{\wt}[1]{\widetilde{#1}}
\newcommand{\mr}[1]{\mathrm{#1}}
\newcommand{\mb}[1]{\mathbb{#1}}
\newcommand{\mc}[1]{\mathcal{#1}}
\newcommand{\UnA}[1]{\mc{U}^n(#1 \ca)^{\Phi}}
\newcommand{\ho}{\mathrm{H}}
\newcommand{\pp}{\mb{P}^1\times\mb{P}^1}
\newcommand{\ca}{\mc{A}}
\newcommand{\CD}{\mc{D}}
\newcommand{\y}{\gamma}
\newcommand{\BC}{\mathbb{C}}
\newcommand{\BE}{\mathbb{E}}
\newcommand{\CP}{\mathbb{P}}
\newcommand{\BZ}{\mathbb{Z}}
\newcommand{\CC}{\mathcal{C}}
\newcommand{\co}{\mathcal{O}}
\newcommand{\wbar}{\overline}
\newcommand{\wtil}{\widetilde}
\newcommand{\RHom}{\mathbb{R}\mathrm{Hom}}
\newcommand{\FFF}{\mb{P}^1\times \mb{P}^1}
\newcommand{\ff}{\mathbb{F}_0}
\begin{document}

\title{Invariant stability conditions of Local $\mb{P}^1\times \mb{P}^1$ (after Del Monte-Longhi)}
\author{Yirui Xiong}
\address{School of Sciences, Southwest Petroleum University, 610500 Chengdu, People’s Republic of China}
\email{yiruimee@icloud.com}

\maketitle

\begin{abstract}
    Let $X$ be the total space of the canonical bundle of $\pp$, we study an invariant subspace of stability conditions on $X$  under an autoequivalence of $D^b(X)$. We describe the complete set of stable objects with respect to the invariant stability conditions and characterize the space of invariant stability conditions.
\end{abstract}
\section{Introduction}
\subsection{Background} Inspired by the Douglas' work on $\Pi$-stability for D-branes, Bridgeland introduced the notion of the stability condition on a triangulated category in \cite{Bri07}. It was shown in \cite{Bri07} that to any triangulated category $\CD$, one can associate a complex manifold $\Stab(\CD)$ which parameterises stability conditions on $\CD$. Recall a stability condition on $\CD$ is a pair  $\sigma = (Z, \ca)$, where $\ca$  is a full subcategory of $\CD$ called the heart, and $Z$ is a group homomorphism called the central charge from the Grothendieck group $K_0(\ca)$ to $\mb{C}$  which satisfies the Harder-Narasimhan property\cite{Bri07} (see Definition \ref{defi:HN}). Bridgeland showed that  if $\Stab(\CD)$ is nonempty, then the forgetfull map $\Stab(\CD) \rightarrow \Hom(K_0(\CD),\mb{C})$ which sends $(Z,\mc{P})$ to $Z$ is a local homeomorphism. Given the triangulated category $\CD$, one can ask the following three questions:
\begin{enumerate}
{\it
    \item Can we find a stability condition on $\CD$?
    \item What is $\Stab(\CD)$ as a complex manifold?
    \item Given a stability condition $\sigma$, can we count the set of (semi)stable objects in $\CD$ for $\sigma$?
}
\end{enumerate}
 So far much progress towards the first two questions has been made for the derived categories of projective and quasi-projective (local) varieties \cite{Bri08K3,Bri06_3fold,Bri09,Li19, HW19,AW22}. 

The answer to the final question is usually very hard for both projective and quasi-projective varieties. When $\CD$ is a Calabi-Yau category of dimension $3$, it is  related to the Donaldson-Thomas invariants \cite{JoSong12, KonSoi08}. 

We study the space of stability conditions for local $\mb{P}^1 \times \mb{P}^1$ in this paper, i.e., the total space $X$ of canonical bundle over $\mb{P}^1 \times \mb{P}^1$. The work is a mathematical interpretation of the work of Del Monte-Longhi \cite{DL22}. In their paper, the physicists found that there were  surprisingly complete answers to all questions in the above when we restrict to an invariant subspace of stability conditions under an autoequivalence of $D^b(X)$. 

\subsection{Results} Denote by $\pi:X= \Tot \omega_{\pp} \rightarrow \FFF$ the bundle projection map and $p_i: \FFF \rightarrow \mb{P}^1$, $i = 1,\ 2$ the projection maps to each component. Write
$\co(a,b)$ for the line bundle  $p_1^*\co(a) \otimes p_2^* \co(b)$. There is a full and strong exceptional sequence of line bundles on $\mb{P}^1\times \mb{P}^1$:
\[
    \mb{E} =(E_0, E_1,E_2,E_3):= \left(\co(0,0), \co(1,0),  \co(1,1), \co(2,1)\right),
\]
which generates $D^b(\FFF)$, and
\[
    T = \bigoplus T_i := \bigoplus_i \pi^*E_i
\]
is a tilting object in $D^b(X)$. Then by the derived Morita theory \cite{Keller94} $X$ is derived equivalent to a non-commutative algebra $A= \End(T)$ via the  functor $\RHom_X(T,-)$. Normally we present $A$ as the path algebra of a quiver with relations, then $A = \oplus_{i\geq 0}A_i$ has a natural grading by the length of paths. Denote by $D^b_0(X)$ the full subcategory of $D^b(X)$ consisting of objects supported on $\FFF$,  and $D^b_0(A)$ the full subcategory of $D^b(A)$ consisting of objects whose cohomology modules are nilpotent, here we say a right $A$-module $M$ is nilpotent if there exists $n>0$ such that $MA_n = 0$. Then $\RHom_X(T,-)$ restricts to an equivalence between $\CC = D^b_0(X)$ and $D^b_0(A)$.

The presentation of the algebra $A$ as the path algebra of a quiver $Q$ subject to relations is as follows,  the nodes of $Q$ correspond to the line bundles $T_i$, and the number of arrows between two nodes can either be calculated from the irreducible maps from $T_i$ to $T_j$, or the  first extension group of the pair of simple modules associated with the vertices. The quiver of $A$ will be
\begin{equation}\label{qui:local_f0}
    \xymatrixcolsep{4pc}
    \xymatrix{
        0 \ar@<0.5ex>[r] \ar@<-0.5ex>[r] & 1 \ar@<0.5ex>[d] \ar@<-0.5ex>[d] \\
        3 \ar@<0.5ex>[u] \ar@<-0.5ex>[u] & 2 \ar@<0.5ex>[l] \ar@<-0.5ex>[l]
    }
\end{equation}
The symmetry of the shape of the quiver suggests that there should be an autoequivalence of $D^b(X)$, denoted by $\Psi$, which cyclically permutes  the simple modules associated with the vertices.  We will realize $\Psi$ explicitly in Section \ref{sec:auto} and let $\Phi = \Psi^2$. $\Phi$ restricts to be an autoequivalence of $D^b_0(X)$. The space of stability conditions on $D^b_0(X)$ is denoted by $\Stab(X)$. Following Del Monte-Longhi \cite{DL22} we consider the space of stability conditions $\Stab(X)^{\Phi}$ which are invariant under $\Phi$, which is called collimination chamber in their paper.

Let $\varphi$ be the automorphism of Grothendieck group $K_0(X)$ induced by $\Phi$. Then $\Stab(X)^{\Phi}$ is locally modelled on $\Hom(K_0(X), \mb{C})^{\varphi}$, the invariant central charges under $\varphi$. Note that $K_0(X) \cong \mb{Z}^4$ has  a basis $\gamma_i = [\pi^* E_i]$, $i=0,\cdots,3$ which corresponds to the vertices of $Q$. Therefore $\Hom(K_0(X),\mb{C})^{\varphi}\cong \mb{C}^2$. 

The construction of a stability condition for $D^b_0(X)$ is simple: denote by $\ca$ the heart in $D^b_0(X)$ by pulling back the standard t-structure on $D^b_0(A)$, i.e. $\ca$ is equivalent to $\text{mod}_0\text{-}A$ via the functor $\RHom_X(T,-)$ where $\text{mod}_0\text{-}A$ is the category of nilpotent modules over $A$. We have $K_0(\ca)\cong K_0(X)$ by sending $S_i$ to $\gamma_i$. Since $\ca$ is of finite-length and has a finite set of simple objects $S_i$, let $Z:K_0(\ca)\rightarrow \mb{C}$ assign the class of each simple object $[S_i]$ to the semi-closed upper half plane
\[
    H = \{z = r\mr{exp}(i\pi\phi) \vert r>0, 0 <\phi\leq 1\} \subset \mb{C},
\] 
then $(Z,\ca)$ satisfies the Harder-Narasimhan property automatically and is therefore a stability condition. Moreover  we write $\mc{U}(\ca)^{\Phi}$ for the subset of $\Stab(X)^{\Phi}$ consisting of $\Phi$-invariant stability conditions with heart $\ca$, since such stability condition is uniquely determined by assigning each simple object to $H$, then $\mc{U}(\ca)^{\Phi}$ is isomorphic to $H^2$. Such stability conditions are called algebraic in  \cite{Bri06_3fold}\cite{BM11}. 

First we characterize the stable objects for $\sigma\in\mc{U}(\ca)^{\Phi}$: note that the Kronecker quiver $K_2$
    \[
        \xymatrix{
        0 \ar@<0.5ex>[r] \ar@<-0.5ex>[r] & 1
        }
    \]
    can be embedded into $Q$ (\ref{qui:local_f0}) in 4 different ways. Therefore $\rep(K_2)$ embeds into  $\ca$ as full subcategories. For a stability condition $\sigma\in\mc{U}(\ca)^{\Phi}$, $\sigma$  reduces to be a stability function (Definition \ref{def:stab_func}) $\bar{\sigma}$ on $\rep(K_2)$. We are able to show that
    \begin{lem}[=Lemma \ref{lem:restr_stab}]
        The stable objects in $\rep(K_2)$ with respect to $\bar{\sigma}$ are stable in $\ca$ with respect to $\sigma$.
    \end{lem}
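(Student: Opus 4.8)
The plan is to exhibit $\rep(K_2)$ as a subobject-closed full subcategory of $\ca \simeq \text{mod}_0\text{-}A$, so that passing from $\rep(K_2)$ to $\ca$ neither enlarges the set of subobjects of a fixed $E$ nor alters its phase; stability is then preserved for formal reasons. Fix one of the four embeddings, say the one on the edge $0 \to 1$ of $Q$; the remaining three are entirely analogous. First I would describe the image of $\rep(K_2)$ explicitly: under $\ca \simeq \text{mod}_0\text{-}A$ it is the full subcategory $\CB$ of modules $M$ with $M_2 = M_3 = 0$, the two arrows $0 \to 1$ recovering the structure maps of the Kronecker representation and every other arrow acting as a zero map. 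All relations of $A$ then hold automatically, since each is a path of length $\geq 2$ that must leave $\{0,1\}$ and hence factors through a zero space.

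The crux is that $\CB$ is closed under subobjects in $\ca$. Indeed, the composition factors of any submodule $F \subseteq E$ form a sub-multiset of those of $E$, so an $E$ whose only composition factors are $S_0$ and $S_1$ can only have submodules with the same property; this forces $F_2 = F_3 = 0$, i.e. $F \in \CB = \rep(K_2)$. (In fact $\CB$ is a Serre subcategory, being cut out by a condition on composition factors.) Therefore the proper nonzero subobjects of a fixed $E \in \rep(K_2)$ are literally the same whether computed in $\rep(K_2)$ or in $\ca$.

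It then remains to match the numerics. On $\CB$ one has $Z(M) = (\dim M_0)\,Z(S_0) + (\dim M_1)\,Z(S_1)$, which lies in $H$ because $Z(S_0), Z(S_1) \in H$ and $H$ is closed under addition; hence $\bar{\sigma}$ is a genuine stability function and every object of $\CB$ has the same phase for $\bar\sigma$ as for $\sigma$. Combining this with the previous paragraph, if $E$ is $\bar\sigma$-stable then $\phi(F) < \phi(E)$ for every proper nonzero subobject $F$ in $\rep(K_2)$, and hence for every such subobject in $\ca$ — which is exactly $\sigma$-stability of $E$.

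The only real content is the closure-under-subobjects claim: this is the step that rules out a destabilizing subobject of $E$ lurking outside $\rep(K_2)$, and it is where I would be most careful to confirm that no arrow or relation of $A$ allows a submodule to acquire support at the vertices $2$ or $3$. Note that the $\Phi$-invariance of $\sigma$ plays no role in the argument itself; it enters only in guaranteeing that $\sigma$ is of the algebraic form with heart $\ca$ from which $\bar\sigma$ is extracted.
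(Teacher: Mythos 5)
Your proposal is correct and follows essentially the same route as the paper: it identifies the embedded copy of $\rep(K_2)$ as a subobject-closed (Serre) subcategory of $\ca$ (the paper's Lemma \ref{embed_K2}) and notes that the embedding preserves phases, so stability transfers. Your explicit verification that the relations of $A$ vanish on modules supported at $\{0,1\}$ and the composition-factor argument simply fill in details the paper declares obvious.
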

    The stable objects in $\rep(K_2)$ are well known to be the indecomposable representations (with respect to certain stability functions), and their images in $\ca$ are called the objects of special Kronecker types I and II (Definition \ref{def:spe_kron}). 
    The main result is that  these are in fact all the stable objects for $\sigma\in\mc{U}(\ca)^{\Phi}$:
\begin{thm}[=Theorem \ref{thm:stab_obj}]
   Take $\sigma =(Z,\ca)\in \mc{U}(\ca)^{\Phi}$. Then  the stable objects in $D^b_0(X)$  for $\sigma$ and their classes in Grothendieck group (up to a sign) are as follows:
    \begin{enumerate}
        \item if $\arg Z(\gamma_0) < \arg Z(\gamma_1)$, then the classes of stable objects  are 
            \begin{eqnarray*}
                n \gamma_0+(n+1)\gamma_1,& (n+1)\gamma_0+n\gamma_1,\\
                n \gamma_2+(n+1)\gamma_3,& (n+1)\gamma_2+n\gamma_3,\\
                \gamma_0+\gamma_1,& \gamma_2+\gamma_3.
            \end{eqnarray*}
            each of the first 4  classes corresponds to a unique stable object (up to a shift of degree), and each of the last two classes corresponds to a $\mb{P}^1$-family of stable objects;
        \item if $\arg Z(\gamma_1) < \arg Z(\gamma_0)$, then the classes of stable objects  are
        \begin{eqnarray*}
                n \gamma_1+(n+1)\gamma_2,& (n+1)\gamma_1+n\gamma_2,\\
                n \gamma_3+(n+1)\gamma_0,& (n+1)\gamma_3+n\gamma_0,\\
                \gamma_1+\gamma_2,& \gamma_3+\gamma_0.
            \end{eqnarray*}
            each of the first 4 classes corresponds to a unique stable object(up to a shift of degree), and each of the last two classes corresponds to a $\mb{P}^1$-family of stable objects;
        \item if $\arg Z(\gamma_0) = \arg Z(\gamma_1)$, then the classes of stable objects are
        \[
            \gamma_0, \ \gamma_1,\ \gamma_2,\ \gamma_3.
        \]
        each class corresponds to a unique stable object $S_i$.
    \end{enumerate} 
\end{thm}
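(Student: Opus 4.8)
The plan is to reduce everything to case (1) and then to split that case into the ``easy'' direction (the listed objects are stable) and the ``hard'' direction (there is nothing else). Case (3) is immediate: when $\arg Z(\gamma_0)=\arg Z(\gamma_1)$, the $\Phi$-invariance $Z(\gamma_0)=Z(\gamma_2)$, $Z(\gamma_1)=Z(\gamma_3)$ forces all four of $Z(\gamma_0),\dots,Z(\gamma_3)$ onto a common ray, so every object of $\ca$ has the same phase and a $\sigma$-stable object can admit no proper subobject, i.e. it must be one of the simples $S_i$. For the passage between (1) and (2) I would use the order-four autoequivalence $\Psi$: up to shift it carries the heart $\ca$ to itself and cyclically relabels the classes $\gamma_i$, and it sends a $\Phi$-invariant central charge with $\arg Z(\gamma_0)<\arg Z(\gamma_1)$ to one with $\arg Z(\gamma_1)<\arg Z(\gamma_0)$; since this relabelling matches the two lists term by term, it suffices to treat case (1).

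For the forward direction in case (1), I would isolate the two copies of $\rep(K_2)$ sitting on the vertex pairs $\{0,1\}$ and $\{2,3\}$ (the two Kronecker arrows that are ``active'' in this chamber). Each is a Serre subcategory $\mc{C}_{01},\mc{C}_{23}$ of $\ca$, and the $\Phi$-invariance guarantees that $\bar\sigma$ restricts on both to the \emph{same} Kronecker stability function, lying in the chamber in which every indecomposable representation is $\bar\sigma$-stable. Lemma \ref{lem:restr_stab} then promotes all of these to $\sigma$-stable objects of $\ca$. Reading off their dimension vectors (the real roots $(n,n+1)$ and $(n+1,n)$ giving the rigid objects, the imaginary root $(1,1)=\delta$ giving the $\mb{P}^1$-family) reproduces exactly the classes in the statement, with their shifts accounting for the sign.

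The hard direction is to show there is nothing else. I would first record a phase constraint: with $\phi_0:=\phi(S_0)=\phi(S_2)<\phi(S_1)=\phi(S_3)=:\phi_1$ the only phases of simples, a $\sigma$-stable $E$ of phase $\phi_0$ (resp. $\phi_1$) can have no proper subobject (resp. quotient) and hence is a simple, while a $\sigma$-stable $E$ of intermediate phase must satisfy $\mathrm{soc}(E)\in\mathrm{add}(S_0\oplus S_2)$ and $\mathrm{top}(E)\in\mathrm{add}(S_1\oplus S_3)$, since an $S_1$ or $S_3$ in the socle, or an $S_0$ or $S_2$ in the top, would be a destabilising subobject or quotient. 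The goal is then to prove that such an $E$ is supported on $\{0,1\}$ or on $\{2,3\}$: it then lies in $\mc{C}_{01}$ or $\mc{C}_{23}$, where (these being Serre subcategories) $\sigma$-stability coincides with $\bar\sigma$-stability and the classification of the previous paragraph applies. The key local observation is that, reading $\ca$ as right $A$-modules, i.e. representations of the opposite quiver, the only arrows running from the top vertices $\{1,3\}$ down to the socle vertices $\{0,2\}$ are $1\to 0$ and $3\to 2$; hence any module of Loewy length two with this socle and top splits as a direct sum of a $\{0,1\}$-module and a $\{2,3\}$-module, and indecomposability of a stable object pins it to a single Kronecker piece.

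The main obstacle is controlling objects of larger Loewy length, i.e. ruling out genuine ``mixing'' of the two active Kroneckers through the inactive arrows $1\to2$ and $3\to0$ (equivalently, longer radical filtrations alternating between the two phases). Here I would combine the phase bookkeeping with the relations of $A$: whenever $E$ spills across an inactive arrow, the relations should produce an embedded $\{2,3\}$- or $\{0,1\}$-subrepresentation of balanced dimension vector sitting inside $E$ as a subobject of phase $\phi_{\mathrm{mid}}=\phi(\gamma_0+\gamma_1)\ge \phi(E)$ (or dually a quotient of phase $\le\phi(E)$), contradicting stability --- exactly as the uniserial module $S_1/S_0/S_3/S_2$ is destabilised by its $\{2,3\}$-subobject $S_3/S_2$ of equal phase. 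Making this destabilisation uniform across all dimension vectors, by induction on $\dim E$ together with careful use of the Jacobian relations of $A$, is the crux of the argument; once it is in place, $E$ is forced into a single active Kronecker and the classification is complete.
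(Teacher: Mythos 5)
Your reductions are sound where they overlap with the paper: case (3) by the common-phase argument, case (2) from case (1) via $\Psi$ (this is exactly Lemma \ref{lem:po_ne} in the paper), and the ``easy'' direction via the Serre-subcategory embeddings of $\rep(K_2)$ and Lemma \ref{lem:restr_stab} (this is Theorem \ref{thm:stab_kron}). The genuine gap is the hard direction, and you have flagged it yourself: the claim that the Jacobian relations force every stable object mixing the two active Kronecker pieces to admit a destabilising subobject is never proved, only illustrated on a single uniserial example, and then deferred to an ``induction on $\dim E$ with careful use of the relations.'' That deferred step is not a routine verification --- it is the main content of the theorem. The difficulty is sharpest on the ray $\phi=\frac{1}{2}$: any class $a\gamma_0+b\gamma_1+c\gamma_2+d\gamma_3$ with $a+c=b+d$ (e.g.\ $n\delta$) has phase exactly $\frac{1}{2}$, so phase bookkeeping gives no traction; you must exhibit, inside an \emph{arbitrary} nilpotent representation of such dimension vector satisfying the potential relations, a proper subobject of phase $\geq\frac{1}{2}$. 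Nothing in your sketch does this, and a socle/top analysis cannot: a priori a stable object of phase $\frac{1}{2}$ could be the shift $s_*F[1]$ of a higher-rank vector bundle $F$ on $\ff$, an object your quiver-combinatorial filtration argument never confronts.

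It is worth comparing with what the paper actually does, because the tools are entirely different and they are deployed precisely to avoid the computation you left open. First, Lemma \ref{g_act} (the key input, imported from Del Monte--Longhi) identifies the action of $\mc{T}=-\otimes\pi^*\co(1,0)$ on $\sigma\in\mc{U}^n(\ca)^{\Phi}_+$ with the action of an element $\wtil{g}\in\wtil{\GL}^+(2,\mb{R})$ fixing the phase $\frac{1}{2}$; since such actions preserve the set of semistable objects, the elementary cone argument (Lemma \ref{lem:cone}, applied after a $\mb{C}$-action rotation as in Remark \ref{rem:C_act}) propagates across all phases $\phi\bigl(\mc{T}^m(S_0)\bigr)$ and $\phi\bigl(\mc{T}^{-m}(S_1)\bigr)$, excluding stable objects in every intermediate interval and pinning the stable objects at those phases to $\mc{T}^{\pm m}(S_i)$, i.e.\ to special Kronecker type I. Second, on the remaining ray $\mc{P}\bigl(\frac{1}{2}\bigr)$ the argument is geometric: $\mc{T}$-invariance of $\mc{P}\bigl(\frac{1}{2}\bigr)$ (Corollary \ref{inv_t}) yields Ext-vanishing against all twists, whence Lemma \ref{supp_points} shows $\pi_*E$ is supported on finitely many fibres $\{x\}\times\mb{P}^1$; Theorem \ref{stabobj} then runs a Bayer--Macr\`i/Bridgeland--Maciocia argument (Hom-vanishing against skyscrapers, two-term complexes of locally free sheaves, Serre duality, Auslander--Buchsbaum) to force $E\cong s_*\co_{\{x\}\times\mb{P}^1}$ or $s_*\co_{\{x\}\times\mb{P}^1}(-1)[1]$, ruling out exactly the higher-rank candidates mentioned above. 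As it stands, your proposal establishes only the containments already given by Theorem \ref{thm:stab_kron}; to complete it along purely algebraic lines you would need to supply a substitute for all of this, which is the part of the proof that carries the weight.
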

This answers the question of counting the stable objects for the local $\FFF$. We proceed to characterize a connected component of $\Stab(X)^{\Phi}$.

Let $K_0(X)^{-\varphi}$ be the subgroup of $K_0(X)$ whose elements are antisymmetric under $\varphi$. We denote by $\overline{K_0(X)} = K_0(X)/K_0(X)^{-\varphi}$ the quotient group. Note that there is an isomorphism 
\[
    \Hom_{\mb{Z}}(\overline{K_0(X)},\mb{C}) \longrightarrow \Hom_{\mb{Z}}(K_0(X),\mb{C})^{\varphi}.
\]
We write $\Delta\subset \overline{K_0(X)}$  for the image of the set of the  classes of the stable objects for $\sigma\in \mc{U}(\ca)^{\Phi}$ in the quotient group. The connected component of $\Stab(X)^{\Phi}$ which contains $\mc{U}(\ca)^{\Phi}$ is denoted by $\bigl(\Stab(X)^{\Phi}\bigr)_0$, we have

 \begin{figure}[h]
\begin{center}
\begin{tikzpicture}[scale = 0.5]
	\begin{pgfonlayer}{nodelayer}
		\node [style=none] (0) at (-6, 0) {};
		\node [style=none] (1) at (6, 0) {};
		\node [style=none] (2) at (0, 6) {};
		\node [style=none] (4) at (0, -6) {};
		\node [style=none] (5) at (-6, 6) {};
		\node [style=none] (6) at (6, -6) {};
		\node [style=none] (7) at (-6, 5) {};
		\node [style=none] (8) at (-6, 4.5) {};
		\node [style=none] (9) at (-6, 4) {};
		\node [style=none] (10) at (-6, 3) {};
		\node [style=none] (11) at (-5, 6) {};
		\node [style=none] (12) at (-4.5, 6) {};
		\node [style=none] (13) at (-4, 6) {};
		\node [style=none] (14) at (-3, 6) {};
		\node [style=none] (15) at (-2, 6) {};
		\node [style=none] (16) at (-6, 2) {};
		\node [style=none] (17) at (6, -5) {};
		\node [style=none] (18) at (6, -4.5) {};
		\node [style=none] (19) at (6, -4) {};
		\node [style=none] (20) at (5, -6) {};
		\node [style=none] (21) at (4.5, -6) {};
		\node [style=none] (22) at (4, -6) {};
		\node [style=none] (23) at (6, -3) {};
		\node [style=none] (24) at (6, -2) {};
		\node [style=none] (25) at (3, -6) {};
		\node [style=none] (26) at (2, -6) {};
		\node [style=none] (27) at (5.5, -6) {$\cdots$};
		\node [style=none] (28) at (6, -5.5) {$\vdots$};
		\node [style=none] (29) at (-6, 5.5) {$\vdots$};
		\node [style=none] (30) at (-5.5, 6) {$\cdots$};
		\node [style=none] (31) at (6, 0.5) {$Z(\gamma_1) = Z(\gamma_3)$};
		\node [style=none] (32) at (0.75, 6.5) {$Z(\gamma_0)=Z(\gamma_2)$};
	\end{pgfonlayer}
	\begin{pgfonlayer}{edgelayer}
		\draw (2.center) to (4.center);
		\draw (0.center) to (1.center);
		\draw (15.center) to (26.center);
		\draw (14.center) to (25.center);
		\draw (13.center) to (22.center);
		\draw (12.center) to (21.center);
		\draw (11.center) to (20.center);
		\draw (5.center) to (6.center);
		\draw (7.center) to (17.center);
		\draw (8.center) to (18.center);
		\draw (9.center) to (19.center);
		\draw (10.center) to (23.center);
		\draw (16.center) to (24.center);
	\end{pgfonlayer}
\end{tikzpicture}
\caption{Real slice of $\mc{H}^{\reg}$}
    \label{fig:hreg}
\end{center}
\end{figure}

\begin{thm}[=Theorem \ref{thm:local_homeo} and \ref{thm:stab_cover}] \label{thm:main_thm2}
The image of the forgetful map
\[
    \mc{Z}: \bigl(\Stab(X)^{\Phi}\bigr)_0 \rightarrow \Hom(\overline{K_0(X)},\mb{C})
\]
factors through
\begin{equation}\label{map:cover}
\mc{Z}: \bigl(\Stab(X)^{\Phi}\bigr)_0 \rightarrow \mc{H}^{\reg} 
\end{equation}
where
\[
\mc{H}^{\reg} := \Hom(\overline{K_0(X)}, \mb{C}) \setminus \bigcup_{\pmb{v}\in \Delta} \pmb{v}^{\perp}, 
\]
is the hyperplane complement of  $\pmb{v}^{\perp} := \{ Z\in  \Hom(\overline{K_0(X)},\mb{C}) \mid Z(\pmb{v}) = 0\}$ for $\pmb{v}\in\Delta$. Moreover in $(\ref{map:cover})$ $\mc{Z}$ is a covering map.
\end{thm}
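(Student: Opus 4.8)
The plan is to establish the two assertions in turn: that the forgetful map $\mc{Z}$ takes values in $\mc{H}^{\reg}$ and is there a local homeomorphism (Theorem \ref{thm:local_homeo}), and that it is moreover a covering map (Theorem \ref{thm:stab_cover}).

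The factorization rests on the explicit list in Theorem \ref{thm:stab_obj}. Inspecting its three cases, every element of $\Delta$ is, up to sign, a \emph{nonnegative} integral combination of the simple classes $\gamma_0,\dots,\gamma_3$. Hence for any $\sigma=(Z,\ca)\in\mc{U}(\ca)^{\Phi}$, where each $Z(\gamma_i)$ lies in the semi-closed upper half plane $H$, the value $Z(\pmb{v})$ is a nonnegative combination of elements of $H$ and so again lies in $H\setminus\{0\}$; in particular $Z(\pmb{v})\neq 0$ for every $\pmb{v}\in\Delta$, giving $\mc{Z}(\mc{U}(\ca)^{\Phi})\subset\mc{H}^{\reg}$. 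To spread this over the whole component I would use equivariance. The autoequivalence $\Psi$, with $\Psi^2=\Phi$, commutes with $\Phi$ and therefore preserves the $\Phi$-invariant locus; on $K_0(X)$ it cyclically permutes the $\gamma_i$, so it permutes the list of Theorem \ref{thm:stab_obj} and stabilizes $\Delta$, as does the shift $[1]$. As $\mc{Z}$ is equivariant for the induced action on $\Hom(\overline{K_0(X)},\mb{C})$, once the translates of $\overline{\mc{U}(\ca)^{\Phi}}$ under $G=\langle\Psi,[1]\rangle$ are known to exhaust $\bigl(\Stab(X)^{\Phi}\bigr)_0$ (established below), the image of the full component avoids $\bigcup_{\pmb{v}\in\Delta}\pmb{v}^{\perp}$ and $\mc{Z}$ factors through $\mc{H}^{\reg}$.

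That $\mc{Z}$ is a local homeomorphism is inherited from Bridgeland's deformation theorem: the forgetful map on $\Stab(D^b_0(X))$ is a local homeomorphism, and its restriction to the $\Phi$-invariant locus is locally modelled on $\Hom(K_0(X),\mb{C})^{\varphi}\cong\Hom(\overline{K_0(X)},\mb{C})$, the finite-length heart $\ca$ supplying the local finiteness the theorem requires. For the chamber structure and surjectivity I would analyse how $\mc{U}(\ca)^{\Phi}\cong H^2$ sits inside the component: it maps homeomorphically onto an open region of $\mc{H}^{\reg}$, with the three sub-cases of Theorem \ref{thm:stab_obj} governed by the relative order of $\arg Z(\gamma_0)$ and $\arg Z(\gamma_1)$, and crossing a wall of this region corresponds to tilting $\ca$ at the stable objects whose phases collide there, namely the special Kronecker objects of Lemma \ref{lem:restr_stab}. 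Recording through Theorem \ref{thm:stab_obj} which objects are stable after each tilt, one identifies the adjacent chambers with the $G$-translates of $\mc{U}(\ca)^{\Phi}$ and checks that their images tile $\mc{H}^{\reg}$, whose real slice — with the hyperplanes $\pmb{v}^{\perp}$ appearing as the accumulating family of lines — is Figure \ref{fig:hreg}. This yields surjectivity and closes the gap left above.

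To promote the surjective local homeomorphism $\mc{Z}$ to a covering map I would verify path lifting and invoke the standard criterion that a surjective local homeomorphism onto a connected, locally path-connected space with unique path lifting (defined on all of $[0,1]$) is a covering. Given a path in $\mc{H}^{\reg}$ together with a lift of its initial point, one lifts by deforming the stability condition; because the path avoids every hyperplane $\pmb{v}^{\perp}$, the central charges $Z(\pmb{v})$ of the pertinent stable classes stay uniformly bounded away from $0$ along the compact path, so the Harder--Narasimhan filtrations deform continuously and the lift neither degenerates nor escapes the component. The genuine difficulty — visible as the accumulation of lines in Figure \ref{fig:hreg} — is that the families $n\gamma_0+(n+1)\gamma_1$ and $(n+1)\gamma_0+n\gamma_1$, together with their $G$-translates, generate a wall-and-chamber structure whose chambers accumulate toward the removed diagonal hyperplanes $(\gamma_0+\gamma_1)^{\perp}$ and $(\gamma_2+\gamma_3)^{\perp}$. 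One must therefore prove local finiteness of this structure on compact subsets of $\mc{H}^{\reg}$, so that a compact arc crosses only finitely many walls and the lift is produced by finitely many tilts; here the support property of the algebraic chamber and the explicit form of $\Delta$ are the essential inputs. I expect this control of the lift near the accumulation rays to be the main obstacle, after which unique path lifting, and hence the covering property, follow.
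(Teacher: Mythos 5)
Your reduction inside the algebraic chamber is fine (every class in $\Delta$ is, up to sign, a non-negative integral combination of the $\gamma_i$, and the semi-closed upper half plane is closed under sums), but the step that spreads it over the whole component contains a genuine error: the group $G=\langle \Psi, [1]\rangle$ is the wrong one. $\Psi$ permutes the simple objects $S_0,\dots,S_3$ cyclically, so it fixes the heart $\ca$ and the chamber $\mc{U}(\ca)^{\Phi}$, while $[1]$ merely replaces $\ca$ by $\ca[1]$; hence the $G$-translates of $\overline{\mc{U}(\ca)^{\Phi}}$ are only the chambers of the shifted hearts $\ca[n]$ and cannot exhaust $\bigl(\Stab(X)^{\Phi}\bigr)_0$. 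The chambers adjacent to $\mc{U}^n(\ca)^{\Phi}$ across its four walls have hearts $L_{S_i}L_{S_{i+2}}\ca$ and $R_{S_i}R_{S_{i+2}}\ca$, which by Theorem \ref{double_tilts} are $\mc{T}^{\pm 1}\ca$ and $\mc{T}_{\Psi}^{\pm 1}\ca$ with $\mc{T}=-\otimes\pi^*\co(1,0)$; their simple objects ($S_0[1],\ S_2[1],\ \mc{T}(S_0),\ \mc{T}(S_2)$, etc.) are not shifts or $\Psi$-images of the $S_i$. The correct statement is Proposition \ref{lem:fund_domain_norm}: the closures $\overline{\mc{U}^n}(g\ca)^{\Phi}$, for $g$ in the group $H$ generated by $\mc{T}$ and $\mc{T}_{\Psi}$, cover the normalized component. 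This is not a formal consequence of wall-crossing heuristics: it is proved by an open-and-closed argument in the connected component, whose key input is the boundary analysis of Lemma \ref{lem:boundary} (itself requiring care, since Lemma \ref{lem:stab_tilt} cannot be applied verbatim — in the invariant setting one must tilt simultaneously at the pairs $S_i, S_{i+2}$). With your group the claimed tiling of $\mc{H}^{\reg}$, and with it the factorization outside the algebraic chamber, collapses.

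The covering property is also left essentially unproven. You propose path lifting and then concede that the uniform control near the accumulating walls is ``the main obstacle'' — but that control \emph{is} the theorem, and your sketch does not supply it. The paper avoids path lifting entirely: Corollary \ref{cor:delta} shows that the class of any stable object of any $\sigma$ in the component lies in $\Delta$, and Lemma \ref{lem:supp_prop} shows that for each $Z\in\mc{H}^{\reg}$ there is a constant $C>0$ with $\lVert\pmb{v}\rVert\le C\,|Z(\pmb{v})|$ for \emph{all} $\pmb{v}\in\Delta$; this uses the explicit shape of $\Delta$ in the basis $\gamma_0,\gamma_1$ and is exactly what tames the accumulation of the hyperplanes $\pmb{v}^{\perp}$ along $(\gamma_0+\gamma_1)^{\perp}$. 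Feeding this uniform estimate into the deformation theorem (Theorem \ref{stab_deform}) produces around every $\sigma\in\mc{Z}^{-1}(Z)$ a chart $C_{\epsilon}(\sigma)$ mapped homeomorphically onto a ball $B_{\epsilon}(Z)$ whose radius is independent of $\sigma$, and Lemma \ref{stab_equi} together with the triangle inequality for the metric $d$ on slicings shows these charts are pairwise disjoint and fill $\mc{Z}^{-1}\bigl(B_{\epsilon}(Z)\bigr)$; that is the covering property by definition. Note finally that any path-lifting argument would need the same uniform support estimate to guarantee lifts defined on all of $[0,1]$, so your route does not bypass the missing ingredient — it only postpones it.
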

\subsection{Relation with \cite{DL22} and related works}
Finally we explain the relation between our work with \cite{DL22}. We keep the notations as above and introduce the notations for normalized stability conditions  $\mc{U}^n(\ca)^{\Phi}\subset \mc{U}(\ca)^{\Phi}$ by
\[
    \mc{U}^n(\ca)^{\Phi} = \{(Z,\mc{P}): Z(\delta) = i\},
\]
where $\delta$ is the class of a skyscrapper sheaf $\co_x$, $x\in \FFF$. Note that $Z(\gamma_0) + Z(\gamma_2) = Z(\gamma_1) + Z(\gamma_3)$ lies on the imaginary axis. It was shown by Closset-Del Zotto \cite[Appendix D]{CZ19} that there is a unique stable object in each slicing $\mc{P}(\phi)$ where $\phi \neq \frac{1}{2}+n$ ($n\in \BZ$), and each such object corresponds to a representation of the Kronecker quiver. 

In this paper we analyze the special slicing $\mc{P}(\frac{1}{2})$ in detail. For simplicity, we restrict to the subset $\mc{U}^n(\ca)^{\Phi}_+ := \bigl\{ (Z,\mc{P}): \arg Z(\gamma_0) < \arg Z(\gamma_1)\bigr\}$. We show that  each stable object in $\mc{P}(\frac{1}{2})$ is isomorphic to $s_*\co_{\{y\} \times \mb{P}^1}$ or $s_*\co_{\{y\}\times \mb{P}^1}(-1)[1]$ for $y\in \mb{P}^1$ (Theorem \ref{stabobj}). The case for $\mc{U}^n(\ca)^{\Phi}_- := \bigl\{ (Z,\mc{P}): \arg Z(\gamma_1) < \arg Z(\gamma_0)\bigr\}$ is obtained by applying an autoequivalence of $D^b(X)$. The key observation in the proof of the above theorem was taken from \cite{DL22}, which is our Lemma \ref{g_act}: one can identify the action of the autoequivalence $\mc{T}$ on the stability conditions in $\mc{U}^n(\ca)^{\Phi}_+$ with the action of $\wtil{g}$ where $\wtil{g} \in \wtil{\GL}^+(2,\mb{R})$, the universal covering space of $\GL^+(2,\mb{R})$. The former autoequivalence $\mc{T}$  plays an important role in the tilting process (Theorem \ref{double_tilts}).

Recently, Bridgeland-Del Monte-Giovenzana \cite{BDG24} use another method to prove the result in this paper: consider the quiver $Q'$  
\begin{center}
		\begin{tikzcd}
		 0 \arrow[rr, bend left, "x_1" description] \arrow[rr,shift left = 3, bend left, "y_1" description] & & 1 \arrow[ll,  bend left, "x_2" description] \arrow[ll, shift left = 3, bend left, "y_2" description]
		\end{tikzcd}
\end{center}
 and potential $W' = y_1x_2x_1y_2-x_1x_2y_1y_2$, then the Jacobi algebra $ J(Q',W')$ is derived equivalent to the resolved conifold $Y =\co_{\mb{P}^1}(-1)^{\oplus 2}$. $\Phi$ acts as rotation by a half turn on the quiver of local $\mb{F}_0$ (see diagram (\ref{qui:local_f0})), and $(Q',W')$ arises naturally as the quotient. Then we apply the result of Qiu-Zhang \cite{QZ25} (see also \cite{De23}), the invariant subspace of stability conditions is identified with the  stability conditions on $D^b(J(Q',W'))$, where the stability conditions on the resolved conifold is already known \cite{Toda08}. 
 \section*{Notation and Conventions}
\begin{center}
\begin{tabular}{p{4cm} p{10cm}}

$\CD$ & Essentially small triangulated category. \\
$D^b(X)$ & Bounded derived category of coherent sheaves on a noetherian and separated scheme $X$ over $\mb{C}$.\\
$D^b(A)$ & Bounded derived category of right $A$-modules over a noetherian (possibly graded) $\mb{C}$-algebra $A$. \\
$\mathrm{mod}_{0}$-$A$ & Category of nilpotent modules over a positively graded noetherian algebra over $\mb{C}$, where a module $M$ is said to be nilpotent, if there exists $n>0$ such that $M.A_n = 0$.\\
$D^b_0(A)$ & Full subcategory of $D^b(A)$ with complexes having nilpotent cohomology modules.\\
$K_0(\CD)$ (resp. $K_0(\ca)$). & Grothendieck group of an triangulated category $\CD$ (resp. an abelian category $\ca$). In particular $K_0(X) := K_0(\Coh X)$ for variety $X$.\\
$\supp(F)$ & Support of a complex of sheaves $F\in D^b(X)$.\\
$\thick(T)$ & Smallest thick subcategory containing the object $T$ (or set of objects) in $\CD$.\\
$\rep Q$ & Category of finite dimensional representations of a quiver $Q$. 
\end{tabular}
\end{center}

Given a  triangulated category $\mc{D}$, we write 
\[
    \Hom_{\CD}^i(A,B):= \Hom_{\CD}(A,B[i]),
\]
for $A,\ B\in \CD$. We denote by $\Hom^{\bullet}_{\CD}(A,B) = \bigoplus_{i\in\mb{Z}}\Hom_{\CD}(A,B[i])$ the total Hom-space.  

Let $Q = (Q_0,Q_1)$ be a quiver specified by a set of vertices $Q_0$, a set of arrows $Q_1$, and source and target maps $s,\ t:Q_1\rightarrow Q_0$. We compose the arrows  {\bf on the left}, that is for $b,\ a\in Q_1$,  $ba = 0$ unless $s(b)=t(a)$.

Denote by $\mb{C}Q$ the path algebra of $Q$, and given a two-sided $I\subset \mb{C}Q$  generated by linear combinations of paths of length at least $2$, let $A= A(Q,I) = \mb{C}Q/I$. We write $\rep (Q,I) =\mr{mod}_{\text{fd}}$-$A(Q,I)$ and $\rep_{\nil}(Q,I) = \mr{mod}_0$-$A(Q,I)$. For each vertex $i\in Q_0$ there is an associated one-dimensional  simple module $S_i \in\rep(Q,I)$. Note that we have
\[
    n_{ij} = \dim_{\mb{C}}\Ext^1_{A}(S_j,S_i)
\]
where $n_{ij}$ is the number of arrows from vertex $i$ to $j$ in our notations.

\section{Preliminaries}
This section is a summary of the results about the tilting theory in the sense of Happel-Reiten-Smal{\o}\cite{HRS96}, exceptional collections \cite{B90} and stability conditions in \cite{Bri07,Bri08K3}. In this  section let $\CD$ be a $\mb{C}$-linear triangulated category of finite type. The finite type condition is the statement that for any two objects $A$, $B$ of $\CD$ the vector space
	$$
		\Hom^{\bullet}_{\CD}(A,B)=\bigoplus_{i\in \mathbb{Z}}\Hom^i_{\CD} (A,B)
	$$
	is finite-dimensional.

\subsection{Simple tilts} \label{sec:sim_tilt}
The reader is assumed to be familiar with the concept of a t-structure \cite{GM}. We are only considering the bounded t-structures. Recall that a t-structure $(\CD^{\leq 0},\CD^{\geq 0})$ is bounded in $\CD$, if for every object $E\in \CD$, there exists an integer $n>0$ such that $E[n] \in \CD^{\leq 0}$ and $E[-n]\in \CD^{\geq 0}$. The bounded t-structure is determined by its heart:

\begin{lem}[{\cite[Lemma 3.2]{Bri07}}]\label{lem:heart_t}
    Let $\ca \subset \CD$ be a full additive subcategory of $\CD$. Then $\ca$ is the heart of a bounded t-structure $(\CD^{\leq 0}, \CD^{\geq 0})$ if and only if it satisfies the following conditions:
    \begin{enumerate}
        \item if $n_1>n_2$ then $\Hom_{\CD}(A[n_1],B[n_2]) = 0$ for any $A,\ B\in \ca$;
        \item for every nonzero object $E\in \CD$ there are a finite sequence of integers:
        \[
            k_1 > k_2 > \cdots > k_n
        \]
        and a collection of triangles 
    \begin{equation}\label{eqn:heart_t}
        \xymatrix@C=.3em{
            0_{\ } \ar@{=}[r] & E_0 \ar[rrrr] &&&& E_1 \ar[rrrr] \ar[dll] &&&& E_2
                \ar[rr] \ar[dll] && \ldots \ar[rr] && E_{n-1}
            \ar[rrrr] &&&& E_n \ar[dll] \ar@{=}[r] &  E_{\ } \\
            &&& A_1 \ar@{-->}[ull] &&&& A_2 \ar@{-->}[ull] &&&&&&&& A_n \ar@{-->}[ull] 
     }
    \end{equation}
    with $A_i \in \ca[k_i]$ for all $i$.
    \end{enumerate}

\end{lem}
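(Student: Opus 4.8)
The statement is Bridgeland's intrinsic characterization of the hearts of bounded t-structures, so the plan is to prove the two implications separately, placing almost all of the work in the sufficiency direction. For necessity, suppose $\ca$ is the heart of a bounded t-structure $(\CD^{\leq 0},\CD^{\geq 0})$, so that $\ca=\CD^{\leq 0}\cap\CD^{\geq 0}$ and hence $\ca[k]\subset\CD^{\leq -k}\cap\CD^{\geq -k}$. Condition (1) is then immediate from the orthogonality $\Hom_{\CD}(X,Y)=0$ for $X\in\CD^{\leq a}$, $Y\in\CD^{\geq b}$ with $a<b$: if $A,B\in\ca$ and $n_1>n_2$, then $A[n_1]\in\CD^{\leq -n_1}$ and $B[n_2]\in\CD^{\geq -n_2}$ with $-n_1<-n_2$. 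For condition (2) I would invoke the cohomology functors $H^i\colon\CD\to\ca$ and the truncations $\tau^{\leq n},\tau^{\geq n}$ attached to the t-structure: boundedness ensures that $H^i(E)=0$ for all but finitely many $i$ and that some $H^i(E)\neq 0$ when $E\neq 0$, and the Postnikov tower assembled from the triangles $\tau^{\leq i-1}E\to\tau^{\leq i}E\to H^i(E)[-i]$ is exactly the filtration (2), with $A_i=H^{-k_i}(E)[k_i]$ and the strictly decreasing $k_i$ obtained by listing the nonzero cohomological degrees.

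The crux of the converse --- and the step I expect to be the main obstacle --- is to show that, granting (1) alone, the filtration (2) is unique up to isomorphism; this is the exact analogue of uniqueness of Harder--Narasimhan filtrations and is what makes the subcategories below well defined. The basic tool, extracted from (1) by induction along a filtration, is the vanishing $\Hom_{\CD}(A,F)=0$ whenever $A\in\ca[k']$ and every filtration shift of $F$ is strictly smaller than $k'$. To compare two filtrations of a fixed $E$, write $E_{>1}$ and $E_{>1}'$ for the cones of their respective top pieces $A_1\in\ca[k_1]$ and $A_1'\in\ca[k_1']$, and assume after symmetrising that $k_1\geq k_1'$; I would argue by induction on the sum of the two lengths. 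The composite $A_1\to E\to E_{>1}'$ vanishes by the tool above (all shifts of $E_{>1}'$ being $<k_1$), so $A_1\to E$ factors through $A_1'\to E$ via some $g\colon A_1\to A_1'$. The possibility $k_1>k_1'$ forces $g=0$, hence a splitting $E_{>1}\cong E\oplus A_1[1]$ whose summand inclusion $A_1[1]\to E_{>1}$ is a nonzero map from shift $k_1+1$ into an object with all shifts $\leq k_2<k_1+1$, contradicting the vanishing; thus $k_1=k_1'$. The symmetric factorisation then gives $g'\colon A_1'\to A_1$, and applying the vanishing to $\mathrm{id}-g'g$ (which factors through $E_{>1}[-1]\to A_1$) forces $g'g=\mathrm{id}$ and likewise $gg'=\mathrm{id}$. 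Hence $A_1\cong A_1'$ compatibly with the maps to $E$, so passing to cones gives $E_{>1}\cong E_{>1}'$ and the induction closes.

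With uniqueness established I would define $\CD^{\leq 0}$ (resp.\ $\CD^{\geq 0}$) to consist of $0$ together with those $E$ whose filtration has all shifts $k_i\geq 0$ (resp.\ $k_i\leq 0$). The inclusions $\CD^{\leq 0}[1]\subseteq\CD^{\leq 0}$ and $\CD^{\geq 0}[-1]\subseteq\CD^{\geq 0}$ are immediate, since shifting an object shifts every $k_i$ uniformly. The orthogonality $\Hom_{\CD}(X,Y)=0$ for $X\in\CD^{\leq 0}$ and $Y\in\CD^{\geq 1}$ follows by induction along both filtrations from the vanishing of the previous paragraph, all shifts of $X$ being $\geq 0$ and all shifts of $Y$ being $\leq -1$. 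For the truncation triangles I would use that $k_1>\cdots>k_n$ are strictly decreasing integers to split at the unique index $m$ with $k_m\geq 0>k_{m+1}$: the partial object $E_m$ lies in $\CD^{\leq 0}$, the cone of $E_m\to E$ lies in $\CD^{\geq 1}$, and this is the required triangle. Finally $\CD^{\leq 0}\cap\CD^{\geq 0}$ consists of objects whose shifts are simultaneously $\geq 0$ and $\leq 0$, and strict decrease forces a single piece with $k_1=0$, identifying the heart with $\ca$, while finiteness of every filtration gives boundedness; this proves sufficiency.
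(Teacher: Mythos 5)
The paper offers no proof of this lemma: it is quoted directly from \cite[Lemma 3.2]{Bri07}, so there is nothing in the paper itself to compare your argument against; the relevant benchmark is Bridgeland's original proof, and your proposal is a correct, somewhat more detailed reconstruction of it. Necessity via the truncation functors and the Postnikov tower is standard, and your converse --- defining $\CD^{\leq 0}$ (resp. $\CD^{\geq 0}$) as the objects whose filtration (2) has all shifts $k_i \geq 0$ (resp. $k_i \leq 0$) --- is the expected construction; the HN-style uniqueness of the filtration, which you deduce carefully from condition (1), is exactly what makes this definition well posed (alternatively one can define the aisles existentially, as extension closures of the subcategories $\ca[k]$ with $k\geq 0$, and bypass uniqueness altogether). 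Two routine points are left implicit and should be spelled out in a complete write-up: first, the assertion that $\Cone(E_m \to E)$ (and likewise $E_{>1}=\Cone(A_1\to E)$ in the uniqueness step) carries a filtration by the remaining factors $A_{m+1},\dots,A_n$ is an induction using the octahedral axiom, which is genuinely needed here; second, your uniqueness argument tacitly assumes each piece $A_i$ is nonzero (a zero piece makes the ``split summand'' contradiction vacuous), so one should first normalize filtrations by deleting zero factors. Neither point affects correctness.
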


Given $\ca$ the heart of a bounded t-structure and any nonzero object $E$, we denote by $\HO^i_{\ca}(E) := A_i$  the $i$th-graded cohomology group with respect to $\ca$, where $A_i$ appears in (\ref{eqn:heart_t}).

A heart of some t-structure will be called finite-length if it is artinian and noetherian as an abelian category.

The following definition comes from Happel-Reiten-Smal{\o} \cite{HRS96}.
\begin{defi}[Torsion pair]
Let $\ca$ be a heart of some bounded t-structure in the triangulated category $\mc{D}$. A pair of full subcategories $(\mc{T},\mc{F})$ of $\ca$ is called a torsion pair in $\ca$ if it satisfies the following conditions
   \begin{enumerate}
       \item $\Hom_{\ca}(T,F) = 0$ for $T\in \mc{T}$ and $F\in \mc{F}$;
       \item for any object $A\in \ca$, there exist $M\in \mc{T}$ and $N\in \mc{F}$ such that they fit into a short exact sequence
       $$
        \xymatrix{
         0 \ar[r] & M \ar[r] & A \ar[r] & N \ar[r] & 0.
        }
       $$
   \end{enumerate}
\end{defi}

 The following theorem was proved in \cite[Proposition 2.1]{HRS96}.
 \begin{thm}[Happel-Reiten-Smal{\o}]
Let $(\mc{T},\mc{F})$ be a torsion pair in a heart $\ca$. Let
    \begin{eqnarray*}
        \ca^{\sharp} &:=& \left\{ E\in \CD\ | \ \ho^1_{\ca}(E)\in \mc{T},\ \ho^0_{\ca}(E)\in \mc{F}, \ \ho^i_{\ca}(E) = 0 \text{ for }i\neq0,\ 1 \right\}, \\
        \ca^{\flat} &:=& \left\{ E\in \CD \ | \ \ho^{-1}_{\ca}(E) \in \mc{F},\ \ho^0_{\ca}(E)\in \mc{T},\ \ho^i_{\ca}(E) = 0 \text{ for }i\neq-1,\ 0 \right\},
    \end{eqnarray*}
    then  $\ca^{\sharp}$ and $\ca^{\flat}$ are hearts of  bounded t-structures in $\mc{D}$.
 \end{thm}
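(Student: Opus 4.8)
The plan is to apply the heart-characterization criterion Lemma \ref{lem:heart_t} to $\ca^{\flat}$ and to deduce the claim for $\ca^{\sharp}$ for free. Unwinding the cohomological conditions gives $\ca^{\sharp}=\ca^{\flat}[-1]$, and an integer shift of the heart of a bounded t-structure is again such a heart, so it suffices to treat $\ca^{\flat}$. First I would record that $\ca^{\flat}$ is a full additive subcategory, as required by the hypothesis of Lemma \ref{lem:heart_t}: closure under finite direct sums is immediate from additivity of $\ho^i_{\ca}$, and closure under direct summands follows because $\mc{T}$ and $\mc{F}$ are closed under summands, a standard property of torsion pairs. It then remains to verify conditions (1) and (2) of Lemma \ref{lem:heart_t} for $\ca^{\flat}$.

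For condition (1) I would show $\Hom_{\CD}(A,B[k])=0$ for all $A,B\in\ca^{\flat}$ and all $k<0$; this is exactly condition (1), since $\Hom_{\CD}(A[n_1],B[n_2])=\Hom_{\CD}(A,B[n_2-n_1])$ with $n_2-n_1<0$. Writing $T_A=\ho^0_{\ca}(A)\in\mc{T}$, $F_A=\ho^{-1}_{\ca}(A)\in\mc{F}$ and similarly $T_B,F_B$, each object of $\ca^{\flat}$ sits in a truncation triangle $F_A[1]\to A\to T_A\to F_A[2]$. Applying $\Hom_{\CD}(A,-)$ and $\Hom_{\CD}(-,B[k])$ to these triangles reduces $\Hom_{\CD}(A,B[k])$ to the groups $\Hom^{\bullet}_{\CD}$ between $T_A,F_A$ and $T_B,F_B$, all computable inside $\ca$. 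For $k<0$ every resulting term vanishes for degree reasons (a heart has no negative self-extensions), the single borderline term being $\Hom_{\ca}(T_A,F_B)$ at $k=-1$, which vanishes precisely by the orthogonality $\Hom_{\ca}(\mc{T},\mc{F})=0$ of the torsion pair. Hence condition (1) holds.

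The substance of the proof is condition (2): producing for every nonzero $E\in\CD$ a finite filtration with factors in shifts of $\ca^{\flat}$. Since $\ca$ is the heart of a bounded t-structure, $E$ has only finitely many nonzero $\ca$-cohomologies $H^i:=\ho^i_{\ca}(E)$, say for $a\le i\le b$, and the $\ca$-truncation tower already filters $E$ with graded pieces $H^i[-i]\in\ca[-i]$. Splitting each $H^i$ by the torsion pair as $0\to T_i\to H^i\to F_i\to 0$ with $T_i\in\mc{T}$, $F_i\in\mc{F}$, one sees that the factor of $E$ lying in the shift $\ca^{\flat}[-i]$ must carry the $\mc{T}$-part $T_i$ in $\ca$-degree $i$ and the $\mc{F}$-part $F_{i-1}$ in $\ca$-degree $i-1$, since an object of $\ca^{\flat}[-i]$ has its $\mc{T}$-part in $\ca$-degree $i$ and its $\mc{F}$-part in $\ca$-degree $i-1$. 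These prospective factors $A_i\in\ca^{\flat}[-i]$, ranging over $a\le i\le b+1$ in strictly decreasing shift, index the filtration demanded by Lemma \ref{lem:heart_t}(2).

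The main obstacle is producing these $A_i$ as genuine objects and triangles rather than merely predicting the associated graded. The difficulty is concentrated in the following collision: the naive refinement of the $\ca$-truncation tower splits $H^i$ into $T_i$ at shift $-i$ and $F_i$ at shift $-i-1$, so for each $i$ the pieces $T_i$ and $F_{i-1}$ land at the same shift $-i$; since Lemma \ref{lem:heart_t}(2) demands \emph{strictly} decreasing shifts, these colliding pieces must be merged into a single object of $\ca^{\flat}[-i]$, i.e. one must realize the relevant extension in $\CD$. I would carry this out by peeling factors off from the top $\ca$-cohomology degree $b$ downward, repeatedly invoking the octahedral axiom to re-associate the $\ca$-truncation triangles with the torsion short exact sequences; the $\Hom$-vanishing from step (1), ultimately the orthogonality $\Hom_{\ca}(\mc{T},\mc{F})=0$, guarantees that the gluing data is unique and that no cross terms obstruct the assembly. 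Finiteness of the filtration is inherited from boundedness of the $\ca$-cohomology of $E$. With conditions (1) and (2) verified, Lemma \ref{lem:heart_t} shows that $\ca^{\flat}$ is the heart of a bounded t-structure, and the identity $\ca^{\sharp}=\ca^{\flat}[-1]$ gives the same conclusion for $\ca^{\sharp}$.
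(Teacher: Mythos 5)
The paper gives no argument for this statement at all: it is quoted from Happel--Reiten--Smal{\o} and justified solely by the citation to \cite[Proposition 2.1]{HRS96}, so there is no internal proof to compare yours against. Judged on its own terms, your proof is correct. The reduction $\ca^{\sharp}=\ca^{\flat}[-1]$ holds with the convention $\ho^i_{\ca}(E[n])=\ho^{i+n}_{\ca}(E)$; your verification of condition (1) of Lemma \ref{lem:heart_t} is complete, since the four groups $\Hom(T_A,T_B[k])$, $\Hom(T_A,F_B[k+1])$, $\Hom(F_A,F_B[k])$, $\Hom(F_A,T_B[k-1])$ all vanish for $k<0$ by the heart axioms, except the borderline case $\Hom_{\ca}(T_A,F_B)$ at $k=-1$, which is exactly the torsion-pair orthogonality; and you isolate the real content of condition (2) correctly, namely that $T_i$ and $F_{i-1}$ occupy the same shift and must be merged into a single factor $A_i\in\ca^{\flat}[-i]$. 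The peeling you sketch does go through: writing $H^i=\ho^i_{\ca}(E)$ with torsion sequences $0\to T_i\to H^i\to F_i\to 0$, the composite $E\to H^b[-b]\to F_b[-b]$ has cocone $E'$ whose top cohomology is $T_b$ (one octahedron), the composite $T_{b-1}[-(b-1)]\to H^{b-1}[-(b-1)]\to\tau_{\geq b-1}E'$ has cone an object $A_b$ sitting in a triangle $F_{b-1}[-(b-1)]\to A_b\to T_b[-b]$, hence lying in $\ca^{\flat}[-b]$ (a second octahedron), and iterating downward terminates by boundedness, yielding exactly the filtration demanded by Lemma \ref{lem:heart_t}(2) with strictly decreasing shifts $-a>-a-1>\cdots>-(b+1)$. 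Two remarks. First, your appeal to the step-(1) $\Hom$-vanishing to ``guarantee the gluing'' is superfluous: the lemma asks only for existence of the filtration, which the octahedral axiom supplies unconditionally; uniqueness of the gluing data is never needed. Second, the original HRS argument (and its standard accounts) instead verifies the t-structure axioms directly, producing a single truncation triangle $A\to E\to C$ with the $\ca$-cohomology of $A$ concentrated in degrees $\leq 0$ with top piece in $\mc{T}$ and that of $C$ concentrated in degrees $\geq 0$ with bottom piece in $\mc{F}$; that is marginally lighter than constructing the full filtration, but your route through Lemma \ref{lem:heart_t} is the natural one given the toolkit the paper sets up, at the cost of an induction.
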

 A special case of the tilting construction will be particularly important \cite[Definition 3.7]{KQ15}. Suppose that $\ca$ is a finite-length heart and $S\in \ca$ is a simple object. Let $\langle S\rangle$ be the full subcategory consisting of objects $E\in \ca$ all of whose simple factors are isomorphic to $S$. Define the full subcategories 
 $$
    S^{\perp} := \{E\in \ca \ | \ \Hom_{\ca}(S, E) = 0\}, \quad {}^{\perp}S:= \{E\in\ca \ |\ \Hom_{\ca}(E, S) = 0\}.
 $$
 Then we can either view $(\langle S \rangle, S^{\perp})$  or $(^{\perp} S, \langle S \rangle )$ as a torsion pair. Then we can define new tilted hearts
 \begin{equation} \label{rltilt}
     L_S \ca := \langle S[1], ^{\perp} S \rangle, \quad R_S \ca := \langle S^{\perp}, S[-1] \rangle,
 \end{equation}
 which we refer to as the left and right simple tilts of the heart $\ca$ at the simple object $S$. 
 \begin{rem} \label{inv_tilt}
  It is easy to see that $S[-1]$ is a simple object of $R_S \ca$ and that if the category is of finite-length, then $L_{S[-1]} R_S \ca = \ca$. Similarly, if $L_S \ca$ is of finite-length then $R_{S[1]} L_S \ca = \ca$. 
 \end{rem}

 The following lemmas will be useful.
 \begin{lem}\label{nest_t}
     Let $(D^{\leq 0}, D^{\geq 0})$ and $(\widetilde{D}^{\leq 0}, \widetilde{D}^{\geq 0})$ be two bounded t-structures of $\CD$, and we denote by $\ca$ and $\ca'$  their hearts respectively. If $\ca \subset \ca'$ then $\ca = \ca'$.
 \end{lem}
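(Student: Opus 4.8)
The plan is to prove the reverse inclusion $\ca' \subseteq \ca$, which together with the hypothesis $\ca \subseteq \ca'$ yields equality. The only inputs I need are the two properties of a bounded t-structure recorded in Lemma \ref{lem:heart_t}: for every object the cohomological filtration (\ref{eqn:heart_t}) relative to $\ca$ (equivalently, the truncation functors and cohomology objects $\HO^i_\ca(-)$ of the $\ca$-t-structure), and the orthogonality of condition (1), which I rewrite via $\Hom_\CD(A[n_1],B[n_2]) = \Hom_\CD(A,B[n_2-n_1])$ as the vanishing $\Hom_\CD(A,B[m]) = 0$ for $m<0$ and $A,B$ in a single heart.

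First I would fix a nonzero $E \in \ca'$ and measure it against the $\ca$-t-structure: write $\HO^i_\ca(E)$ for its $\ca$-cohomology objects, all lying in $\ca$. Let $p$ and $q$ be the largest and smallest integers with $\HO^p_\ca(E) \neq 0$ and $\HO^q_\ca(E) \neq 0$; boundedness guarantees these exist with $q \le p$, and membership $E \in \ca$ is precisely the statement $p = q = 0$. The two outer truncation triangles for the $\ca$-t-structure then supply a morphism $E \to \HO^p_\ca(E)[-p]$ (the top truncation, an isomorphism on $\HO^p_\ca$) and a morphism $\HO^q_\ca(E)[-q] \to E$ (the bottom truncation, an isomorphism on $\HO^q_\ca$).

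Next I would feed these into the orthogonality of the heart $\ca'$. Since $\ca \subseteq \ca'$, both $\HO^p_\ca(E)$ and $\HO^q_\ca(E)$ lie in $\ca'$, as does $E$. The first morphism is a nonzero element of $\Hom_\CD(E, \HO^p_\ca(E)[-p])$, so the vanishing in $\ca'$ forces $-p \ge 0$, i.e. $p \le 0$. The second is a nonzero element of $\Hom_\CD(\HO^q_\ca(E), E[q])$, which forces $q \ge 0$. Combined with $q \le p$ this gives $p = q = 0$, hence $E \in \ca$, establishing $\ca' \subseteq \ca$ and therefore $\ca = \ca'$.

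The one point requiring care — the main, though minor, obstacle — is verifying that the two truncation morphisms are genuinely nonzero, so that they actually contradict orthogonality: this reduces to the fact that $\HO^p_\ca$ (resp. $\HO^q_\ca$) applied to the truncation map is an isomorphism onto the nonzero object $\HO^p_\ca(E)$ (resp. $\HO^q_\ca(E)$), so the map cannot vanish. Everything else is formal. I note an alternative route of the same length: from $\ca \subseteq \ca'$ and the description of the aisle $\CD^{\le 0}$ as the extension closure of $\{\ca[n] : n \ge 0\}$ one obtains $\CD^{\le 0} \subseteq \widetilde{\CD}^{\le 0}$ and $\CD^{\ge 0} \subseteq \widetilde{\CD}^{\ge 0}$, after which the standard fact that each aisle is the orthogonal of its complement reverses one inclusion and forces the t-structures to coincide. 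I prefer the truncation argument since it relies only on the two statements already in Lemma \ref{lem:heart_t}.
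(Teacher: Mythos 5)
Your proof is correct, and it takes a slightly different route from the paper's. The paper fixes $E\in\ca'$ and shows directly that $E$ lies in the aisle and co-aisle of the $\ca$-t-structure: for any $F\in D^{\leq 0}$ (resp.\ $G\in D^{>0}$) it filters $F$ (resp.\ $G$) by objects of $\ca[k_i]\subset\ca'[k_i]$ and uses orthogonality in $\ca'$ to get $\Hom_{\CD}(F,E[-1])=0$ and $\Hom_{\CD}(E,G)=0$, concluding $E\in D^{\leq 0}\cap D^{\geq 0}=\ca$; this implicitly invokes the standard fact that the aisle and co-aisle are each other's orthogonals. You instead filter $E$ itself: you bound its $\ca$-cohomological amplitude by feeding the two outer truncation maps $E\to \HO^p_{\ca}(E)[-p]$ and $\HO^q_{\ca}(E)[-q]\to E$ into the orthogonality of $\ca'$, forcing $p\leq 0\leq q$. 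The trade-off is symmetrical: the paper's argument never needs to know that truncation maps are nonzero (its test objects $F$, $G$ are arbitrary), but does need the orthogonality characterization of the aisle; yours avoids that characterization but hinges on the nonvanishing of the truncation maps, which you correctly justify by functoriality of $\HO^\bullet_{\ca}$ (a zero map would induce the zero map on $\HO^p_{\ca}$, contradicting that it induces an isomorphism onto a nonzero object). Both arguments use only the two conditions of Lemma \ref{lem:heart_t}, and the alternative route you sketch at the end (aisle inclusions plus orthogonality) is essentially the paper's proof repackaged.
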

 \begin{proof}
     Let $E\in \ca'$. Since for any object $F\in D^{\leq 0}$, $F$ has a finite filtration by objects in $\ca[k_i] \subset \ca'[k_i]$ for $k_i \geq 0$ by Lemma \ref{lem:heart_t}, so we have
     \[\Hom_{\CD}(F,E[-1]) = 0.\] 
     Therefore $E[-1]\in D^{>0}$.
     
     Similarly for any object $G\in D^{>0}$, $G$ has a finite filtration by objects in $\ca[k_i]\subset \ca'[k_i]$ for $k_i < 0$, therefore 
     \[
        \Hom_{\CD}(E, G) = 0.
     \]
     Therefore $E\in D^{\leq 0}$. So we have $E\in D^{\leq 0}\cap D^{>0}[1] = \ca$. This proves the lemma.
 \end{proof}
 \begin{lem}\label{aut_sim_tilt}
    Take an autoequivalence $\Phi\in\Aut (\mc{D})$. Let $\ca\subset \mc{D}$ be a heart of some bounded t-structure and of finite-length, $S\in \ca$ be a simple object. Then we have 
    $$
        \Phi(L_S\ca) = L_{\Phi(S)} \Phi(\ca), \quad \Phi(R_S\ca) = R_{\Phi(S)} \Phi(\ca).
    $$
 \end{lem}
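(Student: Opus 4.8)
The plan is to observe that every ingredient in the definition of the simple tilts---the shift functor, Hom-orthogonality, and the extension closure $\langle - \rangle$---is preserved by a triangulated autoequivalence, so the claimed identities follow formally. First I would record the preliminary facts. Since $\Phi$ is an exact autoequivalence of $\mc{D}$, it carries the bounded t-structure with heart $\ca$ to a bounded t-structure with heart $\Phi(\ca)$; because being of finite length (artinian and noetherian) is an intrinsic property of the abelian category $\ca$ and $\Phi\colon\ca\to\Phi(\ca)$ is an equivalence, $\Phi(\ca)$ is again of finite length. Likewise simplicity is a categorical property, so $\Phi(S)$ is a simple object of $\Phi(\ca)$. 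This guarantees that the right-hand sides $L_{\Phi(S)}\Phi(\ca)$ and $R_{\Phi(S)}\Phi(\ca)$ are well-defined tilts in the sense of (\ref{rltilt}).

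Next I would establish three compatibilities. (i) Since $\Phi$ is triangulated it commutes with the shift, so $\Phi(S[1])=\Phi(S)[1]$ and $\Phi(S[-1])=\Phi(S)[-1]$. (ii) Full faithfulness gives $\Hom_{\ca}(E,S)\cong\Hom_{\Phi(\ca)}(\Phi(E),\Phi(S))$ and $\Hom_{\ca}(S,E)\cong\Hom_{\Phi(\ca)}(\Phi(S),\Phi(E))$ for $E\in\ca$; combined with essential surjectivity this yields $\Phi({}^{\perp}S)={}^{\perp}\Phi(S)$ and $\Phi(S^{\perp})=\Phi(S)^{\perp}$, the orthogonals on the right being computed inside $\Phi(\ca)$. (iii) Because $\Phi$ sends distinguished triangles to distinguished triangles, the image under $\Phi$ of an extension-closed subcategory is again extension-closed; applying the quasi-inverse $\Phi^{-1}$ gives the opposite inclusion, and hence $\Phi(\langle\mathcal{X},\mathcal{Y}\rangle)=\langle\Phi(\mathcal{X}),\Phi(\mathcal{Y})\rangle$ for any subcategories $\mathcal{X},\mathcal{Y}$.

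Finally I would assemble these: applying $\Phi$ to $L_S\ca=\langle S[1],{}^{\perp}S\rangle$ and using (iii), then (i) and (ii), yields
\[
    \Phi(L_S\ca)=\langle\Phi(S)[1],{}^{\perp}\Phi(S)\rangle=L_{\Phi(S)}\Phi(\ca),
\]
and the identity for $R_S$ is proved identically with $S^{\perp}$ and $S[-1]$ in place of ${}^{\perp}S$ and $S[1]$. The argument is essentially formal; the only point that requires a word of justification is compatibility (iii), namely that a triangulated equivalence preserves extension closures, and this is where I expect the only (mild) friction to lie---one must invoke that both $\Phi$ and $\Phi^{-1}$ preserve distinguished triangles in order to obtain equality of the two extension-closed hulls rather than a single inclusion.
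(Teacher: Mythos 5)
Your proof is correct, but it finishes the argument by a genuinely different mechanism than the paper. Both proofs rest on the same core compatibilities: $\Phi$ commutes with shifts, and full faithfulness plus essential surjectivity give $\Phi(S^{\perp}) = \Phi(S)^{\perp}$ and $\Phi({}^{\perp}S) = {}^{\perp}\Phi(S)$ inside $\Phi(\ca)$. The difference is in how equality is obtained. The paper proves only the single inclusion $R_{\Phi(S)}\Phi(\ca) \subset \Phi(R_S\ca)$ — your ``easy'' direction, since $\Phi(R_S\ca)$ is extension-closed and contains the generators $\Phi(S)^{\perp}$ and $\Phi(S)[-1]$ — and then upgrades it to an equality by invoking Lemma~\ref{nest_t}, the rigidity statement that two nested hearts of bounded t-structures coincide; this step uses essentially that both sides are hearts (one by Happel-Reiten-Smal{\o} tilting applied to the heart $\Phi(\ca)$, the other as the image of a heart under an autoequivalence). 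You instead prove the symmetric identity $\Phi\bigl(\langle \mathcal{X},\mathcal{Y}\rangle\bigr) = \langle \Phi(\mathcal{X}),\Phi(\mathcal{Y})\rangle$ of extension hulls, getting the reverse inclusion by running the same argument through the quasi-inverse $\Phi^{-1}$, so no appeal to the heart structure is needed. Your route is more self-contained and slightly more general: it never uses that the subcategories involved are hearts, nor the finite-length hypothesis, which enters only to make the simple tilts well-defined in the first place. The paper's route is shorter on the page because Lemma~\ref{nest_t} is already available, and this ``one inclusion plus nestedness'' pattern is the paper's standard trick, reused for instance in Theorem~\ref{double_tilts} and Lemma~\ref{g_act}.
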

\begin{proof}
By the definition of simple tilts (\ref{rltilt}), we have $ R_{\Phi(S)}\Phi(\ca) = \left\langle \left(\Phi(S)\right)^{\perp}, \Phi(S)[-1]\right\rangle$. It is easy to check $\Phi(S^{\perp}) = (\Phi(S))^{\perp}$, therefore $R_{\Phi(S)}\Phi(\ca) \subset \Phi(R_S\ca)$ by definition. By Lemma \ref{nest_t} we have $R_{\Phi(S)}\Phi(\ca) = \Phi(R_S \ca)$. The proof of the left tilt case is similar. 
\end{proof}
Given a heart of bounded t-structure $\ca\subset \CD$, we denote by $\Sim \ca$ the  set of all non-isomorphic simple objects in $\ca$. The following theorem  characterizes the new simple objects in the tilted hearts.
\begin{prop}[{\cite[Proposition 5.4]{KQ15}}]\label{sim_tilt}
    Assume $\Sim \ca$ is finite and $\ca$ is of finite-length. Let $S\in \Sim \ca$ be such that $\Ext_{\ca}^1(S,S) = 0$. Then after taking a left or right simple tilt, the new simple objects are:
    \begin{eqnarray}
        \Sim R_S\ca &=& \{ S[-1]\} \ \cup \ \{\phi_S(X): X\in \Sim \ca,\ X\neq S\} \\
        \Sim L_S\ca &=& \{S[1] \} \ \cup \ \{\psi_S(X): X\in \Sim \ca,\ X\neq S\}
    \end{eqnarray}
    where 
    \begin{eqnarray*}
        \phi_S(X) &=& \Cone\ \left(S[-1]\otimes \Ext^1(S,X) \longrightarrow X\right),\\
        \psi_S(X) &=& \Cone\ \left(X\longrightarrow S[1]\otimes \Ext^1(X,S)^*\right)[-1].
    \end{eqnarray*}
\end{prop}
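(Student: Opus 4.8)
\emph{Proof proposal.}
The plan is to realize $R_S\ca$ as the Happel--Reiten--Smal{\o} tilt of $\ca$ at the torsion pair $(\langle S\rangle, S^{\perp})$, to compute the candidate objects explicitly, and then to establish simplicity and exhaustion by an induction on length in $\ca$. Since $\ca$ is of finite length, $(\langle S\rangle, S^{\perp})$ is a torsion pair and $R_S\ca=\langle S^{\perp}, S[-1]\rangle$ is exactly the tilted heart in which $\ho^1_\ca$ lands in $\langle S\rangle$ and $\ho^0_\ca$ in $S^{\perp}$; thus every $E\in R_S\ca$ sits in a short exact sequence $0\to\ho^0_\ca(E)\to E\to\ho^1_\ca(E)[-1]\to 0$ with $\ho^0_\ca(E)\in S^{\perp}$ and $\ho^1_\ca(E)\in\langle S\rangle$, and $R_S\ca\cap\ca=S^{\perp}$. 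Because $\Ext^1_\ca(S,S)=0$, the subcategory $\langle S\rangle$ is semisimple with unique simple object $S$; in particular $S[-1]$ is simple in $R_S\ca$, which disposes of the first listed object.

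Next I would unwind the cone defining $\phi_S(X)$. Writing $V=\Ext^1(S,X)$ and rotating the defining triangle, the long exact sequence of $\ca$-cohomology shows that $\phi_S(X)$ is concentrated in degree $0$, hence lies in $\ca$, and is the universal extension $0\to X\to\phi_S(X)\to S\otimes V\to 0$. Applying $\Hom_\ca(S,-)$ and using that the connecting map $V\to\Ext^1(S,X)=V$ is the identity, one gets $\Hom_\ca(S,\phi_S(X))=0=\Ext^1_\ca(S,\phi_S(X))$; thus $\phi_S(X)\in S^{\perp}\subset R_S\ca$. The same sequence shows that the only $\ca$-subobjects of $\phi_S(X)$ are $0$ and those containing $X$, and that its only $\ca$-quotients lying in $S^{\perp}$ are $0$ and $\phi_S(X)$ itself.

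For simplicity of $\phi_S(X)$ in $R_S\ca$, consider $0\to A\to\phi_S(X)\to B\to 0$ in $R_S\ca$. The $\ca$-cohomology long exact sequence forces $\ho^1_\ca(B)=0$, so $B\in S^{\perp}$, and the image $I$ of $\phi_S(X)\to B$ is a quotient of $\phi_S(X)$ lying in $S^{\perp}$, whence $I=0$ or $I=\phi_S(X)$. If $I=0$ then $\ho^0_\ca(A)\cong\phi_S(X)$ and $B\cong\ho^1_\ca(A)\in\langle S\rangle\cap S^{\perp}=0$, so $A=\phi_S(X)$; if $I=\phi_S(X)$ then $\ho^0_\ca(A)=0$, $A$ is a sum of copies of $S[-1]$, and $0\to\phi_S(X)\to B\to\ho^1_\ca(A)\to 0$ is exact in $\ca$, where now $\Ext^1_\ca(S,\phi_S(X))=0$ together with $\Hom_\ca(S,B)=0$ forces $\ho^1_\ca(A)=0$, i.e. $A=0$. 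Exhaustion (and finiteness of length of $R_S\ca$) I would prove simultaneously by induction on the $\ca$-length, reducing via the canonical sequence to filtering $M\in S^{\perp}$: choose a simple $\ca$-quotient $X$ of $M$; if $X\neq S$ then $0\to M'\to M\to X\to 0$ is a short exact sequence in $R_S\ca$ and $X$ is itself an extension of $\phi_S(X)$ by copies of $S[-1]$, while if $X=S$ the rotated triangle gives $0\to S[-1]\to M'\to M\to 0$, exhibiting $M$ as a quotient of the shorter $M'$. By induction every object of $R_S\ca$ has a finite filtration with factors in $\{S[-1]\}\cup\{\phi_S(X):X\in\Sim\ca,\ X\neq S\}$, so these are all the simples; they are pairwise non-isomorphic since their $K_0$-classes $-[S]$ and $[X]+\dim\Ext^1(S,X)\cdot[S]$ are distinct. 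The statement for $L_S\ca$ follows by the dual argument in $\CD^{\mathrm{op}}$, under which right tilts become left tilts and $\phi_S$ becomes the universal co-extension $\psi_S$, with $0\to S\otimes\Ext^1(X,S)^{*}\to\psi_S(X)\to X\to 0$.

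The hard part is the simplicity of $\phi_S(X)$: although $\phi_S(X)$ is far from simple in $\ca$ (it has $\ca$-length $1+\dim\Ext^1(S,X)$), one must check that none of its proper $\ca$-subobjects survives as an $R_S\ca$-subobject, and this is precisely what the universal-extension vanishing $\Ext^1_\ca(S,\phi_S(X))=0$ guarantees. A secondary subtlety in the exhaustion step is that a simple $\ca$-quotient of $M\in S^{\perp}$ may be $S$ itself, which cannot be treated by naively adding a composition factor and must instead be handled by the rotation trick above.
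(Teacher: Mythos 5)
The paper never proves Proposition \ref{sim_tilt}: it is imported verbatim from King--Qiu \cite[Proposition 5.4]{KQ15}, and no argument for it appears anywhere in the text. So there is no internal proof to compare yours against; judged on its own merits, your proposal is correct and self-contained. You realize $R_S\ca$ as the Happel--Reiten--Smal{\o} tilt at the torsion pair $(\langle S\rangle, S^{\perp})$, identify $\phi_S(X)$ with the universal extension $0\to X\to\phi_S(X)\to S\otimes\Ext^1(S,X)\to 0$, and extract the two crucial vanishings $\Hom_{\ca}(S,\phi_S(X))=0$ and $\Ext^1_{\ca}(S,\phi_S(X))=0$, the latter being exactly where the rigidity hypothesis $\Ext^1_{\ca}(S,S)=0$ enters. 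The two-case analysis on the image of $\phi_S(X)\to B$ correctly forces any short exact sequence in $R_S\ca$ to be trivial, the induction on $\ca$-length with the rotation trick handles exhaustion (including the genuinely delicate case where the chosen simple $\ca$-quotient of $M\in S^{\perp}$ is $S$ itself), and the passage to $L_S\ca$ by working in $\CD^{\mathrm{op}}$ is standard. You also correctly isolate the crux: simplicity of $\phi_S(X)$ in the new heart rests entirely on the universal-extension vanishing.

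Two steps are compressed and should be expanded in a write-up, though neither is a genuine gap. First, simplicity of $S[-1]$ in $R_S\ca$ does not follow formally from semisimplicity of $\langle S\rangle$ alone: for a short exact sequence $0\to A\to S[-1]\to B\to 0$ in $R_S\ca$ one needs the cohomology sequence to kill $\ho^0_{\ca}(A)$ and to embed $\ho^0_{\ca}(B)$ into $\ho^1_{\ca}(A)\in\langle S\rangle$, whence $\ho^0_{\ca}(B)\in\langle S\rangle\cap S^{\perp}=0$, and only then does simplicity of $S$ in $\ca$ conclude. Second, in the $X=S$ branch of the exhaustion you exhibit $M$ as the quotient $M'/S[-1]$ in $R_S\ca$ of an object $M'$ of smaller $\ca$-length; to transfer the filtration from $M'$ to $M$ you need the Jordan--H\"older fact that a quotient of an object admitting a composition series again admits one, with factors a sub-multiset of the original. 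That step is legitimate precisely because you have already proved that the proposed objects are simple, so the order of your argument (simplicity first, exhaustion second) is essential and worth flagging explicitly.
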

On the other hand, the concept of tilting objects gives another method to construct new bounded t-structures. Recall that an object $T$  in $\CD$ is called a tilting object if it satisfies the following conditions
\begin{enumerate}
    \item $\RHom_{\CD}\left(T,T[n]\right) = 0$, unless $n=0$;
    \item $T$ is  a classical generator of $\CD$, i.e., the smallest thick subcategory containing $T$ which we denote by $\thick(T)$ is $\CD$.
\end{enumerate}
Suppose $X$ is a smooth quasi-projective variety and $\CD = D^b(X)$. Then $\RHom_X(T,-)$ induces a derived equivalence between $X$ and noncommutative algebra $B=\End_X(T)$ \cite{R89, Keller94}:
\[
    \RHom_X(T,-): D^b(X) \longrightarrow D^b(B).
\]
Pulling back the standard t-structure on $D^b(B)$ via the equivalence gives us a new t-structure on $D^b(X)$. 
\subsection{Exceptional collections}
Usually tilting objects in $D^b(X)$ can break up into small pieces called the exceptional objects. 
\begin{defi}[Exceptional collection] \label{exc_coll}
An object $E$ in $\CD$ is said to be exceptional if 
$$
	\Hom^k_{\CD}(E,E) = \left\{ \begin{array}{cc}
								\BC&\  \text{if } k=0,\\
								0 &\  \text{otherwise}.
								\end{array}
							\right.
$$
An exceptional collection ${\BE}\subset \CD$ is a sequence of exceptional objects 
$$
	{\BE}=(E_0,\cdots,E_n)
$$
such that for all $0\leq i<j\leq n$, we have $\Hom^{\bullet}_{\CD}(E_j,E_i) = 0$.
\end{defi}
An exceptional collection $\BE = (E_0,\cdots, E_n)$ is said to be strong if for all $i,\ j$ 
$$
	\Hom^k_{\CD}(E_i,E_j) = 0,\ \ \text{unless } k=0.
$$

We write $\thick(\BE) \subset \CD$ for the smallest thick  subcategory of $\CD$ containing the elements of an exceptional collection $\BE\subset \CD$. An exceptional  collection $\BE$ is said to be full if $\thick(\BE) = \CD$. From the definitions above, we have that for a full and strong exceptional collection $\BE$,  the object $\bigoplus_{i=0}^{n} E_i$ is a tilting object in $\CD$. The first full and strong exceptional collection was found in $D^b(\mb{P}^N)$ by Beilinson \cite{Bei78}.
\begin{exa}
    $D^b(\mb{P}^N)$ admits a full and strong exceptional collection $\bigl(\co,\co(1),\cdots, \co(N)\bigr)$. $\mb{P}^N$ is derived equivalent to the path algebra of quiver
    \[ 
    \begin{tikzcd}
   0 \arrow[r, shift left=1, phantom, "\vdots" description] \arrow[r, shift left=5, "f_1" description] \arrow[r, shift right=5, "f_N" description] &
   1 \arrow[r, shift left=1, phantom, "\vdots" description] \arrow[r, shift left=5, "f_1" description] \arrow[r, shift right=5, "f_N" description] &
   \cdots \arrow[r, shift left=1, phantom, "\vdots" description] \arrow[r, shift left=5, "f_1" description] \arrow[r, shift right=5, "f_N" description] &
   N-1 \arrow[r, shift left=1, phantom, "\vdots" description] \arrow[r, shift left=5, "f_1" description] \arrow[r, shift right=5, "f_N" description] &
   N
    \end{tikzcd} 
\]
subject to the relations $f_{j+1} f_j = f_j f_{j+1}$.
\end{exa}

Given an exceptional collection $\BE$ in  $\CD$, the right orthogonal subcategory to $\BE$ is the full triangulated subcategory 
$$
	\BE^{\bot}=\left\{ X\in \CD: \ \Hom^{\bullet}_{\CD}(E,X) = 0\ \text{for } E\in \BE\right\}.
$$
Similarly, the left orthogonal subcategory to $\BE$ is 
$$
	{}^{\bot} \BE = \left\{ X\in \CD: \ \Hom^{\bullet}_{\CD}(X,E) = 0\ \text{for } E\in \BE\right\}.
$$
The subcategory $\langle \BE\rangle$ is admissible due to \cite[Theorem 3.2]{B89}, i.e. the inclusion functor $i: \langle \BE \rangle \rightarrow \CD$ has left and right adjoint functors. Thus the fullness of $\BE$ is equivalent to $\BE^{\bot} = 0$ or ${}^{\bot} \BE = 0$.

We suppose $E\in \CD$ to be exceptional. Given an object $X\in \CD$, the left mutation of $X$ through $E$ is the object $L_E(X)$ defined up to isomorphism by the triangle
$$
	\xymatrix{
		L_E(X) \ar[r]& \Hom^{\bullet}_{\CD}(E,X)\otimes E \ar[r]^-{ev} & X \ar[r]&L_E(X)[1],
		}
$$
where $ev$ denotes the evaluation map. Similarly, given $X\in \CD$, the right mutation of $X$ through $E$ is the object $R_E X$ defined by the triangle
$$
	\xymatrix{
     X \ar[r]^-{coev} & \Hom^{\bullet}_{\CD}(X,E)^* \otimes E \ar[r] &	R_E(X) \ar[r] &  X[1],
	}
$$
where $coev$ denotes the coevaluation map. Moreover, consider the left and right orthogonal subcategories of $E$, these two operations define mutually inverse equivalences of categories (see \cite[Appendix B]{BriS10})
\begin{equation}\label{eqn:lr_mut_equ}
    \xymatrix{
		{}^{\bot}E \ar@/^/[rr]^{L_E} & & E^{\bot} \ar@/^/[ll]^{R_E}
	}
\end{equation}
	
\begin{defi}[Standard mutation]
Given a full exceptional collection $\BE = (E_0,\cdots, E_n)$, the mutation operation $\sigma_i$ for each $0<i\leq n$ is defined by the rule
\begin{eqnarray*}
	&&\sigma_i(E_0,\cdots, E_{i-2}, E_{i-1},E_i, E_{i+1},\cdots, E_n) \\
	&=&(E_0,\cdots, E_{i-2}, L_{E_{i-1}}(E_i),E_{i-1}, E_{i+1},\cdots, E_n)
\end{eqnarray*}
\end{defi}
This operation takes exceptional collections to exceptional collections \cite[Lemma 2.1]{B89}. And it takes full collections to full collections \cite[Lemma 2.2]{B89}.

The following definition is due to Bondal\cite{B89}, and we refer our reader to \cite[Appendix B]{BriS10} for the proof of (\ref{eqn:dual}).
\begin{defi}[Dual objects] \label{dual_coll}
Let $\BE = (E_0,\cdots, E_n)$ be a full exceptional  collection and define 
$$
	F_j = L_{E_0}L_{E_1} \cdots L_{E_{j-1}} (E_j)[j],\ \ 0\leq j \leq n.
$$
Then $F_j$ is called the dual object to $E_j$ and satisfies 
\begin{equation}\label{eqn:dual}
	\Hom^k_{\CD}(E_i,F_j) = \left\{ \begin{array}{cc}
							\BC & \text{if}\ i=j \ \text{and} \ k=0,\\
							0 & \text{otherwise.}
						\end{array}
						\right.
\end{equation}
\end{defi}

\subsection{Stability conditions}
We collect some properties and theorems on the space of stability conditions introduced in \cite{Bri07}.
\begin{defi}[Slicing]\label{defi:slicing}
A slicing of $\CD$ is a collection of full subcategories $\mc{P}(\phi)$ indexed by $\phi\in \mb{R}$, satisfying the following axioms:
\begin{enumerate}
    \item $\mc{P}(\phi+1) = \mc{P}(\phi)[1]$;
    \item $\Hom_D\left(\mc{P}(\phi_1), \mc{P}(\phi_2)\right) = 0$ for $\phi_1 > \phi_2$;
    \item for any nonzero object $E\in \CD$, we have a collection of triangles 
    \[
        \xymatrix@C=.5em{
            0_{\ } \ar@{=}[r] & E_0 \ar[rrrr] &&&& E_1 \ar[rrrr] \ar[dll] &&&& E_2
                \ar[rr] \ar[dll] && \ldots \ar[rr] && E_{n-1}
            \ar[rrrr] &&&& E_n \ar[dll] \ar@{=}[r] &  E_{\ } \\
            &&& A_1 \ar@{-->}[ull] &&&& A_2 \ar@{-->}[ull] &&&&&&&& A_n \ar@{-->}[ull] 
     }
    \]
    such that $A_i\in \mc{P}(\phi_i)$, and 
    \[
        \phi_1 > \phi_2 >\cdots > \phi_n.  
    \]
\end{enumerate}
For any nonzero object $E$, we denote by $\phi^+(E) = \phi_1$ and $\phi^-(E) = \phi_n$ where $\phi_i$ is defined as above. For any interval $I\subset \mb{R}$, $\mc{P}(I)$ is defined to be the extension-closed subcategory of $\CD$ generated by objects $E\in \mc{P}(\phi)$ for $\phi \in I$.
\end{defi}
We denote  by $\mr{Slice}(\CD)$ the set of all slicings on $\CD$. Bridgeland introduced a generalized metric in $\mr{Slice}(\CD)$:
\begin{defi}[{\cite[Section 6]{Bri07} }]\label{defi:slice_metric}
Let $\mc{P}_1$, $\mc{P}_2\in \mr{Slice}(\CD)$, then the generalized metric $d: \mr{Slice}(\CD)\times \mr{Slice}(\CD) \rightarrow [0,+\infty]$ is defined as  
\[
    d(\mc{P}_1,\mc{P}_2) := \mathop{\mr{sup}}_{E\neq 0\in \CD}\left\{|\phi_1^+(E)-\phi_2^+(E)|, |\phi_1^-(E)-\phi_2^-(E)|\right\}.
\]
\end{defi}
Before recalling stability condition on $\CD$, we first recall the stability function on an abelian category $\ca$ \cite{Ru97}.
\begin{defi}\label{def:stab_func}
A stability function on $\ca$ is a group homomorphism $Z:K_0(\ca)\rightarrow \mb{C}$ such that for any nonzero object $A\in\ca$, the complex number $Z(A)$ lies in the subset 
\[
    H = \{z = r\mr{exp}(i\pi\phi) \vert r>0, 0 <\phi\leq 1\} \subset \mb{C}.
\]
The phase of $A$ is defined to be $\phi(A) = \frac{1}{\pi} \mr{arg}Z(A) \in (0,1]$. An object $E\in \ca$ is said to be (semi)stable if for any subobject $A\subset E$ we have
\[
    \phi(A) < (\leq) \phi(E).
\]
\end{defi}
\begin{defi}[Harder-Narasimhan property{\cite[Definition 2.3]{Bri07}}] \label{defi:HN}
Let $Z:K_0(\ca)\rightarrow \mb{C}$ be a stability function on the abelian category $\ca$. Then $Z$ is said to have Harder-Narasimhan property if for any nonzero object $E\in \ca$ there is a filtration
\[
    0 = E_0 \subset E_1\subset E_2\subset \cdots E_{n-1}\subset E_n = E
\]
such that each $F_i = E_i/E_{i-1}$ is a semistable object of phase $\phi_i$ and  $\phi_1 > \phi_2\cdots >\phi_{n-1}>\phi_n$.
\end{defi}

\begin{defi}[Stability condition]
A stability condition for $\CD$ is a pair $\sigma = (Z, \ca)$ which consists of a heart of a bounded t-structure $\ca$ in $\CD$, and a stability function (called the central charge of $\sigma$) $Z: K_0(\ca) \rightarrow \mb{C}$ such that $Z$ satisfies the Harder-Narasimhan property.
\end{defi}
The above definition of stability condition is equivalent to the following definition \cite[Proposition 5.3]{Bri07}:
\begin{defi}
A stability condition is a pair $\sigma = (Z,\mc{P})$ which consists of a slicing $\mc{P}\in \mr{Slice}(\CD)$ and a group homormorphism called the central charge $Z: K_0(D) \rightarrow \mb{C}$, such that it satisfies the compatibility condition: if $0\neq E\in \mc{P}(\phi)$ for some $\phi\in \mb{R}$, then 
\[
    Z(E)  = r\mr{exp}(i\pi\phi),\ r>0.
\]
The objects in $\mc{P}(\phi)$ are called semistable of phase $\phi$, and the simple objects in $\mc{P}(\phi)$ are called stable.
\end{defi}

The following lemma will be useful later.
\begin{lem}[{\cite[Lemma 6.4]{Bri07}}]\label{stab_equi}
    If the stability conditions $\sigma = (Z, \mc{P})$ and $\tau = (Z, \mc{P}')$ have the same central charge and $d(\mc{P},\mc{P}') < 1$, then $\sigma = \tau$.
\end{lem}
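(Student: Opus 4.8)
The statement is \cite[Lemma 6.4]{Bri07}, and the plan is to follow Bridgeland's strategy and show directly that the two slicings coincide. By symmetry in $\mc{P}$ and $\mc{P}'$ it suffices to prove $\mc{P}(\phi)\subseteq \mc{P}'(\phi)$ for every $\phi\in\mb{R}$: interchanging the roles of the two slicings then gives the reverse inclusion, whence $\mc{P}=\mc{P}'$ and $\sigma=\tau$. So I fix $\phi$ and a nonzero $E\in\mc{P}(\phi)$, i.e.\ an object which is $\sigma$-semistable of phase $\phi$, and I must show $E\in\mc{P}'(\phi)$. I write $\phi^{\pm}_{\mc{P}}$, $\phi^{\pm}_{\mc{P}'}$ for the extreme phases computed in the two slicings. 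Since $E$ is $\sigma$-semistable, $\phi^{+}_{\mc{P}}(E)=\phi^{-}_{\mc{P}}(E)=\phi$, and the hypothesis $d(\mc{P},\mc{P}')<1$ forces every $\mc{P}'$-Harder--Narasimhan phase of $E$ to lie in the open interval $(\phi-1,\phi+1)$.

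The idea is to split the $\mc{P}'$-filtration of $E$ across the phase $\phi$. Let $M\to E\to N$ be the triangle with $M\in\mc{P}'((\phi,\phi+1))$ collecting the $\mc{P}'$-factors of phase strictly above $\phi$ and $N\in\mc{P}'((-\infty,\phi])$ collecting those of phase at most $\phi$, so $\phi^{+}_{\mc{P}'}(N)\le\phi$. I claim $M=0$. The naive central-charge argument does not close here: because the allowed window $(\phi-1,\phi+1)$ has width $2$, the $\mc{P}'$-phases may straddle $\phi$, and then $Z(E)=r\,e^{i\pi\phi}$ is no obstruction. The extra input I would use is the sharper bound $\phi^{+}_{\mc{P}}(M)\le \phi$ on the $\sigma$-phases of $M$. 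To get it, let $C$ be the top $\sigma$-semistable factor of $M$, of phase $\gamma=\phi^{+}_{\mc{P}}(M)$, so there is a nonzero $c\colon C\to M$; if $\gamma>\phi$ then $\Hom_{\CD}(C,E)=0$ since $C,E$ are $\sigma$-semistable with $\gamma>\phi$, so the composite $C\to M\to E$ vanishes and $c$ factors through the fibre $N[-1]$ of $M\to E$, forcing $\Hom_{\CD}(C,N[-1])\neq 0$. But $d(\mc{P},\mc{P}')<1$ and $\phi^{+}_{\mc{P}'}(N)\le\phi$ give $\phi^{+}_{\mc{P}}(N[-1])<\phi<\gamma$, so that Hom must vanish --- a contradiction. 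Hence $\gamma\le\phi$.

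With $\phi^{+}_{\mc{P}}(M)\le\phi$ in hand the central charge finishes the job, and here the hypothesis that $\sigma$ and $\tau$ share the \emph{same} $Z$ is essential. On one hand $M\in\mc{P}'((\phi,\phi+1))$ writes $Z(M)$ as a positive combination of rays $e^{i\pi\psi}$ with $\psi\in(\phi,\phi+1)$, so $\mr{Im}\!\left(e^{-i\pi\phi}Z(M)\right)>0$; on the other hand the bounds $\phi-1<\phi^{-}_{\mc{P}}(M)\le\phi^{+}_{\mc{P}}(M)\le\phi$ place $M\in\mc{P}((\phi-1,\phi])$, so $Z(M)$ is a positive combination of rays $e^{i\pi\theta}$ with $\theta\in(\phi-1,\phi]$ and $\mr{Im}\!\left(e^{-i\pi\phi}Z(M)\right)\le 0$. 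These contradict each other unless $M=0$. A symmetric argument, splitting $E$ instead into the parts of $\mc{P}'$-phase $\ge\phi$ and $<\phi$ and bounding the lowest $\sigma$-phase of the latter from below by $\phi$ via the dual factoring, shows that the piece of $E$ of $\mc{P}'$-phase strictly below $\phi$ also vanishes. Hence every $\mc{P}'$-factor of $E$ has phase exactly $\phi$, and since $\mc{P}'(\phi)$ is closed under extensions we conclude $E\in\mc{P}'(\phi)$.

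I expect the main obstacle to be precisely the straddling configuration treated above, namely controlling the upper $\sigma$-phase $\phi^{+}_{\mc{P}}(M)$ by $\phi$. The metric hypothesis alone confines phases only to a window of width $2$, which is too weak for a direct central-charge contradiction; the categorical step --- factoring the inclusion of the top $\sigma$-factor through the fibre $N[-1]$ and invoking the vanishing of $\Hom$ between semistable objects of decreasing phase --- is what upgrades this to the sharp bound, after which the common central charge $Z$ forces the $\mc{P}'$-filtration of $E$ to be concentrated at phase $\phi$.
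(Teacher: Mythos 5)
Your proof is correct: the reduction to showing $\mc{P}(\phi)\subseteq\mc{P}'(\phi)$ for all $\phi$, the sharp bound $\phi^{+}_{\mc{P}}(M)\le\phi$ obtained by factoring the top $\mc{P}$-semistable factor of $M$ through $N[-1]$ and using Hom-vanishing between semistables of decreasing phase, the contradiction from the common central charge $Z$, and the dual argument for the piece of $\mc{P}'$-phase below $\phi$ are all valid. The paper gives no proof of its own here — Lemma \ref{stab_equi} is quoted from \cite{Bri07} — and your argument is essentially Bridgeland's: the only cosmetic difference is that he obtains the same bound $\phi^{+}_{\mc{P}}(M)\le\phi$ by applying extension-closure of $\mc{P}\bigl((-\infty,\phi]\bigr)$ to the rotated triangle $N[-1]\to M\to E$ rather than by your explicit factoring of the top Harder--Narasimhan factor.
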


\begin{defi}[Support property]
Let $\sigma = (Z,\mc{P})$ be a stability condition, by fixing a norm $\lVert\ \cdot\ \rVert$ on $K_0(\CD)_{\mb{R}} = K_0(\CD)\otimes_{\mb{Z}}\mb{R}$, $\sigma$ is said to have support property if there exists a constant $C>0$ such that 
$$
    \lVert E \rVert \leq C |Z(E)|
$$
for any stable object $E$.
\end{defi}
We denote  by $\Stab(\CD)$ the set of all stability conditions with the support property.
To define the topology on $\Stab(\CD)$, Bridgeland \cite{Bri07} introduced the following definitions:
\begin{defi}
Let $\sigma = (Z, \mc{P})\in \Stab(\CD)$. The function  $\lVert\ \cdot\ \rVert_{\sigma}: \Hom(K_0(\mc{D}), \mb{C}) \rightarrow [0,+\infty]$ is defined as
\[
    \lVert W \rVert_{\sigma} := \sup \left\{ \frac{|W(E)|}{|Z(E)|} : \text{ E semistable for } \sigma \right\}.
\]
\end{defi}
\begin{lem}[{\cite[Lemma 6.2]{Bri07}}]
For $\sigma = (Z,\mc{P})\in \Stab(\CD)$ and $0<\epsilon < \frac{1}{4}$ let
$$
    C_{\epsilon}(\sigma) := \left\{ \tau = (W, \mc{Q})\in \Stab(\CD): \lVert W - Z \rVert_{\sigma} < \sin(\pi\epsilon), d(\mc{P}, \mc{Q}) < \epsilon \right\}.
$$
Then by varying $\sigma$,  we get a basis for the topology of $\Stab(\CD)$. 
\end{lem}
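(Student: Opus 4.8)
The plan is to verify that the collection $\mathcal{B} = \{ C_\epsilon(\sigma) \mid \sigma\in\Stab(\CD),\ 0<\epsilon<\tfrac14\}$ satisfies the two axioms of a basis, so that it generates, and is a basis for, the topology on $\Stab(\CD)$. The covering axiom is immediate: for any $\sigma = (Z,\mc{P})$ and admissible $\epsilon$, taking $\tau=\sigma$ gives $\lVert Z-Z\rVert_\sigma = 0 < \sin(\pi\epsilon)$ and $d(\mc{P},\mc{P})=0<\epsilon$, so $\sigma\in C_\epsilon(\sigma)$ and $\bigcup\mathcal{B}=\Stab(\CD)$. I would also record at the outset the monotonicity $C_\delta(\tau)\subseteq C_\epsilon(\tau)$ for $\delta\le\epsilon$, which holds because $\sin(\pi\,\cdot\,)$ is increasing on $(0,\tfrac14)$.

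The content is the intersection axiom, and I would reduce it to the single statement: for every $\tau\in C_\epsilon(\sigma)$ there is a $\delta>0$ with $C_\delta(\tau)\subseteq C_\epsilon(\sigma)$. Granting this, if $\tau\in C_{\epsilon_1}(\sigma_1)\cap C_{\epsilon_2}(\sigma_2)$ I pick $\delta_1,\delta_2$ accordingly, set $\delta=\min(\delta_1,\delta_2)$, and conclude $C_\delta(\tau)\subseteq C_{\epsilon_1}(\sigma_1)\cap C_{\epsilon_2}(\sigma_2)$ using monotonicity. So the task becomes: fix $\tau=(W,\mc{Q})\in C_\epsilon(\sigma)$, take an arbitrary $\rho=(U,\mc{R})\in C_\delta(\tau)$, and verify the two defining inequalities of $C_\epsilon(\sigma)$ for $\rho$.

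The slicing inequality is the easy half. Since $d$ is a generalized metric on $\mr{Slice}(\CD)$ (Definition \ref{defi:slice_metric}) it obeys the triangle inequality, and $\tau\in C_\epsilon(\sigma)$ supplies strict slack $\eta:=\epsilon-d(\mc{P},\mc{Q})>0$; choosing $\delta<\eta$ yields $d(\mc{P},\mc{R})\le d(\mc{P},\mc{Q})+d(\mc{Q},\mc{R})<\epsilon$. For the central-charge inequality I write $\lVert U-Z\rVert_\sigma \le \lVert U-W\rVert_\sigma + \lVert W-Z\rVert_\sigma$, subadditivity of $\lVert\,\cdot\,\rVert_\sigma$ being immediate from its definition as a supremum of seminorms; the second term is $<\sin(\pi\epsilon)$ with strict slack, so everything hinges on making $\lVert U-W\rVert_\sigma$ small. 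But the hypothesis $\rho\in C_\delta(\tau)$ only controls $\lVert U-W\rVert_\tau$, and $\lVert\,\cdot\,\rVert_\sigma$ and $\lVert\,\cdot\,\rVert_\tau$ are suprema over the different families of $\sigma$- and $\tau$-semistable objects.

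The main obstacle is thus the comparison of these two seminorms, which I would phrase as a constant $K=K(\epsilon)$, with $K\to 1$ as $\epsilon\to 0$, satisfying $\lVert V\rVert_\sigma\le K\,\lVert V\rVert_\tau$ for all $V\in\Hom(K_0(\CD),\mb{C})$ whenever $\tau\in C_\epsilon(\sigma)$. To establish it I take a $\sigma$-semistable $E$ and pass to its Harder--Narasimhan filtration for $\tau$, with $\tau$-semistable factors $A_j$. Then $|V(E)|\le\sum_j|V(A_j)|\le\lVert V\rVert_\tau\sum_j|W(A_j)|$, so it remains to bound the $\tau$-mass $\sum_j|W(A_j)|$ by a multiple of $|Z(E)|$. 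Here $d(\mc{P},\mc{Q})<\epsilon$ forces every factor phase $\phi_\tau(A_j)$ into $(\phi_\sigma(E)-\epsilon,\phi_\sigma(E)+\epsilon)$, since $E$ is $\sigma$-semistable; hence the $W(A_j)$ all lie within angle $\pi\epsilon$ of a common ray and cannot cancel, giving $\sum_j|W(A_j)|\le |W(E)|/\cos(\pi\epsilon)$, while $|W(E)|\le(1+\lVert W-Z\rVert_\sigma)|Z(E)|\le(1+\sin(\pi\epsilon))|Z(E)|$. This yields $K=(1+\sin\pi\epsilon)/\cos\pi\epsilon$ and uses the support property to guarantee finiteness of the suprema. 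Feeding $\lVert U-W\rVert_\sigma\le K\sin(\pi\delta)$ back in, I would choose $\delta$ small enough that $K\sin(\pi\delta)$ falls below the slack $\sin(\pi\epsilon)-\lVert W-Z\rVert_\sigma$, closing the argument. I expect the only genuine difficulty to be this phase--mass comparison, namely controlling how a Harder--Narasimhan factor's contribution degrades under the simultaneous triangle inequalities for phases and for the seminorm; the remaining steps are formal.
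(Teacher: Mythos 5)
Your proposal is correct, and it follows essentially the same route as the source this lemma is taken from: the paper itself states the result as a citation of \cite[Lemma 6.2]{Bri07} without proof, and Bridgeland's argument is precisely the one you reconstruct — reduce the basis axioms to producing, for each $\tau\in C_{\epsilon}(\sigma)$, a $\delta>0$ with $C_{\delta}(\tau)\subseteq C_{\epsilon}(\sigma)$, handle the slicing inequality by the triangle inequality for $d(-,-)$, and handle the central-charge inequality by comparing the seminorms $\lVert\,\cdot\,\rVert_{\sigma}$ and $\lVert\,\cdot\,\rVert_{\tau}$ via Harder--Narasimhan filtrations together with the non-cancellation estimate $\sum_j\lvert W(A_j)\rvert\le \lvert W(E)\rvert/\cos(\pi\epsilon)$ for central charges confined to a cone of half-angle $\pi\epsilon<\pi/4$. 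You correctly isolated the only non-formal step (the phase--mass comparison), and your constant $K=(1+\sin\pi\epsilon)/\cos\pi\epsilon$ together with the choice of $\delta$ below both $\epsilon-d(\mc{P},\mc{Q})$ and $\arcsin\bigl((\sin(\pi\epsilon)-\lVert W-Z\rVert_{\sigma})/K\bigr)/\pi$ closes the argument exactly as in the original proof.
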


The following is the main result of \cite{Bri07}. The idea is that if $\sigma = (Z,\CP)$ is a stability condition on $\CD$ and one deforms $Z$ to a new group homomorphism $W:K_0(\CD) \rightarrow \BC$ in such a way that the phase of each semistable object in $\sigma$ changes in a uniformly bounded way, then it is possible to define a new slicing $\mc{Q}(\psi)\subset \CD$ so that $(W,\mc{Q})$ is a stability condition on $\CD$.

\begin{thm}[Deformation of stability conditions] \label{stab_deform}
Let $\sigma = (Z, \mc{P})\in \Stab(\CD)$. Then there exists $0<\epsilon_0 <\frac{1}{8}$ such that for $0<\epsilon < \epsilon_0$ and $W\in \Hom(K_0(\CD), \mb{C})$ satisfying
$$
    |W(E)-Z(E)| < \sin(\pi \epsilon)|Z(E)|
$$
for any semistable object $E\in \CD$ with respect to $\sigma$,  there exists a unique stability condition $\tau = (W, \mc{P}')$ such that 
$$
    d(\mc{P},\mc{P}') < \epsilon.
$$
\end{thm}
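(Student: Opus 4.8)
The plan is to build the perturbed slicing $\mc{P}'$ directly out of the old slicing $\mc{P}$ together with the new central charge $W$, to verify the three axioms of Definition \ref{defi:slicing} by hand, and to let uniqueness follow almost for free. Indeed, suppose one such $\tau = (W,\mc{P}')$ with $d(\mc{P},\mc{P}')<\epsilon$ has been produced. If $\tau' = (W,\mc{P}'')$ is another candidate, then the triangle inequality for the generalized metric of Definition \ref{defi:slice_metric} gives $d(\mc{P}',\mc{P}'') \le d(\mc{P}',\mc{P}) + d(\mc{P},\mc{P}'') < 2\epsilon < \tfrac14 < 1$, so Lemma \ref{stab_equi} forces $\tau = \tau'$. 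Hence the whole content is the \emph{existence} of $\mc{P}'$.

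The first ingredient is an elementary geometric estimate: if $z,w\in\mb{C}$ with $r = |z| > 0$ satisfy $|w-z| < r\sin(\pi\epsilon)$ and $\epsilon<\tfrac12$, then $w\neq 0$ and the argument of $w$ differs from that of $z$ by strictly less than $\pi\epsilon$, since the two tangent lines from the origin to the circle of radius $r\sin(\pi\epsilon)$ centred at $z$ subtend a half-angle exactly $\arcsin(\sin\pi\epsilon) = \pi\epsilon$. Applying this with $z = Z(E)$ and $w = W(E)$ for each $\sigma$-semistable $E$ of phase $\phi$, I get that $W(E)$ is nonzero and that its argument, taken on the branch nearest $\phi$, lies in the open interval $(\phi-\epsilon,\phi+\epsilon)$. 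This is the mechanism by which the hypothesis $|W(E)-Z(E)|<\sin(\pi\epsilon)|Z(E)|$ translates into a uniform bound on how far phases move.

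Now I construct $\mc{P}'$. For $\psi\in\mb{R}$ set $I_\psi = (\psi-\epsilon,\psi+\epsilon)$, an interval of length $2\epsilon<\tfrac14<1$, so that $\mc{P}(I_\psi)$ is a finite-length quasi-abelian category. Every nonzero object of $\mc{P}(I_\psi)$ is an iterated extension of $\sigma$-semistable objects of old phase in $I_\psi$, each of which, by the estimate above, has $W$-phase in $(\psi-2\epsilon,\psi+2\epsilon)$, an interval of length $<\tfrac12$; hence $W$ sends the nonzero objects of $\mc{P}(I_\psi)$ into an open angular cone of width $<\pi$ and therefore restricts to a \emph{stability function} on $\mc{P}(I_\psi)$. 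I then define $\mc{P}'(\psi)$ to be the full subcategory of $W$-semistable objects of $W$-phase exactly $\psi$ inside $\mc{P}(I_\psi)$. The shift axiom $\mc{P}'(\psi+1)=\mc{P}'(\psi)[1]$ is immediate from $\mc{P}(I_{\psi+1})=\mc{P}(I_\psi)[1]$ and $W\circ[1]=-W$, and the orthogonality axiom splits into two cases: when $\psi_1-\psi_2\ge 2\epsilon$ the old phase windows of the two objects are disjoint, so $\Hom=0$ already for $\mc{P}$, while when $0<\psi_1-\psi_2<2\epsilon$ the two objects lie in a common interval category and vanishing follows from the standard fact that a stability function on a quasi-abelian category admits no maps from a higher phase to a strictly lower one.

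The crux, and the step I expect to be the genuine obstacle, is the third axiom: the Harder--Narasimhan decomposition for $\mc{P}'$, together with the bound $d(\mc{P},\mc{P}')<\epsilon$. The distance bound is built into the construction, since every $\mc{P}'$-semistable object lies in some $\mc{P}(I_\psi)$ and thus has old phases within $\epsilon$ of its new phase. For the decomposition I would argue as follows: a fixed nonzero $E$ has only finitely many $\sigma$-HN factors, all with old phases in the bounded window $[\phi^-(E),\phi^+(E)]$, so only finitely many intervals $I_\psi$ are relevant; within this bounded band I re-run the Harder--Narasimhan formalism in the quasi-abelian categories $\mc{P}(I_\psi)$, grouping the pieces by their new $W$-phase and using finiteness to force termination with factors of strictly decreasing new phase. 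The delicate points are that the quasi-abelian HN filtrations in overlapping interval categories are mutually compatible and glue to a single global filtration of $E$, and that no infinite descent occurs; this is precisely where the smallness $\epsilon<\tfrac18$ and the finite-type hypothesis on $\CD$ enter. Finally, the support property for $\tau$ is inherited from that of $\sigma$, as the defining inequality $\lVert E\rVert \le C|Z(E)|$ is stable under the controlled replacement of $Z$ by $W$ after enlarging $C$ by a bounded factor, so that $\tau\in\Stab(\CD)$.
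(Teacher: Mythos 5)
A preliminary remark: the paper itself gives no proof of Theorem \ref{stab_deform}; it is recalled verbatim from \cite{Bri07}, so the only meaningful comparison is with Bridgeland's original argument. Your outline follows the same strategy as his: the $\sin(\pi\epsilon)$ tangent-line estimate controlling how far phases move, the definition of $\mc{P}'(\psi)$ as the $W$-semistable objects of $W$-phase $\psi$ inside the interval category $\mc{P}\bigl((\psi-\epsilon,\psi+\epsilon)\bigr)$, the two-case verification of the orthogonality axiom, and uniqueness via the triangle inequality together with Lemma \ref{stab_equi} (this last point is correct and complete exactly as you state it).

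The gap is the one you flag yourself: the Harder--Narasimhan axiom for $\mc{P}'$ is asserted, not proved, and it is precisely the substance of the theorem --- Bridgeland devotes the bulk of his Section 7 to it. Concretely, two things are missing. First, you claim that $\mc{P}(I_\psi)$ is a \emph{finite-length} quasi-abelian category merely because the interval $I_\psi$ has length $2\epsilon<1$; that is not automatic for an arbitrary stability condition --- it is exactly the local-finiteness input, which in the present paper must be extracted from the support property built into the definition of $\Stab(\CD)$ (for instance, the support property forces the set of classes of $\sigma$-semistable objects with $Z$-values in a bounded region of a fixed cone to be finite, whence descending chains of strict subobjects in $\mc{P}(I_\psi)$ terminate). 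Without finite length, your ``re-run the HN formalism inside $\mc{P}(I_\psi)$'' step has no reason to terminate. Second, the gluing: for a general object $E$, the $\sigma$-HN factors $A_1,\dots,A_n$ have strictly decreasing old phases, but whenever $\phi_i-\phi_{i+1}<2\epsilon$ their new $W$-phases may interleave, and such overlaps can chain across several consecutive factors, so ``grouping the pieces by their new $W$-phase'' is not a well-defined operation on the old filtration. Bridgeland resolves this by an induction on the number of factors, re-filtering each overlapping cluster of consecutive pieces inside a single thin subcategory (kept thin by the choice $\epsilon_0<\tfrac{1}{8}$) where the finite-length HN theory applies, and then concatenating; this re-filtering argument is the missing idea. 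As it stands, your proposal establishes uniqueness and the easy axioms, but the existence of HN filtrations --- and hence of $\tau$ itself --- remains unproven.
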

\begin{cor}[{\cite[Theorem 1.2]{Bri07}}] \label{stab_mani}
Let $\CD$ be a triangulated category. For each connected component $\Sigma \subset \Stab(\CD)$ there are a linear subspace $V(\Sigma) \subset \Hom(K_0(\CD),\mb{C})$, with a well-defined linear topology, and a local homeomorphism $\mc{Z}: \Sigma \rightarrow V(\Sigma)$ which sends a stability condition to its central charge $Z$.
\end{cor}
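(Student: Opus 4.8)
The plan is to upgrade the local deformation statement of Theorem \ref{stab_deform} into the assertion that the forgetful map $\mc{Z}\colon (Z,\mc{P})\mapsto Z$ is a local homeomorphism onto a fixed linear subspace. Fix a stability condition $\sigma=(Z,\mc{P})\in\Sigma$ and, for $0<\epsilon<\epsilon_0$ with $\epsilon_0<\tfrac18$ as in Theorem \ref{stab_deform}, work with the basic neighbourhood $C_\epsilon(\sigma)$. I would first record that the function $\lVert\,\cdot\,\rVert_\sigma$ is genuinely a norm: if $\lVert W\rVert_\sigma=0$ then $W$ vanishes on every semistable class, and since the Harder-Narasimhan filtrations show that the semistable classes span $K_0(\CD)$, this forces $W=0$. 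Hence $V_\sigma:=\{W:\lVert W\rVert_\sigma<\infty\}$ is a linear subspace of $\Hom(K_0(\CD),\mb{C})$ carrying the norm topology.

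For local injectivity, suppose $\tau_1=(W,\mc{Q}_1)$ and $\tau_2=(W,\mc{Q}_2)$ both lie in $C_\epsilon(\sigma)$ and share the central charge $W$. Then $d(\mc{Q}_1,\mc{P})<\epsilon$ and $d(\mc{Q}_2,\mc{P})<\epsilon$, so the triangle inequality gives $d(\mc{Q}_1,\mc{Q}_2)<2\epsilon<1$, and Lemma \ref{stab_equi} forces $\tau_1=\tau_2$. For local surjectivity, I would observe that $\lVert W-Z\rVert_\sigma<\sin(\pi\epsilon)$ implies $|W(E)-Z(E)|<\sin(\pi\epsilon)|Z(E)|$ for every $\sigma$-semistable $E$, so Theorem \ref{stab_deform} produces a unique stability condition $(W,\mc{P}')$ with $d(\mc{P},\mc{P}')<\epsilon$, that is, a point of $C_\epsilon(\sigma)$. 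Combined with the definition of $C_\epsilon(\sigma)$, this shows $\mc{Z}$ maps $C_\epsilon(\sigma)$ bijectively onto the open ball $\{W:\lVert W-Z\rVert_\sigma<\sin(\pi\epsilon)\}$ of $V_\sigma$. Continuity of $\mc{Z}$ is immediate from the description of the topological basis $C_\epsilon(\sigma)$, and continuity of the inverse follows from the uniqueness in Theorem \ref{stab_deform}, which pins the slicing down within distance $\epsilon$; hence $\mc{Z}$ is a homeomorphism onto this ball.

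It remains to assemble these local models into a single target $V(\Sigma)$ with a well-defined topology, and this is the step I expect to be the main obstacle, since Theorem \ref{stab_deform} already supplies the local structure. The normed space $(V_\sigma,\lVert\,\cdot\,\rVert_\sigma)$ a priori depends on $\sigma$, so I would prove that whenever $\tau\in C_\epsilon(\sigma)$ the two norms $\lVert\,\cdot\,\rVert_\sigma$ and $\lVert\,\cdot\,\rVert_\tau$ are equivalent, with constants depending only on $\epsilon$; this is obtained by comparing the ratios $|W(E)|/|Z(E)|$ over $\sigma$-semistable and $\tau$-semistable objects, using that the slicings of $\sigma$ and $\tau$ are within distance $\epsilon$, so their (semi)stable phases, and hence the relevant central-charge arguments, differ in a controlled way. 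Equivalent norms have the same finiteness locus and the same topology, so the pair $(V_\sigma,\lVert\,\cdot\,\rVert_\sigma)$ is locally constant on $\Stab(\CD)$; by connectedness of $\Sigma$ together with a covering argument it is constant there, which defines $V(\Sigma)$ with its linear topology. Finally, covering $\Sigma$ by the neighbourhoods $C_\epsilon(\sigma)$ and using that on each of them $\mc{Z}$ is a homeomorphism onto an open ball of $V(\Sigma)$, I conclude that $\mc{Z}\colon\Sigma\to V(\Sigma)$ is a local homeomorphism, as required.
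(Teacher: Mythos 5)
Your proposal is correct and follows essentially the same route as the proof the paper implicitly relies on: the paper gives no argument of its own for Corollary \ref{stab_mani}, simply presenting it as a consequence of Theorem \ref{stab_deform} with a citation to \cite[Theorem 1.2]{Bri07}, and your deduction — local injectivity on $C_{\epsilon}(\sigma)$ from Lemma \ref{stab_equi} via the triangle inequality for $d$, local surjectivity onto the $\lVert\,\cdot\,\rVert_{\sigma}$-ball from Theorem \ref{stab_deform}, and patching the subspaces $V_{\sigma}$ over the connected component by norm equivalence plus connectedness — is exactly Bridgeland's original argument. The one step you compress, the equivalence of $\lVert\,\cdot\,\rVert_{\sigma}$ and $\lVert\,\cdot\,\rVert_{\tau}$ for $\tau\in C_{\epsilon}(\sigma)$, is Bridgeland's Lemma 6.2 and does require both the slicing bound and the central-charge bound in the definition of $C_{\epsilon}(\sigma)$ (one compares $|Z(E)|$ with $\sum_i|Z(A_i)|$ over the $\sigma$-Harder--Narasimhan factors $A_i$ of a $\tau$-semistable $E$, whose phases lie in a sector of angle less than $\pi/2$), but your sketch identifies precisely this mechanism, so the approach is sound.
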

 Since $K_0(\CD)$ might have infinite rank, in practice we usually assume there is a quotient group $\mc{N}$ of finite rank, and the quotient map is denoted by $\mu: K_0(\CD) \rightarrow \mc{N}$. Then let $\Stab_{\mc{N}}(\CD)$ be the subspace of $\Stab(\CD)$ consisting of stability conditions whose central charges $Z: K_0(\CD)\rightarrow \mb{C}$ factor through $\mc{N}$. Then the following result is an immediate consequence of Corollary \ref{stab_mani}.
\begin{cor}[{\cite[Corollary 1.3]{Bri07}}]\label{cor:stab_mani_2}
    For each connected component $\Sigma \subset \Stab_{\mc{N}}(\CD)$ there are a linear subspace $V(\Sigma) \subset \Hom(\mc{N},\mb{C})$, and a local homeomorphism $\mc{Z}: \Sigma \rightarrow V(\Sigma)$ which sends a stability condition to its central charge $Z$. In particular, $\Sigma$ is a finite-dimensional complex manifold.
\end{cor}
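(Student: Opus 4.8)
The plan is to deduce the corollary formally from Corollary \ref{stab_mani} by exhibiting $\Stab_{\mc{N}}(\CD)$ as the preimage of a linear subspace under the global forgetful map, and then to extract finiteness of dimension from the finite rank of $\mc{N}$. First I would record the linear-algebraic input. Since the quotient map $\mu: K_0(\CD)\to\mc{N}$ is surjective, precomposition $\iota := \mu^*:\Hom(\mc{N},\mb{C})\to\Hom(K_0(\CD),\mb{C})$ is injective, and its image $W:=\iota\bigl(\Hom(\mc{N},\mb{C})\bigr)$ is exactly the set of homomorphisms $K_0(\CD)\to\mb{C}$ that factor through $\mc{N}$. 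By the very definition of $\Stab_{\mc{N}}(\CD)$, a stability condition $(Z,\mc{P})$ lies in $\Stab_{\mc{N}}(\CD)$ if and only if $Z\in W$; that is, $\Stab_{\mc{N}}(\CD)=\mc{Z}^{-1}(W)$ for the forgetful map $\mc{Z}:\Stab(\CD)\to\Hom(K_0(\CD),\mb{C})$.

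Next I would show that the local homeomorphism of Corollary \ref{stab_mani} restricts over $W$. Fix a connected component $\Sigma\subset\Stab_{\mc{N}}(\CD)$; being connected, $\Sigma$ is contained in a single connected component $\Sigma'$ of $\Stab(\CD)$, on which Corollary \ref{stab_mani} provides a local homeomorphism $\mc{Z}:\Sigma'\to V(\Sigma')$ onto an open subset of the linear subspace $V(\Sigma')$. For $\sigma\in\Sigma$ choose an open neighborhood $U\subset\Sigma'$ on which $\mc{Z}$ restricts to a homeomorphism onto an open subset of $V(\Sigma')$. Then
\[
    U\cap\Stab_{\mc{N}}(\CD)=U\cap\mc{Z}^{-1}(W)=(\mc{Z}|_U)^{-1}\bigl(\mc{Z}(U)\cap W\bigr)
\]
maps homeomorphically onto $\mc{Z}(U)\cap W$, which is open in the linear subspace $V(\Sigma')\cap W$. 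Since a homeomorphism carries the relative topology to the relative topology, this exhibits $\mc{Z}$ as a local homeomorphism from $\Stab_{\mc{N}}(\CD)$ onto $V(\Sigma')\cap W$. Setting $V(\Sigma):=\iota^{-1}\bigl(V(\Sigma')\cap W\bigr)\subset\Hom(\mc{N},\mb{C})$ then gives the desired local homeomorphism $\mc{Z}:\Sigma\to V(\Sigma)$.

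Finally I would deduce finite-dimensionality. Because $\mc{N}$ has finite rank $r$, the space $\Hom(\mc{N},\mb{C})\cong\mb{C}^{r}$ is a finite-dimensional complex vector space, so $V(\Sigma)$ is finite-dimensional; being locally homeomorphic to $V(\Sigma)$, $\Sigma$ is a finite-dimensional complex manifold, the holomorphic transition maps being inherited from the complex structure already present on $\Stab(\CD)$ through Corollary \ref{stab_mani} and the deformation Theorem \ref{stab_deform}. The one point that requires genuine care — and which I regard as the main obstacle — is the compatibility of topologies: the linear topology on $V(\Sigma')\subset\Hom(K_0(\CD),\mb{C})$ is a priori the (possibly infinite-dimensional) topology generated by the seminorms $\lVert\,\cdot\,\rVert_{\sigma}$, and I must check that its restriction to the finite-dimensional subspace $W$ agrees with the standard topology on $\mb{C}^{r}$. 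This holds because every Hausdorff vector-space topology on a finite-dimensional complex vector space coincides with the Euclidean one, so the identification of $V(\Sigma)$ with a subspace of $\mb{C}^{r}$ is a homeomorphism and the resulting chart is compatible with the complex structure; once this is verified the remaining assertions are immediate.
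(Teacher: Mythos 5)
Your proposal is correct and takes essentially the same route as the paper: the paper gives no separate argument, stating the result as an immediate consequence of Corollary \ref{stab_mani} (citing [Bri07, Corollary 1.3]), and your proof is precisely the fleshing-out of that restriction argument, pulling the local homeomorphism back over the subspace $W$ of central charges factoring through $\mc{N}$. Your identification of the genuinely delicate point — that the linear topology on $V(\Sigma')$ restricts on the finite-dimensional subspace $V(\Sigma')\cap W$ to the standard one — is handled correctly, since that topology is a norm topology and all norms on a finite-dimensional space are equivalent.
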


We recall some group actions on the space of stability conditions. Let $\widetilde{\mr{GL}}^+(2,\mb{R})$ be the universal covering of $\mr{GL}^+(2,\mb{R})$. Note that an element in $\widetilde{\mr{GL}}^+(2,\mb{R})$ can be viewed as a pair $(g,f)$ where $g\in \mr{GL}^+(2,\mb{R})$ and $f:\mb{R}\rightarrow \mb{R}$ is an increasing map with $f(\phi+1) = f(\phi) +1$, such that $g$ and $f$ induce the same action on the circle $S^1 = \{e^{i\pi\phi} : \phi \in \mb{R}\} =(\mb{R}^2\setminus\{0\})/\mb{R}_{>0}$.
\begin{defi}\label{defi:group_act}
    The space of stability conditions carries a right action by $\widetilde{\mr{GL}}^+(2,\mb{R})$. For $\widetilde{g} = (g,f)\in \widetilde{\mr{GL}}^+(2,\mb{R})$ and $\sigma = (Z,\mc{P}) \in \Stab(\CD)$, then $\sigma\cdot \widetilde{g} = (Z_g, \mc{P}_f)$ where for $[E] \in K_0(\CD)$
    $$
        Z_g(E) = g^{-1}Z(E), \quad \mc{P}_f(\phi) = \mc{P}\left(f(\phi)\right).
    $$ 
    The space of stability conditions also carries a left action by $\Aut(\CD)$. For $T\in \Aut(D)$,  denote by $t$ the automorphism of $K_0(\CD)$ induced by $T$, then $T(\sigma) = (Z_t, \mc{P}_T)$ where for $[E]\in K_0(\CD)$
    $$
        Z_t(E) = Z(t^{-1}E), \quad \mc{P}_T(\phi) = T\left(\mc{P}(\phi)\right).
    $$
\end{defi}
\begin{rem}\label{rem:C_act}
    From the definition of $\widetilde{\mr{GL}}^+(2,\mb{R})$-action, $\sigma$ and $\sigma\cdot\widetilde{g}$ have the same set of semistable objects, but the phases have been relabelled. In particular, note that the additive group $\mb{C}$ acts on $\Stab(\CD)$, via the embedding $\mb{C}\xhookrightarrow{} \widetilde{\mr{GL}}^+(2,\mb{R})$: an element $\lambda \in\mb{C}$ acts by
    \[
        \lambda: (Z,\mc{P}) \mapsto (Z',\mc{P}'),\quad Z'(E) = e^{-i\pi\lambda} \cdot Z(E), \quad \mc{P}'(\phi)  = \mc{P}\left(\phi+\mr{Re}(\lambda)\right).
    \]
\end{rem}

In the end of this subsection, we recall the following important lemma:
\begin{lem}[{\cite[Proposition 7.6]{BI15}}]\label{stab_sstable}
    Fix $0\neq E\in D$. Then 
    \begin{enumerate}
        \item the set of stability conditions $\sigma \in \Stab(\CD)$ for which $E$ is $\sigma$-stable is open;
        \item the set of stability conditions $\sigma \in \Stab(\CD)$ for which $E$ is $\sigma$-semistable is closed.
    \end{enumerate}
\end{lem}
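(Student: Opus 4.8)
The plan is to reduce both assertions to the continuity of the Harder--Narasimhan phase functions, supplemented in the stable case by the finiteness forced by the support property. Fix $0\neq E\in\CD$, and for $\sigma=(Z,\mc{P})$ write $\phi^{+}_{\sigma}(E)$ and $\phi^{-}_{\sigma}(E)$ for the top and bottom phases occurring in the slicing decomposition of $E$ (Definition \ref{defi:slicing}). First I would observe that the forgetful map $\sigma\mapsto\mc{P}_{\sigma}$ is continuous for the generalized metric $d$ of Definition \ref{defi:slice_metric}: this is built into the topology, since each basic open set $C_{\epsilon}(\sigma)$ imposes $d(\mc{P},\mc{Q})<\epsilon$. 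As $d$ is by definition a supremum that includes our fixed $E$, we get $|\phi^{\pm}_{\sigma}(E)-\phi^{\pm}_{\tau}(E)|\le d(\mc{P}_{\sigma},\mc{P}_{\tau})$, so the two functions $\sigma\mapsto\phi^{\pm}_{\sigma}(E)$ are continuous (indeed $1$-Lipschitz) on $\Stab(\CD)$. Part (2) then follows at once: $E$ is $\sigma$-semistable exactly when its slicing decomposition is trivial, i.e.\ when $\phi^{+}_{\sigma}(E)=\phi^{-}_{\sigma}(E)$, and the continuous nonnegative function $\sigma\mapsto\phi^{+}_{\sigma}(E)-\phi^{-}_{\sigma}(E)$ cuts out this locus as a closed set.

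For part (1) I would argue by contradiction, assuming $E$ is $\sigma$-stable of phase $\phi_0:=\phi_{\sigma}(E)$ while some sequence $\tau_m=(W_m,\mc{Q}_m)\to\sigma$ has $E$ not $\tau_m$-stable. By the continuity just established, $\phi^{\pm}_{\tau_m}(E)\to\phi_0$, so for large $m$ all $\tau_m$-phases of $E$ lie in a short interval $J$ about $\phi_0$ of length $<1$; hence $E$ sits in the abelian category $\mc{Q}_m(J)$ and honest subobjects make sense. Non-stability then yields a proper nonzero subobject $A_m\subsetneq E$ with $\phi^{-}_{\tau_m}(A_m)\ge\phi^{-}_{\tau_m}(E)$ --- the leading HN factor if $E$ is $\tau_m$-unstable, a same-phase subobject if $E$ is strictly $\tau_m$-semistable. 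Since $A_m$ and $E/A_m$ have all their phases in $J$, their central charges lie in a narrow convex cone about the ray $\mb{R}_{>0}\,e^{i\pi\phi_0}$ and cannot substantially cancel, so $|W_m(A_m)|$ and the total $\tau_m$-mass of $A_m$ are bounded by a fixed multiple of $|W_m(E)|\to|Z(E)|$. Feeding this into the support property (whose constant I would first make uniform on a neighborhood of $\sigma$) bounds the norm of $A_m$ uniformly; as the relevant classes lie in the fixed finite-rank lattice $\mc{N}$ through which all the $W_m$ factor (Corollary \ref{cor:stab_mani_2}), only finitely many classes occur, and after passing to a subsequence $[A_m]=v$ is constant with $v\neq0$ and $[E]-v\neq0$.

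It remains to produce a contradiction at $\sigma$ itself. Because $d(\mc{P}_{\sigma},\mc{Q}_m)\to0$, the $\sigma$-phases of $A_m$ and of $B_m:=E/A_m$ also cluster in a small interval about $\phi_0$, and passing to the limit the estimate $\phi^{-}_{\tau_m}(A_m)\ge\phi^{-}_{\tau_m}(E)\to\phi_0$ together with $W_m(v)\to Z(v)$ forces $\phi_{\sigma}(v)\ge\phi_0$ for the limiting class $v$. The hard part will be upgrading this limiting \emph{class}, realised as $\tau_m$-subobjects whose $\sigma$-phases concentrate at $\phi_0$, into an actual $\sigma$-destabilising subobject or quotient of $E$; a priori a new subobject could appear only in the limit, and the inclusions $A_m\hookrightarrow E$ live in the varying hearts $\mc{Q}_m(J)$. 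I would control this with the discreteness already in hand: for a fixed class and bounded mass there are only finitely many $\sigma$-semistable objects (support property together with finiteness of the stable factors), which lets me factor the maps $A_m\to E$, along the subsequence, through a single nonzero morphism $A\to E$ in $\CD$ with $[A]=v$, and then compare $\sigma$-phases on its image. Since $E$ is simple in $\mc{P}(\phi_0)$ and Homs from higher to lower phase vanish, the resulting morphism must exhibit either a proper $\sigma$-subobject of $E$ of phase $\ge\phi_0$ or a proper $\sigma$-quotient of phase $\le\phi_0$, both forbidden by stability. This passage from limiting numerical data to an honest destabiliser at $\sigma$ is the crux of the argument; everything preceding it is continuity of phases and the finiteness furnished by the support property.
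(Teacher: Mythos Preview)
The paper does not prove this lemma; it simply quotes \cite[Proposition~7.6]{BI15}. So there is no ``paper's proof'' to compare against, and your attempt should be judged on its own merits.

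Your argument for part~(2) is correct and is the standard one: semistability of $E$ is exactly the vanishing locus of the continuous function $\sigma\mapsto\phi^{+}_{\sigma}(E)-\phi^{-}_{\sigma}(E)$.

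For part~(1), the reduction to finitely many classes $v=[A_m]$ via the support property is right, and you correctly flag the passage to a limiting destabiliser as the crux. But the sketch you give to fill it does not work. The assertion that ``for a fixed class and bounded mass there are only finitely many $\sigma$-semistable objects'' is false in general: already on $D^b(\mb{P}^1)$ the skyscraper sheaves $\co_x$ give a continuous family of stable objects of a single class. Hence there is no reason the inclusions $A_m\hookrightarrow E$ should factor through one common object, and your extraction of a single morphism $A\to E$ is unjustified.

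The usual fix avoids limits entirely. Once $d(\mc{P},\mc{Q}_m)$ is small, the triangle $A_m\to E\to B_m$ is already a short exact sequence in the \emph{fixed} abelian category $\mc{P}\bigl((\phi_0-\tfrac12,\phi_0+\tfrac12]\bigr)$, and $\sigma$-stability of $E$ forces $\phi^{+}_{\sigma}(A_m)\le\phi_0\le\phi^{-}_{\sigma}(B_m)$ for every such sequence. One then argues class by class over the finite set of possible $v$: if $Z(v)$ does not lie on the ray $\mb{R}_{>0}\cdot Z([E])$, the phase inequality defining destabilisation fails for all $\tau$ close to $\sigma$; if it does lie on that ray, one shows directly that no such short exact sequence can satisfy the destabilising inequality near $\sigma$, using that the relevant $\sigma$-phases of the HN pieces are drawn from a finite set determined by the finitely many classes. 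This is the line taken in \cite{BI15}. Your sequential approach is not wrong in spirit, but the compactness input you need is finiteness of \emph{classes}, not of objects.
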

\begin{defi}
    Let $\ca$ be the heart of  a bounded t-structure in $\CD$ which is of finite-length. Then the subset of stability conditions $\mc{U}(\ca)\subset \Stab(\CD)$  is defined to be
    \[
    \mc{U}(\ca) = \bigl\{\sigma= (Z,\mc{P})\vert \mc{P}\bigl((0,1]\bigr) = \ca \bigr\}.
    \]
\end{defi}
The relation between simple tilts and stability conditions is the following:
\begin{lem}[{\cite[Lemma 5.5]{Bri06}}]\label{lem:stab_tilt}
Suppose $\ca$ is of finite-length. Let $\sigma = (\mc{Z},\mc{P}) \in \overline{\mc{U}}(\ca)$ the closure of $\mc{U}(\ca)$. Suppose that $Z(S_i)\in \mb{R}_{<0}$ for some $i$, also $\mr{Im}Z(S_j)>0$ for $j\neq i$, and $R_{S_i}\ca$ is finite length, then there is an open neighborhood  $V$ of $\sigma$ such that $V \subset \mc{U}(\ca)\cup \mc{U}(R_{S_i}\ca)$. Similarly suppose $Z(S_i)\in \mb{R}_{>0}$ for some $i$ and $\mr{Im}Z(S_j)>0$ for $j\neq i$, and $L_{S_i}\ca$ is finite length, then there is an open neighborhood $V'$ of $\sigma$ such that $V' \subset \mc{U}(\ca)\cup \mc{U}(L_{S_i}\ca)$.
\end{lem}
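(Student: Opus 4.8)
The plan is to take $V$ to be a basic metric ball $C_\epsilon(\sigma)$ for suitably small $\epsilon$ and to show, for each $\tau=(W,\mc{Q})\in V$, that $\tau$ coincides with an explicitly built algebraic stability condition whose heart is either $\ca$ or $R_{S_i}\ca$. The two tools I rely on are Lemma \ref{stab_equi} (two stability conditions with the same central charge whose slicings are within distance $1$ are equal) and the fact that a finite-length heart equipped with a central charge sending all its simples into $H$ is automatically a stability condition. Since $\ca$ is finite-length I may fix its finite set of simples $S_1,\dots,S_n$, and the hypothesis that $R_{S_i}\ca$ is finite-length will play the same role for the tilted heart.

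First I record the picture at $\sigma$. Because $Z(S_i)\in\mb{R}_{<0}$ has phase $1$ while each $Z(S_j)$, $j\neq i$, has positive imaginary part, all simples lie in $H$, so $\sigma=(Z,\ca)$ itself has heart $\ca$ and sits on the wall where $S_i$ attains phase exactly $1$. Each $S_k$ is $\sigma$-semistable, so for $\tau\in C_\epsilon(\sigma)$ the bound $\lVert W-Z\rVert_\sigma<\sin(\pi\epsilon)$ forces $W(S_k)$ to be close to $Z(S_k)$; shrinking $\epsilon$ I may assume $\mr{Im}\,W(S_j)>0$ for all $j\neq i$ and that $W(S_i)$ lies in a small disc about the negative real axis. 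The argument then splits on the sign of $\mr{Im}\,W(S_i)$, and for $W$ near $Z$ these two cases are exhaustive.

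If $\mr{Im}\,W(S_i)\geq 0$ then $W(S_i)\in H$, so $W$ is a stability function on the finite-length heart $\ca$ and $\tau'=(W,\ca)\in\mc{U}(\ca)$ is a genuine stability condition; its slicing depends continuously on $W$ inside $\mc{U}(\ca)$, hence lies within distance $<1$ of $\mc{P}$, and Lemma \ref{stab_equi} gives $\tau=\tau'\in\mc{U}(\ca)$. If instead $\mr{Im}\,W(S_i)<0$, I pass to $R_{S_i}\ca$, whose simples are $S_i[-1]$ and $\phi_{S_i}(S_j)$ for $j\neq i$ by Proposition \ref{sim_tilt}, with classes $-[S_i]$ and $[S_j]+\dim\Ext^1(S_i,S_j)\,[S_i]$ in $K_0$. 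Then $W(S_i[-1])=-W(S_i)$ lies in the open first quadrant, and $W(\phi_{S_i}(S_j))=W(S_j)+\dim\Ext^1(S_i,S_j)\,W(S_i)$ still has positive imaginary part, so every simple of $R_{S_i}\ca$ maps into $H$; using that $R_{S_i}\ca$ is finite-length, $\tau''=(W,R_{S_i}\ca)\in\mc{U}(R_{S_i}\ca)$, and the same closeness-plus-Lemma \ref{stab_equi} argument yields $\tau=\tau''$. Hence $V=C_\epsilon(\sigma)\subset\mc{U}(\ca)\cup\mc{U}(R_{S_i}\ca)$. The left-tilt statement is proved identically, exchanging $R_{S_i}$ with $L_{S_i}$ and $\mb{R}_{<0}$ with $\mb{R}_{>0}$.

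The main obstacle is the slicing-continuity input feeding Lemma \ref{stab_equi}: I must know that the slicing of the constructed $(W,\ca)$, and especially of $(W,R_{S_i}\ca)$, approaches $\mc{P}$ as $W\to Z$. On the $\ca$-side this is continuity within $\mc{U}(\ca)$; the delicate point is the $R_{S_i}\ca$-side, where $\sigma$ is only a boundary point of $\mc{U}(R_{S_i}\ca)$ because $S_i[-1]$ degenerates to phase $0$ there. One must therefore check that the slicings on $\mc{U}(R_{S_i}\ca)$ extend continuously up to $\sigma$ and agree with $\mc{P}$, i.e.\ that $\sigma$ is genuinely the common boundary wall of $\mc{U}(\ca)$ and $\mc{U}(R_{S_i}\ca)$; this is exactly where the finite-length hypothesis on $R_{S_i}\ca$ is indispensable.
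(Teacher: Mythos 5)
Your overall architecture (the case split on the sign of $\mr{Im}\,W(S_i)$, sending one case to $\ca$ and the other to $R_{S_i}\ca$) matches the paper's, but the step that actually identifies the given nearby stability condition $\tau=(W,\mc{Q})$ with your constructed algebraic one is missing, and it is the real content of the lemma. Checking that $W$ maps the simples of $\ca$ (resp.\ of $R_{S_i}\ca$) into $H$ only produces \emph{some} point $\tau'=(W,\mc{P}')$ of $\mc{U}(\ca)$ (resp.\ $\tau''=(W,\mc{P}'')$ of $\mc{U}(R_{S_i}\ca)$) with the same central charge as $\tau$; to conclude $\tau=\tau'$ or $\tau=\tau''$ from Lemma \ref{stab_equi} you must bound $d(\mc{Q},\mc{P}')$, resp.\ $d(\mc{Q},\mc{P}'')$, strictly below $1$. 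Since all you know about $\mc{Q}$ is that $d(\mc{P},\mc{Q})<\epsilon$, this amounts to bounding $d(\mc{P},\mc{P}'')$ by $1-\epsilon$, i.e.\ to controlling the slicing attached to the heart $R_{S_i}\ca$ as its central charge degenerates to the boundary of $\mc{U}(R_{S_i}\ca)$ --- which is essentially the assertion being proved. Your last paragraph concedes exactly this (``one must therefore check that the slicings on $\mc{U}(R_{S_i}\ca)$ extend continuously up to $\sigma$'') without supplying the check, so the argument is incomplete, indeed circular, at its key step. Note also that your appeal to Proposition \ref{sim_tilt} silently assumes $\Ext^1_{\ca}(S_i,S_i)=0$, which the lemma does not hypothesize (though it holds in the paper's application).

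The paper's own argument (written out for the analogous Lemma \ref{lem:boundary}, and asserted there to be identical to the proof of this lemma) bypasses Lemma \ref{stab_equi} entirely by comparing \emph{hearts} rather than estimating distances between slicings. One first locates the relevant objects inside the slicing $\mc{P}$ at $\sigma$: since $\mr{Im}\,Z(S_j)>0$ for $j\neq i$, one has $\mc{P}(1)=\langle S_i\rangle$, and $\Hom(S_i,\phi_{S_i}(S_j))=0$ shows each universal extension $\phi_{S_i}(S_j)$ lies in $\mc{P}\bigl((0,1)\bigr)$ with phases bounded away from $0$ and $1$; moreover the $S_k$ are $\sigma$-stable, hence by Lemma \ref{stab_sstable} remain $\tau$-stable for all $\tau$ in a small neighbourhood, with phases moving by less than $\epsilon$. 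If $\mr{Im}\,W(S_i)\geq 0$ the $\tau$-phase of $S_i$ stays in $(1-\epsilon,1]$, so every simple of $\ca$ lies in $\mc{Q}\bigl((0,1]\bigr)$; if $\mr{Im}\,W(S_i)<0$ then $S_i[-1]\in\mc{Q}\bigl((0,\epsilon)\bigr)$ and every simple of $R_{S_i}\ca$ lies in $\mc{Q}\bigl((0,1]\bigr)$. Because the heart in question is of finite length, it is the extension closure of its finitely many simples, hence is contained in $\mc{Q}\bigl((0,1]\bigr)$, and Lemma \ref{nest_t} upgrades this containment to equality of hearts; since a stability condition is determined by its heart together with its central charge, $\tau$ lies in $\mc{U}(\ca)$, resp.\ $\mc{U}(R_{S_i}\ca)$. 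This is where the finite-length hypothesis on $R_{S_i}\ca$ enters --- not through any continuity of slicings. If you insist on your route, one can in fact prove $d(\mc{P},\mc{P}'')<1$ using that $S_i^{\perp}\subset\mc{P}\bigl([m,1-c]\bigr)$ and $S_i[-1]\in\mc{P}(0)$ for constants $0<m\leq 1-c<1$, but establishing those containments is exactly the phase computation above, so the detour through Lemma \ref{stab_equi} adds work without buying anything.
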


\section{Quiver symmetry and autoequivalence} \label{sec:quiv_symm_aut}
In this section, we give explicit construction of the autoequivalence in the introduction. 
\subsection{Quiver}
There is a full and strong exceptional collection on $\ff = \pp$:
\[
    \mb{E} = \left(\co(0,0), \co(1,0),  \co(1,1), \co(2,1)\right),
\]
which has the dual collection
\[
    \mb{F} = \left(\co(0,0), \co(-1,0)[1],\co(1,-1)[1],\co(0,-1)[2]\right).
\]
As in the introduction, we denote by $\pi: X \rightarrow \ff$ the bundle projection map, $p_i: \pp \rightarrow \mb{P}^1$, $i=1,\ 2$ the projection maps to each component. We denote by $s: \ff \hookrightarrow X$ the embedding map  of the zero section.

\begin{lem} \label{lem:van_tilt_f0}
The pull back $\mc{Q}= \bigoplus_i \pi^*E_i$ is a tilting object in $D^b(X)$.
\end{lem}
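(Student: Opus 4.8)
The plan is to verify the two defining properties of a tilting object for $\mc{Q}$: the vanishing $\RHom_X(\mc{Q},\mc{Q}[n])=0$ for $n\neq 0$, and that $\mc{Q}$ classically generates $D^b(X)$. Both reduce to the base $\ff$ through the projection $\pi$, whose essential feature is that it is \emph{affine}. Indeed, since $X=\Tot\,\omega_{\pp}=\Spec_{\ff}\bigl(\sym^{\bullet}\omega_{\pp}^{\vee}\bigr)$ and $\omega_{\pp}^{\vee}=\co(2,2)$, we have
\[
    R\pi_{*}\co_X=\pi_{*}\co_X=\bigoplus_{k\geq 0}\co(2k,2k),
\]
concentrated in cohomological degree $0$. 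This computation is the one fact driving everything below.

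For the vanishing, the $\pi^*E_i$ are line bundles, so the local Hom-sheaf is invertible and the projection formula together with the displayed pushforward yields
\[
    \RHom_X(\pi^*E_i,\pi^*E_j)\;\cong\;\bigoplus_{k\geq 0}\RHom_{\ff}\bigl(E_i,\,E_j\otimes\co(2k,2k)\bigr).
\]
The $k=0$ summand has no cohomology outside degree $0$ because $\mb{E}$ is a \emph{strong} exceptional collection. For $k\geq 1$ I would argue directly: each $E_i^{\vee}\otimes E_j$ is a line bundle $\co(a,b)$ with $a\geq -2$ and $b\geq -1$ (the extreme case being $E_3^{\vee}\otimes E_0=\co(-2,-1)$), so after twisting by $\co(2k,2k)$ with $k\geq 1$ both bidegrees become $\geq 0$. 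Since $\HO^{>0}(\mb{P}^1,\co(m))=0$ as soon as $m\geq -1$, the Künneth formula on $\pp$ gives $\HO^{>0}\bigl(\ff,\,E_i^{\vee}\otimes E_j\otimes\co(2k,2k)\bigr)=0$ for every $k\geq 1$. As negative $\Ext$'s between honest sheaves vanish automatically, this settles property~(1).

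For generation I would use an orthogonality argument in $D_{\mathrm{qc}}(X)$. Suppose $F\in D_{\mathrm{qc}}(X)$ satisfies $\RHom_X(\pi^*E_i,F)=0$ for all $i$. By the $(L\pi^*,R\pi_*)$-adjunction this reads $\RHom_{\ff}(E_i,R\pi_*F)=0$ for all $i$; since $\mb{E}$ is \emph{full}, $\bigoplus_i E_i$ is a compact generator of $D_{\mathrm{qc}}(\ff)$, whence $R\pi_*F=0$. Because $\pi$ is affine, $\pi_*$ is exact and faithful on quasi-coherent sheaves, so $R\pi_*F=\pi_*F$ and the vanishing gives $\pi_*\HO^n(F)=\HO^n(\pi_*F)=0$, hence $\HO^n(F)=0$ for all $n$ by faithfulness; thus $F=0$. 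Therefore the perfect objects $\pi^*E_i$ generate $D_{\mathrm{qc}}(X)$, and, $X$ being smooth so that $\mathrm{Perf}(X)=D^b(X)$, the theorem of Neeman that a set of compact objects with vanishing right orthogonal classically generates the subcategory of compact objects gives $\thick(\mc{Q})=\thick(\pi^*E_0,\dots,\pi^*E_3)=D^b(X)$, i.e.\ property~(2).

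The main obstacle is exactly this generation step: converting the vanishing of the right orthogonal into genuine classical generation of $D^b(X)$. Because $X$ is non-proper (it carries the fibre directions of $\pi$), there is no naive dévissage reducing an arbitrary coherent sheaf on $X$ to pullbacks, so one really needs the compactly generated framework to pass from orthogonality to thick generation. It is worth noting what the elementary Koszul resolution of the zero section,
\[
    0\longrightarrow \pi^{*}\omega_{\pp}^{\vee}\longrightarrow \co_X\longrightarrow s_{*}\co_{\ff}\longrightarrow 0,
\]
does and does not give: tensoring with $\pi^{*}\mathcal{F}$ shows that every $s_{*}\mathcal{F}$ lies in the thick subcategory generated by pullbacks, but this only accounts for objects set-theoretically supported on the zero section and does not by itself exhaust $D^b(X)$ — which is why the orthogonality route is the cleaner one to push through.
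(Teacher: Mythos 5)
Your proof is correct. The first half (the $\Ext$-vanishing) is essentially the argument in the paper: both proofs use the adjunction $\Ext^i_X(\pi^*E_i,\pi^*E_j)\cong\Ext^i_{\ff}\bigl(E_i,E_j\otimes\bigoplus_{k\geq 0}(\omega_{\ff}^{\vee})^{k}\bigr)$ followed by the K\"unneth decomposition on $\pp$; your bookkeeping (strong exceptionality for $k=0$, both bidegrees $\geq 0$ for $k\geq 1$ since the extreme twist is $\co(-2,-1)$) is a slightly cleaner packaging of the paper's case analysis, which instead lists the three vanishing patterns for $\HO^{>0}$ and rules them out. Where you genuinely diverge is the generation property: the paper dispatches it in one line by citing a general result \cite[Lemma 5.2.3]{Lam}, whereas you prove it from scratch — right-orthogonality in $D_{\mathrm{qc}}(X)$, reduction to the base via the $(L\pi^*,R\pi_*)$-adjunction and fullness of $\mb{E}$, conservativity of $\pi_*$ for the affine morphism $\pi$, and finally Neeman's theorem identifying the thick closure of a compact generating set with the compact objects, together with $\mathrm{Perf}(X)=D^b(X)$ for smooth $X$. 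This buys self-containedness at the cost of invoking the compactly generated machinery; your closing remark that Koszul dévissage along the zero section cannot substitute for this step (it only reaches objects supported on $\ff$) is also accurate, and explains why some such machinery — whether yours or the cited lemma's — is unavoidable here.
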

\begin{proof}
    For $a,\ b,\ c,\ d\in \BZ$, we have
    \begin{eqnarray*}
        \Ext^i_X\left(\pi^*\co(a,b), \pi^*\co(c,d)\right) &=& \Ext^i_{\ff}\left(\co(a,b), \pi_*\pi^*\co(c,d)\right) \\
                                                    &=& \bigoplus_{n=0}\Ext^i_{\ff}\left(\co(a,b), \co(c,d)\otimes (\omega^*_{\ff})^n\right) \\
                                                    &=& \bigoplus_{n=0} \ho^i\left(\ff, \co(c-a+2n,d-b+2n)\right)\\
                                                    &=& \bigoplus_{n=0}\bigoplus_{s+t = i}\ho^s(\mb{P}^1,\co(c-a+2n))\otimes \ho^t(\mb{P}^1,\co(d-b+2n)).
    \end{eqnarray*}
    For $i>0$,  $\ho^i\bigl(\ff, \co(c-a+2n,d-b+2n)\bigr) = 0$ unless 
    \begin{enumerate}
        \item $c-a+2n \leq -2, \ d-b+2n \geq 0$;
        \item  $d-b+2n \leq -2, \ c-a+2n \geq 0$;
        \item $c-a+2n\leq -2, \ d-b+2n \leq -2$.
    \end{enumerate}
    Since $-1\leq d-b\leq 1$ in our case, so only the first part of case 1 is possible. One can easily verify  the bundles in our exceptional collection do not belong to this case. Therefore $\Ext^i_{X}(\mc{Q},\mc{Q}) = 0$ for $i\neq 0$.

    The proof of the generating property for $\mc{Q}$ is due to the general result \cite[Lemma 5.2.3]{Lam}. We finished the proof.
\end{proof}
The endomorphism algebra $B=\End_X(\mc{Q})$ is noetherian \cite[Theorem 3.6]{BriS10}, therefore we can write it as the path algebra of a quiver $Q$ subject to relations, and  grade it by the length of paths. The vertex is indexed by 
$i$, and the number of arrows from $i$ to $j$ is the dimension of space of irreducible maps from $\pi^*E_i$ to $\pi^*E_j$, i.e., the cokernel of the map
\[
    \bigoplus_{k \neq i,\ j}\Hom_X(\pi^*E_i,\pi^*E_k)\otimes \Hom_X(\pi^*E_k,\pi^*E_j) \longrightarrow \Hom_X(\pi^*E_i, \pi^*E_j).
\]
The following corollary is from \cite[Lemma 4.4]{Bri05}.
\begin{cor}
    There is a derived equivalence:
    $$
        \mc{R}_{\mc{Q}} := \RHom(\mc{Q},-): D^b_0(X) = D^b_0(X) \longrightarrow D^b_0(B).
    $$  
\end{cor}
$D^b_0(B)$ inherits a t-structure from the standard t-structure on $D^b(B)$, whose heart is $\mr{mod}_0$-$B$. From now on until the end of the paper, we denote by $\ca$ the heart of the t-structure by pulling back the standard t-structure on $D^b_0(B)$, i.e., it is equivalent to $\mr{mod}_0$-$B$. 
\begin{cor}\label{simp_heart}
The simple objects up to isomorphism in $\ca$ are 
 \[
 S_0 = s_*\co(0,0), \quad S_1 = s_*\co(-1,0)[1], \quad S_2 = s_* \co(1,-1)[1], \quad S_3 = s_*\co(0,-1)[2].
 \]
\end{cor}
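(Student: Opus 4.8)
The strategy is to identify the simple objects in the heart $\ca$ by transporting the simple modules $S_i^B$ of $\mathrm{mod}_0\text{-}B$ back across the derived equivalence $\mc{R}_{\mc{Q}} = \RHom(\mc{Q},-)$. Since $\ca$ is equivalent to $\mathrm{mod}_0\text{-}B$ and the simple nilpotent $B$-modules are exactly the one-dimensional modules $S_0^B,\dots,S_3^B$ supported at the four vertices of $Q$, it suffices to compute $\mc{R}_{\mc{Q}}^{-1}(S_i^B)$ for each $i$ and show the answer is the claimed object $s_*\co(a,b)[k]$. The cleanest way to do this is to use the dual exceptional collection $\mb{F} = (F_0,\dots,F_3)$ introduced just before Lemma \ref{lem:van_tilt_f0}, and the corresponding pulled-back objects $\pi^*F_j$.

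First I would recall that the defining property \eqref{eqn:dual} of the dual collection gives $\Hom^k_{\ff}(E_i,F_j) = \BC$ if $i=j,\ k=0$ and $0$ otherwise. Pulling this back along $\pi$ (and using that $\pi^*$ is fully faithful with the adjunction computations already carried out in the proof of Lemma \ref{lem:van_tilt_f0}, so that $\RHom_X(\pi^*E_i, \pi^*F_j)$ localizes to the base), I expect to obtain
\[
    \Hom^k_X(\pi^*E_i,\, s_*F_j) = \begin{cases} \BC & i=j,\ k=0,\\ 0 & \text{otherwise.}\end{cases}
\]
The presence of $s_*$ here is the point: pushing forward from the zero section produces objects supported on $\ff$, hence lying in $D^b_0(X)$, and the orthogonality relation says precisely that $\mc{R}_{\mc{Q}}(s_*F_j) = \RHom_X(\bigoplus_i \pi^*E_i,\, s_*F_j)$ is the one-dimensional graded module concentrated at vertex $j$, i.e. the simple $B$-module $S_j^B$. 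Reading off $\mb{F} = (\co(0,0),\,\co(-1,0)[1],\,\co(1,-1)[1],\,\co(0,-1)[2])$ then yields exactly $S_0 = s_*\co(0,0)$, $S_1 = s_*\co(-1,0)[1]$, $S_2 = s_*\co(1,-1)[1]$, $S_3 = s_*\co(0,-1)[2]$.

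The main obstacle I anticipate is verifying carefully that the base-change / projection-formula computation yielding the displayed $\Hom$-vanishing actually produces $s_*F_j$ (a single pushed-forward dual object) rather than the full $\pi^*$-pullback, and in particular that only the $n=0$ summand of the decomposition $\pi_*\pi^* = \bigoplus_{n\geq 0}(-)\otimes(\omega_{\ff}^*)^n$ in the proof of Lemma \ref{lem:van_tilt_f0} contributes after restricting attention to objects supported on the zero section. I would handle this by noting that $s_*F_j$ is supported on $\ff$, so $\RHom_X(\pi^*E_i, s_*F_j) \cong \RHom_{\ff}(s^*\pi^*E_i, F_j) = \RHom_{\ff}(E_i, F_j)$ by adjunction $s^* \dashv s_*$ together with $s^*\pi^* = \mathrm{id}$, which reduces everything to the base relation \eqref{eqn:dual} with no higher-$n$ terms. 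As a final check I would confirm that each $S_i$ is indeed simple in $\ca$ (not merely sent to a simple under $\mc{R}_{\mc{Q}}$), which is automatic since $\mc{R}_{\mc{Q}}$ is an equivalence of abelian categories $\ca \simeq \mathrm{mod}_0\text{-}B$ carrying simples to simples.
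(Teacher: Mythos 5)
Your proposal is correct and follows essentially the same route as the paper: both identify the simples of $\ca$ by transporting the vertex simples of $\mathrm{mod}_0$-$B$ through $\mc{R}_{\mc{Q}}$, with the key step being the orthogonality $\Hom^{\bullet}_X(\pi^*E_i, s_*F_j) = \Hom^{\bullet}_{\ff}(E_i,F_j) = \delta_{ij}\BC$ obtained by adjunction along the zero section together with the dual-collection property \eqref{eqn:dual}. Your more careful justification of why the higher summands of $\pi_*\pi^*$ do not appear (via $s^*\pi^* = \mathrm{id}$) is exactly the computation the paper leaves implicit.
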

\begin{proof}
 We write $P_i$  the projective  $B$-module and $C_i$ the simple $B$-module associated with vertex $i$. Then $\mr{mod}_0$-$B$ is the extension-closed subcategory of $\mr{mod}$-$B$ generated by $\{C_i\}_{i\in I}$ and $\{C_i\}_{i\in  I}$ is the set of all simple $B$-modules in $\mr{mod}_0$-$B$.  By definition of $\mc{R}_{\mc{Q}}$, $\pi^*E_i$ is sent to $P_i$. Then $s_*F_i$ is sent to $C_i$ which follows from the definition of the dual collection:
    \[
        \Hom^{\bullet}_{D^b(X)}(\pi^*E_i, s_*F_j) = \Hom^{\bullet}_{D^b(\ff)}(E_i,F_j) = \delta_{ij}\mb{C}.
    \]
\end{proof}
\begin{prop}\label{quiver_tilt}
The quiver $Q = (Q_0, Q_1)$ of the endomorphism algebra $\mr{End}_X(\mc{Q})$ is
\begin{equation}\label{eqn:quiver}
    \xymatrixcolsep{4pc}
    \xymatrix{
        0 \ar@<0.5ex>[r]^{x_1} \ar@<-0.5ex>[r]_{y_1} & 1 \ar@<0.5ex>[d]^{x_2} \ar@<-0.5ex>[d]_{y_2} \\
        3 \ar@<0.5ex>[u]^{x_3} \ar@<-0.5ex>[u]_{y_3} & 2 \ar@<0.5ex>[l]^{x_4} \ar@<-0.5ex>[l]_{y_4}
    }
\end{equation}
 and the vertex $i$ corresponds to $S_i$.
\end{prop}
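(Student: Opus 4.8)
The plan is to read off the Gabriel quiver of $B=\End_X(\mc{Q})$ from the simple objects. By construction $\mc{R}_{\mc{Q}}=\RHom(\mc{Q},-)$ carries $\pi^*E_i$ to the indecomposable projective $P_i$ and, by Corollary \ref{simp_heart}, carries $S_i=s_*F_i$ to the simple top $C_i$; this already pins down the identification of vertex $i$ with $S_i$, so only the arrows are at issue. Following the convention fixed in the Notation section, the number $n_{ij}$ of arrows from $i$ to $j$ equals $\dim_{\mb{C}}\Ext^1_X(S_j,S_i)$ (equivalently one could count irreducible maps $\pi^*E_i\to\pi^*E_j$, but I prefer the $\Ext^1$ description because it sidesteps all factorisation bookkeeping). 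The whole statement therefore reduces to computing the sixteen groups $\Ext^1_X(S_j,S_i)$.

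To make these tractable I would first record a formula for $\Hom^\bullet$ between sheaves pushed forward along the zero section $s:\ff\hookrightarrow X$. Since $X=\Tot\omega_{\ff}$ with $\omega_{\ff}=\co(-2,-2)$, the zero section is the divisor cut out by the tautological section $\tau$ of $\pi^*\omega_{\ff}$, so there is a Koszul resolution
\[
0 \to \pi^*\co(2,2) \xrightarrow{\ \tau\ } \co_X \to s_*\co_{\ff} \to 0.
\]
Tensoring by $\pi^*\CF$ and applying $\RHom_X(-,s_*\CG)$, together with the adjunction identity $\RHom_X(\pi^*\CF,s_*\CG)=\RHom_{\ff}(\CF,\CG)$ (coming from $Ls^*\pi^*=\mathrm{id}$), gives a triangle whose connecting map is induced by $\tau$ and hence vanishes because $s^*\tau=0$ on the zero section. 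This splits the triangle and yields
\[
\Ext^k_X(s_*\CF,s_*\CG) \;=\; \Ext^k_{\ff}(\CF,\CG)\;\oplus\;\Ext^{k-1}_{\ff}\bigl(\CF\otimes\co(2,2),\,\CG\bigr).
\]

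With this in hand I would substitute $S_i=s_*F_i$, absorbing the shifts of the $F_i$ (which sit in degrees $0,1,1,2$) so that each $\Ext^1_X(S_j,S_i)$ becomes a single group $\Ext^m_X(s_*\co(\cdot,\cdot),s_*\co(\cdot,\cdot))$, and then expand both summands as line-bundle cohomology on $\ff=\pp$ via K\"unneth and the cohomology of line bundles on $\mb{P}^1$. For instance $\Ext^1_X(S_1,S_0)$ collapses to $\Hom_{\ff}(\co(-1,0),\co(0,0))=H^0(\co(1,0))=\mb{C}^2$, while $\Ext^1_X(S_0,S_3)$ collapses to the \emph{second} summand $\Ext^2_{\ff}(\co(2,2),\co(0,-1))=H^2(\co(-2,-3))=\mb{C}^2$; the analogous computations give $n_{12}=n_{23}=2$, and every reverse or diagonal pair (e.g. $\Ext^1_X(S_0,S_1)$ or $\Ext^1_X(S_2,S_0)$) returns $0$ because both summands vanish. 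Collecting the outcomes shows that the only nonzero arrow counts are $n_{01}=n_{12}=n_{23}=n_{30}=2$, i.e. the cyclic double quiver (\ref{eqn:quiver}). In passing one also checks $\Ext^1_X(S_i,S_i)=H^1(\ff,\co)\oplus\Hom_{\ff}(\co(2,2),\co)=0$, so there are no loops.

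The routine part is the sixteen cohomology computations; the only point requiring genuine care is the shift bookkeeping for the $F_i$, since a misplaced shift moves the calculation into the wrong cohomological degree and can spuriously create or kill an arrow. The one conceptual point — the real content — is the extra pair of arrows $3\to0$: these are invisible on the surface, where $(E_0,\dots,E_3)$ is an exceptional collection and the endomorphism quiver is the linear quiver $0\to1\to2\to3$ with double arrows and \emph{no} arrow $3\to0$; they appear only through the second summand $\Ext^{k-1}_{\ff}(\CF\otimes\co(2,2),\CG)$ contributed by the normal (canonical) direction. This is exactly the mechanism by which passing to the local Calabi--Yau threefold closes the exceptional collection up into a cycle. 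Should one instead prefer the irreducible-maps formulation of the discussion preceding the statement, the same conclusion follows from the formula of Lemma \ref{lem:van_tilt_f0}, with the main obstacle then shifting to showing that all homomorphisms beyond these eight generators are decomposable, i.e. that multiplication of sections on $\pp$ is surjective in the relevant bidegrees.
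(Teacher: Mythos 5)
Your proposal is correct and follows essentially the same route as the paper: both reduce the arrow count $n_{ij}=\dim_{\mb{C}}\Ext^1_X(S_j,S_i)$ to $\Ext$-groups between pushforwards along the zero section via the Koszul resolution, obtaining the split formula $\Ext^k_X(s_*\CF,s_*\CG)\cong\Ext^k_{\ff}(\CF,\CG)\oplus\Ext^{k-1}_{\ff}\bigl(\CF\otimes\co(2,2),\CG\bigr)$ (the paper writes the second summand as $\Ext^{3-k}_{\ff}(\CG,\CF)^*$ via Serre duality, which is the same thing), and then evaluate the sixteen groups as line-bundle cohomology on $\pp$. The only cosmetic difference is that you derive the splitting from the vanishing of $s^*\tau$ on the Koszul short exact sequence, whereas the paper quotes $s^*s_*F\cong F\oplus\bigl(F\otimes\omega_{\ff}^*[1]\bigr)$ directly; your sample computations and the no-loops check are accurate.
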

\begin{proof}
    The arrows from $i$ to $j$ can be calculated by the  dimension of the vector space $\Ext^1_X(S_j, S_i)$. By using the Koszul resolution \cite[Chapter 11]{Huy06} along the embedding map $s$, for any sheaf $F$ on $\ff$ we have 
    \[
        s^*s_* F \cong F \oplus \left(F\otimes \omega_{\ff}^*[1]\right)
    \]
    in $D^b(\ff)$. Therefore 
    \[
        \Ext^n_X(s_*F_i, s_*F_j ) = \Ext^n_{\ff}(F_i,F_j) \oplus \Ext^{3-n}_{\ff}(F_j, F_i)^*.
    \]
   For example 
   \begin{eqnarray*}
       &&\Ext^1_X\left(s_*\co(1,-1)[1],s_*\co(-1,0)[1]\right) \\
       &=& \Ext^1_{\ff}\left(\co(1,-1),\co(-1,0)\right) \oplus \Ext^2_{\ff}\left(\co(-1,0),\co(1,-1)\right)^* \\
       &=& \ho^1\left(\pp,\co(-2,1)\right)\\
       &=&\mb{C}^2.
   \end{eqnarray*}
   The other calculations are similar. Therefore we get the given quiver $Q$.
\end{proof}
We denote by $I$ the relations of paths in $\End_X(\mc{Q})$ and $ \rep_{\mr{nil}}(Q,I) \cong \ca$ the category of nilpotent representations of quiver with relations. 
\begin{rem}\label{rem:potential}
    Since $B=\End_X(\mc{Q})$ is graded 3-Calabi-Yau in the sense that the full subcategory consisting of objects with finite dimensional cohomology modules $D^b_{\text{fin}}(B)$ has Serre duality \cite{Kel08}:
    \[
        \Ext^i_{B}(M,N) = \Ext^{3-i}_B(N,M)^*,\quad M,\ N\in D^b_{\text{fin}}(B).
    \]
     Then work of Bocklandt \cite{Bo08} shows that the relations of $\End_X(\mc{Q})$ can be encoded in compact form in a potential. Thus we can write
     \[
        B = B(Q,W) = \mb{C}Q/(\partial_a W: a\in Q_1)
     \]
     for some non-uniquely defined element $W\in \BC Q/[\BC Q, \BC Q]$. In fact we can write down the potential $W$ explicitly here: keep the notations as in (\ref{eqn:quiver}). Then by using Segal's result \cite{Seg08}, we have
     \[
        W=x_4x_3x_2x_1 + y_4y_3y_2y_1-y_4x_3y_2x_1-x_4y_3x_2y_1.
     \]
     The corresponding relations are (for $j\in \BZ_4$):
     \begin{eqnarray*}
         \partial_{x_j}W = x_{j+3}x_{j+2}x_{j+1}-y_{j+3}x_{j+2}y_{j+1} = 0, &\\
         \partial_{y_j}W = y_{j+3}y_{j+2}y_{j+1}-x_{j+3}y_{j+2}x_{j+1}=0. &
     \end{eqnarray*}
\end{rem}
\subsection{Autoequivalence and invariant stability conditions}\label{sec:auto}
The spherical object and spherical twist were introduced by Seidel and Thomas \cite{ST01}. We briefly recall the definition and property here: for our use we simply consider $\CD := D^b(\mc{V})$  where $\mc{V}$ is a local Calabi-Yau variety of dimension $n$.
\begin{defi}
An object $S\in \CD$ is called $n$-spherical if the following conditions are satisfied:
\begin{enumerate}
    \item For any $F\in \CD$, $\Hom^{\bullet}_{\CD}(F,S)$  and $\Hom^{\bullet}_{\CD}(S,F)$ have finite (total) dimension over $\mb{C}$.
    \item We have 
$$
    \Ext^k_{\CD}(S,S) = \left\{ \begin{array}{cc}
                            \mb{C}& k = 0,\ n, \\
                            0 & \text{otherwise.}
                            \end{array}
                            \right.
$$
\end{enumerate}

Let $S$ be a spherical object in  $\CD$, then the spherical twist $\Tw_S(E)$ of $E\in \CD$ is defined to be the cone of the canonical evaluation morphism:
\[
    \xymatrix{
   \Hom^{\bullet}(S,E)\otimes S \ar[r] & E \ar[r] & \Tw_S(E) \ar[r]^-{[1]} & 
    }
\]
\end{defi}
The following important lemma is due to Seidel and Thomas.
\begin{lem}[\cite{ST01}]
    Let $S$ be a spherical object in $\CD$. Then $\mr{Tw}_S$ is an exact autoequivalence of $\CD$.
\end{lem}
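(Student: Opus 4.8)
The plan is to realise $\Tw_S$ as a Fourier--Mukai functor, deduce exactness and the existence of adjoints for free, and then reduce the claim that it is an equivalence to full faithfulness, which I would establish with Bridgeland's spanning-class criterion. The point of passing to a kernel is that in a triangulated category cones are not functorial, so the object-by-object definition of $\Tw_S$ does not obviously give a functor.

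First I would make the cone construction functorial. Instead of working object-by-object, I would introduce the kernel
\[
    P_S := \Cone\bigl(S^\vee \boxtimes S \xrightarrow{\ \mr{ev}\ } \co_{\Delta}\bigr) \in D^b(\mc{V}\times\mc{V}),
\]
where $\mr{ev}$ is the evaluation (trace) map and $\co_{\Delta}$ is the structure sheaf of the diagonal, and set $\Tw_S := \Phi_{P_S}$, the Fourier--Mukai transform with kernel $P_S$. Unwinding $\Phi_{P_S}$ recovers the triangle $\Hom^{\bullet}_{\CD}(S,E)\otimes S \to E \to \Tw_S(E)$ functorially in $E$, so $\Tw_S$ is an exact functor; moreover, as a Fourier--Mukai transform it admits both a left and a right adjoint, again of Fourier--Mukai type. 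This settles exactness and supplies the adjoints needed later.

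Next I would compute the action of $\Tw_S$ on a spanning class. Set $S^\perp := \{F : \Hom^{\bullet}_{\CD}(S,F)=0\}$ and take $\Omega := \{S\}\cup S^\perp$. From the defining triangle and the spherical identity $\Hom^{\bullet}_{\CD}(S,S)=\BC\oplus\BC[-n]$, the evaluation map splits off an isomorphic copy of $S$, giving $\Tw_S(S)\cong S[1-n]$; and for $F\in S^\perp$ the first term of the triangle vanishes, so $\Tw_S(F)\cong F$. The key structural input is Serre duality on the Calabi--Yau category, $\Hom^{i}_{\CD}(F,S)\cong \Hom^{n-i}_{\CD}(S,F)^{*}$ (valid here because the spherical finiteness condition makes the relevant Hom-spaces finite-dimensional), which forces $S^\perp = {}^\perp S$. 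With this one checks directly that $\Omega$ is a spanning class: if $a$ is left- or right-orthogonal to all of $\Omega$ then orthogonality to $S$ places $a\in S^\perp$, and testing against $F=a$ gives $\Hom^0_{\CD}(a,a)=0$, hence $a=0$.

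Finally I would prove full faithfulness via Bridgeland's criterion: since $\Tw_S$ has adjoints and $\Omega$ is a spanning class, it suffices to show that $\Tw_S$ induces isomorphisms on $\Hom^{\bullet}_{\CD}(\omega_1,\omega_2)$ for all $\omega_1,\omega_2\in\Omega$. The four cases fall out of the computation above: for $F,G\in S^\perp$ the functor is the identity; the mixed terms $\Hom^{\bullet}_{\CD}(S,F)$ and $\Hom^{\bullet}_{\CD}(F,S)$ both vanish (the latter again by Serre duality), and so do their images; and the remaining diagonal case $\Hom^{\bullet}_{\CD}(S,S)\to \Hom^{\bullet}_{\CD}(S[1-n],S[1-n])$ is an isomorphism. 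Once $\Tw_S$ is fully faithful, equivalence is automatic: the Serre functor of $\CD$ is the shift $[n]$, which any exact functor commutes with, so $\Tw_S$ meets the hypotheses of Bridgeland's criterion for a fully faithful Fourier--Mukai functor to be an equivalence. I expect the main obstacle to lie in the exactness/kernel bookkeeping and, within full faithfulness, in checking that the induced map on $\Hom^{\bullet}_{\CD}(S,S)$ is genuinely an isomorphism and not merely a dimension match; by contrast the off-diagonal terms collapse cleanly through Serre duality.
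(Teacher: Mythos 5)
The paper itself contains no proof of this lemma---it is quoted verbatim from Seidel--Thomas---so your proposal can only be measured against the cited source, and in outline it is exactly that argument in the form presented in Huybrechts' book: functoriality via the kernel $\Cone(S^\vee \boxtimes S \to \co_{\Delta})$, the spanning class $\{S\}\cup S^{\perp}$ with $S^{\perp} = {}^{\perp}S$ forced by Calabi--Yau duality, full faithfulness via Bridgeland's spanning-class criterion, and then the criterion for a fully faithful functor with adjoints to be an equivalence. The architecture is right, but two steps remain genuinely open in what you wrote. The main one is the step you flag yourself and then leave unproved: that $\Hom^{n}_{\CD}(S,S) \to \Hom^{n}_{\CD}(\Tw_S S, \Tw_S S)$ is an isomorphism. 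This is not bookkeeping, and the obvious attacks fail: for $n\geq 2$ the canonical map $S \to \Tw_S(S)\cong S[1-n]$ lies in $\Hom^{1-n}_{\CD}(S,S)=0$, so naturality of the defining triangle puts no constraint whatsoever on $\Tw_S(\phi)$ for $\phi\in\Hom^{n}_{\CD}(S,S)$; and while $\Tw_S$ does induce a unital graded ring endomorphism of $\Hom^{\bullet}_{\CD}(S,S)\cong \BC[\epsilon]/(\epsilon^{2})$, such an endomorphism may kill $\epsilon$. What actually closes this case is an extra input: either the compatibility of Fourier--Mukai functors with the nondegenerate Serre-duality trace pairing $\Hom^{0}_{\CD}(S,S)\otimes\Hom^{n}_{\CD}(S,S)\to\BC$ (preservation of the pairing together with $\mathrm{id}\mapsto\mathrm{id}$ forces injectivity in degree $n$), or Seidel--Thomas's own route of writing down the inverse (``dual'') twist kernel and checking that the convolution of the two kernels is $\co_{\Delta}$.

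The second issue is the non-compactness of $\mc{V}$: a local Calabi--Yau has no Serre functor on all of $D^b(\mc{V})$ (already $\Hom_{\CD}(\co_{\mc{V}},\co_{\mc{V}})$ is infinite-dimensional), so your closing claim ``the Serre functor of $\CD$ is the shift $[n]$'' is false as stated, and your justification of the duality $\Hom^{i}_{\CD}(F,S)\cong \Hom^{n-i}_{\CD}(S,F)^{*}$ by finite-dimensionality of Hom-spaces is not the correct one. Both points are repaired by the same observation: $S$ has proper support (in the paper's application $S = s_{*}E$ is supported on the zero section), and Serre/Grothendieck duality on a non-compact Calabi--Yau holds whenever one argument has proper support. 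That salvages $S^{\perp}={}^{\perp}S$ and the spanning-class argument, but the final fully-faithful-implies-equivalence step must then be run at the level of kernels: the left and right adjoint kernels of $\Phi_{P_S}$ coincide because $\omega_{\mc{V}}$ is trivial, and fully faithfulness together with the agreement of the two adjoints and the indecomposability of $D^b(\mc{V})$ yields the equivalence. With those two repairs your proof is the standard one; without them it does not go through.
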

\begin{lem}
If $E$ is an exceptional object on $\pp$, then $s_* E$ is a 3-spherical object in $D^b(X)$.
\end{lem}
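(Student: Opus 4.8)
The plan is to verify the two defining conditions of an $n$-spherical object directly for $s_*E$ with $n=3$, using the same two structural tools that already appear in the computation of the quiver: the projection/adjunction formulas for $s$ and $\pi$, and the Koszul-type identity $s^*s_*F \cong F \oplus (F\otimes \omega_{\ff}^*[1])$ along the zero section. Since $X$ is the total space of the canonical bundle of $\ff=\pp$, it is a local Calabi--Yau threefold, so the spherical condition to check is indeed the $n=3$ one.

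First I would establish the finiteness condition. For any $F\in D^b(X)$, adjunction gives $\Hom^{\bullet}_X(s_*E, F) \cong \Hom^{\bullet}_{\ff}(E, s^!F)$ and $\Hom^{\bullet}_X(F, s_*E) \cong \Hom^{\bullet}_{\ff}(s^*F, E)$, where $s^!$ differs from $s^*$ by a shift and a twist by the (trivial, up to the normal bundle) relative dualizing complex of the proper embedding $s$. Because $\ff$ is a smooth projective surface, all $\Hom^{\bullet}$-spaces in $D^b(\ff)$ are finite-dimensional, so the total dimension of each of these graded Hom-spaces is finite. This gives condition (1).

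Next I would compute $\Ext^{\bullet}_X(s_*E, s_*E)$. The cleanest route is exactly the formula derived in the proof of Proposition~\ref{quiver_tilt}: using the Koszul resolution of $\co_{\ff}$ as an $\co_X$-module along $s$ one gets
\[
    \Ext^n_X(s_*E, s_*E) \cong \Ext^n_{\ff}(E,E) \oplus \Ext^{3-n}_{\ff}(E,E)^*.
\]
Since $E$ is exceptional on $\ff$, we have $\Ext^0_{\ff}(E,E)=\mb{C}$ and $\Ext^k_{\ff}(E,E)=0$ for $k\neq 0$. Feeding this in, the first summand contributes $\mb{C}$ only when $n=0$, and the dualized second summand contributes $\mb{C}$ only when $3-n=0$, i.e. $n=3$; all other degrees vanish. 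This yields precisely $\Ext^k_X(s_*E,s_*E)=\mb{C}$ for $k=0,3$ and $0$ otherwise, which is condition (2) with $n=3$.

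The main obstacle — really the only nontrivial point — is justifying the displayed $\Ext$-symmetry formula cleanly, i.e. making precise how $s^*s_*$ decomposes as $\mathrm{id}\oplus(-\otimes\omega_{\ff}^*[1])$ and how this splits $\Ext^{\bullet}_X(s_*E,s_*E)$ into $\Ext^{\bullet}_{\ff}(E,E)$ together with its Serre-dual shifted copy. The honest way is to apply $s^*$ to $s_*E$, use the Koszul identity to write $s^*s_*E \cong E \oplus (E\otimes\omega_{\ff}^*[1])$ in $D^b(\ff)$, then invoke adjunction $\Ext^{\bullet}_X(s_*E,s_*E)\cong \Hom^{\bullet}_{\ff}(s^*s_*E, E)$ and Serre duality on the surface $\ff$ (where $\omega_{\ff}$ is the Serre functor twist) to identify the second summand with $\Ext^{3-\bullet}_{\ff}(E,E)^*$. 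Once this identity is in hand, the conclusion is immediate from exceptionality; no case analysis is needed beyond reading off the two surviving degrees.
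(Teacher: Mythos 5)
Your proposal is correct and takes essentially the same route as the paper: the Koszul-resolution identity $s^*s_*E \cong E \oplus \left(E\otimes \omega_{\ff}^*[1]\right)$, combined with adjunction and Serre duality on $\ff$, yields $\Ext^n_X(s_*E,s_*E)\cong \Ext^n_{\ff}(E,E)\oplus \Ext^{3-n}_{\ff}(E,E)^*$, and exceptionality of $E$ then gives $\mb{C}$ exactly in degrees $0$ and $3$. The only difference is that you also check the finiteness condition (1) via adjunction for $s^*$ and $s^!$, which the paper's proof leaves implicit; this is a harmless and in fact welcome addition.
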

\begin{proof}
By using the Koszul resolution as in Proposition \ref{quiver_tilt}, we have
\begin{eqnarray*}
        \Ext^n_X(s_*E, s_* E) &\cong& \Ext^n_{\ff}(E,E) \oplus \Ext_{\ff}^{3-n}(E, E) \\
        &=& \left\{ \begin{array}{cc}
                            \mb{C}& n = 0,\ 3, \\
                            0 & \text{otherwise.}
                            \end{array}
                            \right.
\end{eqnarray*}
\end{proof}
Define $\tau: \pp \rightarrow \pp$, $\tau(x,y):= (y,x)$, it has a natural extension to an automorphism of $X = \Tot \co(-2,-2)$ which we also denote by $\tau$. We consider the following functor
\[
    \Psi:= \tau^* \circ \Tw_{S_0} \circ \left(-\otimes \pi^*\co(0,1)\right)
\]
which is an autoequivalence of $D^b(X)$ since it is a composition of autoequivalences.
\begin{lem}\label{aut_quiver}
Recall that $\ca$ denotes the heart of the bounded t-structure induced by $\mc{Q}$, and let $S_i$ be the simple objects in $\ca$ defined in Corollary \ref{simp_heart}. Then
    \[
        \Psi (S_i) = S_{i+1}, \quad i\in \mb{Z}_4.
    \]
    Therefore $\Psi$ reduces to be an autoequivalence of $\ca$.
\end{lem}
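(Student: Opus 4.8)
The plan is to verify the four equalities $\Psi(S_i)=S_{i+1}$ one at a time by pushing each simple object through the three functors constituting $\Psi=\tau^*\circ\Tw_{S_0}\circ(-\otimes\pi^*\co(0,1))$. First I would record the intermediate objects $T_i:=S_i\otimes\pi^*\co(0,1)$ using the projection formula $s_*F\otimes\pi^*L\cong s_*(F\otimes L)$ (valid because $\pi\circ s=\mr{id}_{\ff}$):
\[
    T_0=s_*\co(0,1),\quad T_1=s_*\co(-1,1)[1],\quad T_2=s_*\co(1,0)[1],\quad T_3=s_*\co(0,0)[2]=S_0[2].
\]
Since $\tau$ swaps the factors and fixes the zero section, $\tau^*s_*\co(a,b)=s_*\co(b,a)$, so proving $\Psi(S_i)=S_{i+1}$ amounts to showing $\Tw_{S_0}(T_i)=\tau^*S_{i+1}$, where $\tau^*S_1=s_*\co(0,-1)[1]$, $\tau^*S_2=s_*\co(-1,1)[1]=T_1$, $\tau^*S_3=s_*\co(-1,0)[2]$, and $\tau^*S_0=S_0$.

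The next step is to compute the graded Hom-spaces governing the twist via the Serre-duality (Koszul) formula $\Ext^n_X(s_*F,s_*G)=\Ext^n_{\ff}(F,G)\oplus\Ext^{3-n}_{\ff}(G,F)^*$ established above, together with $\ho^\bullet(\mb{P}^1,\co(-1))=0$. A short calculation gives $\Hom^\bullet_X(S_0,T_1)=0$, so $\Tw_{S_0}(T_1)=T_1=\tau^*S_2$ and $\Psi(S_1)=S_2$. For $T_0$ one finds $\Hom^\bullet_X(S_0,T_0)=\ho^0(\ff,\co(0,1))=\mb{C}^2$ in degree $0$, and for $T_2$ one finds $\Hom^\bullet_X(S_0,T_2)=\mb{C}^2$ in degree $-1$ (from $\ho^0(\ff,\co(1,0))$ after the shift by $[1]$).

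For these two cases the crucial point is to identify the tautological evaluation map in the defining triangle of the spherical twist with the push-forward of an Euler sequence on the relevant $\mb{P}^1$-factor. The two generators of $\Hom_X(S_0,T_0)=\ho^0(\mb{P}^1,\co(1))$ are precisely the sections realizing the surjection in the second-factor Euler sequence $0\to\co(0,-1)\to\co(0,0)^{\oplus 2}\to\co(0,1)\to 0$; applying the exact functor $s_*$ identifies the evaluation $S_0^{\oplus 2}\to T_0$ with this surjection, so $\Tw_{S_0}(T_0)=\Cone(S_0^{\oplus 2}\to T_0)=s_*\co(0,-1)[1]=\tau^*S_1$ and $\Psi(S_0)=S_1$. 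The same argument applied to the first-factor sequence $0\to\co(-1,0)\to\co(0,0)^{\oplus 2}\to\co(1,0)\to 0$, shifted by $[1]$ to account for the degree-$(-1)$ placement of the Hom-complex, yields $\Tw_{S_0}(T_2)=s_*\co(-1,0)[2]=\tau^*S_3$ and $\Psi(S_2)=S_3$. Finally $T_3=S_0[2]$, and the spherical twist satisfies $\Tw_{S_0}(S_0)=S_0[1-3]=S_0[-2]$, whence $\Tw_{S_0}(T_3)=S_0=\tau^*S_0$ and $\Psi(S_3)=S_0$.

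I expect the main obstacle to be the precise identification of the evaluation map $\Hom^\bullet_X(S_0,T_i)\otimes S_0\to T_i$ with the Euler-sequence surjection, together with careful bookkeeping of the cohomological shifts (especially for $T_2$, where the Hom-complex sits in degree $-1$, so that $\Hom^\bullet_X(S_0,T_2)\otimes S_0=S_0^{\oplus 2}[1]$). Once the four images are computed, $\Psi(S_i)=S_{i+1}$ follows; and since $\Psi$ is an exact autoequivalence cyclically permuting the simple objects of the finite-length heart $\ca$, which is the extension closure of the $S_i$, it preserves $\ca$, giving the final assertion.
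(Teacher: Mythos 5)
Your proposal is correct and follows essentially the same route as the paper's proof: compute $S_i\otimes\pi^*\co(0,1)$ via the projection formula, determine $\Hom^{\bullet}_X(S_0,-)$ by the Koszul-resolution formula, identify the spherical-twist evaluation map with the pushforward of the Euler sequences $0\to\co(0,-1)\to\co^{\oplus 2}\to\co(0,1)\to 0$ and $0\to\co(-1,0)\to\co^{\oplus 2}\to\co(1,0)\to 0$, and use $\Tw_{S_0}(S_0)\cong S_0[-2]$ for the remaining case. Your explicit identification of the evaluation map with the Euler-sequence surjection, and the closing remark that $\Psi$ preserves $\ca$ because $\ca$ is the extension closure of the $S_i$, make precise two points the paper treats more briefly.
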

\begin{proof}
By the projection formula, we have
\begin{eqnarray*}
    s_*\co(a,b) \otimes \pi^*\co(j,k) &=& s_*\left(\co(a,b)\otimes s^*\pi^*\co(j,k)\right) \\
                                     &=& s_*\left(\co(a,b)\otimes \co(j,k)\right) \\
                                     &=& s_*\co(a+j,b+k).
\end{eqnarray*}
\begin{enumerate}
    \item Recall $S_3 = s_* \co(0,-1)[2]$. Thus $S_3 \otimes \pi^*\co(0,1) = s_*\co(0,0)[2] = S_0[2]$. Now
    \[
        \Psi(S_3) = \tau^* \Tw_{S_0} (S_0[2]) = \tau^* S_0 = \tau_* s_* \co(0,0) = S_0,
    \]
    where the second equality follows from the standard result, that if $S$ is an $n$-spherical object, then
    \[
        \Tw_S(S) \cong S[1-n],
    \]
    and $n=3$ in our case.
    \item For $S_0 = s_*\co(0,0)$, $S_0 \otimes \pi^*\co(0,1) = s_* \co(0,1)$. By using the similar calculations in Proposition \ref{quiver_tilt}, we have 
    $$
    \Hom^{\bullet}\left(s_*\co(0,0), s_*\co(0,1)\right) = \mb{C}^2,
    $$ 
    then $\Tw_{S_0}\left(s_*\co(0,1)\right) $ fits into the triangle:
    \[
        \xymatrix{
        s_*\co(0,0)^{\oplus 2} \ar[r] & s_*\co(0,1) \ar[r] & \Tw_{S_0}\left(s_*\co(0,1)\right) \ar[r]^-{[1]} & 
        }
    \]
   By applying the exact functor $s_*$ to the short exact sequence on $\ff$:
    \[
        \xymatrix{
            0 \ar[r] & \co(0,-1) \ar[r] & \co^{\oplus 2} \ar[r] & \co(0,1) \ar[r] & 0
        }
    \]
    and comparing with the above triangle, we have 
    \[
        \Tw_{S_0}\left(s_*\co(0,1)\right) \cong s_*\co(0,-1)[1].
    \]
    Thus $\Psi(S_0) = \tau^* s_*\co(0,-1)[1] = s_*\co(-1,0)[1] = S_1$.
    \item For $S_1 = s_*\co(-1,0)[1]$, $S_1 \otimes \pi^*\co(0,1) = s_*\co(-1,1)[1]$. Since $$\Hom^{\bullet}\left(s_*\co(0,0), s_*\co(-1,1)[1] \right) = 0,$$ then $\Tw_{S_0}\left(s_*\co(-1,1)[1] \right) \cong s_*\co(-1,1)[1]$. So we have 
    \[
        \Psi(S_1) = \tau^* s_*\co(-1,1)[1] = s_*\co(1,-1)[1] = S_2.
    \]
    \item For $S_2 = s_*\co(1,-1)[1]$, $S_2 \otimes \pi^*\co(0,1) = s_*\co(1,0)[1]$. Since 
    $$\Hom^{\bullet}\left(s_*\co(0,0), s_*\co(1,0)[1]\right) = \mb{C}^2[1],$$ then $\Tw_{S_0}\left(s_*\co(1,0)\right) $ fits into the triangle:
    \[
        \xymatrix{
        s_*\co(0,0)^{\oplus 2}[1] \ar[r] & s_*\co(1,0)[1] \ar[r] & \Tw_{S_0}\left(s_*\co(1,0)\right) \ar[r]^-{[1]} & 
        }
    \]
    by the same argument as above, we have     
    \[
        \Tw_{S_0}\left(s_*\co(1,0)\right) \cong s_*\co(-1,0)[2].
    \]
    So we have 
    \[
        \Psi(S_2) = \tau^* s_*\co(-1,0)[2] = s_*\co(0,-1)[2] = S_3.
    \]
\end{enumerate}
\end{proof}

\begin{defi}
We define the autoequivalence of $D^b_0(X)$
\begin{equation}
    \Phi = \Psi^2.
\end{equation}
 We denote by $\varphi$ and $\psi$ the automorphisms of $K_0(X)$ induced by $\Phi$ and $\Psi$ respectively.
\end{defi}
Let $\Stab(X)$ denote the space of stability conditions satisfying the support property on $D^b_0(X)$.
\begin{defi}[$\Phi$-invariant stability conditions]
   The space of  stability conditions which are invariant under $\Phi$ is denoted by $\Stab(X)^{\Phi}$.
     Let $\mc{U}(\ca)^{\Phi}$ be the set of $\Phi$-invariant stability conditions with the fixed heart $\ca$. We denote the connected component of $\Stab(X)^{\Phi}$ which contains $\mc{U}(\ca)^{\Phi}$ by 
        \[
            \bigl(\Stab(X)^{\Phi}\bigr)_0.
        \]
\end{defi}
From now on, we denote by $\gamma_i = [S_i]$ the class of $S_i$ in $K_0(\ca)$, $i=0,\cdots,3$.

The subgroup of $K_0(\ca)$ whose elements are antisymmetric under $\varphi$ is generated by $\gamma_0-\gamma_2$ and $\gamma_1-\gamma_3$, and is denoted by $K_0(\ca)^{-\varphi}$. The quotient group is denoted by 
\[
    \overline{K_0(\ca)} := K_0(\ca)/K_0(\ca)^{-\varphi}.
\]
The  quotient map is denoted by $\nu: K_0(\ca)\rightarrow \overline{K_0(\ca)}$. Note that $\overline{K_0(\ca)}$ is free abelian of rank $2$ with basis $\bar{\gamma_0}, \bar{\gamma_1}$ (we will abuse notation and still denote $\gamma_i$ in the quotient group). And there is a natural isomorphism
\[
    \Hom_{\mb{Z}}(\overline{K_0(\ca)},\mb{C}) \longrightarrow \Hom_{\mb{Z}}(K_0(\ca),\mb{C})^{\varphi}.
\]
Therefore we can equally define the $\Phi$-invariant stability conditions to be those whose central charges $Z: K_0(\ca)\rightarrow \mb{C}$ factor through $\overline{K_0(\ca)}$ and the slicings are invariant under $\Phi$. For technical reason we will work with this definition.

By Corollary \ref{cor:stab_mani_2}, the forgetful map $\bigl(\Stab(X)^{\Phi}\bigr)_0\rightarrow \Hom_{\mb{Z}}(\overline{K_0(\ca)},\mb{C})\cong \mb{C}^2 $ is a local homeomorphism. 

At the end of this section, we recall the following definition from \cite{BQS20}.
\begin{defi}\label{def:aut_group}
    Let $\Aut(D^b_0(X))$ be the group of exact $\mb{C}$-linear autoequivalences of the category $D^b_0(X)$, then $\Aut_*(D^b_0(X))$ is defined to be the subquotient consisting of autoequivalences which preserve the connected component $\bigl(\Stab(X)^{\Phi}\bigr)_0$, modulo those which acts trivially on it. 
\end{defi}
\section{Simple tilts and autoequivalence} \label{sec:double_tilts}
In this section we use Proposition \ref{sim_tilt} to figure out the double simple tilts of $\ca$, that is we will calculate $\ca' = L_{S_{i+2}}L_{S_i} \ca$ and $R_{S_{i+2}} R_{S_i} \ca$ for $i\in \BZ_4$.

We simply write $L_i := L_{S_i}$ and $R_i := R_{S_i}$.   Recall that there are $4$ simple objects in $\ca$ up to isomorphism:
\[
     S_0 = s_*\co(0,0), \quad S_1 = s_*\co(-1,0)[1], \quad S_2 = s_*\co(1,-1)[1], \quad S_3 = s_*\co(0,-1)[2].
\]
Proposition \ref{quiver_tilt} shows that there is no extension between $S_i$ and $S_{i+2}$, for $i\in \mb{Z}_4$, therefore $L_i L_{i+2}\ca = L_{i+2} L_i\ca$. 
\begin{enumerate}
    \item[$L_0 \ca$:] Since the only non-trivial extension to $S_0$ is $\Ext^1(S_1,S_0)$, thus the new simple objects are 
    \[
        S_0^{\prime} = S_0[1], \quad S_1^{\prime}, \quad S_2^{\prime} = S_2, \quad S_3^{\prime} = S_3,
    \]
    where $S_1^{\prime}$ fits into the  triangle
    \[
        S_0^{\oplus 2} \rightarrow S_1^{\prime} \rightarrow S_1 \rightarrow S_0^{\oplus 2}[1].
    \]
     Thus  $S_1'$ fits into the short exact sequence
    \begin{equation}\label{ses1}
        0 \rightarrow s_*\co(-1,0) \rightarrow s_*\co^{\oplus 2} \rightarrow  S_1' \rightarrow 0.
    \end{equation}
    We  already see the above short exact sequence in part 4 of the proof of Lemma \ref{aut_quiver}, therefore $S_1' = s_*\co(1,0)$.
    \item[$L_2 L_0 \ca$:] The new simple objects are 
    \[
        \wt{S_0} = S_0^{\prime}, \quad \wt{S_1} = S_1^{\prime}, \quad \wt{S_2} = S_2^{\prime}[1], \quad \wt{S_3}
    \]
    where $\wt{S_3}$ fits into the triangle
    \[
        S_2^{\oplus 2} \rightarrow \wt{S_3} \rightarrow S_3^{\prime} \rightarrow S_2^{\oplus 2}[1].
    \]
    Thus $\wt{S_3}[-1]$ fits into the short exact sequence
    \begin{equation}
        0 \rightarrow s_*\co(0,-1) \rightarrow s_*\co(1,-1)^{\oplus 2} \rightarrow \wt{S_3}[-1] \rightarrow 0.
    \end{equation}
    We obtain $\wt{S_3} = s_*\co(2,-1)[1]$.
\end{enumerate}
Therefore in $L_2L_0 \ca$ we have the following simple objects up  to isomorphism:
\[
    \wt{S_0} = s_*\co(0,0)[1],\quad \wt{S_1} = s_*\co(1,0),\quad \wt{S_2} = s_*\co(1,-1)[2], \quad \wt{S_3} = s_*\co(2,-1)[1].
\]
\begin{thm}\label{double_tilts}
 Let $\mc{T} = -\otimes \pi^*\co(1,0)$ and $\mc{T}_{\Psi} = \Psi\circ \mc{T} \circ \Psi^{-1}$, then we have 
 \begin{eqnarray*}
    L_2L_0\ca &=& \mc{T}\ca;\\
    R_3R_1\ca &=& \mc{T}^{-1}\ca;\\
    L_3L_1\ca &=& \mc{T}_{\Psi}\ca;\\
    R_2R_0 \ca&=& \mc{T}_{\Psi}^{-1} \ca.
 \end{eqnarray*}
\end{thm}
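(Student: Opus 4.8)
The plan is to reduce each of the four identities to an equality of sets of simple objects. The underlying principle is that a finite-length heart is the extension-closure of its simple objects: if $\ca_1,\ca_2$ are hearts of bounded t-structures that are finite-length and $\Sim\ca_1 = \Sim\ca_2$, then every object of $\ca_1$ is a finite iterated extension of objects of $\Sim\ca_2$ and so lies in the extension-closed category $\ca_2$, giving $\ca_1\subseteq\ca_2$; Lemma \ref{nest_t} then forces $\ca_1 = \ca_2$. Since $\ca$ is finite-length and simple tilts of finite-length hearts remain finite-length (cf.\ Remark \ref{inv_tilt} and Proposition \ref{sim_tilt}), while $\mc{T}$ and $\mc{T}_\Psi$ are autoequivalences, every heart appearing below is finite-length and this principle applies. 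Note also that $S_i$ and $S_{i+2}$ have no extensions (Proposition \ref{quiver_tilt}), so the double tilts $L_2L_0$, $R_3R_1$, etc.\ are unambiguous.

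First I would treat $L_2L_0\ca = \mc{T}\ca$. As $\mc{T} = -\otimes\pi^*\co(1,0)$ is an autoequivalence, $\Sim(\mc{T}\ca) = \{\mc{T}(S_i)\}$, and the projection formula from Lemma \ref{aut_quiver} gives $\mc{T}(s_*\co(a,b)) = s_*\co(a+1,b)$, so that
\[
\mc{T}(S_0) = s_*\co(1,0),\quad \mc{T}(S_1) = s_*\co(0,0)[1],\quad \mc{T}(S_2) = s_*\co(2,-1)[1],\quad \mc{T}(S_3) = s_*\co(1,-1)[2].
\]
Comparing with the objects $\wt{S_0},\dots,\wt{S_3}$ computed immediately before the theorem shows $\mc{T}(S_1) = \wt{S_0}$, $\mc{T}(S_0) = \wt{S_1}$, $\mc{T}(S_3) = \wt{S_2}$, $\mc{T}(S_2) = \wt{S_3}$, so $\Sim(\mc{T}\ca) = \Sim(L_2L_0\ca)$ and the principle gives the first identity.

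Next I would establish $R_3R_1\ca = \mc{T}^{-1}\ca$ by the analogous direct computation with Proposition \ref{sim_tilt}. The first tilt produces $\Sim(R_1\ca) = \{S_1[-1],\,\phi_{S_1}(S_0),\,S_2,\,S_3\}$; here $S_1[-1] = s_*\co(-1,0)$, the objects $S_2,S_3$ are unaffected as they have no $\Ext^1$ with $S_1$, and $\phi_{S_1}(S_0) = \Cone(S_1[-1]^{\oplus2}\to S_0)$ is identified as $s_*\co(-2,0)[1]$ by pushing forward the Euler-type sequence $0\to\co(-2,0)\to\co(-1,0)^{\oplus2}\to\co(0,0)\to0$ on $\FFF$. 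For the second tilt a $\Hom^\bullet$-computation via the Koszul resolution (as in Proposition \ref{quiver_tilt}) shows $\Ext^1(S_3,-)$ vanishes on the other three simples except $\Ext^1(S_3,S_2) = \mb{C}^2$, and $\phi_{S_3}(S_2) = \Cone(S_3[-1]^{\oplus2}\to S_2)$ is $s_*\co(-1,-1)[2]$ via $0\to\co(-1,-1)\to\co(0,-1)^{\oplus2}\to\co(1,-1)\to0$. Thus $\Sim(R_3R_1\ca) = \{s_*\co(0,-1)[1],\,s_*\co(-1,-1)[2],\,s_*\co(-1,0),\,s_*\co(-2,0)[1]\}$, which coincides with $\{\mc{T}^{-1}(S_i)\} = \Sim(\mc{T}^{-1}\ca)$, yielding the second identity.

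Finally, the remaining two identities follow formally by applying $\Psi$. Lemma \ref{aut_sim_tilt} gives $\Psi(L_S\mc{B}) = L_{\Psi(S)}\Psi(\mc{B})$ and $\Psi(R_S\mc{B}) = R_{\Psi(S)}\Psi(\mc{B})$ for any finite-length heart $\mc{B}$, while Lemma \ref{aut_quiver} gives $\Psi(S_i) = S_{i+1}$ and $\Psi(\ca) = \ca$. Applying $\Psi$ to $L_2L_0\ca = \mc{T}\ca$ therefore yields $L_3L_1\ca = \Psi\mc{T}\ca = (\Psi\mc{T}\Psi^{-1})\Psi\ca = \mc{T}_\Psi\ca$, and applying $\Psi$ to $R_3R_1\ca = \mc{T}^{-1}\ca$ (using $R_2R_0 = R_0R_2$, as $S_0,S_2$ have no mutual extensions) yields $R_2R_0\ca = \mc{T}_\Psi^{-1}\ca$. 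I expect the main obstacle to be the right-tilt computation of the second identity: correctly identifying the mutation cones $\phi_{S_1}(S_0)$ and $\phi_{S_3}(S_2)$ via the pushforward Euler/Koszul sequences, and checking the $\Ext^1$-vanishing in the intermediate heart $R_1\ca$ so that Proposition \ref{sim_tilt} applies; once the four simple objects are pinned down, the passage to equality of hearts is automatic.
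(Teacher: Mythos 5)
Your proposal is correct, and for the first and third identities it is essentially the paper's own argument: compare the simple objects of $\mc{T}\ca$ with the objects $\wt{S_i}$ computed in Section \ref{sec:double_tilts}, get containment by extension-closure of the heart, upgrade to equality by Lemma \ref{nest_t}, and transport along $\Psi$ via Lemmas \ref{aut_sim_tilt} and \ref{aut_quiver}. Where you genuinely diverge is the second identity $R_3R_1\ca = \mc{T}^{-1}\ca$ (and hence the fourth, which you obtain from it by applying $\Psi$; the paper leaves that case to the reader). The paper never computes any right-tilted simples: it deduces the second identity formally from the first by inverting the double left tilt, $R_{S_0[1]}R_{S_2[1]}L_{S_2}L_{S_0}\ca = \ca$ (Remark \ref{inv_tilt}), and rewriting the left-hand side as $\mc{T}R_{S_1}R_{S_3}\ca$ using Lemma \ref{aut_sim_tilt} together with $\mc{T}(S_1) = S_0[1]$ and $\mc{T}(S_3)= S_2[1]$. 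You instead run Proposition \ref{sim_tilt} twice on the right tilts and identify the mutation cones $\phi_{S_1}(S_0) = s_*\co(-2,0)[1]$ and $\phi_{S_3}(S_2) = s_*\co(-1,-1)[2]$ via pushed-forward Euler sequences; these cones and the intermediate $\Ext^1$-vanishings check out, so your route works, and it has the side benefit of exhibiting $\Sim(R_3R_1\ca)$ explicitly, whereas the paper's route recycles the first identity and needs no new computation. One caveat: your assertion that "simple tilts of finite-length hearts remain finite-length (cf.\ Remark \ref{inv_tilt} and Proposition \ref{sim_tilt})" is not what those results say — both take finite-lengthness of the tilted heart as a \emph{hypothesis}. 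You do need $R_1\ca$ to be finite-length with finitely many simples in order to apply Proposition \ref{sim_tilt} a second time; the paper's own computation of $L_2L_0\ca$ (and its appeal to Remark \ref{inv_tilt}) relies on the same unproved property of $L_0\ca$, so this is a gloss shared with the paper rather than a defect peculiar to your argument, but the citation should be corrected or the finite-lengthness of the intermediate heart verified directly.
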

\begin{proof}
 $\mc{T} \ca\subset L_2L_0 \ca $ follows directly from the comparison of the simple objects after reordering them. By Lemma \ref{nest_t} we have $\mc{T} \ca = L_2L_0\ca$. 
 We also have 
    \begin{eqnarray*}
        \Psi\circ \mc{T}\circ \Psi^{-1}(\ca) 
        &=& \Psi L_2 L_0 \circ \Psi^{-1} \ca \\
        &=& \Psi \circ \Psi^{-1} L_3 L_1 \ca \quad \text{by Lemma \ref{aut_sim_tilt} and Lemma \ref{aut_quiver} }  \\
        &=& L_3 L_1 \ca.
    \end{eqnarray*}
    This proves the third identity. For the right mutation $R_3R_1\ca$, by using Remark \ref{inv_tilt} we have 
    $$
        R_{S_0[1]}R_{S_2[1]}L_{S_2}L_{S_0} \ca = \ca.
    $$
    Note that $R_{S_0[1]}R_{S_2[1]}L_{S_2}L_{S_0} \ca = R_{S_0[1]}R_{S_2[1]} \mc{T} \ca = \mc{T} R_{S_1}R_{S_3} \ca$ by Lemma \ref{aut_sim_tilt},  and $\mc{T}(S_1) = S_0[1]$,   $\mc{T}(S_3)= S_2[1]$. Therefore combining with the above identities we have 
    $$
        R_{S_1}R_{S_3} \ca = \mc{T}^{-1} \ca.
    $$
    Finally for $R_2 R_0 \ca$, the calculation is quite similar to that of $L_3L_1 \ca$ and we leave it to the reader. 
\end{proof}

$\mc{T}$ and $\mc{T}_{\Psi}$ induce automorphisms $t$ and $t_{\psi}$ of the Grothendieck group $K_0(\ca)$. The following results will be useful later:
\begin{lem}\label{auto_kgroup}
With respect to the  basis $\{\gamma_i\}$ of $K_0(\ca)$,  the automorphisms $t$ and $t_{\psi}$ have the matrix forms:
\begin{equation}
    t = \left[\begin{array}{c c c c}
                2 & -1 & 0 & 0 \\
                1 & 0 & 0 & 0 \\
                0 & 0 & 2 & -1 \\
                0 & 0 & 1 & 0
                \end{array}
            \right] \quad 
    t_{\psi} = \left[\begin{array}{c c c c}
                0 & 0 & 0 & 1 \\
                0 & 2 & -1 & 0 \\
                0 & 1 & 0 & 0 \\
                -1 & 0 & 0 & 2
                \end{array}
            \right] \quad 
\end{equation}  
\end{lem}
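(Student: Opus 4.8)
The plan is to compute $t$ directly on the basis $\{\gamma_i\}$ by applying the functor $\mc{T} = -\otimes\pi^*\co(1,0)$ to each simple object $S_i$ and reading off the class of the image in $K_0(\ca)$, and then to obtain $t_{\psi}$ by a conjugation argument rather than a fresh computation. Using the projection formula $s_*\co(a,b)\otimes\pi^*\co(1,0) = s_*\co(a+1,b)$ from the proof of Lemma \ref{aut_quiver}, I compute
\[
    \mc{T}(S_0) = s_*\co(1,0),\quad \mc{T}(S_1) = S_0[1],\quad \mc{T}(S_2) = s_*\co(2,-1)[1],\quad \mc{T}(S_3) = S_2[1].
\]
Two of these images are already shifts of basis objects, so passing to $K_0(\ca)$ gives immediately $t(\gamma_1) = [S_0[1]] = -\gamma_0$ and $t(\gamma_3) = [S_2[1]] = -\gamma_2$, which are the second and fourth columns of the asserted matrix for $t$.

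For the remaining two classes I would express the line bundles $s_*\co(1,0)$ and $s_*\co(2,-1)$ in the basis using the short exact sequences already produced in the double-tilt computation of Section \ref{sec:double_tilts}. Since $s_*\co(1,0) = S_1'$ occurs in the sequence (\ref{ses1}), in $K_0(\ca)$ one has $[s_*\co(1,0)] = 2[s_*\co(0,0)] - [s_*\co(-1,0)]$; using $s_*\co(0,0) = S_0$ and $s_*\co(-1,0) = S_1[-1]$ this reads $t(\gamma_0) = 2\gamma_0 + \gamma_1$. Likewise $s_*\co(2,-1) = \wt{S_3}[-1]$ occurs in the second short exact sequence of that computation, giving $[s_*\co(2,-1)] = 2[s_*\co(1,-1)] - [s_*\co(0,-1)]$; with $s_*\co(1,-1) = S_2[-1]$ and $s_*\co(0,-1) = S_3[-2]$ one finds $[s_*\co(2,-1)] = -2\gamma_2 - \gamma_3$, hence $t(\gamma_2) = -[s_*\co(2,-1)] = 2\gamma_2 + \gamma_3$. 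Collecting the four columns yields the stated matrix for $t$.

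For $t_{\psi}$ I would avoid repeating the calculation and instead use the definition $\mc{T}_{\Psi} = \Psi\circ\mc{T}\circ\Psi^{-1}$, which on Grothendieck groups becomes $t_{\psi} = \psi\circ t\circ\psi^{-1}$. By Lemma \ref{aut_quiver} the induced automorphism $\psi$ is the cyclic permutation $\psi(\gamma_i) = \gamma_{i+1}$ with indices in $\mb{Z}_4$, whose matrix is the corresponding permutation matrix. Conjugating the matrix of $t$ by this permutation — equivalently, evaluating $t_{\psi}(\gamma_i) = \psi\bigl(t(\gamma_{i-1})\bigr)$ for each $i$ — produces the second matrix directly.

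The computation presents no serious obstacle; the only delicate point is the sign bookkeeping coming from the homological shifts, so the real care lies in consistently applying $[s_*\co(a,b)[n]] = (-1)^n[s_*\co(a,b)]$ together with the identifications $\gamma_1 = -[s_*\co(-1,0)]$, $\gamma_2 = -[s_*\co(1,-1)]$, and $\gamma_3 = [s_*\co(0,-1)]$ when converting between line-bundle classes and basis vectors. Everything else reduces to the two short exact sequences already in hand.
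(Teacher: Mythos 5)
Your proof is correct and is essentially the paper's own argument: the paper's proof is the one-line remark that the lemma ``follows directly from the calculations above,'' meaning exactly the Section \ref{sec:double_tilts} data you invoke (the projection formula giving $\mc{T}(S_1)=S_0[1]$, $\mc{T}(S_3)=S_2[1]$, and the two short exact sequences identifying $[s_*\co(1,0)]$ and $[s_*\co(2,-1)]$), with signs handled exactly as you do. Your conjugation derivation $t_{\psi}=\psi\circ t\circ\psi^{-1}$ also parallels how the paper itself treats $\mc{T}_{\Psi}$ (via $\Psi$-conjugation in Theorem \ref{double_tilts}), so nothing is missing.
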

\begin{proof}
Follows directly from the calculations above.
 \end{proof}
It is easy to check that $t$ and $t_{\psi}$ preserve the subgroup $K_0(\ca)^{-\varphi}$. Therefore $t$ and $t_{\psi}$ can be regarded as the actions on $\overline{K_0(\ca)}$. In fact, when reducing to $\overline{K_0(\ca)}$, $t$ and $t_{\psi}$ have the matrix forms with respect to the basis $\{\gamma_0,\gamma_1\}$:
\begin{equation}
    t|_{\overline{K_0(\ca)}} = \left[\begin{array}{c c}
                2 & -1 \\
                1 & 0 
                \end{array}
            \right] \quad 
    t_{\psi}|_{\overline{K_0(\ca)}} = \left[\begin{array}{c c}
                0 & 1 \\
                 -1 & 2 
                \end{array}
            \right], 
\end{equation}  
we have 
 \begin{equation}\label{eqn:act_quot}
     \left(t|_{\overline{K_0(\ca)}}\right)^{-1} = t_{\psi}|_{\overline{K_0(\ca)}}.
 \end{equation}

We have the following relation in $K_0(\ca)$. 
\begin{lem}\label{ox_kgroup}
Let $x = ([a:b],[c:d])$ be a closed point in $\mb{P}^1\times \mb{P}^1$, $\delta$ be the class of $\co_x$ in $K_0(\ca)$. Then we have 
$$
    \delta = \sum_{i = 0}^3 \gamma_i.
$$
\end{lem}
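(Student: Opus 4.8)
The plan is to compute the coordinates of $\delta$ in the basis $\{\gamma_i\}$ by pairing against the dual collection, rather than by resolving $\co_x$ directly. First I would record that $\{\gamma_i = [S_i] = [s_*F_i]\}_{i=0}^3$ is a $\mb{Z}$-basis of $K_0(\ca)\cong K_0(D^b_0(X))$: under the equivalence $\mc{R}_{\mc{Q}}$ the objects $S_i$ go to the simple $B$-modules $C_i$, which generate $K_0(\mr{mod}_0\text{-}B)$. The key input is the dual-collection identity established in the proof of Corollary \ref{simp_heart}, namely $\Hom^{\bullet}_{D^b(X)}(\pi^*E_i, s_*F_j) = \delta_{ij}\mb{C}$. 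This says exactly that the Euler pairing satisfies $\chi_X(\pi^*E_i, \gamma_j) = \delta_{ij}$, so $\{[\pi^*E_i]\}$ is the dual basis of $\{\gamma_j\}$; consequently, for any class $v\in K_0(D^b_0(X))$ one has $v = \sum_j \chi_X(\pi^*E_j, v)\,\gamma_j$. I would note here that $\chi_X(\pi^*E_i,-)$ is well defined on $D^b_0(X)$ because $\RHom_X(\pi^*E_i,\mc{G})$ is finite-dimensional whenever $\mc{G}$ is supported on the compact zero section, and in particular for $\mc{G} = \co_x$.

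It then remains to evaluate $\chi_X(\pi^*E_i, \co_x)$ for each $i$. Since $\co_x$ is the skyscraper at a point $x$ of the smooth threefold $X$ and each $\pi^*E_i$ is a line bundle, the local $\mc{E}xt$ computation gives $\Ext^0_X(\pi^*E_i,\co_x)=\mb{C}$ and $\Ext^{>0}_X(\pi^*E_i,\co_x)=0$ (the higher sheaf-$\mc{E}xt$ vanish as $\pi^*E_i$ is locally free, and the local-to-global spectral sequence degenerates since the remaining sheaf is a skyscraper). Hence $\chi_X(\pi^*E_i,\co_x) = \mr{rank}(\pi^*E_i) = 1$ for every $i$, and substituting into the dual-basis expansion yields $\delta = \sum_{i=0}^{3}\gamma_i$.

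As a cross-check, and an alternative route, one can argue entirely on $\ff$ and then apply the exact pushforward $s_*$. The tensor-product Koszul resolution $0 \to \co(-1,-1) \to \co(-1,0)\oplus\co(0,-1) \to \co \to \co_x \to 0$ gives $[\co_x] = [\co]-[\co(-1,0)]-[\co(0,-1)]+[\co(-1,-1)]$ in $K_0(\ff)$, while the dual collection $\mb{F}$ gives $\sum_i [F_i] = [\co]-[\co(-1,0)]-[\co(1,-1)]+[\co(0,-1)]$. These two agree because of the relation $[\co(-1,-1)]+[\co(1,-1)] = 2[\co(0,-1)]$, which is the class of the Euler sequence of the first $\mb{P}^1$-factor twisted by $\co(0,-1)$; pushing forward by $s_*$ recovers the claim.

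I do not expect a genuine obstacle here: the statement is essentially a bookkeeping identity once the dual collection is in hand. The only points requiring care are the identification $\gamma_i = [s_*F_i]$ together with the finiteness needed for $\chi_X(\pi^*E_i,-)$ to be defined on the noncompact $X$, and, in the alternative approach, keeping the signs from the shifts in $\mb{F}$ straight. The Euler-pairing argument is the cleaner of the two, since it reuses the dual-collection property verbatim and sidesteps any explicit resolution of $\co_x$.
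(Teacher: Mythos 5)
Your proof is correct, but it takes a genuinely different route from the paper. The paper never invokes the Euler pairing: it resolves $\co_x$ explicitly inside $\ca$, first introducing $F_1 = s_*\co_{[a:b]\times \mb{P}^1}$ and $F_2 = s_*\co_{[a:b]\times \mb{P}^1}(-1)[1]$, computing $[F_1]=\gamma_0+\gamma_1$ and $[F_2]=\gamma_2+\gamma_3$ from the short exact sequences $0 \to s_*\co(-1,0) \to s_*\co \to F_1 \to 0$ and $0 \to s_*\co(0,-1) \to s_*\co(1,-1) \to F_2[-1]\to 0$, and then using $0 \to s_*\co_{[a:b]\times\mb{P}^1}(-1) \to s_*\co_{[a:b]\times\mb{P}^1} \to s_*\co_x \to 0$ to get $\delta = [F_1]+[F_2]$. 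Your primary argument instead pairs against the tilting bundle: the dual-collection identity $\Hom^{\bullet}_{D^b(X)}(\pi^*E_i, s_*F_j)=\delta_{ij}\mb{C}$ makes $\{[\pi^*E_i]\}$ dual to $\{\gamma_j\}$ under $\chi_X$, and $\chi_X(\pi^*E_i,\co_x)=1$ for a skyscraper, so $\delta=\sum_j\gamma_j$; your care about finite-dimensionality of $\RHom_X(\pi^*E_i,-)$ on compactly supported objects is exactly the point that needs checking on the non-compact $X$, and your computation is sound. What your approach buys is generality and economy: it gives the coordinate formula $v=\sum_j\chi_X(\pi^*E_j,v)\gamma_j$ for \emph{any} class in $K_0(D^b_0(X))$ and avoids resolutions entirely. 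What the paper's approach buys is reusable structure: the objects $F_1$, $F_2$, their classes, and the extension $0\to F_1 \to \co_x \to F_2 \to 0$ (equation (\ref{ses:delta})) are precisely what is needed later to prove semistability of $\co_x$ in Corollary \ref{ox_ss} and to identify the $\mb{P}^1$-families of stable objects of special Kronecker type I, so the paper's proof is doing double duty. Your cross-check via the Koszul resolution of the point on $\ff$ and the twisted Euler sequence relation $[\co(-1,-1)]+[\co(1,-1)]=2[\co(0,-1)]$ is also correct, and is closer in spirit to the paper's argument, though still organized differently (a single Koszul resolution plus a K-theory identity, rather than the paper's two-step filtration through sheaves on a ruling).
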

\begin{proof}
We write $F_1 = s_*{\co}_{[a:b]\times \mb{P}^1}, \ F_2 = s_*\co_{[a:b]\times \mb{P}^1}(-1)[1]$. First we have the short exact sequences
\begin{eqnarray*}
     0 \rightarrow s_*\co(-1,0) \rightarrow s_* \co \rightarrow F_1 \rightarrow 0, \\
     0 \rightarrow s_*\co(0,-1) \rightarrow s_* \co(1,-1) \rightarrow F_2[-1] \rightarrow 0.
\end{eqnarray*}
This gives $[F_1] = \gamma_0+\gamma_1$, $[F_2] = \gamma_2 + \gamma_3$. Then we consider the short exact sequence
\begin{equation}\label{ses:delta}
    0 \rightarrow s_*\co_{[a:b]\times \mb{P}^1}(-1) \rightarrow s_*{\co}_{[a:b]\times \mb{P}^1} \rightarrow s_* \co_x \rightarrow 0
\end{equation}
which gives $\delta = [F_1] + [F_2] = \sum_i\gamma_i$ as required.
\end{proof}
The above lemma shows that $[\co_x]$ does not depend on $x\in \pp$.
\section{Semistable Objects} \label{sec:stab_obj}

\subsection{(Semi)stable objects}
 In this section we describe the set of stable objects for stability conditions $\sigma\in \mc{U}(\mc{A})^{\Phi}$ . The description relies on the known properties of stability conditions for the Kronecker quiver.

Denote by $K_2$ the Kronecker quiver 
$$
    \xymatrix{
     0 \ar@<0.5ex>[r] \ar@<-0.5ex>[r] & 1
    }
$$
and $\rep(K_2)$ the category of representations. We denote by $C_0$ and $C_1$ the simple objects at vertices $0$ and $1$. 
Recall the underlying quiver $Q$ of $\mc{A}$ (Proposition \ref{quiver_tilt}) is 
    \[
    \xymatrixcolsep{4pc}
    \xymatrix{
        0 \ar@<0.5ex>[r] \ar@<-0.5ex>[r] & 1 \ar@<0.5ex>[d] \ar@<-0.5ex>[d] \\
        3 \ar@<0.5ex>[u] \ar@<-0.5ex>[u] & 2 \ar@<0.5ex>[l] \ar@<-0.5ex>[l]
    }
    \]
\begin{defi}
    We define full subcategories  of $\ca \cong \rep_{\nil}(Q,I)$ which can naturally be identified with $\rep K_2$. The objects of the full subcategories are
    \begin{center}
    \begin{tabular}{|c |c| c|c|}
        \hline
        \textbf{Subcategories}   &  \textbf{Objects} & \textbf{Dimension vectors} &\textbf{Class in $K_0(\ca)$} \\
        \hline
        $\mc{K}^I_1$ & $\xymatrixcolsep{4pc}
                        \xymatrix{
                         \mb{C}^p \ar@<0.5ex>[r]^{\mu_1} \ar@<-0.5ex>[r]_{\mu_2} & \mb{C}^q \ar@<0.5ex>[d] \ar@<-0.5ex>[d] \\
                         0 \ar@<0.5ex>[u] \ar@<-0.5ex>[u] & 0 \ar@<0.5ex>[l] \ar@<-0.5ex>[l]
                         }$ & $(p,q,0,0)$ & $p\gamma_0 + q\gamma_1$\\
        \hline
        $\mc{K}^I_2$ & $\xymatrixcolsep{4pc}
                        \xymatrix{
                         0 \ar@<0.5ex>[r] \ar@<-0.5ex>[r] & 0 \ar@<0.5ex>[d] \ar@<-0.5ex>[d] \\
                         \mb{C}^q \ar@<0.5ex>[u] \ar@<-0.5ex>[u] & \mb{C}^p \ar@<0.5ex>[l]^{\mu_1} \ar@<-0.5ex>[l]_{\mu_2}
                         }$ & $(0,0,p,q)$ & $p\gamma_2+q\gamma_3$\\
        \hline
        $\mc{K}^{II}_1$ & $\xymatrixcolsep{4pc}
                        \xymatrix{
                         0 \ar@<0.5ex>[r] \ar@<-0.5ex>[r] & \mb{C}^p \ar@<0.5ex>[d]^{\mu_1} \ar@<-0.5ex>[d]_{\mu_2} \\
                         0 \ar@<0.5ex>[u] \ar@<-0.5ex>[u] & \mb{C}^q \ar@<0.5ex>[l] \ar@<-0.5ex>[l]
                         } $ & $(0,p,q,0)$ & $p\gamma_1+q\gamma_2$\\   
        \hline
        $\mc{K}^{II}_2$ & $\xymatrixcolsep{4pc}
                        \xymatrix{
                         \mb{C}^q \ar@<0.5ex>[r] \ar@<-0.5ex>[r] & 0 \ar@<0.5ex>[d] \ar@<-0.5ex>[d] \\
                         \mb{C}^p \ar@<0.5ex>[u]^{\mu_1} \ar@<-0.5ex>[u]_{\mu_2} & 0 \ar@<0.5ex>[l] \ar@<-0.5ex>[l]
                         }$  & $(q,0,0,p)$ & $p\gamma_3+q\gamma_0$ \\
        \hline
    \end{tabular}
     \end{center}
     The objects in $\mc{K}^I_i$, $i=1,\ 2$ are called Kronecker type I, and the objects in $\mc{K}^{II}_i$, $i=1,\ 2$ are called Kronecker type II.
\end{defi}
The following lemma is obvious.
\begin{lem}\label{embed_K2}
    The full subcategories $\mc{K}^{I}_i$ and $\mc{K}^{II}_i$,  $i=1,\ 2$ are equivalent to $\rep(K_2)$. They are Serre subcategories of $\ca \cong \rep_{\nil}(Q,I)$, i.e., they are closed under taking quotients and subobjects.
\end{lem}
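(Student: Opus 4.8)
The plan is to realise each of the four subcategories as the full subcategory of $\ca \cong \rep_{\nil}(Q,I)$ cut out by a support condition. Reading off the dimension vectors, $\mc{K}^I_1$, $\mc{K}^{II}_1$, $\mc{K}^I_2$, $\mc{K}^{II}_2$ consist precisely of those representations supported, respectively, on the adjacent pairs of vertices $\{0,1\}$, $\{1,2\}$, $\{2,3\}$, $\{3,0\}$ --- the four edges of the oriented $4$-cycle (\ref{eqn:quiver}). For a representation $M$ supported on such a pair $\{i,i+1\}$ (indices in $\mb{Z}_4$), the only arrow maps that can be nonzero are the two parallel arrows joining $i$ to $i+1$, since every other arrow has its source or target outside the support. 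Restricting $M$ to the two spaces $M_i, M_{i+1}$ together with these two maps therefore defines a functor to $\rep(K_2)$, and I would show that it is an equivalence.

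Fullness and faithfulness of this functor are immediate: a morphism between two such representations is a tuple of linear maps all but two of which are forced to vanish, and the remaining commutativity constraints are exactly those defining a morphism in $\rep(K_2)$. The one point that needs care is essential surjectivity --- that extending a Kronecker representation by zero on the two complementary vertices yields a legitimate object of $\rep_{\nil}(Q,I)$. Here I would appeal to the shape of the relations: by Remark \ref{rem:potential} every generator of $I$ is a $\mb{C}$-linear combination of paths of length $3$ in the $4$-cycle, and any path of length $\geq 2$ must leave a two-vertex support, hence acts as zero on $M$. Consequently the relations are satisfied automatically and nilpotency is free, so restriction and extension-by-zero are mutually inverse, giving $\mc{K}^I_i, \mc{K}^{II}_i \simeq \rep(K_2)$.

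For the Serre property the key observation is that the defining support condition is monotone under subquotients. Concretely, if $M$ is supported on $\{i,i+1\}$ and $N$ is a subobject or a quotient of $M$ in $\ca$, then at each vertex $N_v$ is a subspace or quotient of $M_v$, so $N_v=0$ whenever $M_v=0$; thus $N$ lies in the same subcategory. The identical dimension-vector bookkeeping shows the support condition is inherited by extensions as well, so each subcategory is closed under subobjects, quotients and extensions. The only thing to verify carefully is that subobjects and quotients in the abelian category $\ca$ are computed vertexwise, which is precisely what the equivalence $\ca \cong \rep_{\nil}(Q,I)$ provides; granting this, the whole argument is routine bookkeeping and I anticipate no genuine obstacle, consistent with the statement being flagged as obvious.
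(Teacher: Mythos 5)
Your proof is correct. The paper offers no argument at all---it simply declares the lemma obvious---and your support-condition reasoning (extension by zero lands in $\rep_{\nil}(Q,I)$ because every generator of $I$ is a combination of length-3 paths, and any path of length $\geq 2$ in the oriented 4-cycle vanishes on a representation supported on two adjacent vertices; sub- and quotient objects are computed vertex-wise, so the support condition is inherited) is exactly the routine verification the paper leaves implicit.
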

We denote by $\Xi_{i}^{I}$ and $\Xi_i^{II}$ the corresponding embedding functors from the full subcategories to $\ca$.

Recall that for a finite acyclic quiver $Q$, $K_0(\rep Q)\cong \mb{Z}^{\oplus |Q_0|}$ is generated by the simple modules $S_i$ at each vertex $i$. We denote by $n_{ij}$ the number of arrows from vertex $i$ to $j$. Then the Euler form  on $K_0(\rep Q)$ is defined by $\chi([S_i], [S_j]) := \delta_{ij} - n_{ji}$.   For $\pmb{\alpha} = (\alpha_i)_{i\in Q_0}\in K_0(Q)$, the quadratic form  $q(-)$ is defined as $q(\pmb{\alpha}) := \chi(\pmb{\alpha},\pmb{\alpha})$. The associated matrix of $q$ is a symmetrization of the associated matrix of $\chi$. 

When $Q$ is Dynkin or affine Dynkin (for example, the Kronecker quiver), it is well-known that $q(-)$ is positive semi-definite. $\pmb{\alpha}$ is called a real root if $q(\pmb{\alpha}) = 1$ and an imaginary root if $q(\pmb{\alpha}) = 0$.  We need the following well-known result (for example, see \cite[Theorem 4.3.2]{BD98}).
\begin{thm}[Indecomposable representations of Kronecker quiver] \label{indec_K2}
We identify $K_0(\rep K_2)\cong \mb{Z}^2$ using the basis $([C_0],[C_1])$. Then
\begin{enumerate}
    \item for each real root $(n,n+1)$ or $(n+1,n)$ ($n\geq 0$), there is a unique indecomposable representation with this class in $K_0(\rep K_2)$, up to isomorphism, which we will denote by $E_{n,n+1}$ or $E_{n+1,n}$;
    \item for each imaginary root $(n,n)$ ($n \geq 1$), there is a family of indecomposable representations indexed by $\mb{P}^1$ with this class in $K_0(\rep K_2)$,  which we  denote by $E^{\lambda}_n$ where $\lambda = [a:b] \in \mb{P}^1$.
\end{enumerate}
The above are all the indecomposable representations in $\rep K_2$ up to isomorphism.
\end{thm}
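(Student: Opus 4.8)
The plan is to reduce the statement to the classical Weierstrass--Kronecker classification of matrix pencils. A representation of $K_2$ with dimension vector $(p,q)$ is exactly a pair of linear maps $\mu_1,\mu_2\colon \mb{C}^p\to\mb{C}^q$, equivalently a pencil $\mu_1+\lambda\mu_2$ of $q\times p$ matrices. Two representations are isomorphic precisely when the pencils are equivalent under the $\GL_q\times \GL_p$-action by left and right multiplication, and a decomposition of a representation into a direct sum corresponds to a block decomposition of the pencil. Thus classifying indecomposable representations is the same as classifying indecomposable pencils, and counting the isomorphism classes in a fixed dimension vector is the same as counting the indecomposable pencils of the corresponding shape.

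First I would record the invariant-theoretic shape of the problem by computing the Euler form. With the basis $([C_0],[C_1])$ one finds $\chi\bigl((p,q),(p,q)\bigr)=(p-q)^2$, so $q(\pmb{\alpha})=1$ forces $\pmb{\alpha}=(n,n+1)$ or $(n+1,n)$ (the real roots) while $q(\pmb{\alpha})=0$ forces $\pmb{\alpha}=(n,n)$ (the imaginary roots). This isolates exactly which dimension vectors can carry indecomposables and separates the two regimes in the statement.

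Next I would invoke the Kronecker normal form: every pencil is equivalent to a direct sum, unique up to reordering, of indecomposable blocks of three types --- the ``column'' blocks $L_n$ of size $n\times(n+1)$, the ``row'' blocks $R_n$ of size $(n+1)\times n$, and the Jordan-type blocks $J_n(\lambda)$ of size $n\times n$ attached to a point $\lambda\in\mb{P}^1$ (the case $\lambda=\infty$ being the block where $\mu_2$ is nilpotent). Reading off dimension vectors gives $[R_n]=(n,n+1)$, $[L_n]=(n+1,n)$ and $[J_n(\lambda)]=(n,n)$. The blocks $L_n$ and $R_n$ carry no continuous modulus, so each real root is realized by a single indecomposable, which I name $E_{n+1,n}$ and $E_{n,n+1}$; for fixed $n$ the blocks $J_n(\lambda)$ are genuinely parametrized by $\lambda\in\mb{P}^1$, giving the family $E^{\lambda}_n$ over each imaginary root. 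Completeness of the list in the theorem is then exactly completeness of the normal form, and for $n=0$ the blocks $L_0$ and $R_0$ recover the two simples $C_0,C_1$.

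The substance, and the step I expect to be the main obstacle, is the Kronecker normal form itself --- in particular its uniqueness (no block of one type degenerating into another) and the absence of further indecomposable block types. I would establish this by the standard staircase reduction on pencils: using the kernels and cokernels of a generic member $\mu_1+\lambda\mu_2$ one splits off the ``singular'' part governed by the minimal column and row indices (producing the $L_n$ and $R_n$) from the ``regular'' part, on which $\mu_2$ is invertible and the pencil reduces to the Jordan theory of $\mu_2^{-1}\mu_1$ (producing the $J_n(\lambda)$); uniqueness of these indices and of the Jordan data yields uniqueness of the decomposition. As an alternative route that matches the root-theoretic phrasing more directly, one can run the Bernstein--Gelfand--Ponomarev reflection functors for the affine quiver $\widetilde{A}_1$: iterating the Coxeter functor on the two indecomposable projectives and injectives produces the preprojective and preinjective modules with classes $(n,n+1)$ and $(n+1,n)$, each a single $\tau$-orbit, while the regular part consists of homogeneous tubes of rank one indexed by $\mb{P}^1$, the tube at $\lambda$ contributing one indecomposable of class $(n,n)$ for each length $n$; the trichotomy for tame hereditary algebras then shows these exhaust the indecomposables.
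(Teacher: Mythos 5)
Your proposal is correct, but it cannot be compared against a proof in the paper for the simple reason that the paper offers none: Theorem \ref{indec_K2} is invoked there as a ``well-known result'' with a citation to \cite[Theorem 4.3.2]{BD98}, and no argument is given. What you have written is essentially the standard argument behind that cited result. The reduction to matrix pencils is set up correctly (a representation of $K_2$ of dimension vector $(p,q)$ is a pair $\mu_1,\mu_2\colon\mb{C}^p\to\mb{C}^q$, and isomorphism of representations is exactly strict equivalence of pencils under $\GL_q\times\GL_p$), your Euler-form computation $q(p,q)=(p-q)^2$ matches the paper's convention $\chi([S_i],[S_j])=\delta_{ij}-n_{ji}$ and correctly isolates the real roots $(n,n+1),(n+1,n)$ and imaginary roots $(n,n)$, and the dimension-vector bookkeeping for the blocks $L_n$, $R_n$, $J_n(\lambda)$ (including $\lambda=\infty$ and the identification of $L_0,R_0$ with the simples) is right. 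You correctly flag the Weierstrass--Kronecker normal form as the real content; both routes you sketch for it --- the staircase reduction splitting the singular part from the regular part where a generic member of the pencil is invertible, and the Bernstein--Gelfand--Ponomarev/tame-hereditary trichotomy (preprojective and preinjective $\tau$-orbits plus homogeneous tubes indexed by $\mb{P}^1$) --- are standard and complete the argument. One small point worth making explicit: uniqueness of the block decomposition, combined with Krull--Schmidt, is what guarantees that each block is itself indecomposable and that no indecomposables outside the list exist; you use this implicitly when you say completeness of the list ``is exactly completeness of the normal form.''
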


We have the following characterization of stability conditions for Kronecker quivers by Okada \cite{Oka06}.
\begin{lem} \label{stab_K2}
Take a stability function $Z: K_0(\rep K_2) \rightarrow \mb{C}$ and denote by $\phi(E)$ the phase of a nonzero object $E\in \rep (K_2)$
\begin{enumerate}
    \item if $\phi(C_0) < \phi(C_1)$, then every indecomposable representation of $K_2$ is semistable, moreover, all indecomposable representations except for $E_m^{\lambda}$ when $m>1$ are stable.
    \item  If $\phi(C_1)<\phi(C_0)$, then the only stable objects are $C_0$ and $C_1$. The semistable objects are $C_0^{\oplus k},\ C_1^{\oplus k}$ for $k>1$.
    \item If $\phi(C_0) = \phi(C_1)$, then all objects are semistable, and only $C_0$,\ $C_1$ are stable.
\end{enumerate}
\end{lem}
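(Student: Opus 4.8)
The plan is to reduce the computation of (semi)stable objects to a purely combinatorial slope comparison and then run the trichotomy against the classification in Theorem~\ref{indec_K2}. Write $z_0 = Z(C_0)$ and $z_1 = Z(C_1)$, both in $H$, and for a representation $M$ with dimension vector $(a,b)$ (where $a = \dim M_0$, $b=\dim M_1$) record $Z(M) = a z_0 + b z_1$. First I would observe that, since $z_0,z_1 \in H$, the phase $\phi(a z_0 + b z_1)$ is a strictly monotone function of the slope $b/a \in [0,\infty]$: strictly increasing when $\phi(C_0) < \phi(C_1)$, strictly decreasing when $\phi(C_1) < \phi(C_0)$, and constant when $\phi(C_0) = \phi(C_1)$. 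Hence the comparison $\phi(N) \lessgtr \phi(M)$ for a subobject $N \subseteq M$ depends only on the comparison of slopes together with the sign of $\phi(C_0) - \phi(C_1)$; in particular, within each of the three cases the (semi)stable locus is independent of the precise $z_0,z_1$. I would also recall the two structural facts that drive everything: $\rep(K_2)$ is hereditary with $\Ext^1(C_1,C_0) = \mb{C}^2$ and $\Ext^1(C_0,C_1) = 0$, so that in the right-module convention of this paper (an arrow $0\to 1$ induces a map $M_1 \to M_0$) every $M$ with $M_0 \neq 0$ has $C_0$ as a subobject and every $M$ with $M_1 \neq 0$ has $C_1$ as a quotient; and $\Ext^1(C_0,C_0) = \Ext^1(C_1,C_1) = 0$.

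The two degenerate cases are then immediate. When $\phi(C_0) = \phi(C_1)$ every object shares the common phase, so all objects are semistable and an object is stable exactly when it has no proper nonzero subobject, i.e.\ when it is simple; thus the stable objects are $C_0$ and $C_1$. When $\phi(C_1) < \phi(C_0)$ the monotonicity makes $\phi(C_0)$ the strictly largest attainable phase, so any indecomposable $M$ with $M_0 \neq 0$ and $M_1 \neq 0$ contains $C_0$ as a subobject with $\phi(C_0) > \phi(M)$ and is therefore not even semistable; hence the only stable objects are $C_0,C_1$, and since neither simple self-extends, the semistable objects of phase $\phi(C_0)$ (resp.\ $\phi(C_1)$) are precisely $C_0^{\oplus k}$ (resp.\ $C_1^{\oplus k}$).

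The substance is Case~(1), $\phi(C_0) < \phi(C_1)$, where the slope comparison turns semistability into the statement that no proper subobject has slope strictly larger than $M$, i.e.\ King stability for the weight $\theta(a,b) = aq - bp$ on $M$ of dimension $(p,q)$ (so $\theta(M)=0$). I would dispatch the three families of Theorem~\ref{indec_K2}. For the imaginary root $(m,m)$: the regular representations form a Serre subcategory, all of slope $1$, so every subobject of $E_m^{\lambda}$ has slope $1$, giving semistability; for $m=1$, $E_1^{\lambda}$ is the indecomposable extension $0 \to C_0 \to E_1^{\lambda} \to C_1 \to 0$ whose only proper nonzero subobject is $C_0$ (slope $0 < 1$), so it is stable, while for $m>1$ the chain $E_1^{\lambda} \subset E_2^{\lambda} \subset \cdots \subset E_m^{\lambda}$ of subobjects of equal slope $1$ shows $E_m^{\lambda}$ is strictly semistable. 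The real-root representations $E_{n,n+1}$ and $E_{n+1,n}$ are the exceptional (rigid brick) indecomposables, and for these I would show each proper nonzero subobject has strictly smaller slope, whence they are stable.

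The main obstacle is exactly this last point: proving stability of the real-root representations (equivalently, that their proper subobjects have strictly smaller slope) and, uniformly, semistability of every indecomposable. I would handle it by induction on $n$ using the recursive/mutational structure of the preprojective and preinjective families, together with the standard argument that an exceptional object in a hereditary category is slope-stable: if $N \subsetneq M$ were a maximal destabilizing subobject, it would be semistable of slope $\geq \mu(M)$, and the long exact sequence for $0 \to N \to M \to M/N \to 0$ combined with $\End(E_{n,n+1}) = \mb{C}$ and $\Ext^1(E_{n,n+1},E_{n,n+1}) = 0$ forces $N = M$, ruling out both a strictly destabilizing sub and a proper sub of equal slope. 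Alternatively one may transport the whole of Case~(1) to slope stability of coherent sheaves on $\mb{P}^1$ via the Beilinson equivalence, under which the real-root indecomposables correspond to shifts of line bundles and the semistability statements are classical. Either route finishes Case~(1) and hence the lemma.
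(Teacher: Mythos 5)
Your treatment of cases (2) and (3) is correct and is essentially the paper's own argument: the paper likewise disposes of these in one line each, using that $C_0$ is a simple subobject and $C_1$ a simple quotient of every indecomposable with both components nonzero. The problem is case (1), which is where all the content lies, and your primary argument for it fails at two points. First, the claim that the regular representations form a Serre subcategory, so that every subobject of $E_m^{\lambda}$ has slope $1$, is false: the regular representations are \emph{not} closed under subobjects, and your very next sentence exhibits the counterexample $C_0\subset E_1^{\lambda}$ of slope $0\neq 1$. What is true, and what a quiver-side proof actually needs, is the Hom-vanishing $\Hom(I,R)=0$ for $I$ preinjective and $R$ regular, which forces every subobject of $E_m^{\lambda}$ to be a direct sum of preprojective and regular indecomposables, hence of slope $\leq 1$; this is a genuine input from the structure theory of tame hereditary algebras, not a formal consequence of anything you set up. Second, and more seriously, the ``standard argument that an exceptional object in a hereditary category is slope-stable'' is not a valid lemma: stability depends on the stability function, and in your own case (2) the exceptional representation of dimension vector $(2,1)$ is destabilized by its subobject $C_0^{\oplus 2}$. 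Consequently no argument of the shape you sketch (maximal destabilizing subobject, a long exact sequence, $\End=\mb{C}$ and $\Ext^1=0$) can work, because it never invokes the hypothesis $\phi(C_0)<\phi(C_1)$, and the conclusion is false without that hypothesis. This is exactly the point you yourself flag as ``the main obstacle,'' and it is not overcome: the stability of the real-root representations, and with it the semistability of all indecomposables, is left resting on a sketch that, as stated, proves a false general statement.

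Your one-sentence fallback --- transporting case (1) to $\mb{P}^1$ --- is precisely the paper's proof: since $\rep K_2$ is finite-length, $Z$ extends to a stability condition on $D^b(K_2)\cong D^b(\mb{P}^1)$; when $\phi(C_0)<\phi(C_1)$ one rotates by an element of $\mb{C}\subset\widetilde{\GL}^+(2,\mb{R})$ so that the heart of the rotated stability condition becomes $\Coh\mb{P}^1$ (this rotation is where the hypothesis is used), and then the classical facts on $\mb{P}^1$ finish the job: line bundles and torsion sheaves are semistable, line bundles and skyscrapers are stable, and under $\RHom(T,-)$ with $T=\co\oplus\co(1)$ these correspond to the indecomposable representations, the torsion sheaves of length $m>1$ giving exactly the strictly semistable $E_m^{\lambda}$. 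Had you developed that route, the proof would be complete; as written, the substance of case (1) rests on the two flawed steps above.
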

\begin{proof}
Since $\rep K_2$ is of finite-length,  $Z$ satisfies the Harder-Narasimhan property automatically, therefore $Z$ can be extended to a stability condition for $D^b(K_2)\cong D^b(\mb{P}^1)$, and is denoted  by $(Z, \mc{P})$. 
\begin{enumerate}
    \item Let $T = \co \oplus \co(1)$ be the tilting object in $D^b(\mb{P}^1)$. Then the functor $\RHom(T,-)$ sends $\co$ and $\co(-1)[1]$ to $C_0$ and $C_1$ respectively. If $\phi(C_0) < \phi(C_1)$, after rotating by $\lambda=i\bigl(\pi-\phi(\co_x)\bigr)$ where $x$ is a closed point of $\mb{P}^1$, the resulting stability condition $\lambda \cdot (Z,\mc{P}) = (\overline{Z}, \overline{\mc{P}})$ has heart $\overline{\mc{P}} (0,1] = \mr{Coh}\ \mb{P}^1$ (see the following figures).
       \begin{figure}[!h]
\begin{center}
\begin{tikzpicture}
[thick,scale=0.8, every node/.style={scale=0.9}]
	\begin{pgfonlayer}{nodelayer}
		\node [style=none] (0) at (0, 0) {};
		\node [style=none] (1) at (4, 0) {};
		\node [style=none] (2) at (8, 0) {};
		\node [style=none] (3) at (1, 0) {};
		\node [style=none] (4) at (4, 2) {};
		\node [style=none] (5) at (2, 2) {};
		\node [style=none] (6) at (6, 2) {};
		\node [style=none] (7) at (8, 2) {};
		\node [style=none] (8) at (9, 2) {};
		\node [style=none] (9) at (9, 2) {$\dots$};
		\node [style=none] (10) at (1.25, 2.5) {$Z\bigl(\co(-1)\bigr)$};
		\node [style=none] (11) at (3.75, 2.5) {$Z(\co)$};
		\node [style=none] (12) at (6, 2.5) {$Z\bigl(\co(1)\bigr)$};
		\node [style=none] (13) at (8, 2.5) {$Z\bigl(\co(2)\bigr)$};
		\node [style=none] (14) at (1, 0.5) {$Z(\co_x)$};
		\node [style=none] (15) at (-6.5, 0) {};
		\node [style=none] (16) at (-10.5, 0) {};
		\node [style=none] (17) at (-2.5, 0) {};
		\node [style=none] (18) at (-4.5, 2.5) {};
		\node [style=none] (19) at (-8.75, 2) {};
		\node [style=none] (20) at (-6, 3) {};
		\node [style=none] (21) at (-3.5, 2) {};
		\node [style=none] (22) at (-2.5, 1.25) {};
		\node [style=none] (23) at (-2.25, 0.75) {$\ddots$};
		\node [style=none] (24) at (-9.5, 2.75) {$Z\bigl(\co(-1)[1]\bigr)$};
		\node [style=none] (25) at (-6, 3.75) {$Z(\co_x)$};
		\node [style=none] (26) at (-4.25, 3.25) {$Z(\co)$};
		\node [style=none] (27) at (-3, 2.5) {$Z\bigl(\co(1)\bigr)$};
		\node [style=none] (28) at (-2, 1.5) {$Z\bigl(\co(2)\bigr)$};
		\node [style=none] (29) at (-6.5, -0.75) {Central charges of $\rep K_2$};
		\node [style=none] (30) at (4, -0.75) {};
		\node [style=none] (31) at (4, -0.75) {Central charges after rotating};
		\node [style=none] (32) at (3.25, -1.75) {};
	\end{pgfonlayer}
	\begin{pgfonlayer}{edgelayer}
		\draw (0.center) to (1.center);
		\draw [shorten >=0pt, dashed](1.center) to (2.center);
		\draw [style={<-}](5.center) to (1.center);
            \draw [style={<-}](3.center) to (1.center);
		\draw [style={<-}](4.center) to (1.center);
		\draw [style={->}](1.center) to (6.center);
		\draw [style={->}](1.center) to (7.center);
		\draw (16.center) to (15.center);
		\draw [style={->}](15.center) to (19.center);
		\draw [style={->}](15.center) to (18.center);
		\draw [style={->}](15.center) to (20.center);
		\draw [shorten >=0pt, dashed](15.center) to (17.center);
		\draw [style={->}](15.center) to (21.center);
		\draw [style={->}](15.center) to (22.center);
	\end{pgfonlayer}
\end{tikzpicture}
\end{center}
\end{figure}
    Therefore all line bundles and torsion sheaves are semistable, and in fact all line bundles and skyscraper sheaves are stable. They correspond to the indecomposable representations of $K_2$ by the functor $\RHom(T,-)$.
    \item The second statement follows  from the fact that $C_0$ is a simple subobject of every indecomposable representation except $C_1$ and $C_1$ is a simple factor object of every indecomposable representation except $C_0$.
    \item If $\phi(C_0)=\phi(C_1)$, then all nonzero objects in $\Rep(K_2)$ have the same phase and therefore are semistable.
\end{enumerate}
\end{proof}

\begin{rem}
Since $\mc{K}^{I}_i$ and $\mc{K}^{II}_i$ are equivalent to $\rep(K_2)$ by Lemma \ref{embed_K2}, therefore by Theorem \ref{indec_K2} we can describe the indecomposable objects of Kronecker type I and II.   
\end{rem}
Since we will only be interested in the stable objects in $\rep(K_2)$ by Lemma \ref{stab_K2}, by definition the stable representations are bricks, that is, $\End(M) = \BC$ if $M\in \rep(K_2)$ is stable. Then the following definition will be useful:
\begin{defi}[Special Kronecker type]\label{def:spe_kron}
    We call the indecomposable object of Kronecker type I and II  {\bf special} if it is a brick, or equivalently it is not isomorphic to the image of $E^{\lambda}_m$ under $\Xi^I_i$ or $\Xi^{II}_i$ for $m>1$.
\end{defi}

The following proposition  gives us the geometric description of objects of special Kronecker types. The calculations are direct and we leave them to the reader.
\begin{prop}\label{kron_type}
Let $l\geq 0$. We have the following correspondences between objects in $\mc{A}$ and $\rep_{\mr{nil}}(Q,I)$ under the equivalence $\mc{R}_{\mc{Q}} :\mc{A} \rightarrow \rep_{\mr{nil}}(Q,I) $  (we denote by $x$ a closed point of $\mb{P}^1$):
\begin{center}
\begin{tabular}{|c|c|| c | c |}
    \hline
  \textbf{\small Objects  of special }   & \textbf{\small Classes in $K_0(\ca)$}   & \textbf{\small Objects of special} & \textbf{\small Classes in $K_0(\ca)$}\\
  \textbf{\small Kronecker type I} &   &\textbf{\small Kronecker type II}&\\
    \hline
    $s_*\mc{O}(l,0)$ & $(l+1)\gamma_0 + l \gamma_1$ &   $\Psi(s_*\mc{O}(l,0))$ & $(l+1)\gamma_1 + l \gamma_2$ \\
    \hline
    $s_*\mc{O}(l+1,-1)[1]$ & $(l+1)\gamma_2+l \gamma_3$ & $\Psi(s_*\mc{O}(l+1,-1)[1])$ & $(l+1)\gamma_3 + l \gamma_0$ \\
    \hline
    $s_*\mc{O}(-l-1,0)[1]$ & $l\gamma_0 + (l+1)\gamma_1$ & $\Psi(s_*\mc{O}(-l-1,0)[1])$ & $l\gamma_1 + (l+1)\gamma_2$\\
    \hline
    $s_*\mc{O}(-l,-1)[2]$ & $l\gamma_2+(l+1)\gamma_3$ & $\Psi(s_*\mc{O}(-l,-1)[2])$ & $l\gamma_3 + (l+1)\gamma_0$ \\
    \hline
    $F_1 = s_*\mc{O}_{\{x\}\times \mb{P}^1}$ & $\gamma_0+\gamma_1$ & $\Psi(s_*\mc{O}_{\{lx\}\times \mb{P}^1})$ & $\gamma_1+\gamma_2$ \\
    \hline
    $F_2 = s_*\mc{O}_{\{x\}\times \mb{P}^1}(-1)[1]$ & $\gamma_2+\gamma_3$ & $\Psi(s_*\mc{O}_{\{x\}\times \mb{P}^1}(-1)[1])$ & $\gamma_3+\gamma_0$\\
    \hline
\end{tabular}
\end{center}
\end{prop}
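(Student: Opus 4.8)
The plan is to reduce the identification of each $\mc{R}_{\mc{Q}}$-image to cohomology on $\mb{P}^1\times\mb{P}^1$, and then to let Theorem \ref{indec_K2} do the work of pinning down the Kronecker representation from its class alone. The central tool is the adjunction $\RHom_X(\pi^*E_i, s_*G)\cong \RHom_{\ff}(E_i,G)$, which holds because $s^*\pi^*\cong\mathrm{id}_{\ff}$; since $\mc{Q}=\bigoplus_i\pi^*E_i$ and $\mc{R}_{\mc{Q}}=\RHom_X(\mc{Q},-)$ is an equivalence sending $\pi^*E_i$ to the projective $P_i$, the vertex-$i$ graded piece of the module $\mc{R}_{\mc{Q}}(s_*G)$ is $\Hom^\bullet_{\ff}(E_i,G)$, which the Künneth formula on $\ff$ computes term by term.

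First I would treat the objects attached to $\mc{K}^I_1$, namely $s_*\co(l,0)$, $s_*\co(-l-1,0)[1]$ and the family $F_1=s_*\co_{\{x\}\times\mb{P}^1}$. For each I would pair against all four $E_i$. The key observation is that each of these sheaves has second $\mb{P}^1$-degree $0$, so pairing against $E_2=\co(1,1)$ and $E_3=\co(2,1)$ produces a Künneth factor $H^\bullet(\mb{P}^1,\co(-1))=0$; hence the images are supported only on vertices $0,1$ and land in $\mc{K}^I_1$. The surviving pairings against $E_0,E_1$ turn out to be concentrated in a single cohomological degree, so the images are genuine modules in $\ca$, and reading off dimensions recovers the first column of the table (e.g. $s_*\co(l,0)\mapsto(l+1,l,0,0)$, i.e. $(l+1)\gamma_0+l\gamma_1$). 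The objects attached to $\mc{K}^I_2$, namely $s_*\co(l+1,-1)[1]$, $s_*\co(-l,-1)[2]$ and $F_2$, are handled identically: now the second $\mb{P}^1$-degree is $-1$, which forces the pairings against $E_0,E_1$ to vanish and leaves support on vertices $2,3$.

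Next I would identify each image with the correct indecomposable. Since $\mc{R}_{\mc{Q}}$, $s_*$ and $p_1^*$ are all fully faithful and hence preserve indecomposability, the line bundles $\co(l,0)$ and $\co(l+1,-1)$ map to indecomposable objects of $\rep(K_2)$; as a real root determines its indecomposable uniquely in Theorem \ref{indec_K2}, matching the class already identifies these with the appropriate $E_{n,n+1}$ or $E_{n+1,n}$. For the fibre classes $\gamma_0+\gamma_1$ and $\gamma_2+\gamma_3$, the members of the family $\{s_*\co_{\{x\}\times\mb{P}^1}\}_{x\in\mb{P}^1}$ are pairwise non-isomorphic bricks of class $(1,1)$, so they fill out the $\mb{P}^1$-family $E_1^\lambda$ attached to the imaginary root; in particular all of these objects are special in the sense of Definition \ref{def:spe_kron}. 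This settles both type I columns.

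Finally, the type II columns follow formally. By Lemma \ref{aut_quiver} we have $\Psi(S_i)=S_{i+1}$, so $\Psi$ carries the subcategory supported on vertices $0,1$ to that supported on vertices $1,2$, giving $\Psi(\mc{K}^I_1)=\mc{K}^{II}_1$, and likewise $\Psi(\mc{K}^I_2)=\mc{K}^{II}_2$; on $K_0(\ca)$ it acts by $\gamma_i\mapsto\gamma_{i+1}$. Applying $\Psi$ to the type I objects therefore yields exactly the type II objects with the claimed shifted classes. I expect the only mildly delicate bookkeeping to be the simultaneous tracking of the Künneth splitting and the cohomological degree, so as to confirm that each image is concentrated in degree $0$ and hence lies in $\ca$ rather than merely in $D^b_0(X)$; once that is in place, everything else is forced by the class together with Theorem \ref{indec_K2}.
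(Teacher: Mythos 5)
Your proof is correct, and it fills a genuine hole: the paper's own ``proof'' of this proposition consists of the single sentence that the calculations are direct and left to the reader. Your computation is exactly the intended direct calculation, assembled from the same tools the paper uses in adjacent results: the adjunction $\Hom^{\bullet}_X(\pi^*E_i,s_*G)\cong\Hom^{\bullet}_{\ff}(E_i,G)$ (as in Corollary \ref{simp_heart}), the K\"unneth decomposition on $\ff$ (as in Lemma \ref{lem:van_tilt_f0} and Proposition \ref{quiver_tilt}), identification of the resulting indecomposable Kronecker representations via Theorem \ref{indec_K2}, and the relation $\Psi(S_i)=S_{i+1}$ of Lemma \ref{aut_quiver} to dispatch the type II column formally. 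I checked the dimension-vector computations and they come out as in the table; your organizing observation --- that one vanishing K\"unneth factor forces support on vertices $\{0,1\}$ or $\{2,3\}$, and that a nilpotent representation with such a dimension vector automatically lies in $\mc{K}^I_1$ or $\mc{K}^I_2$ because all remaining arrows act as zero --- is the right one, as is your remark that concentration in a single cohomological degree is what places these objects in $\ca$ in the first place.

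Two minor points of precision, neither of which breaks the argument. First, $s_*$ is fully faithful on coherent sheaves but \emph{not} on derived categories (the Koszul computation in Proposition \ref{quiver_tilt} exhibits the extra Ext groups); what you actually need is $\End_X(s_*F)\cong\End_{\ff}(F)$ for a sheaf $F$, which does hold since the correction term $\Ext^{3}_{\ff}(F,F)^*$ vanishes on a surface, so indecomposability and the brick property are indeed preserved. Second, your claim that the sheaves $s_*\co_{\{x\}\times\mb{P}^1}$ ``fill out'' the family $E^{\lambda}_1$ asserts surjectivity, which pairwise non-isomorphism alone does not give (an injection of sets $\mb{P}^1\to\mb{P}^1$ need not be onto). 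The proposition only requires each $F_1(x)$ to be a brick in $\mc{K}^I_1$ of class $\gamma_0+\gamma_1$, which you have; if you want exhaustion, either compute the two arrow maps on $\mc{R}_{\mc{Q}}\bigl(s_*\co_{\{x\}\times\mb{P}^1}\bigr)$ directly --- for $x=[a:b]$ they are the pair of scalars $(a,b)$, identifying the representation with $E^{[a:b]}_1$ --- or note that the assignment is an injective morphism of varieties $\mb{P}^1\to\mb{P}^1$, hence an isomorphism.
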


From now on, we often identify  objects in $\rep_{\mr{nil}}(Q,I)$ with  the corresponding objects in $\ca$ without further comment. 
\begin{defi}
    We introduce the open subsets of $\mc{U}(\ca)^{\Phi}$:
    \begin{eqnarray*}
    \mc{U}(\mc{A})^{\Phi}_+ &=& \{\sigma\in \mc{U}(\mc{A})^{\Phi}: \phi(S_0) = \phi(S_2) < \phi(S_1)= \phi(S_3)\}, \\
     \mc{U}(\mc{A})^{\Phi}_- &=& \{\sigma\in \mc{U}(\mc{A})^{\Phi}: \phi(S_1) = \phi(S_3) < \phi(S_0)= \phi(S_1)\}
    \end{eqnarray*}
    \begin{figure}[!h]
        \begin{center}
        \begin{minipage}{0.5\linewidth}
        \begin{tikzpicture}
	    \begin{pgfonlayer}{nodelayer}
		  \node [style=none] (0) at (-2.5, 0) {};
		  \node [style=none] (1) at (0, 0) {};
		  \node [style=none] (2) at (2.5, 0) {};
		  \node [style=none] (3) at (-2, 2.5) {};
		  \node [style=none] (4) at (2.5, 2) {};
		  \node [style=none] (7) at (2, 2.2) {$Z(\gamma_0)=Z(\gamma_2)$};
		  \node [style=none] (9) at (-2, 2.8) {$Z(\gamma_1)=Z(\gamma_3)$};
	   \end{pgfonlayer}
	   \begin{pgfonlayer}{edgelayer}
		  \draw (0.center) to (1.center);
		  \draw [shorten >=0pt,dashed] (1.center) to (2.center);
		  \draw [style={edge_arrow}] (3.center) to (1.center);
		  \draw [style={edge_arrow_2}] (1.center) to (4.center);
	   \end{pgfonlayer}
            \end{tikzpicture}
            \caption{Central charges of $\mc{U}(\ca)^{\Phi}_+$}
        \end{minipage}\hfill
        \begin{minipage}{0.5\linewidth}
        \begin{tikzpicture}
	    \begin{pgfonlayer}{nodelayer}
		  \node [style=none] (0) at (-2.5, 0) {};
		  \node [style=none] (1) at (0, 0) {};
		  \node [style=none] (2) at (2.5, 0) {};
		  \node [style=none] (3) at (-2, 2.5) {};
		  \node [style=none] (4) at (2.5, 2) {};
		  \node [style=none] (7) at (2, 2.2) {$Z(\gamma_1)=Z(\gamma_3)$};
		  \node [style=none] (9) at (-2, 2.8) {$Z(\gamma_0)=Z(\gamma_2)$};
	   \end{pgfonlayer}
	   \begin{pgfonlayer}{edgelayer}
		  \draw (0.center) to (1.center);
		  \draw [shorten >=0pt,dashed] (1.center) to (2.center);
		  \draw [style={edge_arrow}] (3.center) to (1.center);
		  \draw [style={edge_arrow_2}] (1.center) to (4.center);
	   \end{pgfonlayer}
            \end{tikzpicture}
            \caption{Central charges of $\mc{U}(\ca)^{\Phi}_-$}
        \end{minipage}
        \end{center}
    \end{figure}
\end{defi}
\begin{lem} \label{lem:po_ne}
The autoequivalence $\Psi$ induces a bijection between $\mc{U}(\mc{A})^{\Phi}_+ $ and $\mc{U}(\mc{A})^{\Phi}_-$.
\end{lem}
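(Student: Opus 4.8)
The plan is to exploit the left action of $\Aut(D^b_0(X))$ on stability conditions (Definition \ref{defi:group_act}) and to show that $\Psi$ restricts to an involution of $\mc{U}(\ca)^{\Phi}$ which interchanges the two open subsets. The only real content is bookkeeping of hearts, $\Phi$-invariance, and phases, so I do not expect a genuine obstacle; the point requiring the most care is the direction of the index shift.

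First I would verify that $\Psi$ maps $\mc{U}(\ca)^{\Phi}$ into itself. For the heart: by Lemma \ref{aut_quiver} the functor $\Psi$ permutes the simple objects via $S_i \mapsto S_{i+1}$, hence $\Psi(\ca) = \ca$; since the heart of $\Psi(\sigma) = (Z_\psi, \mc{P}_\Psi)$ is $\mc{P}_\Psi\bigl((0,1]\bigr) = \Psi\bigl(\mc{P}((0,1])\bigr) = \Psi(\ca) = \ca$, the transformed stability condition still has heart $\ca$. For $\Phi$-invariance: since $\Phi = \Psi^2$ commutes with $\Psi$, for $\sigma \in \mc{U}(\ca)^{\Phi}$ one has $\Phi\bigl(\Psi(\sigma)\bigr) = \Psi\bigl(\Phi(\sigma)\bigr) = \Psi(\sigma)$, so $\Psi(\sigma)$ is again $\Phi$-invariant.

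Next I would compute the effect of $\Psi$ on phases. Writing $\phi$ for the phase in $\sigma = (Z,\mc{P})$ and $\phi'$ for the phase in $\Psi(\sigma)$, the object $S_i$ is semistable of phase $\phi'(S_i)$ in $\Psi(\sigma)$ if and only if $S_i \in \mc{P}_\Psi\bigl(\phi'(S_i)\bigr) = \Psi\bigl(\mc{P}(\phi'(S_i))\bigr)$, that is, if and only if $\Psi^{-1}(S_i) = S_{i-1} \in \mc{P}\bigl(\phi'(S_i)\bigr)$; hence $\phi'(S_i) = \phi(S_{i-1})$ for all $i \in \mb{Z}_4$. Substituting the defining inequalities of $\mc{U}(\ca)^{\Phi}_+$, namely $\phi(S_0) = \phi(S_2) < \phi(S_1) = \phi(S_3)$, yields $\phi'(S_1) = \phi'(S_3) < \phi'(S_0) = \phi'(S_2)$, which is exactly the condition defining $\mc{U}(\ca)^{\Phi}_-$. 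Thus $\Psi$ sends $\mc{U}(\ca)^{\Phi}_+$ into $\mc{U}(\ca)^{\Phi}_-$, and the same computation with the roles reversed sends $\mc{U}(\ca)^{\Phi}_-$ into $\mc{U}(\ca)^{\Phi}_+$.

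Finally, to upgrade these two inclusions to a bijection I would observe that on $\Phi$-invariant stability conditions $\Psi$ is an involution: for $\sigma \in \mc{U}(\ca)^{\Phi}$ one has $\Psi^2(\sigma) = \Phi(\sigma) = \sigma$ by $\Phi$-invariance. Hence the restriction $\Psi \colon \mc{U}(\ca)^{\Phi}_+ \to \mc{U}(\ca)^{\Phi}_-$ admits the restriction $\Psi \colon \mc{U}(\ca)^{\Phi}_- \to \mc{U}(\ca)^{\Phi}_+$ as a two-sided inverse, giving the claimed bijection. No step here is genuinely difficult; the only subtlety is that the inverse $\Psi^{-1}$ produces $S_{i-1}$ rather than $S_{i+1}$, and it is precisely this index shift that makes $\Psi$ swap the two chambers instead of fixing them.
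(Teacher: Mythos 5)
Your proof is correct and follows essentially the same route as the paper's: both apply the $\Aut$-action of $\Psi$, observe that the heart $\ca$ is preserved while the central charge/phases shift index by one ($Z_{\psi}(\gamma_i) = Z(\gamma_{i-1})$, equivalently your $\phi'(S_i) = \phi(S_{i-1})$), and conclude that the two chambers are exchanged. Your closing involution argument ($\Psi^2 = \Phi = \mathrm{id}$ on $\Phi$-invariant stability conditions) is simply a clean way of making explicit what the paper compresses into ``the statement follows immediately.''
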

\begin{proof}
Given $\sigma = (Z,\ca)\in \mc{U}(\mc{A})^{\Phi}_+$, we denote by $\Psi(\sigma) = (Z_{\psi}, \Psi(\ca)=\ca)$. By Lemma \ref{aut_quiver}, $Z_{\psi}(\gamma_i) = Z(\gamma_{i-1})$, therefore $\Psi(\sigma)\in \mc{U}(\mc{A})^{\Phi}_-$. The statement follows immediately.
\end{proof}
Let $\sigma = (Z,\ca) \in \mc{U}(\ca)^{\Phi}$. We define a stability function $\wbar{Z}$ on $\rep K_2$ by setting $\wbar{Z}^{I}:= Z\circ \Xi^I_i$ and $\wbar{Z}^{II} = Z\circ \Xi^{II}_i$, where $\Xi^I_{i}$ $i=1,\ 2$ are the embedding functors.
\begin{lem}\label{lem:restr_stab}
    The stable objects in $\mc{K}^I_i$ (resp. $\mc{K}^{II}_i$) with respect to $\wbar{Z}^I$ (resp. $\wbar{Z}^{II}$) are stable in $\ca$ with respect to $\sigma$.
\end{lem}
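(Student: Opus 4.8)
The plan is to exploit the structural fact recorded in Lemma \ref{embed_K2} that $\mc{K}^I_i$ and $\mc{K}^{II}_i$ are \emph{Serre} subcategories of $\ca$ — in particular closed under taking subobjects. I will treat the type I case, the type II case being identical after replacing $\Xi^I_i$ and $\wbar{Z}^I$ with $\Xi^{II}_i$ and $\wbar{Z}^{II}$. First I would recall what $\sigma$-stability means for an object of the heart: since $\ca$ is of finite length, by Definition \ref{def:stab_func} an object $M\in\ca$ is $\sigma$-stable precisely when $\phi(A) < \phi(M)$ for every proper nonzero subobject $A \subsetneq M$ taken in $\ca$, the phases being measured by the central charge $Z$. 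So the statement reduces to controlling the subobjects, inside $\ca$, of $M = \Xi^I_i(N)$, where $N\in\rep K_2$ is a $\wbar{Z}^I$-stable object.

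The crux of the argument is the next step. Because $\mc{K}^I_i$ is closed under subobjects in $\ca$, every subobject $A \subsetneq M$ formed in the ambient category $\ca$ already lies in $\mc{K}^I_i$. As $\Xi^I_i$ is a fully faithful exact embedding onto this Serre subcategory, it identifies the poset of subobjects of $M$ in $\ca$ with the poset of subobjects of $N$ in $\rep K_2$; thus every such $A$ is of the form $A = \Xi^I_i(B)$ for a unique proper nonzero subobject $B \subsetneq N$, and conversely every proper nonzero $B$ arises this way.

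Finally I would compare phases. By the definition $\wbar{Z}^I = Z\circ\Xi^I_i$ we have $Z(A) = \wbar{Z}^I(B)$ and $Z(M) = \wbar{Z}^I(N)$, so the phases of $A$ and $M$ computed with $Z$ in $\ca$ coincide with the phases of $B$ and $N$ computed with $\wbar{Z}^I$ in $\rep K_2$. Since $N$ is $\wbar{Z}^I$-stable, $\phi(B) < \phi(N)$ for every proper nonzero $B \subsetneq N$; transporting this strict inequality through the identification of subobjects yields $\phi(A) < \phi(M)$ for every proper nonzero $A \subsetneq M$, which is exactly $\sigma$-stability of $M$ in $\ca$.

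The only genuine content lies in the second step: the Serre (subobject-closed) property is what prevents a subobject of $M$ taken in the much larger category $\ca$ from escaping the Kronecker subcategory and destabilizing $M$ with phase $\geq \phi(M)$. Everything else is a routine bookkeeping of phases under the embedding. I therefore expect no real obstacle beyond a careful invocation of Lemma \ref{embed_K2}; in particular, no Harder--Narasimhan filtration argument and no deformation of $\sigma$ are needed.
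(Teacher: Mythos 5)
Your proposal is correct and is essentially the paper's own argument: the paper likewise deduces stability from Lemma \ref{embed_K2} (closure of $\mc{K}^I_i$, $\mc{K}^{II}_i$ under subobjects) together with the fact that the embeddings $\Xi^I_i$, $\Xi^{II}_i$ preserve phases, your write-up merely spelling out the identification of subobject posets that the paper leaves implicit.
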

\begin{proof}
By Lemma \ref{embed_K2} $\rep K_2$ is closed under taking quotients and subobjects, and since $\Xi^{I}_i$ and $\Xi^{II}_i$ preserve the ordering by phases, it follows that if $E\in \rep K_2$ is stable, then $\Xi^{I}_i(E)$ and $\Xi^{II}_i(E)$ are stable in $\ca$.
\end{proof}
We immediately have the following theorem
\begin{thm}\label{thm:stab_kron}
    Let $\sigma\in \mc{U}(\ca)^{\Phi}_+$. Then the objects of special Kronecker type I are stable for $\sigma$. For $\tau\in \mc{U}(\ca)^{\Phi}_-$, then the objects of special Kronecker type II are stable for $\tau$.
\end{thm}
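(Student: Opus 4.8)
The plan is to read the statement off directly from Okada's classification (Lemma \ref{stab_K2}) combined with the embedding result (Lemma \ref{lem:restr_stab}); no new computation is required beyond bookkeeping of the Kronecker orientations.

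First I would fix $\sigma = (Z,\ca) \in \mc{U}(\ca)^{\Phi}_+$ and record the consequence of its defining inequality $\phi(S_0) = \phi(S_2) < \phi(S_1) = \phi(S_3)$, namely that $\phi(S_0) < \phi(S_1)$ and $\phi(S_2) < \phi(S_3)$. Under the equivalences $\Xi^I_1 \colon \rep K_2 \to \mc{K}^I_1$ and $\Xi^I_2 \colon \rep K_2 \to \mc{K}^I_2$ of Lemma \ref{embed_K2}, the source simple $C_0$ of $K_2$ is sent to $S_0$ (respectively $S_2$) and the sink simple $C_1$ to $S_1$ (respectively $S_3$), as one reads off from the arrow directions and dimension vectors in the defining table. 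Hence in both Kronecker subcategories the restricted stability function $\wbar{Z}^I$ satisfies $\phi(C_0) < \phi(C_1)$, placing us in case (1) of Lemma \ref{stab_K2}.

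Okada's result then identifies the stable objects of $\rep K_2$ for $\wbar{Z}^I$ as exactly the indecomposables that are not of the form $E^{\lambda}_m$ with $m > 1$, that is, the bricks, which by Definition \ref{def:spe_kron} are the special Kronecker objects. Applying Lemma \ref{lem:restr_stab}, each such object is stable in $\ca$ for $\sigma$, which is the assertion for special Kronecker type I.

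For the type II statement I would exploit the symmetry rather than repeat the argument. By Lemma \ref{lem:po_ne} the autoequivalence $\Psi$ carries $\mc{U}(\ca)^{\Phi}_+$ bijectively onto $\mc{U}(\ca)^{\Phi}_-$, and being an autoequivalence it preserves stability; by Proposition \ref{kron_type} it sends the special objects of Kronecker type I to those of type II. So given $\tau \in \mc{U}(\ca)^{\Phi}_-$, I would write $\tau = \Psi(\sigma)$ with $\sigma \in \mc{U}(\ca)^{\Phi}_+$ and conclude that the $\Psi$-images of the type I objects stable for $\sigma$ are type II objects stable for $\tau$. (Equally, one can verify directly that the defining inequality of $\mc{U}(\ca)^{\Phi}_-$ yields $\phi(C_0) < \phi(C_1)$ in each of $\mc{K}^{II}_1$ and $\mc{K}^{II}_2$ and rerun the previous two paragraphs.) The only delicate point throughout is matching the orientation $0 \to 1$ of $K_2$ to the correct ordered pair of vertices of $Q$, so that the phase inequality lands in case (1) and not case (2) of Lemma \ref{stab_K2}; once this is fixed there is no genuine obstacle.
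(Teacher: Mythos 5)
Your proposal is correct and takes essentially the same approach as the paper: the paper states this theorem immediately after Lemma \ref{lem:restr_stab} with the phrase ``we immediately have,'' the intended argument being exactly your combination of Okada's classification (case (1) of Lemma \ref{stab_K2}) with the transfer of stability along the Serre-subcategory embeddings, plus the orientation bookkeeping identifying $(C_0,C_1)$ with $(S_0,S_1)$, $(S_2,S_3)$ (resp.\ $(S_1,S_2)$, $(S_3,S_0)$). Your use of $\Psi$ via Lemma \ref{lem:po_ne} and Proposition \ref{kron_type} for the type II case is a harmless variant of the direct check, which you also record parenthetically.
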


\begin{cor} \label{ox_ss}
    Let $x\in \mb{P}^1\times \mb{P}^1$, then $\co_x$ is semistable with respect to the stability condition $\sigma\in \mc{U}(\ca)^{\Phi}_+$.  For $\sigma \in \mc{U}(\ca)^{\Phi}_-$, there is a  semistable object whose class in $K_0(\ca)$ is $\delta = [\co_x]$.
\end{cor}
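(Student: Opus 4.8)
The plan is to exhibit $s_*\co_x$ as an extension of two $\sigma$-stable objects of equal phase. The starting point is the short exact sequence (\ref{ses:delta}) from the proof of Lemma \ref{ox_kgroup},
\[
    0 \to s_*\co_{[a:b]\times\mb{P}^1}(-1) \to s_*\co_{[a:b]\times\mb{P}^1} \to s_*\co_x \to 0,
\]
whose outer terms I would rewrite using the objects of Proposition \ref{kron_type}: the middle sheaf is $F_1 = s_*\co_{[a:b]\times\mb{P}^1}$, and the subsheaf is $F_2[-1]$, where $F_2 = s_*\co_{[a:b]\times\mb{P}^1}(-1)[1]$. Viewing this in $D^b_0(X)$ as the triangle $F_2[-1] \to F_1 \to s_*\co_x \to F_2$ and rotating gives
\[
    F_1 \to s_*\co_x \to F_2 \to F_1[1].
\]
Since $F_1$, $F_2$ and $s_*\co_x$ all lie in the heart $\ca$ by Proposition \ref{kron_type}, applying the long exact cohomology sequence of the t-structure forces this triangle to be a genuine short exact sequence $0 \to F_1 \to s_*\co_x \to F_2 \to 0$ in $\ca$.

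Next I would fix the phases for $\sigma = (Z,\ca)\in \mc{U}(\ca)^{\Phi}_+$. Because $\sigma$ is $\Phi$-invariant, its central charge factors through $\overline{K_0(\ca)}$, so $Z(\gamma_0) = Z(\gamma_2)$ and $Z(\gamma_1) = Z(\gamma_3)$; hence
\[
    Z(F_1) = Z(\gamma_0) + Z(\gamma_1) = Z(\gamma_2) + Z(\gamma_3) = Z(F_2),
\]
and $F_1$, $F_2$ share a common phase $\phi$, which is also the phase of $\delta$. Both $F_1$ and $F_2$ are objects of special Kronecker type I, hence $\sigma$-stable by Theorem \ref{thm:stab_kron}, so in particular they belong to the extension-closed subcategory $\mc{P}(\phi)$ of semistable objects of phase $\phi$. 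The short exact sequence above therefore presents $s_*\co_x$ as an extension of two objects of $\mc{P}(\phi)$, whence $s_*\co_x\in\mc{P}(\phi)$ is semistable. This proves the first assertion.

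For $\sigma\in\mc{U}(\ca)^{\Phi}_-$ I would transport the statement along $\Psi$. By Lemma \ref{lem:po_ne} the autoequivalence $\Psi$ maps $\mc{U}(\ca)^{\Phi}_+$ bijectively onto $\mc{U}(\ca)^{\Phi}_-$, so $\sigma = \Psi(\sigma')$ for some $\sigma'\in\mc{U}(\ca)^{\Phi}_+$. As $\Psi$ sends $\sigma'$-semistable objects to $\Psi(\sigma')$-semistable objects, the object $\Psi(s_*\co_x)$ is $\sigma$-semistable; and since $\psi(\gamma_i) = \gamma_{i+1}$ by Lemma \ref{aut_quiver}, its class is $\psi(\delta) = \sum_i \gamma_{i+1} = \delta$. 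This supplies the required semistable object of class $\delta$.

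The only genuinely delicate point is the first step: confirming that the rotated triangle is an exact sequence \emph{inside} $\ca$, which rests on checking that the shifted sheaf $F_2$ lies in the heart and on correctly matching the shift $F_2[-1]$ with the subsheaf in (\ref{ses:delta}). Once that identification is secured, the remainder is formal, following from the extension-closedness of $\mc{P}(\phi)$ together with the $\Phi$-symmetry $Z(\gamma_0)=Z(\gamma_2)$, $Z(\gamma_1)=Z(\gamma_3)$ of the central charge.
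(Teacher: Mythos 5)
Your proposal is correct and follows essentially the same route as the paper: both arguments realize $\co_x$ as the extension $0 \to F_1 \to \co_x \to F_2 \to 0$ in $\ca$ coming from the sequence (\ref{ses:delta}), invoke Proposition \ref{kron_type} and Theorem \ref{thm:stab_kron} to get stability of $F_1$, $F_2$, use the $\Phi$-invariance $Z(\gamma_0)=Z(\gamma_2)$, $Z(\gamma_1)=Z(\gamma_3)$ to equate their phases, and handle $\mc{U}(\ca)^{\Phi}_-$ by transporting along $\Psi$ together with $\psi(\delta)=\delta$. Your explicit verification that the rotated triangle is a genuine short exact sequence in the heart is a detail the paper leaves implicit, but it is the same proof.
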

\begin{proof}
Let  $F_1 = s_*\mc{O}_{\{x_1\}\times \mb{P}^1}, \ F_2 = s_*\mc{O}_{\{x_1\}\times \mb{P}^1}(-1)[1]$ where $x_1 = p_1(x)$. By (\ref{ses:delta}), there is a short exact sequence in $\ca$:
$$
    0 \rightarrow F_1 \rightarrow \co_x \rightarrow F_2 \rightarrow 0.
$$
By Proposition \ref{kron_type} and Theorem \ref{thm:stab_kron} $F_1$ and $F_2$ are stable for the stability condition $\sigma\in\mc{U}(\ca)^{\Phi}_+$. Moreover, since $Z(F_1) = Z(\gamma_0)+Z(\gamma_1)$ and $Z(F_2)=Z(\gamma_2)+Z(\gamma_3)$ therefore $\phi(F_1) = \phi(F_2)$.  So  $\co_x$ is (strictly) semistable for $\sigma$ with the same phase as $\phi(F_i)$.

Suppose $\sigma\in \mc{U}(\ca)^{\Phi}_-$. We take $\Psi^{-1}(\sigma)$, by Lemma \ref{lem:po_ne} $\Psi^{-1}(\sigma) \in \mc{U}(\ca)^{\Phi}_+$. Therefore $\Psi(\co_x)$ is semistable for $\sigma$.  Since 
    \[
    \psi(\delta) = \sum_{i=0}^3 \psi(\gamma_i) = \sum_{i=0}^3 \gamma_i = \delta,
    \]
    the claim is proved.   
\end{proof}
\begin{rem}\label{rem:stab}
    For $\sigma\in \mc{U}(\ca)^{\Phi}$, we can conclude that there is a semistable object whose class in $K_0(\ca)$ is $\delta$. By the above corollary, the  remaining case we need to verify is that when $\phi(S_i) = \phi(\co_x)$ for all $i$, however, each object in $\ca$ is   semistable in this case.
\end{rem}
The central charges of $\delta$ and other stable objects for $\sigma\in \mc{U}(\ca)^{\Phi}_+$ are depicted in the figure \ref{fig:sstab}.
\begin{figure}[H]
    \centering
    \begin{tikzpicture}
	\begin{pgfonlayer}{nodelayer}
		\node [style=none] (0) at (-4, 0) {};
		\node [style=none] (1) at (0, 0) {};
		\node [style=none] (2) at (4.25, 0) {};
		\node [style=none] (3) at (-3, 0.75) {};
        \node [style=none] (17) at (-3, 3) {$\vdots$};
		\node [style=none] (4) at (3, 0.75) {};
		\node [style=none] (7) at (4.5, 0.75) {\small $Z(\gamma_0)=Z(\gamma_2)$};
		\node [style=none] (9) at (-4.25, 0.75) {\small $Z(\gamma_1)=Z(\gamma_2)$};
		\node [style=none] (10) at (-3, 2) {};
		\node [style=none] (11) at (-3, 4) {};
		\node [style=none] (16) at (3, 4) {};
		\node [style=none] (17) at (3, 2) {};
        \node [style=none] (26) at (3, 3) {$\vdots$};
		\node [style=none] (18) at (0, 3) {};
        \node [style=none] (26) at (0, 2) {};
        \node [style=none] (27) at (0.2, 2) {\small $Z(\gamma_0+\gamma_1) = Z(\gamma_2+\gamma_3)$};
		\node [style=none] (19) at (-3.75, 4.25) {};
		\node [style=none] (20) at (-3.75, 4.25) {\small $Z(n\gamma_0+(n+1)\gamma_1) = Z(n\gamma_2+(n+1)\gamma_3)$};
		\node [style=none] (21) at (3.75, 4.25) {};
		\node [style=none] (22) at (3.75, 4.25) {\small $Z((n+1)\gamma_0+n\gamma_1) = Z((n+1)\gamma_2+n\gamma_3)$};
		\node [style=none] (23) at (3.75, 4.25) {};
		\node [style=none] (24) at (0, 3.5) {};
		\node [style=none] (25) at (0, 3.5) {\small $Z(\delta)$};
	\end{pgfonlayer}
	\begin{pgfonlayer}{edgelayer}
		\draw (0.center) to (1.center);
		\draw [shorten >=0pt,dashed] (1.center) to (2.center);
		\draw [style={<-}] (3.center) to (1.center);
		\draw [style={->}, in=195, out=15] (1.center) to (4.center);
		\draw [style={<-}] (10.center) to (1.center);
		\draw [style={<-}] (11.center) to (1.center);
		\draw [style={<-}] (16.center) to (1.center);
		\draw [style={<-}] (17.center) to (1.center);
		\draw [style={<-}] (18.center) to (1.center);
        \draw [style={<-}] (26.center) to (1.center);
	\end{pgfonlayer}
\end{tikzpicture}
    \caption{Central charges of stable objects and $\co_x$ for $\sigma \in \mc{U}(\ca)^{\Phi}_+$}
    \label{fig:sstab}
\end{figure}

 Recall that $\mc{T}= -\otimes \pi^*\co(1,0)$ and $\mc{T}_{\Psi}=\Psi\circ\mc{T} \circ \Psi^{-1}$, the simple objects in $\ca$ are $S_0 = s_*\co(0,0)$, $S_1 = s_*\co(-1,0)[1]$, $s_*\co(1,-1)[1]$ and $S_3 = s_*\co(0,-1)[2]$. Finally we mention another description of some objects of special Kronecker types I and II,
\begin{lem}\label{lem:kron_type}
Let $n\geq 0$
    \begin{enumerate}
        \item $\mc{T}^n(S_0),\ \mc{T}^n(S_2),\ \mc{T}^{-n}(S_1),\ \mc{T}^{-n}(S_3)$ are objects of special Kronecker type I with classes in $K_0(\ca)$:
        \[ (n+1)\gamma_0 + n \gamma_1, \quad (n+1)\gamma_2+n\gamma_3,\quad  n\gamma_0+ (n+1)\gamma_1, \quad n\gamma_2 + (n+1)\gamma_3.
        \]
            
        \item $\mc{T}^n_{\Psi}(S_1),\ \mc{T}^n_{\Psi}(S_3),\ \mc{T}^{-n}_{\Psi}(S_0),\ \mc{T}^{-n}_{\Psi}(S_2)$ are objects of special Kronecker type II with classes in $K_0(\ca)$:
        \[
        (n+1)\gamma_1+ n\gamma_2,\quad (n+1)\gamma_3+n\gamma_0,\quad  n\gamma_1+(n+1)\gamma_2,\quad n\gamma_3 + (n+1)\gamma_0.
        \]
    \end{enumerate}
\end{lem}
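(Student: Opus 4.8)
The plan is to reduce both parts to explicit computations of the autoequivalences on the four simple objects, and then to read off the conclusion from the dictionary established in Proposition \ref{kron_type}. Since $\mc{T} = -\otimes \pi^*\co(1,0)$, the only computational input needed is the projection-formula identity $s_*\co(a,b)\otimes \pi^*\co(j,k)\cong s_*\co(a+j,b+k)$, which was already verified at the start of the proof of Lemma \ref{aut_quiver}.

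For part (1), I would iterate this identity on each simple object. Starting from $S_0 = s_*\co(0,0)$, $S_1 = s_*\co(-1,0)[1]$, $S_2 = s_*\co(1,-1)[1]$, and $S_3 = s_*\co(0,-1)[2]$, one obtains
\[
\mc{T}^n(S_0) = s_*\co(n,0),\quad \mc{T}^n(S_2) = s_*\co(n+1,-1)[1],
\]
\[
\mc{T}^{-n}(S_1) = s_*\co(-n-1,0)[1],\quad \mc{T}^{-n}(S_3) = s_*\co(-n,-1)[2].
\]
Each of these is precisely one of the four entries in the left-hand column of the table in Proposition \ref{kron_type} (with $l = n$), which already identifies them as objects of special Kronecker type I and records their classes in $K_0(\ca)$ as the four listed expressions. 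This settles part (1); in particular no separate verification of the brick/specialness condition is required, since that is part of the assertion of Proposition \ref{kron_type}.

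For part (2), I would exploit the conjugation relation $\mc{T}_{\Psi} = \Psi\circ\mc{T}\circ\Psi^{-1}$, which gives $\mc{T}_{\Psi}^{\pm n} = \Psi\circ\mc{T}^{\pm n}\circ\Psi^{-1}$. By Lemma \ref{aut_quiver} we have $\Psi^{\pm 1}(S_i) = S_{i\pm 1}$ with indices read in $\mb{Z}_4$, so $\Psi^{-1}$ carries $S_1, S_3, S_0, S_2$ to $S_0, S_2, S_3, S_1$ respectively. Composing with the formulas from part (1) yields, for instance, $\mc{T}_{\Psi}^n(S_1) = \Psi\,\mc{T}^n(S_0) = \Psi\bigl(s_*\co(n,0)\bigr)$ and $\mc{T}_{\Psi}^{-n}(S_0) = \Psi\,\mc{T}^{-n}(S_3) = \Psi\bigl(s_*\co(-n,-1)[2]\bigr)$, and analogously for $\mc{T}_{\Psi}^n(S_3)$ and $\mc{T}_{\Psi}^{-n}(S_2)$. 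Each of these is exactly a right-hand column entry of the table in Proposition \ref{kron_type}, which records them as objects of special Kronecker type II with the four listed classes.

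The argument is essentially a bookkeeping verification, so I do not anticipate a serious obstacle; the only point demanding care is the index arithmetic modulo $4$ in part (2), where one must correctly track how $\Psi^{-1}$ permutes the simple objects before applying the part (1) computation. All the genuine content---the identification of these pushforwards of line bundles with special Kronecker representations and the computation of their Grothendieck classes---has already been carried out in Proposition \ref{kron_type}, so the present lemma is simply a repackaging of that data via the twist functors $\mc{T}$ and $\mc{T}_{\Psi}$.
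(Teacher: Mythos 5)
Your proof is correct and is essentially the paper's own argument: the paper likewise notes $\mc{T}^n\bigl(s_*\co(a,b)\bigr) = s_*\co(a+n,b)$ by the projection formula and reads everything off from the table in Proposition \ref{kron_type}, with part (2) implicitly handled exactly as you do, via $\mc{T}_{\Psi}^{\pm n} = \Psi\circ\mc{T}^{\pm n}\circ\Psi^{-1}$ and $\Psi(S_i)=S_{i+1}$. One incidental point your computation exposes: in part (2) of the lemma as stated, the last two classes are paired in swapped order --- your (correct) calculation gives $\mc{T}_{\Psi}^{-n}(S_0) = \Psi\bigl(s_*\co(-n,-1)[2]\bigr)$ with class $n\gamma_3+(n+1)\gamma_0$ and $\mc{T}_{\Psi}^{-n}(S_2) = \Psi\bigl(s_*\co(-n-1,0)[1]\bigr)$ with class $n\gamma_1+(n+1)\gamma_2$ --- a harmless transposition in the paper's statement, since the set of classes is unchanged.
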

\begin{proof}
    Note that $\mc{T}^n\bigl(s_*\co(a,b)\bigr) =  s_*\co(a+n,b)$ by the projection formula, the result follows directly from the table in Proposition \ref{kron_type}.
\end{proof}
For $\sigma\in \mc{U}(\ca)^{\Phi}_+$, we can alternatively illustrate the central charges of semistable objects in the complex plane:
\begin{center}
    \begin{tikzpicture}
	\begin{pgfonlayer}{nodelayer}
		\node [style=none] (0) at (-4, 0) {};
		\node [style=none] (1) at (0, 0) {};
		\node [style=none] (2) at (4.25, 0) {};
		\node [style=none] (3) at (-3, 0.75) {};
        \node [style=none] (17) at (-3, 3) {$\vdots$};
		\node [style=none] (4) at (3, 0.75) {};
		\node [style=none] (7) at (4.5, 0.75) {$Z(S_0)=Z(S_2)$};
		\node [style=none] (9) at (-4.25, 0.75) {$Z(S_1)=Z(S_3)$};
		\node [style=none] (10) at (-3, 2) {};
		\node [style=none] (11) at (-3, 4) {};
		\node [style=none] (16) at (3, 4) {};
		\node [style=none] (17) at (3, 2) {};
        \node [style=none] (26) at (3, 3) {$\vdots$};
		\node [style=none] (18) at (0, 3) {};
		\node [style=none] (19) at (-3.75, 4.25) {};
		\node [style=none] (20) at (-3.75, 4.25) {$Z(\mc{T}^{-n}(S_1))$};
		\node [style=none] (21) at (3.75, 4.25) {};
		\node [style=none] (22) at (3.75, 4.25) {$Z(\mc{T}^n(S_0))$};
		\node [style=none] (23) at (3.75, 4.25) {};
		\node [style=none] (24) at (0, 3.5) {};
		\node [style=none] (25) at (0, 3.5) {$Z(\delta)$};
        \node [style=none] (26) at (-4.25, 2) {$Z(\mc{T}^{-1}(S_1))$};
        \node [style=none] (27) at (4.5, 2) {$Z(\mc{T}(S_0))$};
	\end{pgfonlayer}
	\begin{pgfonlayer}{edgelayer}
		\draw (0.center) to (1.center);
		\draw [shorten >=0pt,dashed] (1.center) to (2.center);
		\draw [style={<-}] (3.center) to (1.center);
		\draw [style={->}, in=195, out=15] (1.center) to (4.center);
		\draw [style={<-}] (10.center) to (1.center);
		\draw [style={<-}] (11.center) to (1.center);
		\draw [style={<-}] (16.center) to (1.center);
		\draw [style={<-}] (17.center) to (1.center);
		\draw [style={<-}] (18.center) to (1.center);
	\end{pgfonlayer}
\end{tikzpicture}
\end{center}
\subsection{There are no other stable objects}\label{sec:noother_stab}
This subsection is the main part of this paper. We prove that for $\sigma \in \mc{U}(\ca)^{\Phi}$, there are no other stable objects other than the ones in Theorem \ref{thm:stab_kron}. 

For simplicity,  we first restrict ourselves to the normalized stability conditions 
\[
\bigl(\Stab(X)^{\Phi}\bigr)_n := \{\sigma=(Z,\mc{P}): Z(\delta) = i\}\subset \bigl(\Stab(X)^{\Phi}\bigr)_0,
\]
where $\delta$ is the class of skyscrapper sheaf $\co_x$ in $K_0(\ca)$. Note that $\bigl(\Stab(X)^{\Phi}\bigr)_n$ is a connected submanifold of $\bigl(\Stab(X)^{\Phi}\bigr)_0$. 

Let $\mc{U}^n(\mc{A})^{\Phi} \subset \bigl(\Stab(X)^{\Phi}\bigr)_n$ which consists of normalized stability conditions $(Z, \ca)$ with the fixed heart $\ca$, and $\mc{U}^n(\ca)^{\Phi}_{\pm} = \{(Z,\ca)\in \mc{U}(\ca)^{\Phi}_{\pm}: Z(\delta) = i \}$. For $\sigma\in \mc{U}^n(\mc{A})^{\Phi}_+$, we have \begin{equation}
        \phi(S_0)=\phi(S_2) < \phi(\co_x) = \frac{1}{2} < \phi(S_1) = \phi(S_3).
\end{equation}

Recall  the group action on the stability conditions in  Definition \ref{defi:group_act}. Let  $\mc{T}$, $\mc{T}_{\Psi}$ be the autoequivalences defined in Theorem \ref{double_tilts}. We first make  the following important observartion
\begin{lem} \label{g_act}
\begin{enumerate}
    \item For $\sigma\in \mc{U}^n(\mc{A})^{\Phi}_+ $, then $\mc{T}(\sigma) = \sigma\cdot \widetilde{g}$ where  $\widetilde{g} = (g, f)\in \widetilde{\mr{GL}}^+(2, \mb{R})$ such that $f: \mb{R}\rightarrow \mb{R}$ satisfies  $f\left(\frac{1}{2}\right) = \frac{1}{2}$.
    \item For $\sigma\in \mc{U}^n(\mc{A})^{\Phi}_- $, then $\mc{T}_{\Psi}(\sigma) = \sigma\cdot \widetilde{g}$ where  $\widetilde{g} = (g, f)\in \widetilde{\mr{GL}}^+(2, \mb{R})$ such that $f: \mb{R}\rightarrow \mb{R}$ satisfies  $f\left(\frac{1}{2}\right) = \frac{1}{2}$.
\end{enumerate}

\end{lem}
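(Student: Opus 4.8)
The plan is to reduce the claim to a two-dimensional linear statement on the invariant central charges and then match the slicings. Throughout write $t$ for the automorphism of $\overline{K_0(\ca)}$ induced by $\mc{T}$, which by Lemma~\ref{auto_kgroup} is $\left[\begin{smallmatrix} 2 & -1 \\ 1 & 0\end{smallmatrix}\right]$ in the basis $\{\gamma_0,\gamma_1\}$. For $\sigma=(Z,\ca)\in \mc{U}^n(\ca)^{\Phi}_+$ the central charge factors through $\overline{K_0(\ca)}$, and since $\phi(S_0)<\phi(S_1)$ the values $Z(\gamma_0),Z(\gamma_1)$ are $\mb{R}$-linearly independent, so $Z$ defines an $\mb{R}$-linear isomorphism $\overline{K_0(\ca)}\otimes\mb{R}\xrightarrow{\sim}\mb{C}$. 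I would then set $g:=Z\circ t\circ Z^{-1}\in\GL(2,\mb{R})$ (identifying $\mb{C}=\mb{R}^2$); because $\det t=1$ we get $g\in\mr{SL}(2,\mb{R})\subset\GL^+(2,\mb{R})$. By Definition~\ref{defi:group_act} the central charge of $\mc{T}(\sigma)$ is $Z\circ t^{-1}=g^{-1}\circ Z$, which is exactly the central charge of $\sigma\cdot\wtil g$ for any lift $\wtil g=(g,f)$ of $g$; so the central-charge parts agree automatically.

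Next I would pin down the lift and the fixed phase. By Lemma~\ref{ox_kgroup}, $\delta=\sum_i\gamma_i=2\gamma_0+2\gamma_1$ in $\overline{K_0(\ca)}$, and a direct computation with the matrix of $t$ gives $t(\delta)=\delta$ (equivalently, $(1,1)$ spans the unique eigenline of the parabolic map $t$). Since $\sigma$ is normalized with $Z(\delta)=i$, this yields $g(i)=Z(t\,Z^{-1}(i))=Z(t\delta)=Z(\delta)=i$, so $g$ fixes the positive imaginary axis, i.e. the ray of phase $\tfrac12$. I would then take $f$ to be the unique lift of the $S^1$-action of $g$ with $f(\tfrac12)=\tfrac12$; because $g$ is parabolic with its only fixed direction at phase $\tfrac12$, the relation $f(\phi+1)=f(\phi)+1$ forces $f$ to fix all of $\tfrac12+\mb{Z}$ and to satisfy $|f(\phi)-\phi|<1$ for every $\phi$. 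This is the $\wtil g$ asserted in the statement. Part (2) then follows from part (1) by conjugating with $\Psi$, using $\mc{T}_{\Psi}=\Psi\mc{T}\Psi^{-1}$, Lemma~\ref{lem:po_ne}, and the symmetry $\psi(\delta)=\delta$ of the normalization.

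It remains to show the slicings agree, and this is the step I expect to be the main obstacle. Both $\mc{T}(\sigma)$ and $\sigma\cdot\wtil g$ now have the same central charge $g^{-1}Z$, so by Lemma~\ref{stab_equi} it suffices to prove the two slicings are within distance $<1$; equivalently, I would show their hearts coincide and then use that a stability condition with a fixed finite-length heart is determined by its central charge on the simple objects. The heart of $\mc{T}(\sigma)$ is $\mc{T}(\ca)=L_2L_0\ca$ by Theorem~\ref{double_tilts}, with simple objects $\wt S_i$ listed there; locating their $\sigma$-phases (the two objects $\wt S_1=\mc{T}(S_0)$ and $\wt S_3=\mc{T}(S_2)$, of classes $2\gamma_0+\gamma_1$ and $2\gamma_2+\gamma_3$, lie in $(\phi(S_0),1)$, while $\wt S_0=S_0[1]$ and $\wt S_2=S_2[1]$ lie at $\phi(S_0)+1$) shows all four sit in a $\sigma$-phase interval of length $<1$, so $L_2L_0\ca=\mc{P}_{\sigma}((a,a+1])$ for a suitable $a$ by Lemma~\ref{nest_t}. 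The heart of $\sigma\cdot\wtil g$ is $\mc{P}_{\sigma}((f(0),f(0)+1])$, so the crux is to check that $f(0)$ lands in the same window, i.e. that rotating the central charge along the path $\{(g^s,f^s)\}_{s\in[0,1]}$ crosses precisely the walls where $\phi(S_0)=\phi(S_2)$ drop to $0$ and performs exactly the double left tilt $L_2L_0$, invoking Lemma~\ref{lem:stab_tilt}. The delicate point is to rule out any intermediate wall-crossing, so that the branch of the lift really is the one fixing $\tfrac12$; once the hearts are matched, the equal central charge forces $\mc{T}(\sigma)=\sigma\cdot\wtil g$.
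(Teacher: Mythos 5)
Your first half is correct and is essentially the paper's own argument: your $g=Z\circ t\circ Z^{-1}$ is exactly the paper's $g$ (defined there by $g(\pmb{e_0})=2\pmb{e_0}+\pmb{e_1}$, $g(\pmb{e_1})=-\pmb{e_0}$), the observation that $t(\delta)=\delta$ together with the normalization $Z(\delta)=i$ forces $g$ to fix the positive imaginary axis is the paper's, as is the choice of the unique lift with $f(1/2)=1/2$, the matching of central charges, and the deduction of part (2) by conjugation with $\Psi$ using Lemma \ref{lem:po_ne}. The genuine gap is exactly the step you flag as "the main obstacle": you never prove that the heart $\mc{P}_f(0,1]$ of $\sigma\cdot\wtil{g}$ equals $\mc{T}(\ca)=L_{S_0}L_{S_2}\ca$, i.e.\ that $f(0)$ lands in $[\phi(S_0),\phi(\mc{T}(S_0)))$; you only restate this as a wall-crossing problem along a path $\{(g^s,f^s)\}$ and leave "ruling out intermediate wall-crossing" open. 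Since this is the substantive half of the lemma, the proposal is incomplete. A second, smaller gap: your phase bookkeeping treats $\wt{S_1}=\mc{T}(S_0)$ and $\wt{S_3}=\mc{T}(S_2)$ as having well-defined $\sigma$-phases, which presupposes they are $\sigma$-semistable; this needs Theorem \ref{thm:stab_kron} (they are of special Kronecker type I), which you never invoke.

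The paper closes the gap directly, with no path argument. For $S_1,S_3\in\mc{P}(\phi)$, the relation $g(\pmb{e_1})=-\pmb{e_0}$ and the lift property give $f(\phi)=\phi(S_0[i])$ for some odd $i$; monotonicity of $f$, $f(1/2)=1/2$, $f(3/2)=3/2$ and $\phi\in(1/2,1]$ force $i=1$, so $S_0[1],S_2[1]\in\mc{P}_f(0,1]$. Symmetrically, $g(\pmb{e_0})=Z(\mc{T}(S_0))$ together with the $\sigma$-semistability of $\mc{T}(S_0),\mc{T}(S_2)$ gives $f(\omega)=\phi(\mc{T}(S_0)[n])$ with $n$ even, forced to be $n=0$; hence all four simples of $L_{S_0}L_{S_2}\ca$ lie in $\mc{P}_f(0,1]$, and Lemma \ref{nest_t} yields equality of hearts, hence of stability conditions. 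Note that this nested-hearts argument needs no information about which other slices are nonempty, whereas your wall-crossing framing needs strictly more: by Remark \ref{rem:C_act} the $\wtil{\GL}^+(2,\mb{R})$-action never changes the set of semistable objects, so nothing genuinely wall-crosses along $\sigma\cdot\wtil{g}^s$ and Lemma \ref{lem:stab_tilt} (which concerns actual deformations of the central charge) is not the relevant tool; and controlling which occupied slices $f^s(0)$ sweeps past would require knowing the $\sigma$-semistable objects of phase near $\phi(S_0)$, which the paper establishes only after, and by means of, Lemma \ref{g_act} --- a circularity risk. If you prefer to keep your formulation, the honest completion is elementary linear algebra: write $1=c_0\pmb{e_0}+c_1\pmb{e_1}$, use $2(\pmb{e_0}+\pmb{e_1})=i$ to see $c_0\ge 0$ and $2c_0-c_1>0$, so that $g(1)$ lies in the cone spanned by $\pmb{e_0}$ and $2\pmb{e_0}+\pmb{e_1}$, which pins $f(0)\in[\phi(S_0),\phi(\mc{T}(S_0)))$ as required.
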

\begin{proof}
Let $\sigma \in\mc{U}^n(\mc{A})^{\Phi}_+$. We write $\mc{T}(\sigma)  = (Z_t, \mc{P}_{\mc{T}})$. By viewing $\mb{C} \cong \mb{R}^2$, we let 
$$
\pmb{e_0} = Z(\gamma_0) = Z(\gamma_2), \quad \pmb{e_1} = Z(\gamma_1) = Z(\gamma_3).
$$
 Then by (\ref{eqn:act_quot}), we have 
\begin{eqnarray*}
    Z_t(\gamma_0) &=& Z\left(t^{-1}\gamma_0\right) = Z\left(-\gamma_1\right) = -\pmb{e_1};\\
    Z_t(\gamma_1) &=& Z\left(t^{-1}\gamma_1\right) = Z\left(\gamma_0+2 \gamma_1\right) = \pmb{e_0}+2\pmb{e_1}.
\end{eqnarray*}
We define $g\in \mr{GL}^+(2,\mb{R})$ such that 
$$
    g(\pmb{e_0}) = 2\pmb{e_0}+\pmb{e_1}, \quad g(\pmb{e_1}) = -\pmb{e_0},
$$
Note that $2(\pmb{e_0}+\pmb{e_1}) = \sum_i Z(\gamma_i) = Z(\delta) = i$ by Lemma \ref{ox_kgroup}, and $g(\pmb{e_0}+\pmb{e_1}) = \pmb{e_0}+\pmb{e_1}$, therefore we see $g$ preserves the positive imaginary axis. We can take $\widetilde{g} = (g,f) \in \widetilde{\mr{GL}}^+(2, \mb{R})$ be the unique lift of $g$ such that $f(\frac{1}{2}) = \frac{1}{2}$. Let $\sigma\cdot \widetilde{g} = (Z_g, \mc{P}_f)$. Then by definition $Z_g(\gamma_i) = Z_t(\gamma_i)$ for $i=0,\ 1$. Therefore $Z_g = Z_t$.

We claim that the bounded hearts  $\mc{P}_{\mc{T}}(0,1]$ and $\mc{P}_f(0,1]$ are the same, thus finishing the proof of  the first case. By Theorem \ref{double_tilts} we have 
$$
    \mc{P}_{\mc{T}}(0,1] = \mc{T}(\ca) = L_{S_0} L_{S_2} \ca.
$$
Suppose $S_1, S_3 \in \mc{P}(\phi)$, $\phi\in (0,1]$. Since $g(\pmb{e_1}) = -\pmb{e_0} = -Z(\gamma_0) = -Z(\gamma_2)$, therefore $f(\phi) = \phi(S_0[i]) = \phi(S_2[i])$ for some odd number $i$. Note that $\phi \in (1/2,1] \subset (1/2,3/2)$, so
$$
    f(\phi) \in \left(f(\frac{1}{2}),f(\frac{3}{2})\right) = (1/2,3/2).
$$ 
By our assumption, \[\phi(S_0[1]) = \phi(S_2[1]) \in (1/2,3/2).\]
Therefore, $i=1$ and $S_0[1], S_2[1] \in \mc{P}_f(\phi)$ which is contained in $\ca' = \mc{P}_f(0,1]$. 

Suppose $S_0, S_2 \in \mc{P}(\omega)$, $\omega \in (0,1]$. Then $g(\pmb{e_0}) = 2\pmb{e_0}+\pmb{e_1} = Z(\widetilde{S_1}) = Z(\widetilde{S_3})$ where $\widetilde{S_1} = \mc{T}(S_0)$ and $\widetilde{S_3} = \mc{T}(S_2)$. By Lemma \ref{lem:kron_type} they are of special Kronecker type I, therefore they are semistable for $\sigma$ by Theorem \ref{thm:stab_kron}. We have $f(\omega) = \phi(\widetilde{S_1}[n]) = \phi(\widetilde{S_3}[n])$ for some even number $n$.  Since $\omega\in (0,1/2) \subset (-1/2,1/2)$, so $f(\omega) \in (-1/2,1/2)$. By our assumption, $\phi(\widetilde{S_1}) = \phi(\widetilde{S_2}) \in (-1/2,1/2)$. Therefore $n=0$ and $\widetilde{S_1}, \widetilde{S_3}\in \ca'$.

Recall the simple objects in $L_{S_0}L_{S_1}\ca$ are exactly $S_0[1]$, $S_2[1]$, $\wtil{S_1}$ and $\wtil{S_3}$ by our computations in Section \ref{sec:double_tilts}. We just proved $L_{S_0}L_{S_2} \ca =\langle S_0[1], S_2[1], \widetilde{S_1}, \widetilde{S_3}\rangle \subset \ca'$,  so by Lemma \ref{nest_t} the two hearts are equivalent. We finished the proof of the first case.  

For $\sigma \in \mc{U}^n(\mc{A})^{\Phi}_-$, note that $\Psi^{-1}(\sigma)\in \mc{U}^n(\mc{A})^{\Phi}_+$ by Lemma \ref{lem:po_ne}, therefore by the first part we have
\begin{eqnarray*}
    \mc{T}_{\Psi}(\sigma) &=& \Psi\circ\mc{T}\circ \Psi^{-1}(\sigma) \\
    &=&\Psi(\Psi^{-1}(\sigma)\cdot \wtil{g})\\
    &=& \sigma\cdot \wtil{g},
\end{eqnarray*}
where $\wtil{g} = (g,f)\in\wtil{\GL}^+(2,\mb{R})$ such that $f(1/2) = 1/2$. This finishes the proof.
\end{proof}
\begin{cor}\label{inv_t}
If $\sigma = (Z, \mc{P})\in \mc{U}^n(\mc{A})^{\Phi}_+$, then $\mc{T}^{\pm 1}\left(\mc{P}(\frac{1}{2})\right)= \mc{P}(\frac{1}{2})$. Similarly, if $\tau = (W, \mc{P}')\in \mc{U}^n(\mc{A})^{\Phi}_-$, then $\mc{T}_{\Psi}^{\pm 1}\left(\mc{P}'(\frac{1}{2})\right) = \mc{P}'(\frac{1}{2})$.
\end{cor}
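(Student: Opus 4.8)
The plan is to read the statement off directly from Lemma \ref{g_act} by unwinding the two group actions recorded in Definition \ref{defi:group_act}. First I would invoke Lemma \ref{g_act}(1): for $\sigma = (Z,\mc{P})\in \mc{U}^n(\mc{A})^{\Phi}_+$ we have an equality of stability conditions
\[
    \mc{T}(\sigma) = \sigma\cdot \widetilde{g}, \qquad \widetilde{g} = (g,f)\in \widetilde{\mr{GL}}^+(2,\mb{R}),\ \ f\!\left(\tfrac{1}{2}\right) = \tfrac{1}{2}.
\]
An equality of stability conditions means equality of both the central charge and the slicing, so it suffices to compare the slicings on the two sides at the single phase $\phi = \tfrac{1}{2}$.

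Next I would spell out each slicing. By the left $\Aut(\CD)$-action in Definition \ref{defi:group_act}, the slicing of $\mc{T}(\sigma)$ is $\mc{P}_{\mc{T}}(\phi) = \mc{T}\bigl(\mc{P}(\phi)\bigr)$; by the right $\widetilde{\mr{GL}}^+(2,\mb{R})$-action, the slicing of $\sigma\cdot\widetilde{g}$ is $\mc{P}_f(\phi) = \mc{P}\bigl(f(\phi)\bigr)$. Equating these at $\phi = \tfrac{1}{2}$ and using $f\!\left(\tfrac{1}{2}\right) = \tfrac{1}{2}$ gives
\[
    \mc{T}\!\left(\mc{P}\!\left(\tfrac{1}{2}\right)\right) = \mc{P}\!\left(f\!\left(\tfrac{1}{2}\right)\right) = \mc{P}\!\left(\tfrac{1}{2}\right),
\]
which is the $+1$ case. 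Applying $\mc{T}^{-1}$ to both sides immediately yields $\mc{T}^{-1}\!\left(\mc{P}\!\left(\tfrac{1}{2}\right)\right) = \mc{P}\!\left(\tfrac{1}{2}\right)$, giving the $-1$ case.

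Finally, for $\tau = (W,\mc{P}')\in \mc{U}^n(\mc{A})^{\Phi}_-$ I would run the identical argument using Lemma \ref{g_act}(2), which supplies $\mc{T}_{\Psi}(\tau) = \tau\cdot\widetilde{g}$ with the same normalization $f\!\left(\tfrac{1}{2}\right) = \tfrac{1}{2}$; comparing slicings at $\phi = \tfrac{1}{2}$ and applying $\mc{T}_{\Psi}^{-1}$ gives $\mc{T}_{\Psi}^{\pm 1}\!\left(\mc{P}'\!\left(\tfrac{1}{2}\right)\right) = \mc{P}'\!\left(\tfrac{1}{2}\right)$. There is essentially no obstacle here: all the analytic content—in particular the crucial fact that the autoequivalence $\mc{T}$ acts on the normalized invariant stability conditions exactly as an element of $\widetilde{\mr{GL}}^+(2,\mb{R})$ fixing the phase $\tfrac{1}{2}$—is already contained in Lemma \ref{g_act}, and the corollary is merely the observation that a phase fixed by $f$ corresponds to a slice preserved by $\mc{T}$. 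The only point requiring care is the bookkeeping of which action is left and which is right, so that the equality of slicings is genuinely read off at the fixed phase rather than at $f^{-1}\!\left(\tfrac{1}{2}\right)$; since $f\!\left(\tfrac{1}{2}\right)=\tfrac{1}{2}$ these coincide and the direction is immaterial.
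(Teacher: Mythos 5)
Your proposal is correct and follows the paper's own proof essentially verbatim: the paper likewise invokes Lemma \ref{g_act} to write $\mc{T}(\sigma) = \sigma\cdot\widetilde{g}$ with $f\bigl(\tfrac{1}{2}\bigr)=\tfrac{1}{2}$, and then reads off $\mc{T}\bigl(\mc{P}(\tfrac{1}{2})\bigr) = \mc{P}_f\bigl(\tfrac{1}{2}\bigr) = \mc{P}\bigl(\tfrac{1}{2}\bigr)$ from the definitions of the two group actions, with the $\mc{U}^n(\mc{A})^{\Phi}_-$ case handled identically via Lemma \ref{g_act}(2). Your extra remarks on the $\mc{T}^{-1}$ case and on which action is left versus right only make explicit what the paper leaves implicit.
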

\begin{proof}
Let $\sigma = (Z, \mc{P}) \in \mc{U}^n(\mc{A})^{\Phi}_+$ and $\mc{T}(\sigma) = (Z_t, \mc{P}_{\mc{T}})$.  By the above Lemma, $\mc{T}(\sigma) = \sigma\cdot \widetilde{g} = (Z_g, \mc{P}_f)$ for $\widetilde{g} = (g,f)\in \widetilde{\mr{GL}}^+(2,\mb{R})$ such that $f(1/2) = 1/2$ , we have
$$
    \mc{P}_f(1/2) = \mc{P}(1/2).
$$ 
 The proof for the second statement is the same.
\end{proof}
Recall for any interval $I\subset \mb{R}$, $\mc{P}(I)$ is the extension-closed subcategory of $D^b_0(X)$ generated by the subcategories $\mc{P}(\phi)$ for $\phi\in I$. Recall the definition of $\Aut_*(D^b_0(X))$ in Definition \ref{def:aut_group}. The following  proposition will be useful:
\begin{prop}\label{prop:act_order}
Let  $W$ be an element of $\Aut_*(D^b_0(X))$ such that for a stability condition $\sigma = (Z,\mc{P})$ we have $W(\sigma) = \sigma\cdot \wtil{g}$ for some $\wtil{g} = (g,f)\in\wtil{\GL}^+(2,\mb{R})$. Suppose that $E,\ F$ are two semistable objects with phases $\phi_E<\phi_F$, then 
\[
W\bigl(\mc{P}(\phi_E,\phi_F)\bigr) = \mc{P}\bigl(\phi_{W(E)},\phi_{W(F)}\bigr).
\]
\end{prop}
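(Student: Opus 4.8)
The plan is to reduce the statement to a purely formal consequence of the hypothesis $W(\sigma) = \sigma \cdot \wtil{g}$ by comparing slicings and then transporting the extension-closed subcategory $\mc{P}(\phi_E,\phi_F)$ through the exact autoequivalence $W$. First I would unwind both sides of the equality of stability conditions using Definition \ref{defi:group_act}. The left-hand side $W(\sigma)$ has slicing $\phi \mapsto W\bigl(\mc{P}(\phi)\bigr)$, whereas the right-hand side $\sigma \cdot \wtil{g}$ has slicing $\phi \mapsto \mc{P}\bigl(f(\phi)\bigr)$, where $\wtil{g} = (g,f)$. Since the two stability conditions agree, their slicings agree, yielding the key identity
\[
    W\bigl(\mc{P}(\phi)\bigr) = \mc{P}\bigl(f(\phi)\bigr), \qquad \phi \in \mb{R}.
\]

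Next I would record the effect on phases of the two given objects. As $E$ is semistable of phase $\phi_E$, we have $E \in \mc{P}(\phi_E)$, hence $W(E) \in W\bigl(\mc{P}(\phi_E)\bigr) = \mc{P}\bigl(f(\phi_E)\bigr)$; being a nonzero object of this slice, $W(E)$ is semistable of phase $f(\phi_E)$, so $\phi_{W(E)} = f(\phi_E)$, and likewise $\phi_{W(F)} = f(\phi_F)$. To transport the interval category itself, recall that $\mc{P}(\phi_E,\phi_F)$ is by definition the smallest extension-closed subcategory of $D^b_0(X)$ containing the slices $\mc{P}(\phi)$ for $\phi$ in the open interval $(\phi_E,\phi_F)$. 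Because $W$ is an exact autoequivalence with exact inverse $W^{-1}$, it carries the extension closure of a family of objects to the extension closure of their images: the image $W(\mc{C})$ of an extension-closed subcategory $\mc{C}$ is again extension-closed (apply $W^{-1}$ to a triangle $A \to B \to C$ with $A, C \in W(\mc{C})$ to see $B \in W(\mc{C})$), and minimality passes back and forth along $W$ and $W^{-1}$. Applying this together with the identity above gives
\[
    W\bigl(\mc{P}(\phi_E,\phi_F)\bigr) = \bigl\langle \mc{P}\bigl(f(\phi)\bigr) : \phi \in (\phi_E,\phi_F) \bigr\rangle,
\]
where $\langle - \rangle$ denotes extension closure.

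Finally, since $f$ is the second component of $\wtil{g} \in \wtil{\GL}^+(2,\mb{R})$, it is an increasing homeomorphism of $\mb{R}$, hence maps $(\phi_E,\phi_F)$ bijectively onto $\bigl(f(\phi_E),f(\phi_F)\bigr)$. Therefore the right-hand side above is exactly $\mc{P}\bigl(f(\phi_E),f(\phi_F)\bigr) = \mc{P}\bigl(\phi_{W(E)},\phi_{W(F)}\bigr)$, which is the claim. The argument is essentially formal, and I expect the only point requiring care to be the compatibility of an exact autoequivalence with the formation of extension-closed subcategories; I would isolate this as a one-line observation using the invertibility of $W$ to secure both inclusions. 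The monotonicity of $f$ then handles the matching of endpoints, so I do not anticipate a genuine obstacle.
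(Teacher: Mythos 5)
Your proof is correct and follows essentially the same route as the paper's: identify the slicing of $W(\sigma)$ with $\mc{P}_f$ via the hypothesis $W(\sigma)=\sigma\cdot\wtil{g}$, deduce $\phi_{W(E)}=f(\phi_E)$ and $\phi_{W(F)}=f(\phi_F)$ from the fact that $\sigma$ and $\sigma\cdot\wtil{g}$ share the same semistable objects, and transport the interval category through $W$. The only difference is that you spell out two points the paper treats as immediate — that an exact autoequivalence commutes with extension closures, and that monotonicity of $f$ sends $(\phi_E,\phi_F)$ onto $\bigl(f(\phi_E),f(\phi_F)\bigr)$ — which is a reasonable level of added care, not a change of method.
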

\begin{proof}
   We write $\sigma\cdot \wtil{g} = (Z_g,\mc{P}_f)$, then  $\mc{P}_f(I) = \mc{P}\bigl(f(I)\bigr)$ for any interval $I\subset \mb{R}$ by definition. Therefore by our assumption
    \[
        W\bigl(\mc{P}(\phi_E,\phi_F)\bigr) = \mc{P}\bigl(f(\phi_E),f(\phi_F)\bigr).
    \]    
    Since $\sigma\cdot \wtil{g}$ and $\sigma$ contain the same set of semistable objects, therefore $W(E)$ and $W(F)$ are semistable for $\sigma$. So we have $\phi_{W(E)} = f\bigl(\phi_E\bigr)$ and $\phi_{W(F)} = f\bigl(\phi_F\bigr)$, this proves the result.
\end{proof}
\begin{lem} \label{lem:cone}
Given a stability condition $\sigma = (Z, \ca)$ such that $\ca$ is of finite-length with the finite set of simple objects (up to isomorphism) $\{S_0, S_1, \cdots, S_n\}$. Define the linear cone in $\mb{C}$:
$$
    C:= \{z\in \mb{C}: z=\sum_{i=0}^n \lambda_i Z(S_i), \ \lambda_i \in \mb{Z}_{\geq 0} \} \setminus \{0\}.
$$
Then for any non-zero semistable object $E\in \mc{P}(\psi)$, $\psi \in \mb{R}$, we have 
$$
    Z(E) \in C \cup (-C).
$$
\end{lem}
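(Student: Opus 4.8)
The plan is to show that the $Z$-value of every nonzero object of the finite-length heart $\ca$ already lies in the semigroup $C$, and then to reduce the general semistable object $E\in\mc{P}(\psi)$ to such an object by peeling off a shift.

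First I would reduce to the case $\psi\in(0,1]$. Since $\mc{P}$ is a slicing we have $\mc{P}(\phi+1)=\mc{P}(\phi)[1]$, so there is a unique integer $k$ with $\psi-k\in(0,1]$ together with a nonzero object $E'\in\mc{P}(\psi-k)$ satisfying $E=E'[k]$. By the definition of the heart attached to the slicing, $\mc{P}\bigl((0,1]\bigr)=\ca$, whence $E'\in\mc{P}(\psi-k)\subset\ca$. Because $[E[1]]=-[E]$ in $K_0(\ca)$ and $Z$ is a group homomorphism, $Z(E)=(-1)^kZ(E')$; thus it suffices to prove $Z(E')\in C$, for then $Z(E)\in C\cup(-C)$.

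Next I would exploit finite length. As $\ca$ is artinian and noetherian, the nonzero object $E'\in\ca$ admits a Jordan--H\"older filtration whose factors lie among the simple objects $S_0,\dots,S_n$; writing $m_i\ge 0$ for the multiplicity of $S_i$, we obtain $[E']=\sum_{i=0}^n m_i\gamma_i$ in $K_0(\ca)$ with at least one $m_i>0$. Applying $Z$ gives $Z(E')=\sum_{i=0}^n m_i Z(S_i)$, a nonnegative integer combination of the $Z(S_i)$, hence an element of $C\cup\{0\}$ by definition. It remains to check nonvanishing, which I would deduce from $Z$ being a stability function: each $Z(S_i)$ lies in the semi-closed upper half plane $H$, so any nonzero nonnegative combination again lies in $H$ and is in particular nonzero. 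Therefore $Z(E')\in C$, completing the reduction.

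I do not expect a genuine obstacle here: once the reduction to $(0,1]$ is in place, the conclusion is forced by the Jordan--H\"older structure of the finite-length heart $\ca$. The only points requiring care are the bookkeeping of the sign coming from the shift $[k]$, and the observation that semistability of $E$ is used \emph{solely} to guarantee that $E$ is a pure shift of an object of $\ca$; a general object of $D^b_0(X)$ would have cohomology spread over several degrees and need not have $Z$-value in $C\cup(-C)$.
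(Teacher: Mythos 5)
Your proof is correct and takes essentially the same approach as the paper's: shift $E$ into the heart $\ca$, use the Jordan--H\"older filtration of the finite-length object to write its class as a nonnegative integer combination of the $[S_i]$, and conclude by linearity of $Z$. The only difference is that you make explicit two points the paper leaves implicit, namely the sign $(-1)^k$ coming from the shift and the nonvanishing of $Z(E')$ (which follows since $Z(E')$ lies in the semi-closed upper half plane $H$), both of which are handled correctly.
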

\begin{proof}
After shifting $E$, we may assume that $E\in \ca$. Since $\ca$ is of finite-length,  $E$ has a finite filtration by the simple objects $S_i$. Then in $K_0(\ca)$ we have $[E] = \sum_{i=0}^n \lambda_i [S_i], \ \lambda_i\geq 0$ (at least one $\lambda_j \neq 0$). Therefore the result follows from the linearity of $Z$.
\end{proof}
The following important theorem characterizes the stable objects outside the ray $\phi = \frac{1}{2}$ in the upper half complex plane:
\begin{thm}
 Take $\sigma\in\mc{U}^n(\mc{A})^{\Phi}_+$,  there is no stable object whose phase lies in the intervals $\bigl(0,\phi(S_0)\bigr)$, $ \bigl(\phi(S_1),1\bigr)$, nor in the intervals
    \[
\Bigl(\phi\left(\mc{T}^m(S_0)\right),\phi\left(\mc{T}^{m+1}(S_0)\right) \Bigr) \text{ or } \Bigl(\phi\left(\mc{T}^{-m-1}(S_1)\right), \phi\left(\mc{T}^{-m}(S_1)\right)\Bigr),
    \]
    for any integer $m \geq 0$. Moreover, the stable objects of phases $\phi\bigl(\mc{T}^m(S_0)\bigr)$ and $\phi\bigl(\mc{T}^{-m}(S_1)\bigr)$ for $m\geq 0$ are of special Kronecker type I.
\end{thm}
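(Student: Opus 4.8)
The plan is to derive everything from two ingredients already in place: the cone constraint of Lemma \ref{lem:cone}, and the identification $\mc{T}(\sigma)=\sigma\cdot\wtil{g}$ with $f(\tfrac12)=\tfrac12$ from Lemma \ref{g_act}, transported to stable objects by Proposition \ref{prop:act_order}. Write $\pmb{e_0}=Z(\gamma_0)=Z(\gamma_2)$ and $\pmb{e_1}=Z(\gamma_1)=Z(\gamma_3)$; for $\sigma\in\mc{U}^n(\ca)^{\Phi}_+$ these are $\mb{R}$-linearly independent, lie in the open upper half plane with $\phi_0:=\phi(S_0)\in(0,\tfrac12)$ and $\phi_1:=\phi(S_1)\in(\tfrac12,1)$, and $\pmb{e_0}+\pmb{e_1}=\tfrac{i}{2}$. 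First I would record that $\phi(\mc{T}^m S_0)=\phi\bigl((m+1)\pmb{e_0}+m\pmb{e_1}\bigr)$ increases strictly to $\tfrac12$ and $\phi(\mc{T}^{-m}S_1)$ decreases strictly to $\tfrac12$ as $m\to\infty$, so the listed phases are genuinely ordered and the named intervals are disjoint. The key first step is the \emph{forbidden arc}: by Lemma \ref{lem:cone} every $\sigma$-semistable object $E$ has $Z(E)\in C\cup(-C)$, where $C=\bigl(\mb{Z}_{\ge0}\pmb{e_0}+\mb{Z}_{\ge0}\pmb{e_1}\bigr)\setminus\{0\}$. Since $C$ lies in the upper half plane with arguments in $[\pi\phi_0,\pi\phi_1]$ while $-C$ lies in the lower half plane, no semistable object can have phase in $\bigcup_{k\in\mb{Z}}(\phi_1-1+k,\ \phi_0+k)$. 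Taking $k=0$ and $k=1$ already rules out stable objects of phase in $(0,\phi_0)$ and in $(\phi_1,1)$, settling the two end intervals.

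For the gaps I would transport each of them into this arc. By Lemma \ref{g_act} and Proposition \ref{prop:act_order}, $\mc{T}$ sends a $\sigma$-stable object of phase $\psi$ to a $\sigma$-stable object of phase $f(\psi)$, and $f$ satisfies $f(\phi(\mc{T}^m S_0))=\phi(\mc{T}^{m+1}S_0)$. Hence if $E$ were stable with phase in the right gap $\bigl(\phi(\mc{T}^m S_0),\phi(\mc{T}^{m+1}S_0)\bigr)$, then $\mc{T}^{-m-1}(E)$ would be stable with phase in $\bigl(\phi(\mc{T}^{-1}S_0),\phi(S_0)\bigr)=(\phi_1-1,\phi_0)$, using $\mc{T}^{-1}(S_0)=S_1[-1]$; this lies in the forbidden arc, a contradiction. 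The left gaps are handled identically after observing $\mc{T}^{-m}(S_1)=\mc{T}^{-m-1}(S_0)[1]$, so that a stable object of phase in $\bigl(\phi(\mc{T}^{-m-1}S_1),\phi(\mc{T}^{-m}S_1)\bigr)$ yields, after a shift by $[-1]$ followed by $\mc{T}^{m+1}$, a stable object of phase in $(\phi_1-1,\phi_0)$, again forbidden. Thus there is no stable object in any of the listed intervals.

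Finally, for the exact phases I would reduce to $m=0$: applying $\mc{T}^{-m}$ (resp.\ $\mc{T}^{m}$) carries a stable object of phase $\phi(\mc{T}^m S_0)$ to one of phase $\phi_0$ (resp.\ of phase $\phi(\mc{T}^{-m}S_1)$ to one of phase $\phi_1$). A stable $E$ of phase exactly $\phi_0$ has $Z(E)$ on the ray $\mb{R}_{>0}\pmb{e_0}$ by the cone lemma, and since $\pmb{e_0},\pmb{e_1}$ are independent and $E\in\ca$ is a nilpotent representation, its class must be $a\gamma_0+c\gamma_2$; then $E$ is supported on the non-adjacent vertices $0$ and $2$ of $Q$, all of whose arrow-maps vanish, forcing $E\cong S_0^{\oplus a}\oplus S_2^{\oplus c}$, and stability forces $E\in\{S_0,S_2\}$. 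Transporting back by $\mc{T}^{m}$ identifies the stable objects of phase $\phi(\mc{T}^m S_0)$ with $\mc{T}^m(S_0),\mc{T}^m(S_2)$, which are of special Kronecker type I by Lemma \ref{lem:kron_type}; the phases $\phi(\mc{T}^{-m}S_1)$ are treated symmetrically using the non-adjacent vertices $1,3$. I expect the only real obstacle to be the phase bookkeeping of the second paragraph—checking that each gap is carried \emph{exactly} onto the arc $(\phi_1-1,\phi_0)$—which rests entirely on the normalization $f(\tfrac12)=\tfrac12$ together with the two shift identities $\mc{T}^{-1}(S_0)=S_1[-1]$ and $\mc{T}(S_1)=S_0[1]$; everything else is a formal consequence of the cone lemma and the $\wtil{\GL}^+(2,\mb{R})$-transport of stable objects.
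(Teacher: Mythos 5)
Your proposal is correct, and it rests on the same two pillars as the paper's proof---the cone constraint of Lemma \ref{lem:cone} and the identification $\mc{T}(\sigma)=\sigma\cdot\wtil{g}$ of Lemma \ref{g_act}, transported to stable objects by Proposition \ref{prop:act_order}---but your treatment of the gap intervals takes a genuinely different and shorter route. The paper uses the transport only to reduce to the two gaps adjacent to $\phi(S_0)$ and $\phi(S_1)$, and then disposes of those by a $\mb{C}$-action: it rotates $\sigma$ so that the heart of the rotated stability condition becomes $\mc{T}\ca=L_{S_0}L_{S_2}\ca$ (resp.\ $\mc{T}^{-1}\ca=R_{S_1}R_{S_3}\ca$), identified via the double-tilt computations of Section \ref{sec:double_tilts}, and then applies the cone lemma inside that \emph{new} heart. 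You instead push every gap one step further down: applying $\mc{T}^{-m-1}$ (with an extra shift $[-1]$ for the left-hand gaps) carries it exactly onto $\bigl(\phi(\mc{T}^{-1}S_0),\phi(S_0)\bigr)=\bigl(\phi(S_1)-1,\phi(S_0)\bigr)$, which is already excluded by the cone lemma for the \emph{original} heart $\ca$; the only extra input is the identity $\mc{T}^{-1}(S_0)=S_1[-1]$ (equivalently $\mc{T}(S_1)=S_0[1]$), which the paper establishes in the proof of Theorem \ref{double_tilts}. This buys you a proof that never leaves the heart $\ca$ and needs no rotation or re-identification of hearts, at essentially no cost. For the exact phases $\phi(\mc{T}^m S_0)$ and $\phi(\mc{T}^{-m}S_1)$ your argument is in substance the paper's: the paper runs a Jordan--H\"older argument showing the only graded factors are $S_0,S_2$ (resp.\ $S_1,S_3$) and concludes by stability, while you reach the same conclusion through the quiver description---the class $a\gamma_0+c\gamma_2$ forces support on the non-adjacent vertices $0,2$, hence $E\cong S_0^{\oplus a}\oplus S_2^{\oplus c}$, and stability forces $E\in\{S_0,S_2\}$---after which both proofs transport by $\mc{T}^{\pm m}$ and invoke Lemma \ref{lem:kron_type}.
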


\begin{proof}
Given $\sigma =(Z,\mc{P}) \in \mc{U}^n(\ca)^{\Phi}_+$, it is clear that there is no stable object of phase in the interval $(0,\phi(S_0))\cup (\phi(S_1),1)$ by Lemma \ref{lem:cone}. 

 By Lemma \ref{g_act} $\mc{T}(\sigma) = \sigma\cdot \wtil{g}$ for some $\wtil{g}\in \wtil{\GL}^+(2,\mb{R})$, therefore by Proposition \ref{prop:act_order} we only need to check that there is no stable object of phase in the intervals $\left(\phi(S_0), \phi(\mc{T}(S_0))\right)$ and $\left( \phi(\mc{T}^{-1}(S_1)), \phi(S_1)\right)$, then apply $\mc{T}^{\pm m}$ we see that there is no stable object of phase in other open intervals.
\begin{center}
    \begin{tikzpicture}
	\begin{pgfonlayer}{nodelayer}
		\node [style=none] (0) at (-4, 0) {};
		\node [style=none] (1) at (0, 0) {};
		\node [style=none] (2) at (4.25, 0) {};
		\node [style=none] (3) at (-3, 0.75) {};
        \node [style=none] (17) at (-3, 3) {$\vdots$};
		\node [style=none] (4) at (3, 0.75) {};
		\node [style=none] (7) at (4.5, 0.75) {$Z(S_0)=Z(S_2)$};
		\node [style=none] (9) at (-4.25, 0.75) {$Z(S_1)=Z(S_3)$};
		\node [style=none] (10) at (-3, 2) {};
		\node [style=none] (11) at (-3, 4) {};
		\node [style=none] (16) at (3, 4) {};
		\node [style=none] (17) at (3, 2) {};
        \node [style=none] (26) at (3, 3) {$\vdots$};
		\node [style=none] (18) at (0, 3) {};
		\node [style=none] (19) at (-3.75, 4.25) {};
		\node [style=none] (20) at (-3.75, 4.25) {$Z(\mc{T}^{-n}(S_1))$};
		\node [style=none] (21) at (3.75, 4.25) {};
		\node [style=none] (22) at (3.75, 4.25) {$Z(\mc{T}^n(S_0))$};
		\node [style=none] (23) at (3.75, 4.25) {};
		\node [style=none] (24) at (0, 3.5) {};
		\node [style=none] (25) at (0, 3.5) {$Z(\delta)$};
        \node [style=none] (26) at (-4.25, 2) {$Z(\mc{T}^{-1}(S_1))$};
        \node [style=none] (27) at (4.5, 2) {$Z(\mc{T}(S_0))$};
	\end{pgfonlayer}
	\begin{pgfonlayer}{edgelayer}
		\draw (0.center) to (1.center);
		\draw [shorten >=0pt,dashed] (1.center) to (2.center);
		\draw [style={<-, draw=red}] (3.center) to (1.center);
		\draw [style={draw=red, ->}, in=195, out=15] (1.center) to (4.center);
		\draw [style={draw=red,<-}] (10.center) to (1.center);
		\draw [style={<-}] (11.center) to (1.center);
		\draw [style={<-}] (16.center) to (1.center);
		\draw [style={draw=red,<-}] (17.center) to (1.center);
		\draw [style={<-}] (18.center) to (1.center);
	\end{pgfonlayer}
\end{tikzpicture}
\end{center}
Suppose $E\in \mc{P}\left(\phi(S_0)+\epsilon\right)$ for $0< \epsilon< \phi(\mc{T}(S_0))-\phi(S_0)$. We will take a $\mb{C}$-action on $\sigma$ and reduce to the case in the beginning:  we choose $0< \epsilon' \ll \epsilon$ and let $\lambda = -(\phi(S_0)+\epsilon')$. Then let $\sigma' := \sigma\cdot \lambda = (Z', \mc{P}')$. The phase of the semistable object for $\sigma'$ is denoted by $\phi'(-)$. By definition of the $\mb{C}$-action (see Remark \ref{rem:C_act}) we have 
\begin{eqnarray*}
    \phi'(S_0[1]) = \phi'(S_2[1]) &=& 
                \phi(S_0[1]) - \phi(S_0)-\epsilon'\\
                &=& 1-\epsilon'\in (1/2,1), \\
    \phi'\left(\mc{T}(S_0)\right) = \phi'\left(\mc{T}(S_2)\right) &=& \phi(\mc{T}(S_0)) - \phi(S_0)-\epsilon' \in (0,1/2).
\end{eqnarray*}
We see that the heart $\ca' = \mc{P}'(0,1]$ contains the simple objects $\{S_0[1],S_2[1],\mc{T}(S_0),\mc{T}(S_2)\}$ which generate $L_{S_0}L_{S_2}\ca$, we have $L_{S_0}L_{S_2}(\ca)\subset \ca'$  therefore $\ca' = L_{S_0}L_{S_2}(\ca) = \mc{T}\ca$ by Theorem \ref{double_tilts} (see figure \ref{fig:tilt_slicing}).

\begin{figure}[H]
    \centering
\begin{tikzpicture}
	\begin{pgfonlayer}{nodelayer}
		\node [style=none] (0) at (-4, 0) {};
		\node [style=none] (1) at (0, 0) {};
		\node [style=none] (2) at (4.25, 0) {};
		\node [style=none] (3) at (-3, 1.5) {};
		\node [style=none] (4) at (3, 0.75) {};
		\node [style=none] (7) at (4.5, 0.75) {$Z'(E)$};
		\node [style=none] (9) at (-4, 1.75) {$Z'(S_0[1])=Z'(S_2[1])$};
		\node [style=none] (17) at (3, 1.5) {};
		\node [style=none] (18) at (1.5, 3) {};
		\node [style=none] (21) at (3.75, 4.25) {};
		\node [style=none] (22) at (4, 1.75) {$Z'(\mc{T}(S_0))=Z'(\mc{T}(S_2))$};
		\node [style=none] (25) at (1.75, 3.5) {$Z'(\delta)$};
	\end{pgfonlayer}
	\begin{pgfonlayer}{edgelayer}
		\draw (0.center) to (1.center);
		\draw [shorten >=0pt, dashed] (1.center) to (2.center);
		\draw [style={<-}] (3.center) to (1.center);
		\draw [style={->}, in=195, out=15] (1.center) to (4.center);
		\draw [style=<-] (17.center) to (1.center);
		\draw [style=<-] (18.center) to (1.center);
	\end{pgfonlayer}
\end{tikzpicture}
    \caption{Central charges of simple objects and $E$, $\delta$ for $\sigma'$}
    \label{fig:tilt_slicing}
\end{figure}
Since $\ca'$ is of finite-length, and the phase of $E$ for $\sigma'$ is $\phi(S_0)+\epsilon-\phi(S_0)-\epsilon' = \epsilon-\epsilon'\in \Bigl(0,\phi\bigl(\mc{T}(S_0)\bigr)\Bigr)$, therefore $E$ is not semistable for $\sigma'$  by Lemma \ref{lem:cone}. Since $\sigma'$ and $\sigma$ contain the same set of semistable objects, so $E$ is not semistable for $\sigma$ either.

Similarly, suppose $E\in \mc{P}\left(\phi(S_1)-\epsilon\right)$, where $\epsilon \in \left(0, \phi(S_1)-\phi(\mc{T}^{-1}(S_1)) \right)$, we choose $0 < \epsilon' \ll \epsilon$. Let $t = 1+\epsilon'- \phi(S_1)$ and $\tau := \sigma\cdot t = (Z', \mc{P}')$. For $\tau$ we have the phases of 
\begin{eqnarray*}
    \phi'(S_1[-1]) = \phi'(S_3[-1]) &=& \epsilon' \in (0,1/2),\\
    \phi'\left(\mc{T}^{-1}(S_1)\right) = \phi'\left(\mc{T}^{-1}(S_3)\right) &=& \phi\left(\mc{T}^{-1}(S_1)\right) -\phi(S_1) +1+\epsilon' \in (1/2,1).
\end{eqnarray*}
The heart $\ca' = \mc{P}'(0,1]$ contains the  simple objects $\{S_1[-1],S_3[-1],\mc{T}^{-1}(S_1),\mc{T}^{-1}(S_3)\}$ which generate  $R_{S_1}R_{S_3}\ca$, $R_{S_1}R_{S_3}\ca\subset \ca'$ therefore $\ca' = R_{S_1}R_{S_3}\ca = \mc{T}^{-1}\ca$. The phase of $E$ for $\tau$ is $\phi(S_1)-\epsilon +1+\epsilon'-\phi(S_1) = 1+\epsilon'-\epsilon \in (\mc{T}^{-1}(S_1),1]$, therefore $E$ cannot be semistable for $\tau$ again by Lemma \ref{lem:cone}, and  is also not semistable for $\sigma$.

For the second statement, let $E\in \mc{P}\left(\phi(S_0)\right)$ be a stable object, we take the Jordan-H{\"o}lder filtration of $E$:
\[
 0\subset E_n\subset E_{n-1}\subset\cdots  E_1\subset E_0 = E,
\]
such that $ E_i/E_{i-1} = S_j $ for $j \in\{ 0,\cdots 3\}$. Since $\phi(S_0) = \phi(S_2) \neq \phi(S_1) = \phi(S_3)$, by the linearity of $Z$  the only graded factors appear in the filtrations are $S_0$ and $S_2$. Therefore $S_0$ or $S_2$ is a subobject of $E$, thus must be isomorphic to $E$. Similarly if $E\in \mc{P}\left(\phi(S_1)\right)$ is stable, we prove that $E$ is one of $S_1$ and $S_3$ exactly in the same way. Now we apply $\mc{T}^{\pm m}$ on $\sigma$ for $m\geq 0$.  Using Lemma \ref{g_act} again, we see $\mc{T}^{\pm m}(\sigma)$ and $\sigma$ contain the same set of stable objects. Therefore the stable objects of phase $\phi\bigl(\mc{T}^m (S_0)\bigr)$ are $\mc{T}^m(S_0)$ and $\mc{T}^m(S_2)$, and the stable objects of phase $\phi\bigl(\mc{T}^{-m} (S_1)\bigr)$ are $\mc{T}^{-m}(S_1)$ and $\mc{T}^{-m}(S_3)$. By Lemma \ref{lem:kron_type} they are of special Kronecker type I. 
\end{proof}
Let $\sigma \in \mc{U}^n(\mc{A})^{\Phi}_-$, we take $\Psi(\sigma) \in \mc{U}^n(\mc{A})^{\Phi}_+$, then by the above Lemma  the stable objects for $\sigma$ outside the ray $\phi= \frac{1}{2}$ are of special Kronecker type II.

The rest of this section is devoted to characterizing the stable objects on the ray $\phi = \frac{1}{2}$.
\begin{lem}\label{supp_points}
Let $\sigma \in\mc{U}^n(\mc{A})^{\Phi}_+$ and $E\in \mc{P}(\frac{1}{2})$, then $\pi_*E$ is (set theoretically) supported on $S \times \mb{P}^1$ where $S$ is a finite set of closed points in $\mb{P}^1$.
\end{lem}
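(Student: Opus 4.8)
The plan is to show that $\supp(\pi_* E)$ does not dominate the first factor $\mb{P}^1$ under the projection $p_1$; since $\supp(\pi_*E)$ is closed, this forces $p_1\bigl(\supp(\pi_*E)\bigr)=S$ to be finite and hence $\supp(\pi_*E)\subseteq S\times\mb{P}^1$. Writing $\mc{G}:=R\pi_* E\in D^b(\FFF)$ (which agrees with $\pi_*E$ as $\pi$ is affine and $E$ is supported on the zero section), this amounts to showing that the derived restriction of $\mc{G}$ to the generic fibre of $p_1$ vanishes.

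First I would exploit the $\mc{T}$-invariance of the slicing. By Corollary \ref{inv_t} we have $\mc{T}^{\pm1}\bigl(\mc{P}(\tfrac12)\bigr)=\mc{P}(\tfrac12)$, so $\mc{T}^{-n}E=E\otimes\pi^*\co(-n,0)\in\mc{P}(\tfrac12)\subseteq\mc{A}$ for every $n\in\mb{Z}$. Since $\mc{A}\cong\mathrm{mod}_0\text{-}B$ via $\RHom_X(\mc{Q},-)$, every object of $\mc{A}$ has $\RHom_X(\pi^*E_i,-)$ concentrated in degree $0$. Applying this to $\mc{T}^{-n}E$, using $\pi^*E_i\otimes\pi^*\co(n,0)=\pi^*\bigl(E_i(n,0)\bigr)$ and the adjunction $\RHom_X(\pi^*G,E)\cong\RHom_{\FFF}(G,\mc{G})$, I obtain that $\RHom_{\FFF}\bigl(E_i(n,0),\mc{G}\bigr)$ is concentrated in degree $0$ for all $i$ and all $n$. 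As $i\in\{0,1,2,3\}$ and $n\in\mb{Z}$ vary, the twists $E_i(n,0)$ run through all line bundles $\co(k,0)$ (from $E_0,E_1$) and $\co(k,1)$ (from $E_2,E_3$) with $k\in\mb{Z}$. Hence $R\Gamma\bigl(\FFF,\mc{G}(-k,-b)\bigr)$ is concentrated in degree $0$ for every $k\in\mb{Z}$ and every $b\in\{0,1\}$.

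Next I would push forward along $p_1$. By the projection formula $Rp_{1*}\bigl(\mc{G}(-k,-b)\bigr)=\mc{D}^\bullet_b\otimes\co_{\mb{P}^1}(-k)$, where $\mc{D}^\bullet_b:=Rp_{1*}\bigl(\mc{G}(0,-b)\bigr)\in D^b(\mb{P}^1)$, so the previous step says $R\Gamma\bigl(\mb{P}^1,\mc{D}^\bullet_b(-k)\bigr)$ lies in degree $0$ for all $k\in\mb{Z}$. On the curve $\mb{P}^1$ this forces every cohomology sheaf of $\mc{D}^\bullet_b$ to be torsion: inspecting the lowest nonzero cohomology sheaf $\mathcal{H}^{q_{\min}}$ and twisting by $\co(-k)$ with $k\ll0$ produces a nonzero contribution to $R\Gamma$ in total degree $q_{\min}$ through $H^0$, while inspecting the highest nonzero sheaf $\mathcal{H}^{q_{\max}}$ and twisting with $k\gg0$ (resp. $k\ll0$) produces a nonzero contribution in degree $q_{\max}+1$ or $q_{\max}$ through $H^1$ or $H^0$ of the hypercohomology spectral sequence; degree‑$0$ concentration then collapses the complex to a single rank‑$0$ sheaf in degree $0$. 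This asymptotic cohomology estimate is the technical heart of the argument.

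Finally, a torsion sheaf vanishes at the generic point $\eta$ of the first $\mb{P}^1$, so flat base change gives $R\Gamma\bigl(\mb{P}^1_\eta,\mc{G}_\eta\otimes\co(-b)\bigr)\cong\mc{D}^\bullet_b\otimes k(\eta)=0$ for $b=0,1$, where $\mc{G}_\eta$ is the restriction of $\mc{G}$ to the generic fibre $\mb{P}^1_\eta$ of $p_1$. Since $\bigl(\co,\co(1)\bigr)$ is a full exceptional collection on $\mb{P}^1_\eta$, the vanishing of $\RHom\bigl(\co(b),\mc{G}_\eta\bigr)$ for $b=0,1$ forces $\mc{G}_\eta=0$. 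Thus $\supp\mc{G}$ meets no generic fibre of $p_1$, i.e. $p_1(\supp\mc{G})$ is a proper closed subset of $\mb{P}^1$, hence finite, giving the claim. The step I expect to be the main obstacle is the asymptotic vanishing in the third paragraph: one must control the hypercohomology spectral sequence on $\mb{P}^1$ carefully so that no cancellation can hide a cohomology sheaf of positive rank, and I would handle it by isolating the extremal cohomology sheaves of $\mc{D}^\bullet_b$ and invoking Serre vanishing and Riemann–Roch separately for the $k\gg0$ and $k\ll0$ regimes.
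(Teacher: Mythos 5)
Your proposal is correct and takes essentially the same route as the paper's proof: both use Corollary \ref{inv_t} together with adjunction and the projection formula to get degree-zero concentration of $R\Gamma\bigl(\FFF,\mc{G}(-k,-b)\bigr)$ for all twists, push forward to $\mb{P}^1$, use the asymptotic vanishing for $k\gg 0$ and $k\ll 0$ (via the hypercohomology spectral sequence and the splitting of sheaves on $\mb{P}^1$) to conclude the pushforwards $\mc{D}^\bullet_b$ are torsion sheaves concentrated in degree zero, and finish by flat base change. The only cosmetic differences are that the paper tests against the two summands $\pi^*\co\oplus\pi^*\co(1,1)$ of $\mc{Q}$ rather than all four, and restricts to fibres over closed points outside $\supp E'_0\cup\supp E'_1$ rather than to the generic fibre.
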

\begin{proof}
According to Corollary \ref{inv_t}, $\mc{T}^n \left(\mc{P}(\frac{1}{2})\right) = \mc{P}(\frac{1}{2}) \subset \ca$ for   any integer $n$. Therefore  we have the vanishing of cohomology groups $\mr{Ext}^k (\mc{Q}, E) = 0$ for $k \neq 0$, in particular:
\begin{eqnarray*}
    0&=& \Ext^k_X\left(\pi^*\co \oplus \pi^*\co(1,1), E \otimes \pi^*\co(n,0)\right) \\
    &=&\Ext^k_Z\left(\co\oplus \co(1,1), \pi_* E\otimes \co(n,0)\right) \quad (\text{projection formula})\\
    &=&\mb{H}^k\bigl(\mb{P}^1, (p_1 \circ \pi)_*E\otimes \co(n)\bigr) \oplus \mb{H}^k\Bigl(\mb{P}^1, (p_1\circ \pi)_* \bigl(E(0,-1)\bigr)\otimes\co(n-1)\Bigr)
\end{eqnarray*}
where $\mb{H}$ means the hypercohomology of complexes and $k\neq 0$. In general for a complex $F^{\star}\in D^b(X)$, we have a spectral sequence  \cite[p.74]{Huy06}
$$
    E_2^{p,q} = \ho^q\left(X, H^p(F^{\star})\right) \Rightarrow \mb{H}^{p+q}(X, F^{\star}).
$$
We write $E'_0 = (p_1 \circ \pi)_* E$ and $E'_1 = (p_1 \circ \pi)_*\bigl(E(0,-1)\bigr)$. By taking $n\gg0$, then $\ho^i\left(\mb{P}^1, H^j(E'_m) \otimes \co(n)\right) = 0$ for $i \neq 0$, therefore the spectral sequence  degenerates, we have 
\begin{equation} \label{vanish_1}
    \mb{H}^k(\mb{P}^1, E'_m\otimes \co(n)) = \bigoplus_j \ho^{k-j}\left(\mb{P}^1,  H^j(E'_m)\otimes \co(n) \right) = 0  \quad \text{for } k\neq 0.
\end{equation}
Fix $j\neq 0$. Then $\ho^0\left(\mb{P}^1, H^j(E'_m)\otimes \co(n)\right) = 0$ where $n\gg 0$. This implies that $H^j(E'_m) = 0$. Therefore $E'_m$ is concentrated in degree $0$ and is indeed a sheaf.

For $m=0, 1$, now we have 
\begin{equation}
    \ho^k(\mb{P}^1, E'_m \otimes \co(n)) = 0, \quad k = 1,\quad n\in \mb{Z}.
\end{equation}
By taking $n\ll 0$, and using the fact that every coherent sheaf on $\mb{P}^1$ splits into line bundles and torsion sheaves\ \cite{CB}, we have $\mr{dim}(\mr{supp}\ E'_m ) = 0$. 

We denote  $S := \supp E'_0 \cup \supp E'_1$. Suppose $s\notin S$, we consider the following fibre product diagram with naturally-defined morphisms:
\begin{equation}
    \xymatrixcolsep{4pc}
    \xymatrix{
        p_1^{-1}(s) \ar@{^{(}->}[r]^i \ar[d]_{pt} & \mb{P}^1\times \mb{P}^1 \ar[d]^{p_1} \\
        \{s\} \ar@{^{(}->}[r]_j & \mb{P}^1
    }
\end{equation}
We apply the flat base change theorem \cite[Chapter 3.3]{Huy06} to $\pi_*E, \ \pi_*\bigl(E(0,-1)\bigr)\in D^b(Z)$, for any integer $k$
\begin{eqnarray*}
     m = 0:& \quad &\ho^k(\mb{P}^1, i^*\pi_* E) = j^*E'_0 = 0 \\
     m = 1:& \quad & \ho^k\left(\mb{P}^1, i^*\pi_*(E(0,-1))\right) = \ho^k\left(\mb{P}^1,(i^*\pi_* E)\otimes \co(-1)\right) = j^* E'_1 = 0.
\end{eqnarray*}
These vanishings imply that $i^*\pi_* E = 0$ (one can again use the structure theorem of coherent sheaf on $\mb{P}^1$). Therefore $\pi_*E$ is supported on $S \times \mb{P}^1$.
\end{proof}

\begin{lem}\label{lem:stab_pushfor}
    Suppose $E\in \ca$ is isomorphic to the shift of a sheaf, and $\End_{\ca}(E) \cong \mb{C}$ then it is the pushforward $E = s_* F[i]$ for some $F\in \Coh \ff$.
\end{lem}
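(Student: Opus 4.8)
The plan is to reduce the assertion to a statement about the underlying sheaf. Write $E\cong G[i]$ with $G\in\Coh X$; since $E\in\ca\subset D^b_0(X)$ the sheaf $G$ is set-theoretically supported on the zero section $s(\ff)$, and $\End_{\ca}(E)=\Hom_{D^b(X)}(G,G)=\End_X(G)\cong\mb{C}$. Because $X=\Tot\omega_{\ff}$ is the total space of a line bundle, it carries a tautological section $\tau\in\HO^0(X,\pi^*\omega_{\ff})$ whose zero scheme is exactly $s(\ff)$. The claim $E=s_*F[i]$ (with $F=s^*G$) holds if and only if $G$ is annihilated by the ideal $\mc{I}_{s(\ff)}=\mathrm{im}\bigl(\pi^*\omega_{\ff}^{\vee}\xrightarrow{\tau}\co_X\bigr)$, equivalently if and only if the multiplication map $m\colon G\to G\otimes\pi^*\omega_{\ff}$, $g\mapsto\tau\cdot g$, vanishes. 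So the entire problem becomes: show $m=0$.

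The key device is to convert the ``fiber coordinate'' $\tau$ into honest global endomorphisms of $G$. For each $\sigma\in\HO^0(\ff,\omega_{\ff}^{\vee})=\HO^0(\ff,\co(2,2))$, viewed as a contraction $\pi^*\sigma\colon\pi^*\omega_{\ff}\to\co_X$, the composite $m_\sigma:=(\mathrm{id}_G\otimes\pi^*\sigma)\circ m\colon G\to G$ is simply multiplication by the global regular function $f_\sigma:=\pi^*\sigma(\tau)\in\HO^0(X,\co_X)$, and hence lies in $\End_X(G)$. Since $\End_X(G)\cong\mb{C}$, this endomorphism is a scalar $\lambda\,\mathrm{id}_G$. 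But $f_\sigma$ vanishes along $\supp G\subseteq s(\ff)=\{\tau=0\}$, so at a stalk over any point of $\supp G$ multiplication by $f_\sigma$ is non-surjective by Nakayama, which forces $\lambda=0$. Thus $m_\sigma=0$ for every $\sigma$.

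Finally I would invoke that $\omega_{\ff}^{\vee}=\co(2,2)$ is globally generated, which is exactly where the Fano property of $\ff=\pp$ enters. The contractions $\pi^*\sigma$, as $\sigma$ ranges over $\HO^0(\ff,\omega_{\ff}^{\vee})$, jointly have zero kernel at every point of $X$ (because the sections $\sigma$ span $\omega_{\ff}^{\vee}$ pointwise, and $\omega_{\ff}$ is a line bundle), so $\bigcap_\sigma\ker(\mathrm{id}_G\otimes\pi^*\sigma)=0$ inside $G\otimes\pi^*\omega_{\ff}$. Since the previous paragraph shows $\mathrm{im}(m)$ is contained in this intersection, we conclude $m=0$, whence $G=s_*F$ with $F=s^*G\in\Coh\ff$ and $E=s_*F[i]$.

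The one genuine subtlety — and the reason a naive argument fails — is the twist: ``multiplication by the fiber coordinate'' is a morphism $G\to G\otimes\pi^*\omega_{\ff}$ rather than an endomorphism, so the hypothesis $\End_X(G)\cong\mb{C}$ cannot be applied to it directly. (Contrast the untwisted Beauville--Narasimhan--Ramanan picture, where a nilpotent Higgs field is itself a non-scalar endomorphism, making the conclusion immediate.) The content of the argument is that the ampleness of $\omega_{\ff}^{\vee}$ provides enough global sections $\sigma$ both to untwist $m$ into honest scalar endomorphisms $m_\sigma$ and to recover $m$ from the family $\{m_\sigma\}_\sigma$; I expect the only step needing care to be the pointwise joint-injectivity in the last paragraph, which rests on global generation of $\omega_{\ff}^{\vee}$.
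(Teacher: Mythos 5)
Your proof is correct, and it takes a genuinely different route from the paper's. The paper argues as in \cite[Lemma 3.1]{BaM11}: letting $Y$ denote the scheme-theoretic support of $E$, faithfulness of the $\co_Y$-action together with left-exactness of global sections gives an embedding $\HO^0(\co_Y)\hookrightarrow \End_{\ca}(E)\cong\mb{C}$, so $\HO^0(\co_Y)\cong\mb{C}$; composing $Y\hookrightarrow X$ with the contraction $X\twoheadrightarrow \Spec \HO^0(\co_X)$, the scheme-theoretic image of $Y$ is then a single reduced point, necessarily the origin, hence $Y$ is scheme-theoretically contained in the fiber over the origin, which is the reduced zero section $\ff$. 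You instead work directly with the tautological section $\tau$ cutting out $s(\ff)$ and reduce to showing that multiplication by $\tau$ vanishes on $G$; since $m\colon G\to G\otimes\pi^*\omega_{\ff}$ is not an endomorphism, you apply the hypothesis $\End_X(G)\cong\mb{C}$ not to $m$ itself but to its contractions $m_\sigma$ against pullbacks of sections $\sigma\in \HO^0(\ff,\omega_{\ff}^{\vee})$, kill each $m_\sigma$ by a Nakayama argument along $\supp G$, and then use global generation of $\omega_{\ff}^{\vee}=\co(2,2)$ to recover $m=0$ from the vanishing of all $m_\sigma$. Each step checks out (in particular the identification of $\mc{I}_{s(\ff)}$ as the image of $\pi^*\omega_{\ff}^{\vee}\xrightarrow{\tau}\co_X$, and the pointwise joint-injectivity you flag at the end). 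It is worth noting that the two arguments rest on the same geometric input: the paper's assertion that the scheme-theoretic fiber of the contraction over the origin is the \emph{reduced} $\ff$ is precisely the statement that the functions $f_\sigma$ locally generate the ideal of the zero section, i.e.\ global generation of $\omega_{\ff}^{\vee}$, which the paper leaves implicit. Your version makes this input explicit and self-contained, avoiding both the citation and any discussion of scheme-theoretic images, at the cost of being longer; the paper's version is shorter given the cited machinery of \cite{BaM11}.
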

\begin{proof}
     We follow the idea in the proof of \cite[Lemma 3.1]{BaM11}: let $Y$ be the scheme-theoretic support of $E$. By definition, $\ho^0(\co_Y)$ acts faithfully on $E$, and $\End_{\ca}(E)=\mb{C}. \mr{Id}$, therefore $\ho^0(\co_Y)\cong \mb{C}$. Take the composition of the embedding of $Y$  with the contraction
    \[
       f:Y\hookrightarrow  X\twoheadrightarrow \bar{X} = \Spec \ho^0(\co_X), 
    \]
    as $ \ho^0(f_*\co_Y) = \ho^0(\co_Y) =\mb{C}$, so the scheme-theoretic image of $Y$ under $f$ is a point of $\bar{X}$. By definition of $f$, the point will be the origin (singular point), thus $Y$ is contained scheme-theoretically in the fiber of the contraction map. Since the scheme-theoretic fiber of the origin is exactly $\ff$, so $E = s_*F$ for some $F\in \Coh \ff$.
\end{proof}

Now we are able to characterize the stable objects on the ray $\phi = \frac{1}{2}$.
\begin{thm} \label{stabobj}

    {\em If $\sigma \in\mc{U}^n(\mc{A})^{\Phi}_+$}, and $E$ is a  $\sigma$-stable object in $\mc{P}(\frac{1}{2})$, then there exists a point $x\in \mb{P}^1$ such that either $E = F_1(x) = s_* \co_{\{x\}\times \mb{P}^1}$ or $E = F_2(x) = s_* \co_{\{x\}\times \mb{P}^1}(-1)[1]$.
    
    {\em If $\tau\in \mc{U}^n(\mc{A})^{\Phi}_-$},  and $E$  is a $\tau$-stable object in $\mc{P}(\frac{1}{2})$, then there exists a point $x\in \mb{P}^1$ such that either $E = \Psi\left(F_1(x) \right)$ or $E = \Psi\left(F_2(x)\right)$.
\end{thm}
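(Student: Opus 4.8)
The plan is to reduce the statement to a classification of sheaves on a single fibre of $\pp$ and then feed the result into Lemmas \ref{supp_points} and \ref{lem:stab_pushfor}. I will treat $\sigma\in\mc{U}^n(\mc{A})^{\Phi}_+$ in detail; the case $\tau\in\mc{U}^n(\mc{A})^{\Phi}_-$ follows formally at the very end, because $\Psi^{-1}(\tau)\in\mc{U}^n(\mc{A})^{\Phi}_+$ by Lemma \ref{lem:po_ne} and $\Psi$ carries ($\Psi^{-1}(\tau)$-)stable objects of phase $\tfrac12$ to ($\tau$-)stable objects of phase $\tfrac12$. Throughout, the essential input is that a $\sigma$-stable $E$ is a brick, $\End(E)=\mb{C}$, which is exactly what both lemmas are built to exploit.

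First I would locate the support of $E$. Since $E\in D^b_0(X)$ it is supported on the zero section $s(\pp)$, and Lemma \ref{supp_points} confines it to $S\times\mb{P}^1$ for a finite set $S\subset\mb{P}^1$, a disjoint union of fibres $\{x_i\}\times\mb{P}^1$. These are pairwise disjoint closed subsets, so $E$ decomposes accordingly; indecomposability of a stable object leaves a single summand, whence $\supp E=\{x\}\times\mb{P}^1=:C$ for one point $x$. Indecomposability also excludes $0$-dimensional support: if $\supp E$ were a single point, Lemma \ref{lem:stab_pushfor} would give $E\cong\co_x[i]$, but $\co_x$ is strictly semistable by Corollary \ref{ox_ss}, a contradiction. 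Hence $\supp E=C\cong\mb{P}^1$ is purely one-dimensional.

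Next I would promote ``$E$ is supported on $C$'' to ``$E$ is a shift of a sheaf'', which is the hypothesis of Lemma \ref{lem:stab_pushfor}; that lemma then yields $E\cong s_*F[i]$ with $F\in\Coh\pp$, $\supp F=C$ and $\End_{\pp}(F)=\mb{C}$. To obtain concentration in one coherent degree I would push forward along $q=p_2\circ\pi$: since $q$ restricts to an isomorphism $C\xrightarrow{\sim}\mb{P}^1$, the functor $q_*$ is exact and conservative on sheaves supported on $C$, so $E$ sits in one degree if and only if $Rq_*E$ does. I would combine this with Corollary \ref{inv_t}, which keeps every first-factor twist $E\otimes\pi^*\co(n,0)$ inside the single heart $\ca$, together with the degree-$0$ concentration of $R(p_1\circ\pi)_*(E\otimes\pi^*\co(n,0))$ obtained in the proof of Lemma \ref{supp_points}, to force $Rq_*E$ into one degree. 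The brick hypothesis and purity then identify $F$ with a rank-one torsion-free sheaf on the reduced curve $C$, i.e. a line bundle $\co_C(n)$; any embedded point or non-reduced thickening would contribute a skyscraper sub- or quotient of phase $\tfrac12$ and break stability.

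Finally I would pin down $(n,i)$. Membership $E\in\ca$ fixes the shift $i$, and the fibre-class computation $[s_*\co_C(n)]=(n+1)(\gamma_0+\gamma_1)+n(\gamma_2+\gamma_3)$ shows every such object has phase $\tfrac12$. For $n\ne 0,-1$ the skyscraper sequence $0\to\co_C(n-1)\to\co_C(n)\to\co_p\to0$, pushed forward and shifted into $\ca$, produces a proper subobject of phase $\tfrac12$ (the sheaf $s_*\co_C(n-1)$ when $n\ge 1$, a skyscraper of class $\delta$ when $n\le -2$), contradicting stability; this forces $n=0$, giving $E=s_*\co_C=F_1(x)$, or $n=-1$ and $i=1$, giving $E=s_*\co_C(-1)[1]=F_2(x)$. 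Applying $\Psi$ then settles the $\mc{U}^n(\mc{A})^{\Phi}_-$ case. The hard part will be the concentration step of the previous paragraph: converting the one-sided pushforward vanishing of Lemma \ref{supp_points} and the twist-invariance of Corollary \ref{inv_t} into genuine cohomological concentration of $E$, while simultaneously excluding non-reduced structure on $F$. Should this cohomological route prove unwieldy, an alternative is to argue entirely inside the finite-length abelian category $\mc{P}(\tfrac12)$, showing that its simple objects supported on $C$ are exactly the imaginary-root $\mb{P}^1$-families of the Kronecker subcategories $\mc{K}^I_1$ and $\mc{K}^I_2$, via Lemma \ref{stab_K2} and Theorem \ref{indec_K2}.
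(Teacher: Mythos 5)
You have a genuine gap at the concentration step --- showing that $E$ is a shift of a sheaf --- and this step is not a technicality but the actual mathematical content of the theorem. The inputs you propose to use there, namely Corollary \ref{inv_t} together with the degree-zero concentration of $R(p_1\circ\pi)_*\bigl(E\otimes\pi^*\co(n,0)\bigr)$ from the proof of Lemma \ref{supp_points}, cannot suffice, because they are blind to exactly the phenomenon that must be excluded. Concretely, take $E_0 = F_1(x)\oplus F_2(x) = s_*\co_C\oplus s_*\co_C(-1)[1]$ with $C = \{x\}\times\mb{P}^1$: it lies in $\mc{P}(\frac{1}{2})$, is supported on the single fibre $C$, satisfies $E_0\otimes\pi^*\co(n,0)\cong E_0$ for all $n$, and both $R(p_1\circ\pi)_*E_0$ and $R(p_1\circ\pi)_*\bigl(E_0\otimes\pi^*\co(0,-1)\bigr)$ are isomorphic to the single skyscraper sheaf $\co_x$ concentrated in degree zero (in each case the vanishing of $R\Gamma\bigl(\mb{P}^1,\co(-1)\bigr)$ kills one summand), yet $E_0$ occupies two cohomological degrees. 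So every fact you invoke holds for $E_0$, which is not concentrated; concentration can therefore only come from using stability (the brick property) at this precise point, and your sketch never says how --- nor is your fallback inside $\mc{P}(\frac{1}{2})$ developed. A smaller error of the same kind: your exclusion of point-like support misapplies Lemma \ref{lem:stab_pushfor}, which has ``$E$ is a shift of a sheaf'' as a hypothesis, not as a conclusion.

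The paper avoids all of this by arguing by contradiction, and that structure is essential. Assuming $E\not\cong F_1(x), F_2(x)$ for every $x$, stability gives $\Hom_X\bigl(E,F_i(x)\bigr) = \Hom_X\bigl(F_i(x),E\bigr) = 0$; the extension $0\to F_1(x)\to\co_x\to F_2(x)\to 0$ then forces $\Hom_X(E,\co_x) = \Hom_X(\co_x,E) = 0$, and semistability of $\co_x$ plus Serre duality upgrade this to vanishing against all shifts of all skyscrapers. That is precisely the hypothesis of \cite[Proposition 5.4]{BM02}, which represents $E$ by a two-term complex of locally free sheaves $E^{-2}\to E^{-1}$; torsion-freeness of $H^{-2}(E)$ and the support condition give $E\cong F'[1]$, Lemma \ref{lem:stab_pushfor} gives $F' = s_*F$, and a depth/Auslander--Buchsbaum argument makes $F$ locally free on $\ff$, contradicting Lemma \ref{supp_points}. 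Note that the skyscraper Hom-vanishing exists only under the contrapositive assumption --- $F_1(x)$ itself maps onto $\co_x$ --- so your direct classification route cannot quote it and would need a genuine substitute, which is exactly what is missing.
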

\begin{proof}
We prove the first statement, the second statement follows since $\Psi$ exchanges the stability conditions in $\mc{U}^n(\mc{A})^{\Phi}_+$ and $\mc{U}^n(\mc{A})^{\Phi}_-$, as also exchanges the objects of special Kronecker type I and II.

Suppose $E\in \mc{P}\left(\frac{1}{2}\right)$ is stable and not isomorphic to $F_1(x_1)$ and $F_2(x_1)$ for any $x_1\in\mb{P}^1$. We will show that there is a vector bundle $F\in \Coh \ff$ such that $E\cong s_*F[1]$. We follow the idea of proof in \cite[Lemma 3.2]{BaM11}: since $F_i$ are stable, therefore
\[
    \Hom_X(E, F_n(x_1)) = \Hom_X(F_n(x_1), E) = 0,\ n=1,\ 2.
\]
Note that we have the short exact sequence in $\mc{A}$:
\[
    0\rightarrow F_1(x_1) \rightarrow \co_x \rightarrow F_2(x_1) \rightarrow 0,
\]
where $x\in \ff$ such that $p_1(x) = x_1$, therefore there cannot be any nonzero map $E \rightarrow \co_x$ or $\co_x\rightarrow E$. Since $\co_x$ is semistable of phase $1/2$, then $\Hom^i_X(E, \co_x) = 0$ for $i \leq 0$, and Serre duality gives $\Hom_X(\co_x[i],E) = \Hom_X(E,\co_x[i+3])= 0$ for $i\geq 0$ and $x\in \ff$. Since $E$ is supported on $\ff$, there will be no homomorphisms with shifts of skyscraper sheaves outside the zero-section. 
Therefore we can apply \cite[Proposition 5.4]{BM02} and deduce that $E$ is isomorphic to a two-term complex of locally-free sheaves
\[
    E^{-2}\xrightarrow{d^{-2}} E^{-1}. 
\]
Hence $H^{-2}(E)\subset E^{-2}$ is torsion free on $X$. However, since $H^{-2}(E)$ is supported on $\ff$, therefore it must vanish. The map $d^{-2}$ is injective, so that $E$ is isomorphic to the shift of a sheaf $F'[1]$. Since $E$ is stable, $\End(F')\cong \mb{C}.\mr{Id}$, therefore we apply 
 Lemma \ref{lem:stab_pushfor} and show that $F' = s_*F$ where $F\in \Coh \ff$. Since $\Hom_X(s_*\co_x, s_*F[1]) \cong \Hom_{\ff}(\co_x,F[1]) \oplus \Hom_{\ff}(\co_x, F) = 0$, therefore $F$ has depth $2$ and by Auslander-Buchsbaum formula, $F$ is actually locally free.

 However, this contradicts with Lemma \ref{supp_points} which says that $\pi_*E$ is supported on a $S\times \mb{P}^1\subset \pp$, where $S$ is a finite set of points. Therefore we conclude that $E$ is isomorphic to either $F_1(x)$ or $F_2(x)$ for some $x\in\mb{P}^1$.
\end{proof}
In summary, we have completed the description of the stable objects for $\sigma \in \mc{U}^n(\ca)^{\Phi}$:
\begin{thm} \label{thm:stab_obj}
	\begin{enumerate}
		\item If $\sigma \in \mc{U}^n(\mc{A})^{\Phi}_+$, then the stable objects (up to a shift) are of special Kronecker type I, and the classes of stable objects (up to a sign) in $K_0(\ca)$  are ($n\in \mb{N}$)
            \begin{eqnarray*}
                n \gamma_0+(n+1)\gamma_1,& (n+1)\gamma_0+n\gamma_1,\\
                n \gamma_2+(n+1)\gamma_3,& (n+1)\gamma_2+n\gamma_3,\\
                \gamma_0+\gamma_1,& \gamma_2+\gamma_3.
            \end{eqnarray*}
        \item If $\sigma \in \mc{U}^n(\mc{A})^{\Phi}_-$, then the stable objects (up to a shift) are of special Kronecker type II, and the classes of stable objects (up to a sign)  in $K_0(\ca)$ are ($n\in \mb{N}$)
        \begin{eqnarray*}
                n \gamma_1+(n+1)\gamma_2,& (n+1)\gamma_1+n\gamma_2,\\
                n \gamma_3+(n+1)\gamma_0,& (n+1)\gamma_3+n\gamma_0,\\
                \gamma_1+\gamma_2,& \gamma_3+\gamma_0.
            \end{eqnarray*}
	\item If $\sigma \in \mc{U}^n(\ca)^{\Phi}$ and  $\phi(S_i) = \frac{1}{2}$ for each $i$, then the stable objects (up to a shift) are only $\{S_i\}_{i}$, and the classes of stable objects (up to a sign) in $K_0(\ca)$ are
        \[
            \gamma_0, \ \gamma_1,\ \gamma_2,\ \gamma_3.
        \] 
	\end{enumerate}
\end{thm}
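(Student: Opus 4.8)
The plan is to assemble the three results already proved --- Theorem \ref{thm:stab_kron} (special Kronecker objects are stable), the preceding theorem classifying the $\sigma$-stable objects of phase $\neq\tfrac12$, and Theorem \ref{stabobj} (the stable objects of phase $\tfrac12$) --- into the single clean statement, handling the three cases in turn. Throughout, I note that any $\sigma$-stable object may be shifted so as to lie in the heart $\ca=\mc{P}\bigl((0,1]\bigr)$, and that by Lemma \ref{lem:cone} every nonzero object of $\ca$ has central charge in the cone spanned by the $Z(S_i)$; since for $\sigma\in\mc{U}^n(\mc{A})^{\Phi}_+$ the simple objects have only the two phases $\phi(S_0)=\phi(S_2)$ and $\phi(S_1)=\phi(S_3)$, every such object --- in particular every stable one --- has phase in $[\phi(S_0),\phi(S_1)]$. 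It therefore suffices to locate the stable objects of phase in this interval and then read off their $K_0(\ca)$-classes from Lemma \ref{lem:kron_type} and Proposition \ref{kron_type}.

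For case (1) the key geometric input is that the phases $\phi(\mc{T}^m(S_0))$, $m\ge 0$, form an increasing sequence in $[\phi(S_0),\tfrac12)$ accumulating at $\tfrac12$, and likewise $\phi(\mc{T}^{-m}(S_1))$ a decreasing sequence in $(\tfrac12,\phi(S_1)]$ accumulating at $\tfrac12$. This I would verify directly from $Z(\mc{T}^m(S_0)) = m\bigl(Z(\gamma_0)+Z(\gamma_1)\bigr)+Z(\gamma_0)$ together with the normalization $2\bigl(Z(\gamma_0)+Z(\gamma_1)\bigr)=Z(\delta)=i$, so that the direction of $Z(\mc{T}^m(S_0))$ tends to the positive imaginary axis as $m\to\infty$. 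The open intervals between consecutive such phases carry no stable objects by the preceding theorem, the discrete phases $\phi(\mc{T}^{\pm m})$ (with $m=0$ giving the endpoints $\phi(S_0)$, $\phi(S_1)$) carry only special Kronecker type I objects, and the single remaining phase $\tfrac12$ carries only $F_1(x)$, $F_2(x)$ by Theorem \ref{stabobj}, again of special Kronecker type I by Proposition \ref{kron_type}. Thus all stable objects are of special Kronecker type I, and Lemma \ref{lem:kron_type} together with $[F_1]=\gamma_0+\gamma_1$, $[F_2]=\gamma_2+\gamma_3$ produces exactly the listed classes (the last two being realized by $\mb{P}^1$-families as $x$ varies).

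Case (2) I reduce to case (1) through the autoequivalence $\Psi$: by Lemma \ref{lem:po_ne} and Lemma \ref{aut_quiver}, $\Psi$ carries $\mc{U}^n(\mc{A})^{\Phi}_+$ onto $\mc{U}^n(\mc{A})^{\Phi}_-$, sends objects of special Kronecker type I to those of type II, and acts on classes by $\psi:\gamma_i\mapsto\gamma_{i+1}$; hence the stable objects of any $\sigma\in\mc{U}^n(\mc{A})^{\Phi}_-$ are the $\Psi$-images of the stable objects of $\Psi^{-1}(\sigma)\in\mc{U}^n(\mc{A})^{\Phi}_+$, and applying $\psi$ to the list of case (1) yields precisely the list of case (2). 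For case (3), where $\phi(S_i)=\tfrac12$ for all $i$, the $\Phi$-invariance and the normalization $Z(\delta)=i$ force all four $Z(\gamma_i)$ onto the positive imaginary axis, so every nonzero object of $\ca$ has phase $\tfrac12$ and is semistable; as $\ca$ is of finite length, the $\sigma$-stable objects are exactly the simple objects of the abelian category $\ca$, namely $\{S_i\}$, with classes $\gamma_0,\gamma_1,\gamma_2,\gamma_3$.

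The genuinely hard steps --- clearing the open phase-intervals via the $\mc{T}$-action and the geometric identification of the phase-$\tfrac12$ stable objects --- are already in hand, so the only point demanding real care in this assembly is the \emph{covering} claim: that the discrete phases $\phi(\mc{T}^{\pm m})$ together with the intervening empty intervals and the single phase $\tfrac12$ exhaust $[\phi(S_0),\phi(S_1)]$ with no gap. I expect this accumulation argument at $\phi=\tfrac12$ to be the one place where a careless limit could leave room for an unaccounted stable object, so I would make the monotonicity and the limit $\phi(\mc{T}^{\pm m})\to\tfrac12$ fully explicit.
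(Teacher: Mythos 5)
Your proposal is correct and follows essentially the same route as the paper: cases (1) and (2) are assembled from Theorem \ref{thm:stab_kron}, the theorem classifying stable objects of phase $\neq \tfrac{1}{2}$, and Theorem \ref{stabobj} (with $\Psi$ transporting case (1) to case (2)), while case (3) rests on the observation that all objects of $\ca$ then share phase $\tfrac{1}{2}$ and hence the stable objects are exactly the simples — the paper phrases this last step as an induction on length, but it is the same point. Your explicit verification that the phases $\phi(\mc{T}^m(S_0))$ and $\phi(\mc{T}^{-m}(S_1))$ increase, respectively decrease, monotonically to $\tfrac{1}{2}$ under the normalization $Z(\delta)=i$, so that the discrete phases, the empty open intervals, and the ray $\phi=\tfrac{1}{2}$ exhaust $[\phi(S_0),\phi(S_1)]$, makes explicit a point the paper's proof leaves implicit in the phrase ``we have proved the first two cases in the above.''
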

\begin{proof}
    We have proved the first two cases in the above. For the last case, we  do induction on the length $l(E)$ of object $E\in \ca$. When $l(E) = 1$, it is obvious. Then for $l(E)= n+1$, by taking the Jordan-H{\"o}lder filtration of $E$, we have short exact sequence
    \[
        0\rightarrow E' \rightarrow E \rightarrow S_i^{\oplus n_i}\rightarrow 0.
    \] 
    Then $E'$ is semistable by our induction hypothesis, note that $E'$ has the same phase as $S_i$,  we see that $E$ is also semistable of phase of $S_i$. Therefore we proved that there are no other stable objects other that $S_i$, $i=0,\cdots, 3$.
\end{proof}
By applying $\mb{C}$-action we obtain the same description of stable objects for general stability conditions in $\mc{U}(\ca)^{\Phi}$.

\section{Space of invariant stability conditions} \label{sec:spe_stab}
Recall from the introduction  the subset $\Delta\subset \overline{K_0(\ca)}$ is defined to be the set of classes of stable objects for $\sigma\in \mc{U}(\ca)^{\Phi}$ in the quotient group $\overline{K_0(\ca)} = K_0(\ca)/K_0(\ca)^{-\varphi}$.  By Theorem \ref{thm:stab_obj} $\Delta$ consists of the following elements:
\[
    \Delta = \bigl\{n\in \mb{N}: n\gamma_0+(n+1)\gamma_1,\ (n+1)\gamma_0+n\gamma_1, \pm(\gamma_0+\gamma_1) \bigr\}. 
\]
Recall also that 
    $$
        \mc{H}^{\reg} := \Hom(\overline{K_0(\ca)}, \mb{C}) \setminus \bigcup_{\pmb{v}\in \Delta} \pmb{v}^{\perp},
    $$
where $\pmb{v}^{\perp} := \{Z\in \Hom(\overline{K_0(\ca)}, \mb{C}) | Z(\pmb{v}) = 0 \}$ is the hyperplane complement.

Note that $\mc{H}^{\reg}$ is the complement of a  family of hyperplanes in $\Hom(\overline{K_0(\ca)}, \mb{C})\cong \mb{C}^2$:
\begin{eqnarray*}
        n Z(\gamma_0) + (n+1)Z(\gamma_1) &=& 0 \\
        (n+1) Z(\gamma_0) + n Z(\gamma_1) &=& 0 \\
        Z(\gamma_0)+Z(\gamma_1) &=& 0
\end{eqnarray*}
for $n \geq 0$ (see Figure \ref{fig:hreg}).

In the final section we prove Theorem \ref{thm:main_thm2} in the introduction: the forgetful map 
\[
    \mc{Z}: \bigl(\Stab(X)^{\Phi}\bigr)_0 \rightarrow \Hom(\overline{K_0(X)},\mb{C})
\]
factors through
\[
\mc{Z}: \bigl(\Stab(X)^{\Phi}\bigr)_0 \rightarrow \mc{H}^{\reg}.
\]
Moreover, the above is a covering map. 

Recall that a continuous map $f:A\rightarrow B$ between topological spaces is called a covering map, if every point $b\in B$ has an open neighborhood $V\subset B$ such that the restriction of $f$ to each connected component of $f^{-1}(V)$ is a homeomorphism onto $V$.

We first analyze the boundary of $\mc{U}^n(\ca)^{\Phi}$. Recall $\{S_i\}_{i\in \mb{Z}_4}$ (Corollary \ref{simp_heart}) are the simple objects which generate $\ca$ and $\gamma_i = [S_i]$ are their classes. By definition $\partial\ \mc{U}^n(\ca)^{\Phi}$ has four components of codimension-one submanifolds (real lines), which are 
\[
W^+_i := \{Z(\gamma_i) = Z(\gamma_{i+2})\in \mb{R}_{>0}\},\quad  W^-_i:=\{Z(\gamma_i)=Z(\gamma_{i+2}) \in \mb{R}_{<0}\}
\]
$i=0,\ 1$. 
Though we cannot apply Lemma \ref{lem:stab_tilt} directly, however, since we are deforming $\sigma$ while preserving the condition $Z(\gamma_i) = Z(\gamma_{i+2})$,  the statement and proof are exactly the same as there.
\begin{lem}\label{lem:boundary}
\begin{enumerate}
    \item For any stability condition on $W^+_i$ ($i=0,\ 1$) there exists an open neighborhood $V$ such that $V\subset \UnA{}\cup \UnA{L_{S_i}L_{S_{i+2}}}$. Similarly, for any stability condition on $W^-_i$ ($i=0,\ 1$) there exists an open neighborhood $V$ such that $V\subset \UnA{}\cup \UnA{R_{S_i}R_{S_{i+2}}}$.
    \item We have $W^+_i = \overline{\mc{U}^n}(\ca)^{\Phi}\cap \overline{\mc{U}^n}(\tau\ca)^{\Phi}$, where $\tau = \mc{T}$ when $i =0$ and $\tau = \mc{T}_{\Psi}$ when $i=1$. Similarly, $W^-_i = \overline{\mc{U}^n}(\ca)^{\Phi}\cap \overline{\mc{U}^n}(\tau\ca)^{\Phi}$, where $\tau = \mc{T}_{\Psi}^{-1}$ when $i=0$ and $\tau = \mc{T}^{-1}$ when $i=1$.
\end{enumerate}
\end{lem}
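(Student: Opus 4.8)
The plan is to reproduce the proof of Bridgeland's Lemma \ref{lem:stab_tilt} almost verbatim, replacing the single simple tilt by the double tilt at the pair $\{S_i, S_{i+2}\}$. The only reason Lemma \ref{lem:stab_tilt} does not apply on the nose is that along $W_i^+$ both $Z(\gamma_i)$ and $Z(\gamma_{i+2})$ sit on the positive real axis at once, violating its hypothesis that exactly one simple be real. This is harmless here: working inside $\mc{U}^n(\ca)^{\Phi}$, $\Phi$-invariance forces $Z(\gamma_i) = Z(\gamma_{i+2})$ throughout every deformation, so the two central charges move together and cross the real axis at the same instant — a one-sided tilt is never available. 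Moreover Proposition \ref{quiver_tilt} gives $\Ext^1(S_i, S_{i+2}) = 0$, so the two simple tilts commute and $L_{S_i}L_{S_{i+2}}$ is unambiguous; by Theorem \ref{double_tilts} it equals $\mc{T}\ca$ (for $i=0$) or $\mc{T}_{\Psi}\ca$ (for $i=1$), an image of $\ca$ under an autoequivalence and hence a finite-length heart — exactly the finiteness input the tilting argument needs.

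For part (1), I would fix $\sigma_0 = (Z_0, \mc{P}_0) \in W_i^+$. The normalization $Z(\delta) = i$ together with invariance pins $\mr{Im}\, Z_0(\gamma_{i+1}) = \mr{Im}\, Z_0(\gamma_{i+3}) = \tfrac12 > 0$, so on $W_i^+$ the only central charges on the real axis are the equal positive reals $Z_0(\gamma_i) = Z_0(\gamma_{i+2})$; in particular $\sigma_0$ itself already has heart $L_{S_i}L_{S_{i+2}}\ca$, with $S_i[1], S_{i+2}[1]$ sitting at phase $1$. Next I would invoke the deformation theorem (Theorem \ref{stab_deform}) to obtain a neighborhood $V$ of $\sigma_0$, restricted to the invariant normalized slice, on which every $\tau = (W, \mc{P}')$ satisfies $d(\mc{P}_0, \mc{P}') < \epsilon$ and keeps $\mr{Re}\, W(\gamma_i) > 0$. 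The common sign of $\mr{Im}\, W(\gamma_i) = \mr{Im}\, W(\gamma_{i+2})$ then decides the heart: if it is $>0$, all four simples of $\ca$ lie in the open upper half plane and the slicing comparison of Lemma \ref{lem:stab_tilt} (via Lemma \ref{stab_equi}) gives $\mc{P}'((0,1]) = \ca$, so $\tau \in \UnA{}$; if it is $\leq 0$, then $S_i, S_{i+2}$ lie on or below the axis while $S_i[1], S_{i+2}[1]$ stay in $(0,1]$, and the same comparison gives $\mc{P}'((0,1]) = L_{S_i}L_{S_{i+2}}\ca$, so $\tau \in \UnA{L_{S_i}L_{S_{i+2}}}$. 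Hence $V \subset \UnA{} \cup \UnA{L_{S_i}L_{S_{i+2}}}$. The $W_i^-$ statement is identical after replacing left tilts by right tilts and using $R_{S_i}R_{S_{i+2}}$.

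For part (2) I would argue from the geometry of the walls. The inclusion $\subseteq$ is quick: by definition $W_i^+ \subseteq \partial\,\mc{U}^n(\ca)^{\Phi} \subseteq \overline{\mc{U}^n}(\ca)^{\Phi}$, and by part (1) every neighborhood of a wall point meets the opposite chamber $\UnA{\tau}$ with $\tau\ca = L_{S_i}L_{S_{i+2}}\ca$ — which by Theorem \ref{double_tilts} is $\mc{T}\ca$ for $i=0$ and $\mc{T}_{\Psi}\ca$ for $i=1$ — so $W_i^+ \subseteq \overline{\mc{U}^n}(\tau\ca)^{\Phi}$ as well. For $\supseteq$, take $\sigma_0$ in the intersection of the two closures. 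Since $\mc{U}^n(\ca)^{\Phi}$ is open and disjoint from $\UnA{\tau}$ (distinct finite-length hearts give disjoint $\mc{U}$-sets), $\sigma_0$ cannot lie in $\mc{U}^n(\ca)^{\Phi}$ itself, so it lies on one of the four walls $W_0^{\pm}, W_1^{\pm}$. By part (1) the chamber across each wall is one of the four double tilts $\mc{T}\ca, \mc{T}_{\Psi}\ca, \mc{T}_{\Psi}^{-1}\ca, \mc{T}^{-1}\ca$, and Lemma \ref{auto_kgroup} shows the induced automorphisms $t, t_{\psi}, t_{\psi}^{-1}, t^{-1}$ of $K_0(\ca)$ are pairwise distinct and none the identity, so these four hearts are pairwise distinct and distinct from $\ca$. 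Hence $\sigma_0 \in \overline{\mc{U}^n}(\tau\ca)^{\Phi}$ can hold only when $\sigma_0$ lies on the single wall whose opposite chamber is $\UnA{\tau}$, namely $W_i^+$; the $W_i^-$ case is the same.

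The hard part will be the faithful transcription of Bridgeland's slicing comparison to the double-tilt setting — checking that the uniqueness in Theorem \ref{stab_deform}, together with the finite-length property of $L_{S_i}L_{S_{i+2}}\ca$ from Theorem \ref{double_tilts}, really does pin the heart of each nearby $\tau$ to be exactly $\ca$ or the double tilt, with nothing in between. This is precisely where $\Phi$-invariance earns its keep: by locking $\mr{Im}\, Z(\gamma_i)$ to $\mr{Im}\, Z(\gamma_{i+2})$ it rules out the one-sided tilt that would otherwise be a genuine extra possibility, so the two-case dichotomy driving the whole argument is exactly the one Bridgeland's single-tilt proof handles.
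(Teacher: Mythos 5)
Your overall route is the same as the paper's: the paper itself remarks that the proof of part (1) is "exactly the same" as Lemma \ref{lem:stab_tilt} once one observes that $\Phi$-invariance locks $Z(\gamma_i)=Z(\gamma_{i+2})$ so the two simples cross the real axis together, and its actual proof is precisely your neighborhood-plus-sign-of-$\mr{Im}\,Z(\gamma_i)$ dichotomy; your part (2) is in fact spelled out in more detail than the paper's one-line deduction (the paper just says "by the first part" and cites Theorem \ref{double_tilts}, whereas you supply the disjointness/pairwise-distinctness argument that makes this rigorous).

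There is, however, one concrete gap in your part (1), and it is exactly the step the paper's proof is built around. In the case $\mr{Im}\,W(\gamma_i)\leq 0$ you conclude $\mc{P}'\bigl((0,1]\bigr)=L_{S_i}L_{S_{i+2}}\ca$ after locating only $S_i[1]$ and $S_{i+2}[1]$ in the heart. That is not enough: $L_{S_i}L_{S_{i+2}}\ca$ has \emph{four} simples, namely $S_i[1],\ S_{i+2}[1],\ \wtil{S}_{i+1}=\mc{T}(S_i),\ \wtil{S}_{i+3}=\mc{T}(S_{i+2})$ (for $i=0$; the $\mc{T}_{\Psi}$-images for $i=1$), and Lemma \ref{nest_t} only applies once \emph{all} of them are shown to lie in $\mc{P}'\bigl((0,1]\bigr)$, since only then does the finite-length heart $L_{S_i}L_{S_{i+2}}\ca$ embed into $\mc{P}'\bigl((0,1]\bigr)$. (Knowing two simples, or even $S_{i+1},S_{i+3}$ as well, does not pin the heart: in the tilted heart $\wtil{S}_{i+1}$ is a subobject of $S_{i+1}$, and hearts are not closed under subobjects taken in a different heart.) The paper handles this by showing that $\wtil{S}_{i+1},\wtil{S}_{i+3}$ lie in $\mc{P}(0,1)$ at the wall point — using $\Hom(\wtil{S}_{i+1},S_i)=\Hom(\wtil{S}_{i+3},S_{i+2})=0$, equivalently their semistability as special Kronecker type I objects with $\mr{Im}\,Z = \tfrac12$ bounded away from the axis — and that this persists on the whole neighborhood $V$; only then do both cases of the dichotomy close. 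Your related opening claim, that the wall point $\sigma_0$ "already has heart $L_{S_i}L_{S_{i+2}}\ca$", is true but needs the same ingredient and is not justified by the remark about which central charges are real. So: right skeleton, but the "faithful transcription" you defer must include tracking these two extra simples, and that is the actual content of the paper's argument.
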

\begin{proof}
    In the following proof we will repeatedly use Lemma \ref{nest_t} that if $\ca,\ \ca'\subset \CD$ are hearts of bounded t-structures and $\ca\subset \ca'$, then $\ca = \ca'$. 

First we suppose $\sigma \in W^+_0$, that is $Z(\gamma_0) = Z(\gamma_2)\in \mb{R}_{>0}$. 
The objects $\wtil{S_1} = \mc{T}(S_0)$ and $\wtil{S_3} = \mc{T}(S_2)$ lie in $\ca$, and are in the short exact sequences by the computations in Section \ref{sec:double_tilts}:
\begin{equation}
    0\rightarrow S_{i-1}^{\oplus 2} \rightarrow \wtil{S_i} \rightarrow S_{i} \rightarrow 0,\quad i=1,\ 3
\end{equation}
where $2 = \dim_{\mb{C}}\Ext^1(S_i,S_{i-1})^*$. Since $\Hom(\wtil{S_1},S_0) = \Hom(\wtil{S_3},S_2) = 0$ the objects $\wtil{S_i}$ lie in $\mc{P}(0,1)$, and by choosing a small enough open neighborhood $V$ of $\sigma$ we can assume this is the case for all stability conditions $(Z,\mc{P})$ of $V$. We can split $V$ into two pieces 
\[
V_+=\{ \mr{Im} Z(S_0) = \mr{Im} Z(S_2) > 0\},\quad  V_- = \{\mr{Im}\ Z(S_0) = \mr{Im} Z(S_2) \leq 0\}.
\]
For $\sigma \in V_+$, we can shrink $V$ if necessarily such that $S_i\in \mc{P}(0,1)$ for all $i$. This shows that $\ca \subset \mc{P}(0,1]$ for all stability conditions in $V_+$, therefore $\mc{P}(0,1]=\ca$ and so $V_+\subset \mc{U}^n(\ca)^{\Phi}$. On the other hand,  for any stability condition $(Z,\mc{P})\in V_-$ the objects $S_0$ and $S_2$ are in $\mc{P}(-1/2,0]$, thus the heart $\mc{P}(0,1]$ contains the objects $S_0[1],\ S_2[1],\ \wtil{S_1}$
 and $\wtil{S_3}$. Since these are the simple objects of the finite length category $L_{S_0}L_{S_2}\ca$, therefore $\mc{P}(0,1] = L_{S_0}L_{S_2}\ca$ and so $V_-\subset \mc{U}^n(L_{S_0}L_{S_2}\ca)^{\Phi}$. Therefore $V\subset \mc{U}^n(\ca)^{\Phi}\cup \mc{U}^n(L_{S_0}L_{S_2}\ca)^{\Phi}$. 
 
 By applying $\Psi$ on $\sigma$ then $\Psi(\sigma)\in W^+_1$, that is $Z(\y_1)=Z(\y_3)\in\mb{R}_{>0}$, then there exists an open neighborhood $V$ of $\Psi(\sigma)$ such that $V\subset \UnA{}\cup \UnA{L_{S_1}L_{S_3}}$.

The proof for $\sigma \in W^-_i$ is essentially the same by replacing the left double tilt with the right double tilt. 

For the second statement, by the first part we have 
\[
    W^+_i = \overline{\mc{U}^n}(\ca)^{\Phi}\cap \overline{\mc{U}^n}(L_{S_i}L_{S_{i+2}}\ca)^{\Phi},\quad W^-_i = \overline{\mc{U}^n}(\ca)^{\Phi}\cap \overline{\mc{U}^n}(R_{S_i}R_{S_{i+2}}\ca)^{\Phi}.
\]
By Theorem \ref{double_tilts} we obtain the results.
\end{proof}

We denote by  $H$ the subgroup in $\Aut_*(D^b_0(X))$ (see Definition \ref{def:aut_group}) generated by $\mc{T}$ and $\mc{T}_{\Psi}$. 
\begin{prop} \label{lem:fund_domain_norm}
\begin{equation} \label{stab_norm}
    \bigcup_{g\in H}  \overline{\mc{U}^n}(g\ca)^{\Phi} = \bigl(\Stab(X)^{\Phi}\bigr)_n.
\end{equation}
\end{prop}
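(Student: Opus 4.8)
The plan is to prove both assertions by a standard open--closed argument inside the connected manifold $\bigl(\Stab(X)^{\Phi}\bigr)_n$. Write $U := \bigcup_{g\in H}\overline{\mc{U}^n}(g\ca)^{\Phi}$. First I would record the easy inclusion $U\subseteq\bigl(\Stab(X)^{\Phi}\bigr)_n$: every $g\in H$ fixes the class $\delta=\sum_i\gamma_i$ (one checks $t(\delta)=\delta$ and $t_{\psi}(\delta)=\delta$ from Lemma \ref{auto_kgroup} and Lemma \ref{ox_kgroup}), so $g$ preserves the normalization $Z(\delta)=i$; and since by Theorem \ref{double_tilts} each generator of $H$ carries $\ca$ to a double simple tilt, Lemma \ref{lem:boundary}(2) shows that $\mc{U}^n(g\ca)^{\Phi}$ and $\mc{U}^n(\ca)^{\Phi}$ are joined through a shared wall. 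An induction on the word length of $g$ then places every chamber in the same connected component, and in particular $U$ is nonempty. Since $\bigl(\Stab(X)^{\Phi}\bigr)_n$ is connected, it suffices to show that $U$ is both open and closed.

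For openness, let $\sigma\in U$, say $\sigma\in\overline{\mc{U}^n}(g\ca)^{\Phi}$. If $\sigma$ lies in the interior $\mc{U}^n(g\ca)^{\Phi}$ there is nothing to prove. Otherwise $\sigma$ lies on the boundary, and here I would first note that the four walls $W_i^{\pm}$ of the fundamental chamber are pairwise disjoint: the normalization forces $Z(\gamma_0)+Z(\gamma_1)=\tfrac{i}{2}$ to be purely imaginary, so no two of the values $Z(\gamma_i)$ can be simultaneously real, and in particular the chamber has no higher-codimension corners. Thus $\sigma$ lies on a single wall $g\cdot W_i^{\pm}$, and applying Lemma \ref{lem:boundary}(1) to the heart $g\ca$ (whose relevant double tilt is again an $H$-translate of $\ca$ by Theorem \ref{double_tilts}) produces an open neighbourhood of $\sigma$ contained in the union of the two adjacent $H$-chambers, hence in $U$.

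The main obstacle is closedness. I would take $\sigma\in\overline{U}$ inside the component together with a sequence $\sigma_k\to\sigma$ with $\sigma_k\in\overline{\mc{U}^n}(g_k\ca)^{\Phi}$. If only finitely many chambers occur among the $g_k$, then $\sigma$ lies in one of the finitely many closed sets $\overline{\mc{U}^n}(g\ca)^{\Phi}$ and we are done. The delicate case is when infinitely many distinct chambers accumulate at $\sigma$. To handle it I would use that, on $\overline{K_0(\ca)}\cong\mb{Z}^2$, the group $H$ acts through the single parabolic automorphism $t$ (recall $t_{\psi}=t^{-1}$ by (\ref{eqn:act_quot})), whose fixed class is $\gamma_0+\gamma_1$; consequently the classes of the simple objects of the accumulating chambers $g_k\ca$, which are the special Kronecker classes $(n+1)\gamma_0+n\gamma_1$ and $n\gamma_0+(n+1)\gamma_1$, converge projectively to $\gamma_0+\gamma_1$. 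Feeding this into the normalization, their central charges are $Z(\gamma_0)+n\tfrac{i}{2}$ and $Z(\gamma_1)+n\tfrac{i}{2}$, so the walls of the $g_k\ca$ can cluster at $\sigma$ only if $\mc{Z}(\sigma)$ lands on one of the removed hyperplanes, that is, $Z_{\sigma}(\pmb{v})=0$ for some $\pmb{v}\in\Delta$.

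To reach a contradiction I would then exhibit a fixed object realizing the offending class: by Theorem \ref{thm:stab_obj} and Lemma \ref{lem:kron_type} the class $\pmb{v}$ is that of a genuine special Kronecker object $E$ (some $\mc{T}^{m}(S_0)$, $\mc{T}^{-m}(S_1)$, or $F_i$) which is stable for all the nearby $\sigma_k$; since semistability is a closed condition (Lemma \ref{stab_sstable}(2)), $E$ is $\sigma$-semistable, and yet $Z_{\sigma}(E)=Z_{\sigma}(\pmb{v})=0$, contradicting the support property of $\sigma$. Hence infinitely many chambers cannot accumulate at a point of the component, so $U$ is closed, and by connectedness $U=\bigl(\Stab(X)^{\Phi}\bigr)_n$. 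I expect the genuine difficulty to lie entirely in the accumulation step: controlling how the infinitely many special Kronecker chambers pile up near the ray $\phi=\tfrac12$ and pinning their limit onto the excluded hyperplanes $\pmb{v}^{\perp}$ is exactly where the parabolicity of $t$, the normalization $Z(\gamma_0)+Z(\gamma_1)=\tfrac{i}{2}$, and the support property must be combined.
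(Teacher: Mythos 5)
Your overall skeleton agrees with the paper's proof: the inclusion $\subseteq$ via $t(\delta)=t_{\psi}(\delta)=\delta$, and then an open--closed argument inside the connected space $\bigl(\Stab(X)^{\Phi}\bigr)_n$; your openness step, using Lemma \ref{lem:boundary} at boundary points (together with the correct observation that the normalization $Z(\gamma_0)+Z(\gamma_1)=\tfrac{i}{2}$ rules out corners), is essentially the paper's. The genuine gap is in your closedness step. Your treatment of the accumulation case rests on the claim that infinitely many distinct chambers $g_k\ca$ force the classes of their simples in $\overline{K_0(\ca)}$ to run through the Kronecker classes with $n\to\infty$; equivalently, that distinct chambers induce distinct powers of $t$ on $\overline{K_0(\ca)}$. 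This conflates the action of $H$ on hearts with its action on $\overline{K_0(\ca)}$, and it is false: the homomorphism from $H$ to $\Aut\bigl(\overline{K_0(\ca)}\bigr)$ has image $\langle t\rangle\cong\mb{Z}$ but a large kernel that moves $\ca$. Concretely, $\mc{T}_{\Psi}\mc{T}$ acts trivially on $\overline{K_0(\ca)}$ by (\ref{eqn:act_quot}), yet $\mc{T}_{\Psi}\mc{T}(S_1)=\mc{T}_{\Psi}(S_0[1])=S_3[2]$ (using Theorem \ref{double_tilts}, Lemma \ref{aut_quiver} and the projection formula), and $S_3[2]$ cannot lie in $\ca$ since $S_3\in\ca$ and no nonzero object lies in both $\ca$ and $\ca[2]$; hence $\mc{T}_{\Psi}\mc{T}\ca\neq\ca$. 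Iterating, $(\mc{T}_{\Psi}\mc{T})^{2j}(S_1)=S_1[4j]$, so the chambers $(\mc{T}_{\Psi}\mc{T})^{2j}\ca$, $j\in\mb{Z}$, are pairwise distinct, while all of their simples have exactly the same classes in $\overline{K_0(\ca)}$ as those of $\ca$. A sequence drawn from such chambers is invisible to your argument: no class moves, no central charge diverges, and no removed hyperplane is approached, so neither the projective convergence to $\gamma_0+\gamma_1$ nor the support property yields a contradiction. This ``same-strip'' accumulation is precisely the hard case (it is the reason the covering map of Theorem \ref{thm:stab_cover} can have infinite fibers), and it is what closedness really has to exclude.

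The paper excludes it with categorical rather than $K$-theoretic input: by Lemma \ref{lem:boundary}, every point of a closed chamber $\overline{\mc{U}^n}(g\ca)^{\Phi}$ has a neighbourhood contained in the union of at most two chambers, the second one being the unique adjacent heart prescribed by the wall via Theorem \ref{double_tilts}; since distinct hearts give disjoint chambers, the family $\bigl\{\overline{\mc{U}^n}(g\ca)^{\Phi}\bigr\}_{g\in H}$ is locally finite and its union is closed. In other words, which chambers can meet is controlled by the tilting computation, not by classes in $\overline{K_0(\ca)}$. If you prefer to keep a sequence argument, the same-strip case can be repaired using Lemma \ref{stab_equi}: translating by $g_k^{-1}$ and using that a stability condition with finite-length heart $\ca$ is uniquely determined by its central charge, accumulation of distinct same-strip chambers at $\sigma$ produces, for large $k$, stability conditions distinct from one another but with the same central charge as $\sigma$ and slicing distance to $\sigma$ tending to $0$, contradicting Lemma \ref{stab_equi}. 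Finally, a minor point about the case you do treat: when the strip index $n$ tends to infinity, any point of the corresponding chamber has $-n/2\leq\mr{Im}\,Z(\gamma_0)\leq-(n-1)/2$ under the normalization, so those chambers escape to infinity in $\Hom(\overline{K_0(\ca)},\mb{C})$ and cannot accumulate at any point whatsoever; the detour through the removed hyperplanes and the support property is unnecessary, and the implication ``clustering forces $\mc{Z}(\sigma)$ onto a hyperplane'' is not justified as stated.
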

\begin{proof}
    
Since $t(\delta) = t_{\psi}(\delta) = \delta$ due to Lemma \ref{auto_kgroup}, therefore $\overline{\mc{U}^n}(g\ca)^{\Phi} \subset \bigl(\Stab(D^b_0(X))^{\Phi}\bigr)_n$ for any $g\in H$. 

Now we show that the left side is open and closed, hence the inclusion is in fact an equality. 

First we prove the {\em openess}. For any $\sigma\in \overline{\mc{U}^n}(g\ca)^{\Phi}$, considering the preimage of $\sigma$ under the autoequivalence $g$,  $\sigma = (Z,\mc{P})$ lies in $\overline{\mc{U}^n}(\ca)^{\Phi}$. Suppose first that $\mr{Im} Z(\gamma_i) > 0$ for each $i$, then we can choose an open neighborhood $U$ of $\sigma$ such that each simple object $S_i$ has phase $(0,1)$ for all stability conditions $(Z,\mc{P})$ of $U$. Since $\ca$ is the smallest extension-closed subcategory of $\CD$ containing $S_i$ it follows that $\ca \subset \mc{P}(0,1]$ of all stability conditions in $U$. Therefore $\mc{P}(0,1] = \ca$  by Lemma \ref{nest_t} and so $U$ is contained in $\mc{U}^n(\ca)^{\Phi}$.

Now suppose $\sigma$ lies on the boundary of $\mc{U}^n(\ca)^{\Phi}$, according to Lemma \ref{lem:boundary}, there is an open neighborhood $V$ of $\sigma$ such that $V\subset \UnA{}\cup \UnA{\tau }$ where $\tau$ is one of the autoequivalences $\mc{T}^{\pm 1}$ and $\mc{T}_{\Psi}^{\pm 1}$. This finishes the proof of openess.

To check the left side of (\ref{stab_norm}) is closed,  we only need to show the collection of closed sets is locally finite. Suppose 
\[
    \sigma \in \bigcap_{g\in H' \subset H} \overline{\mc{U}^n}(g\ca)^{\Phi}.
\]
It is obvious that $\mc{U}^n(g\ca)^{\Phi} \cap \mc{U}^n(g'\ca)^{\Phi} = \emptyset$ if $g\ca\neq g'\ca$. Taking the preimage under some autoequivalence $g$, we suppose $\sigma$ lies on the boundary of $\mc{U}^n(\ca)^{\Phi}$. Then this intersection is finite since by Lemma \ref{lem:boundary}, each boundary component corresponds to exactly one of the autoequivalences $\mc{T}^{\pm 1}$ and $\mc{T}_{\Psi}^{\pm 1}$.

This finishes the proof of the equality (\ref{stab_norm}).
\end{proof}

\begin{lem} \label{lem:fund_domain}
For any  stability condition $\sigma = (Z, \mc{P}) \in \bigl(\Stab(X)^{\Phi}\bigr)_0$, we have $Z(\delta) \neq 0 $ where $\delta = [\co_x]$ for $x\in Z$. Moreover, there is an exact equivalence $W\in H$ and $\lambda \in \mb{C}$ such that $\lambda W(\sigma)\in \overline{\mc{U}}(\ca)^{\Phi}$, the closure of $\mc{U}(\ca)^{\Phi}$.
\end{lem}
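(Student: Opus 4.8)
The plan is to prove the two assertions in the opposite order: the ``Moreover'' part is essentially formal once $Z(\delta)\neq 0$ is available, so the real content is the non-vanishing of $Z(\delta)$.

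\emph{Reduction of the second assertion.} Suppose $Z(\delta)\neq 0$. Since $\delta$ is a fixed nonzero class, I would choose $\lambda\in\mb{C}$ with $e^{-i\pi\lambda}Z(\delta)=i$; by the description of the $\mb{C}$-action in Remark \ref{rem:C_act} the stability condition $\lambda\cdot\sigma$ then satisfies $Z_\lambda(\delta)=i$, hence lies in $\bigl(\Stab(X)^{\Phi}\bigr)_n$. Proposition \ref{lem:fund_domain_norm} gives $g\in H$ with $\lambda\cdot\sigma\in\overline{\mc{U}^n}(g\ca)^{\Phi}$. Because every element of $H$ fixes $\delta$ (Lemma \ref{auto_kgroup}), the autoequivalence $W:=g^{-1}\in H$ preserves the normalisation $Z(\delta)=i$ and carries $\overline{\mc{U}^n}(g\ca)^{\Phi}$ homeomorphically onto $\overline{\mc{U}^n}(\ca)^{\Phi}$; as the left $H$-action commutes with the right $\mb{C}$-action, $\lambda W(\sigma)=W(\lambda\cdot\sigma)\in\overline{\mc{U}^n}(\ca)^{\Phi}\subset\overline{\mc{U}}(\ca)^{\Phi}$, as required.

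\emph{Non-vanishing of $Z(\delta)$.} I would argue by connectedness. The set $U=\{\sigma=(Z,\mc{P})\in\bigl(\Stab(X)^{\Phi}\bigr)_0 : Z(\delta)\neq 0\}$ is open, since $\sigma\mapsto Z(\delta)$ is the composite of the local homeomorphism $\mc{Z}$ with evaluation at $\delta$, and it is nonempty: on $\mc{U}(\ca)^{\Phi}$ each $Z(\gamma_i)$ lies in the upper half-plane, so $Z(\delta)=2\bigl(Z(\gamma_0)+Z(\gamma_1)\bigr)$ does too. As $\bigl(\Stab(X)^{\Phi}\bigr)_0$ is connected it suffices to show $U$ is closed, and the engine for this is the closedness of semistability (Lemma \ref{stab_sstable}(2)) together with the fact that $\delta=[\co_x]$ is carried by the honest object $\co_x$. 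On the closure inside $\Stab$ of a single chamber $\mc{U}(\ca)^{\Phi}_{+}$ the object $\co_x$ is $\sigma$-semistable throughout by Corollary \ref{ox_ss}, hence stays semistable at every boundary limit $\sigma_0$ by Lemma \ref{stab_sstable}(2); being nonzero, such a semistable object forces $Z_0(\delta)\neq 0$. The same holds on $\mc{U}(\ca)^{\Phi}_{-}$ using $\Psi(\co_x)$ (again of class $\delta$), and on the wall $\phi(S_i)\equiv\tfrac12$ every object is semistable (Remark \ref{rem:stab}). Transporting this by the commuting $H$- and $\mb{C}$-actions, which multiply $Z(\delta)$ only by nonzero scalars, yields $Z(\delta)\neq 0$ on every translate $\lambda\cdot\overline{\mc{U}}(g\ca)^{\Phi}$; so it remains to cover the component by such translates.

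\emph{Main obstacle.} The decisive step is therefore the unnormalised covering $\bigl(\Stab(X)^{\Phi}\bigr)_0=\bigcup_{\lambda\in\mb{C},\,g\in H}\lambda\cdot\overline{\mc{U}}(g\ca)^{\Phi}$, which I would attempt by an open--closed argument parallel to Proposition \ref{lem:fund_domain_norm} but on the full four-real-dimensional component, handling the extra $\mb{C}$-direction by $\mb{C}$-equivariance (openness from the wall analysis of Lemma \ref{lem:boundary} swept by $\mb{C}$, closedness from local finiteness, each boundary wall matching exactly one of $\mc{T}^{\pm1},\mc{T}_{\Psi}^{\pm1}$). The genuinely hard point lies precisely near the putative locus $Z(\delta)=0$: there the $\mb{C}$-normalisation bringing $\sigma$ into $\bigl(\Stab(X)^{\Phi}\bigr)_n$ degenerates as $\lambda\to\infty$, so Proposition \ref{lem:fund_domain_norm} cannot be invoked directly and the central charges of all semistable objects collapse onto one real line; notably the support property alone does not exclude this degeneration, so the argument must use the global chamber structure. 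The crux is thus the local-finiteness statement showing that no chamber actually reaches $Z(\delta)=0$, equivalently that the chamber decomposition extends continuously across this would-be wall without meeting it, which simultaneously delivers the closedness of $U$ and the first assertion.
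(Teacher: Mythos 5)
Your reduction of the ``Moreover'' part to the non-vanishing of $Z(\delta)$ is correct and coincides with the paper's own argument (normalize by the $\mb{C}$-action, apply Proposition \ref{lem:fund_domain_norm}, use that $H$ fixes $\delta$ by Lemma \ref{auto_kgroup}). The first assertion, however, is where your proposal has a genuine gap, and you flag it yourself: the whole argument is reduced to the unnormalised covering $\bigl(\Stab(X)^{\Phi}\bigr)_0=\bigcup_{\lambda\in\mb{C},\,g\in H}\lambda\cdot\overline{\mc{U}}(g\ca)^{\Phi}$, equivalently a ``local-finiteness statement showing that no chamber reaches $Z(\delta)=0$'', and this is never proved. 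That claim is essentially equivalent to the lemma itself: once $Z(\delta)\neq 0$ holds everywhere on the component, the covering follows by normalising and quoting Proposition \ref{lem:fund_domain_norm}, and conversely. So the open--closed scheme, while sound in outline, is circular at the decisive point: the closedness of $U=\{\sigma : Z(\delta)\neq 0\}$ is exactly what is at stake, and sweeping the wall analysis of Lemma \ref{lem:boundary} by the $\mb{C}$-action cannot help, because the locus $Z(\delta)=0$ is not a wall between chambers and is invisible to that lemma.

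The idea you are missing is that no covering of the whole component is needed; the paper rules out a zero of $Z(\delta)$ by a purely local contradiction using only the \emph{normalized} covering already established in Proposition \ref{lem:fund_domain_norm}. Suppose $\sigma=(Z_1,\mc{P}_1)$ in the component has $Z_1(\delta)=0$. Since $\mc{Z}$ is a local homeomorphism onto an open subset of $\Hom(\overline{K_0(\ca)},\mb{C})\cong\mb{C}^2$ and $\{Z(\delta)=0\}$ is a complex line there, one finds stability conditions $\tau$ arbitrarily close to $\sigma$ with $Z_\tau(\delta)\neq 0$. Each such $\tau$ \emph{can} be normalised --- the degeneration $\lambda\to\infty$ that worries you never arises, because normalisation is applied only to the nearby $\tau$, never to $\sigma$ --- so Proposition \ref{lem:fund_domain_norm} places $\lambda\cdot g(\tau)$ in $\overline{\mc{U}^n}(\ca)^{\Phi}$ for some $g\in H$. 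On $\overline{\mc{U}^n}(\ca)^{\Phi}$ there is a semistable object of class $\delta$ (Remark \ref{rem:stab} together with Lemma \ref{stab_sstable}(2)); transporting it back by $g^{-1}$ and the $\mb{C}$-action, both of which preserve semistability and the class $\delta$ (Lemma \ref{auto_kgroup}), one gets a $\tau$-semistable object of class $\delta$. Applying the closedness of semistability (Lemma \ref{stab_sstable}(2)) to this object along a sequence $\tau_n\to\sigma$ (the paper is terse here: one should fix the same $g$, hence the same object, along a subsequence --- but this is a finiteness requirement along a single convergent sequence, far weaker than your global chamber statement) produces a $\sigma$-semistable object of class $\delta$. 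This is impossible, since a semistable object has nonzero central charge, contradicting $Z_1(\delta)=0$. In short, your proposal postpones the difficulty into an unproved global covering, whereas the paper's pointwise contradiction --- density of normalisable points, Proposition \ref{lem:fund_domain_norm}, and closedness of semistability --- is what actually closes the argument, and the ``Moreover'' statement (your covering included) is then deduced \emph{from} the non-vanishing of $Z(\delta)$, not the other way around.
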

\begin{proof}
Suppose there exists $\sigma = (Z_1, \mc{P}_1) \in \bigl(\Stab(X)^{\Phi}\bigr)_0$ such that $Z_1(\delta) = 0$. We take an open neighborhood $U_{\sigma}$ of $\sigma$ and let $\tau = (Z_2,\mc{P}_2)\in U_{\sigma}$ be any stability condition such that $Z_2(\delta)\neq 0$. We can normalize $\tau$ by some $\lambda\in\BC$. Then by (\ref{stab_norm}) there exists some $g\in H$ such that $\lambda.g(\tau)\in \overline{\mc{U}^n}(\ca)^{\Phi}$. Now we choose $\co_x$ for some $x\in \mb{F}_0$, then $g(\co_x)$ is semistable for stability condition $\tau$. We have $\bigl[g(\co_x)\bigr] = \delta$ since $t$ and $t_{\psi}$ preserve $\delta$. Note that $g(\co_x)$ cannot be semistable for the stability condition $\sigma$.  However, this contradicts Lemma \ref{stab_sstable} since semistability is a closed condition.

Therefore for any stability condition $\sigma = (Z, \mc{P})$, we have $Z(\delta) \neq 0$. Then by choosing $\lambda \in \mb{C}$ such that $\lambda\cdot Z(\delta) = i$, and by (\ref{stab_norm}) we can find $W\in H$ such that $\lambda W(\sigma) \in \overline{\mc{U}^n}(\ca)^{\Phi}\subset \overline{\mc{U}}(\ca)^{\Phi}$. This finishes the proof.
\end{proof}

The following lemma is an easy consequence of Lemma \ref{auto_kgroup}: 
\begin{lem}\label{lem:reg_pre}
The automorphisms $t^{\pm 1}$ and $t_{\psi}^{\pm 1}$ preserve $\Delta$.
\end{lem}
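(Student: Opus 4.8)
The plan is to reduce the statement to a finite check based on the matrix presentations of $t$ and $t_\psi$ on $\overline{K_0(\ca)}$ recorded in Lemma \ref{auto_kgroup}, together with the explicit list of elements of $\Delta$. Since the classes of stable objects in Theorem \ref{thm:stab_obj} are only well-defined up to sign — and since the hyperplanes $\pmb{v}^{\perp}$ cut out of $\Hom(\overline{K_0(\ca)},\mb{C})$ depend only on the line spanned by $\pmb{v}$ — I will treat $\Delta$ as a subset of $\overline{K_0(\ca)}$ that is closed under $\pmb{v}\mapsto -\pmb{v}$. Up to this sign, $\Delta$ consists of the two families $(n+1)\gamma_0 + n\gamma_1$ and $n\gamma_0 + (n+1)\gamma_1$ for $n\geq 0$, together with the single class $\gamma_0 + \gamma_1$.

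First I would read off the action of the generators on the basis $\{\gamma_0,\gamma_1\}$ from the $2\times 2$ matrices in Lemma \ref{auto_kgroup}, giving $t(\gamma_0) = 2\gamma_0 + \gamma_1$ and $t(\gamma_1) = -\gamma_0$, while $t_\psi = t^{-1}$ by $(\ref{eqn:act_quot})$, so $t_\psi(\gamma_0) = -\gamma_1$ and $t_\psi(\gamma_1) = \gamma_0 + 2\gamma_1$. Because $t$ and $t_\psi$ are mutually inverse, it suffices to verify the two inclusions $t(\Delta)\subseteq\Delta$ and $t_\psi(\Delta)\subseteq\Delta$; each then upgrades to an equality, and the four maps $t^{\pm 1}, t_\psi^{\pm 1}$ are exhausted by the two we check.

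Finally I would carry out the elementary computation family by family. Applying $t$ sends the first family up by one step, $(n+1)\gamma_0 + n\gamma_1 \mapsto (n+2)\gamma_0 + (n+1)\gamma_1$, sends the second family down by one step, $n\gamma_0 + (n+1)\gamma_1 \mapsto (n-1)\gamma_0 + n\gamma_1$, and fixes $\gamma_0+\gamma_1$; the computation for $t_\psi$ is the mirror image. The only place demanding care — and the reason for building sign-symmetry into $\Delta$ — is the boundary value $n=0$ of the downward family, where $t(\gamma_1) = -\gamma_0$ lands in $\Delta$ only after identifying it with its negative $\gamma_0$ (the $n=0$ member of the first family). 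With that identification in force every image lands back in $\Delta$, so $t^{\pm1}$ and $t_\psi^{\pm1}$ all restrict to permutations of $\Delta$, which is the assertion.
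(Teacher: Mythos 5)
Your proof is correct and is essentially the proof the paper intends: the paper gives no written argument at all, asserting only that the lemma is "an easy consequence of Lemma \ref{auto_kgroup}", and your family-by-family computation with the $2\times 2$ matrices $t|_{\overline{K_0(\ca)}}$, $t_{\psi}|_{\overline{K_0(\ca)}}$ (plus the observation that checking the two inclusions $t(\Delta)\subseteq\Delta$, $t_{\psi}(\Delta)\subseteq\Delta$ suffices, since the maps are mutually inverse) supplies exactly the omitted verification. Your explicit treatment of the boundary case $t(\gamma_1)=-\gamma_0$ by closing $\Delta$ under negation is a point the paper silently glosses over, and it is the right convention: shifts of stable objects are stable, so the set of their classes is sign-symmetric, and the hyperplanes $\pmb{v}^{\perp}$ depend only on the line through $\pmb{v}$.
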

\begin{cor}\label{cor:delta}
    Let $E$ be a stable object for $\sigma \in \bigl(\Stab(X)^{\Phi}\bigr)_0$, then the class  $[E] \in \overline{K_0(\ca)}$ lies in $\Delta$. 
\end{cor}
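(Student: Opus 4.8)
The plan is to reduce an arbitrary stable object to one that is stable for a stability condition lying in the open stratum $\mc{U}(\ca)^{\Phi}$, where the classes of stable objects are controlled by the very definition of $\Delta$ together with Theorem \ref{thm:stab_obj}, and then to transport the conclusion back along the $H$-action. First I would apply Lemma \ref{lem:fund_domain} to the given $\sigma = (Z,\mc{P}) \in \bigl(\Stab(X)^{\Phi}\bigr)_0$, producing an exact equivalence $W \in H$ and a scalar $\lambda \in \mb{C}$ such that $\tau := \lambda W(\sigma)$ lies in the closure $\overline{\mc{U}}(\ca)^{\Phi}$. Since autoequivalences send $\sigma$-stable objects to $W(\sigma)$-stable objects, and the $\mb{C}$-action (realized inside $\widetilde{\GL}^+(2,\mb{R})$) preserves the set of stable objects while only relabelling phases (Remark \ref{rem:C_act}), the object $F := W(E)$ is $\tau$-stable.

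The key step is then to pass from the closure to the interior using openness of stability. By Lemma \ref{stab_sstable}(1) the locus $U \subset \Stab(X)^{\Phi}$ on which $F$ is stable is open, and it contains $\tau$. Because $\tau$ lies in the closure of $\mc{U}(\ca)^{\Phi}$, every neighbourhood of $\tau$ — in particular the open set $U$ — meets $\mc{U}(\ca)^{\Phi}$. Choosing any $\sigma' \in U \cap \mc{U}(\ca)^{\Phi}$, the object $F$ is stable for a stability condition $\sigma'$ whose heart is $\ca$. By the definition of $\Delta$ as the image in $\overline{K_0(\ca)}$ of the classes of stable objects for stability conditions in $\mc{U}(\ca)^{\Phi}$ (made explicit by Theorem \ref{thm:stab_obj}), the image $\nu([F]) \in \overline{K_0(\ca)}$ therefore lies in $\Delta$.

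Finally I would transport the conclusion back to $E$. The $\mb{C}$-action leaves $K_0$-classes unchanged, so in $K_0(\ca)$ we have $[F] = [W(E)] = w([E])$, where $w$ is the automorphism induced by $W$, a composite of $t^{\pm 1}$ and $t_{\psi}^{\pm 1}$. Passing to the quotient $\overline{K_0(\ca)}$ and writing $\bar{w}$ for the induced automorphism, Lemma \ref{lem:reg_pre} guarantees that $\bar{w}$ and $\bar{w}^{-1}$ preserve $\Delta$. Hence from $\nu([F]) = \bar{w}\bigl(\nu([E])\bigr) \in \Delta$ we conclude $\nu([E]) = \bar{w}^{-1}\bigl(\nu([F])\bigr) \in \Delta$, as required.

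The main obstacle I anticipate is precisely the passage from the boundary to the open stratum: a stability condition on $\partial\,\mc{U}(\ca)^{\Phi}$ may have a tilted heart (cf. Lemma \ref{lem:boundary}), so one cannot read off its stable objects directly from Theorem \ref{thm:stab_obj}. What makes the argument go through is that \emph{stability is an open condition}, which lets a stable object at a boundary point remain stable at nearby interior points of $\mc{U}(\ca)^{\Phi}$; everything else is a routine bookkeeping of how $W$ and $\lambda$ act on objects and on $\overline{K_0(\ca)}$.
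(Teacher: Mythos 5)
Your proposal is correct and follows essentially the same route as the paper: reduce to the closure of $\mc{U}(\ca)^{\Phi}$ via Lemma \ref{lem:fund_domain}, use openness of stability (Lemma \ref{stab_sstable}) to move to an interior point where Theorem \ref{thm:stab_obj} applies, and transport back with Lemma \ref{lem:reg_pre}. If anything, your version is slightly more careful than the paper's, since you apply openness to the single object $W(E)$ rather than loosely asserting that nearby stability conditions have the same set of stable objects.
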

\begin{proof}
    By the above lemma, there is an autoequivalence $W\in H$ and $\lambda\in\mb{C}$ such that $\lambda W(\sigma)$ lies in the closure of $\mc{U}(\ca)^{\Phi}$. Since the stable objects remain stable in an open neighborhood $V$ of $\lambda W(\sigma)$, we choose $\sigma'\in V$ such that $\sigma'\in\mc{U}(\ca)^{\Phi}$, it follows that $\sigma'$ and $\lambda W(\sigma)$ contain the same set of stable objects.  Therefore the classes of stable objects for $\lambda W(\sigma)$ lie in $\Delta$ by Theorem \ref{thm:stab_obj}. Since by Lemma \ref{lem:reg_pre} the group element in $H$ preserves $\Delta$, therefore $[E] \in \Delta$.
\end{proof}
\begin{thm}\label{thm:local_homeo}
The image of the local homeomorphism 
\[
\mc{Z}: \bigl(\Stab(X)^{\Phi}\bigr)_0 \rightarrow \Hom(\overline{K_0(\ca)}, \mb{C})
\]
lies in $\mc{H}^{\reg}$.
\end{thm}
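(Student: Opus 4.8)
The plan is to reinterpret the statement as a nonvanishing assertion: since $\mc{H}^{\reg}$ is by definition the complement of the hyperplanes $\pmb{v}^{\perp}=\{Z\mid Z(\pmb{v})=0\}$ for $\pmb{v}\in\Delta$, showing $\mc{Z}(\sigma)\in\mc{H}^{\reg}$ for every $\sigma=(Z,\mc{P})\in\bigl(\Stab(X)^{\Phi}\bigr)_0$ amounts to proving $Z(\pmb{v})\neq 0$ for all $\pmb{v}\in\Delta$. The guiding principle is the elementary fact that a nonzero semistable object has nonvanishing central charge, so it suffices to exhibit, for each $\pmb{v}\in\Delta$, a $\sigma$-semistable object whose class in $\overline{K_0(\ca)}$ equals $\pm\pmb{v}$.

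First I would reduce to the fundamental domain. By Lemma \ref{lem:fund_domain} there exist $W\in H$ and $\lambda\in\mb{C}$ with $\sigma':=\lambda W(\sigma)\in\overline{\mc{U}}(\ca)^{\Phi}$; writing $\sigma'=(Z',\mc{P}')$ and letting $w$ denote the induced automorphism of $\overline{K_0(\ca)}$, we have $Z'(\alpha)=e^{-i\pi\lambda}Z(w^{-1}\alpha)$. Because $w$ preserves $\Delta$ by Lemma \ref{lem:reg_pre}, the identity $Z'(w\pmb{v})=e^{-i\pi\lambda}Z(\pmb{v})$ shows that $Z(\pmb{v})=0$ would force $Z'(\pmb{u})=0$ for the element $\pmb{u}=w\pmb{v}\in\Delta$. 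Hence it is enough to prove $Z'(\pmb{u})\neq 0$ for all $\pmb{u}\in\Delta$, i.e.\ I may assume from the outset that $\sigma\in\overline{\mc{U}}(\ca)^{\Phi}$.

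Next I would realize every $\pmb{u}\in\Delta$ by a single semistable object on the closure. Since $\mc{U}(\ca)^{\Phi}=\mc{U}(\ca)^{\Phi}_+\sqcup\{\phi(S_0)=\phi(S_1)\}\sqcup\mc{U}(\ca)^{\Phi}_-$ and the middle locus lies in the closure of each open piece, one has $\overline{\mc{U}}(\ca)^{\Phi}=\overline{\mc{U}(\ca)^{\Phi}_+}\cup\overline{\mc{U}(\ca)^{\Phi}_-}$, so $\sigma$ lies in one of these two closures; say $\sigma\in\overline{\mc{U}(\ca)^{\Phi}_+}$. For a given $\pmb{u}\in\Delta$ the explicit description in Lemma \ref{lem:kron_type} supplies a \emph{fixed} object $G$ of special Kronecker type I (for instance $\mc{T}^{n}(S_0)$, $\mc{T}^{-n}(S_1)$, or $F_1$) with $\nu[G]=\pm\pmb{u}$, and by Theorem \ref{thm:stab_kron} this $G$ is stable, hence semistable, for \emph{every} stability condition in $\mc{U}(\ca)^{\Phi}_+$. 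Choosing a sequence $\sigma_k\in\mc{U}(\ca)^{\Phi}_+$ with $\sigma_k\to\sigma$, the same $G$ is $\sigma_k$-semistable for all $k$, so by the closedness of semistability (Lemma \ref{stab_sstable}(2)) it is $\sigma$-semistable. When $\sigma\in\overline{\mc{U}(\ca)^{\Phi}_-}$ I would argue identically using the corresponding object of special Kronecker type II. In either case $G\neq 0$ forces $Z'(\pmb{u})=\pm Z'(G)\neq 0$, completing the argument.

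The step I expect to be the main obstacle is precisely the persistence of semistability up to the boundary: one must be careful to use a \emph{single} object $G$ that is valid along an entire approaching sequence, rather than objects varying with $\sigma_k$. This is exactly what the uniform stability of Theorem \ref{thm:stab_kron} provides, via the explicit $\mc{T}$- and $\mc{T}_{\Psi}$-translates of $S_1,S_3,F_1,F_2$ identified in Lemma \ref{lem:kron_type}, together with the decomposition of $\overline{\mc{U}}(\ca)^{\Phi}$ into the closures of $\mc{U}(\ca)^{\Phi}_{\pm}$. Everything else is formal: the reduction to the fundamental domain via Lemma \ref{lem:fund_domain}, the $H$-invariance of $\Delta$ from Lemma \ref{lem:reg_pre}, and the observation that nonzero semistable objects have nonzero central charge. (One may regard this as the converse companion to Corollary \ref{cor:delta}, which records that conversely the class of any stable object lands in $\Delta$.)
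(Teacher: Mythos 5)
Your proof is correct, but it is not the paper's argument: the paper proves this theorem in one line by citing Corollary \ref{cor:delta}, asserting that the set of classes of \emph{stable} objects for an arbitrary $\sigma\in\bigl(\Stab(X)^{\Phi}\bigr)_0$ is \emph{exactly} $\Delta$, where the realizability half of that assertion is carried by the openness-of-stability transfer in the corollary's proof (``$\sigma'$ and $\lambda W(\sigma)$ contain the same set of stable objects''). You share the same first reduction (Lemma \ref{lem:fund_domain} plus the $H$-invariance of $\Delta$ from Lemma \ref{lem:reg_pre}), but then you replace the corollary by a different mechanism: decompose $\overline{\mc{U}}(\ca)^{\Phi}$ as $\overline{\mc{U}(\ca)^{\Phi}_+}\cup\overline{\mc{U}(\ca)^{\Phi}_-}$, realize each class of $\Delta$ (up to sign) by a \emph{fixed} object of special Kronecker type I or II via Lemma \ref{lem:kron_type}, invoke its stability throughout the whole open chamber (Theorem \ref{thm:stab_kron}), and push \emph{semistability} to the closure by Lemma \ref{stab_sstable}(2). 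This difference is not cosmetic, and your version is in fact the more watertight one: the ``exactly $\Delta$'' claim for stable classes fails at walls --- for instance at stability conditions with $\phi(S_0)=\phi(S_1)$, Theorem \ref{thm:stab_obj}(3) shows the only stable classes in the quotient are $\gamma_0,\gamma_1$ --- so the openness-based transfer cannot literally produce a stable object in every class of $\Delta$ at such points. What the theorem actually needs, and what your closedness argument delivers, is only that every class of $\Delta$ is the class of a nonzero \emph{semistable} object, whose central charge is then automatically nonzero. In short: the paper's route buys brevity by quoting Corollary \ref{cor:delta}; yours buys a correct treatment of the boundary strata at the cost of re-deriving realizability directly from Theorem \ref{thm:stab_kron} and Lemma \ref{lem:kron_type}.
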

\begin{proof}
By Corollary \ref{cor:delta}, the set of class of any stable object $E$ for stability condition $\sigma = (Z,\mc{P})$ is exactly $\Delta$. Since $Z(E) \neq 0$, therefore $\mc{Z}(\sigma) \in \mc{H}^{\reg}$.
\end{proof}
We fix a norm $\lVert\ \cdot\  \rVert$ on $\overline{K_0(\mc{A})}_{\mb{R}} = \overline{K_0(\ca)}\otimes_{\mb{Z}} \mb{R}$. The induced norm on $\Hom(\overline{K_0(\ca)},\mb{C})$ is denoted by $\lVert\ \cdot\ \rVert^{\vee}$.
\begin{lem}\label{lem:supp_prop}
Let  $Z\in \mc{H}^{\reg}$, there exists a constant $C > 0$ (depending on $Z$) such that 
\begin{equation} \label{supp_property}
    \lVert \pmb{v} \rVert \leq C |Z(\pmb{v})|
\end{equation}
for all $\pmb{v}\in \Delta$.
\end{lem}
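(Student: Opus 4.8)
The plan is to rephrase the desired inequality as the single assertion that
\[
\sup_{\pmb{v}\in\Delta}\frac{\lVert\pmb{v}\rVert}{|Z(\pmb{v})|}<\infty,
\]
for then any upper bound $C$ for this supremum satisfies $\lVert\pmb{v}\rVert\leq C|Z(\pmb{v})|$ for all $\pmb{v}\in\Delta$. Because $Z\in\mc{H}^{\reg}$ means precisely that $Z(\pmb{v})\neq 0$ for every $\pmb{v}\in\Delta$, each individual ratio is a well-defined finite number; the only thing to rule out is that these ratios blow up as $\pmb{v}$ ranges over the infinitely many classes in $\Delta$.

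The idea is to project onto the unit circle. I would fix the unit sphere $S\subset\overline{K_0(\ca)}_{\mb{R}}\cong\mb{R}^2$ and consider the continuous function $g\colon S\to[0,\infty)$, $g(u)=|Z(u)|$, where $Z$ is extended $\mb{R}$-linearly. Writing $u_{\pmb{v}}=\pmb{v}/\lVert\pmb{v}\rVert$, one has $\lVert\pmb{v}\rVert/|Z(\pmb{v})|=1/g(u_{\pmb{v}})$, so it suffices to bound $g$ from below on the set $D=\{u_{\pmb{v}}:\pmb{v}\in\Delta\}$. Expressing $\Delta$ in the coordinates $(a,b)$ associated with the basis $\gamma_0,\gamma_1$, its elements are $(n,n+1)$ and $(n+1,n)$ for $n\geq 0$ together with $\pm(1,1)$; as $n\to\infty$ both families of directions converge to the direction of $\gamma_0+\gamma_1$. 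Hence $D$ has a unique accumulation point, namely $u_{\gamma_0+\gamma_1}$, which already lies in $D$ since $\gamma_0+\gamma_1\in\Delta$. Thus $D$ is closed, and being contained in the compact circle $S$ it is itself compact.

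Finally I would invoke compactness: since $g$ is continuous and $g(u_{\pmb{v}})=|Z(\pmb{v})|/\lVert\pmb{v}\rVert>0$ for every $\pmb{v}\in\Delta$ (here $Z(\pmb{v})\neq 0$ is exactly the defining condition of $\mc{H}^{\reg}$, used in particular at the limit direction $\gamma_0+\gamma_1$), the function $g$ is strictly positive on the compact set $D$ and therefore attains a positive minimum $m:=\min_{u\in D}g(u)>0$. Taking $C=1/m$ yields $\lVert\pmb{v}\rVert=|Z(\pmb{v})|/g(u_{\pmb{v}})\leq C\,|Z(\pmb{v})|$ for all $\pmb{v}\in\Delta$, as required. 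The one genuine obstacle is the infinitude of $\Delta$: a naive pointwise argument gives finitely many constants but no uniform $C$. This is overcome precisely by the observation that all directions of $\Delta$ accumulate only at the direction of $\gamma_0+\gamma_1$, which is itself a class in $\Delta$ on which $Z$ does not vanish, so that the problem becomes compact and the minimum argument applies.
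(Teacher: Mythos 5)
Your proof is correct, but it takes a genuinely different route from the paper's. The paper argues by explicit computation with a case split: it treats $\pm(\gamma_0+\gamma_1)$ separately, and then distinguishes whether $\arg Z(\gamma_0)$ equals $\arg Z(\gamma_1)$; in the non-collinear case it normalizes $Z$ by a $\GL(2,\mb{R})$-action so that $Z(\gamma_0)=1$, $Z(\gamma_1)=i$, computes $|Z(\pmb{v})|^2=n^2+(n+1)^2$ for $\pmb{v}=n\gamma_0+(n+1)\gamma_1$ or $(n+1)\gamma_0+n\gamma_1$, and takes the maximum of the resulting constants. You instead pass to the set $D$ of unit directions $\pmb{v}/\lVert\pmb{v}\rVert$ and invoke compactness: the directions of $\Delta$ accumulate only at the direction of $\gamma_0+\gamma_1$, which itself lies in $\Delta$, so $D$ is compact and the continuous function $u\mapsto |Z(u)|$ is strictly positive on it, hence bounded below by a positive minimum. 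Both arguments rest on the same structural fact, namely that the unique limiting direction of $\Delta$ is $\gamma_0+\gamma_1$ and the definition of $\mc{H}^{\reg}$ forbids $Z$ from vanishing on it; but your packaging is cleaner and more robust. It avoids the case analysis and the $\GL(2,\mb{R})$ normalization (which in the paper silently replaces $Z$ and strictly speaking needs a norm-equivalence remark to transfer the constant back to the original $Z$; also the paper's claim of an exact identity $\lVert\pmb{v}\rVert=c_2|Z(\pmb{v})|$ in the collinear case is only true as a two-sided bound, since that ratio varies with $n$), and it generalizes verbatim to any set of classes whose normalized directions have compact closure disjoint from the zero locus of $Z$ --- which is the standard mechanism behind support-property verifications. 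What the paper's computation buys in exchange is explicit constants, which make visible how $C$ must blow up as $Z$ approaches one of the hyperplanes $\pmb{v}^{\perp}$.
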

\begin{proof}
Since all norms over finite dimensional space are equivalent, we might take 
$$
    \lVert \pmb{v} \rVert^2 = v_0^2 + v_1^2,
$$
where $v_i$ denotes the $i$-th component of a vector $\pmb{v} \in \overline{K_0(\mc{A})}_{\mb{R}}$ with respect to the basis $\gamma_0,\ \gamma_1$. For any vector $\pmb{v}$ with the class in $\Delta$, we have
\[
    \lVert \pmb{v}\rVert^2 = n^2+(n+1)^2 \text{ or } 2,
\]
for $n\geq 0$.
Suppose $\lVert \pmb{v}\rVert^2 = 2$, in this case  $\pmb{v} = \gamma_0 + \gamma_1$. Since $Z(\gamma_0 + \gamma_1)\neq 0$ by definition. Therefore we can choose a constant $c_0$ such that (\ref{supp_property}) holds. 

Suppose $\arg Z(\gamma_0) \neq \arg Z(\gamma_1)$. Since $Z(\gamma_i) \neq 0$, without loss of generality we take  a suitable $\GL(2,\mb{R})$-action on the complex plane, such that $Z(\gamma_0) = 1,\ Z(\gamma_1) = i$. Now suppose $\lVert \pmb{v}\rVert^2 = n^2+(n+1)^2$, then in this case $\pmb{v} = n\gamma_0 + (n+1)\gamma_1$ or $(n+1)\gamma_0+n\gamma_{1}$ in $\Delta$, therefore $|Z(\pmb{v})|^2 = n^2 +(n+1)^2$. So we have
\[
    \lVert \pmb{v} \rVert = |Z(\pmb{v})|.
\]
We can take $c_1 \geq 1$.

Suppose $\arg Z(\gamma_0) = \arg Z(\gamma_1)$. Then there exists a constant $c_2$ such that 
\[
	 \lVert \pmb{v} \rVert = c_2 |Z(\pmb{v})|.
\]
Finally we choose the maximum from $c_i$ such that (\ref{supp_property}) holds for any $\pmb{v}\in\Delta$.
\end{proof}

\begin{thm}[Covering property] \label{thm:stab_cover}
$\mc{Z}: \bigl(\Stab(X)^{\Phi}\bigr)_0\rightarrow \mc{H}^{\reg}$ is a covering map.
\end{thm}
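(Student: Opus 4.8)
The plan is to leverage the two facts already in place: by Corollary \ref{cor:stab_mani_2} the forgetful map $\mc{Z}$ is a local homeomorphism, and by Theorem \ref{thm:local_homeo} its image lies in the hyperplane complement $\mc{H}^{\reg}$. Since $\mc{H}^{\reg}$ is an open subset of $\Hom(\overline{K_0(\ca)},\mb{C})\cong\mb{C}^2$, hence a connected, locally path-connected and locally simply connected complex manifold, a local homeomorphism onto it is a covering map as soon as it has the path-lifting property; uniqueness of lifts is automatic, because two lifts of a path that agree at one point agree on a clopen subset of $[0,1]$ and therefore everywhere. I would thus reduce the theorem to the statement: for every continuous path $Z\colon[0,1]\to\mc{H}^{\reg}$ and every $\sigma_0$ with $\mc{Z}(\sigma_0)=Z_0$, there is a lift $\sigma\colon[0,1]\to\bigl(\Stab(X)^{\Phi}\bigr)_0$ with $\sigma(0)=\sigma_0$.

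For the lifting, the first observation is that the image $\{Z_t:t\in[0,1]\}$ is compact, so by Lemma \ref{lem:supp_prop} the support constants may be chosen uniformly along the path: there is a single $C>0$ with $\lVert\pmb{v}\rVert\le C\,|Z_t(\pmb{v})|$ for all $\pmb{v}\in\Delta$ and all $t$. By Corollary \ref{cor:delta} every stable class of any $\sigma\in\bigl(\Stab(X)^{\Phi}\bigr)_0$ lies in $\Delta$, so this $C$ is a uniform support bound for every lift that can appear over the path. This uniformity feeds into the deformation theorem (Theorem \ref{stab_deform}): the admissible deformation radius $\epsilon_0$ can be taken bounded below along the whole path, so that from any lift $\sigma_t$ one can deform the central charge to a nearby $Z_s$ and produce a lift $\sigma_s$ with $d(\mc{P}_t,\mc{P}_s)<\epsilon$. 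A standard continuation argument then extends the lift: set $T=\sup\{t:\sigma\text{ lifts over }[0,t]\}$.

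The heart of the argument, and what I expect to be the main obstacle, is the "no escape" step: showing that as $t\to T$ the family $\sigma_t$ converges to a genuine stability condition $\sigma_T\in\bigl(\Stab(X)^{\Phi}\bigr)_0$, rather than degenerating. Once this is known, the local homeomorphism property lets the lift be continued past $T$, so $T$ is attained and $T=1$. Here I would use the uniform constant $C$ to show that the slicings $\{\mc{P}_t\}$ form a Cauchy family in the Bridgeland metric $d$ as $t\to T$: because $Z_t\to Z_T$ with $Z_T(\pmb{v})\neq0$ controlled uniformly by $C$, the phases of semistable objects move in a uniformly bounded way, so the $\mc{P}_t$ converge to a slicing $\mc{P}_T$, the pair $(Z_T,\mc{P}_T)$ is a stability condition, and it satisfies the support property with the same $C$. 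Uniqueness in Theorem \ref{stab_deform}, equivalently Lemma \ref{stab_equi} since the slicings stay within distance $<1$, guarantees that this limit does not depend on the choices made, so $\sigma_T$ is well defined and lies in $\bigl(\Stab(X)^{\Phi}\bigr)_0$.

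Finally I would verify that the lift stays $\Phi$-invariant, so that it genuinely lands in the invariant component. This is automatic: each $Z_t$ factors through $\overline{K_0(\ca)}$ and is therefore $\varphi$-invariant, so $\Phi(\sigma_t)$ has the same central charge as $\sigma_t$; since $\Phi$ acts as an isometry for $d$, a short deformation from an invariant $\sigma_s$ keeps $d(\mc{P}_t,\Phi\mc{P}_t)\le 2\,d(\mc{P}_t,\mc{P}_s)<1$, and Lemma \ref{stab_equi} forces $\Phi(\sigma_t)=\sigma_t$, so invariance propagates from $\sigma_0$ by connectedness of $[0,1]$. With path-lifting thus established, $\mc{Z}$ is a local homeomorphism onto the manifold $\mc{H}^{\reg}$ with the (unique) path-lifting property, and is therefore a covering map.
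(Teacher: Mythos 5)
Your route is genuinely different from the paper's. The paper never mentions path lifting: it fixes $Z\in\mc{H}^{\reg}$, uses Lemma \ref{lem:supp_prop} to build an explicit ball $B_{\epsilon}(Z)$, and then shows directly that $\mc{Z}^{-1}\bigl(B_{\epsilon}(Z)\bigr)$ decomposes into disjoint open sets $C_{\epsilon}(\sigma)=\{\tau=(W,\mc{Q}):\ \mc{Z}(\tau)\in B_{\epsilon}(Z),\ d(\mc{P},\mc{Q})<1/2\}$, one for each $\sigma$ in the fibre, each mapped homeomorphically onto $B_{\epsilon}(Z)$: injectivity and disjointness come from Lemma \ref{stab_equi}, surjectivity from Theorem \ref{stab_deform} together with Corollary \ref{cor:delta}. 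You instead reduce the covering property to path lifting over the manifold $\mc{H}^{\reg}$ and run a continuation argument. Both proofs consume exactly the same four ingredients (Lemma \ref{lem:supp_prop}, Corollary \ref{cor:delta}, Theorem \ref{stab_deform}, Lemma \ref{stab_equi}); the paper's version has the advantage of producing the evenly covered neighbourhoods explicitly and avoiding any appeal to general topology, while yours has the advantage of treating the $\Phi$-invariance of deformed stability conditions explicitly (via the isometry-plus-uniqueness argument), a point the paper leaves implicit. Note that your topological criterion --- local homeomorphism with (automatically unique) path lifting onto a locally path-connected, locally simply connected base is a covering --- is true for Hausdorff source, but it is not in the paper and not completely trivial: one must first upgrade path lifting to homotopy lifting before the monodromy argument applies, so it deserves a proof or a citation.

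There is one step that does not hold up as written: the ``no escape'' argument at $t\to T$. You assert that because the family $\{\mc{P}_t\}$ is Cauchy for $d$, it converges to a slicing $\mc{P}_T$ and that $(Z_T,\mc{P}_T)$ is a stability condition. This implicitly uses completeness of the space of stability conditions (or of slicings) with respect to $d$, which is proved nowhere in the paper and is not available as a black box; in general Cauchy families of stability conditions can degenerate, and the whole point of the hypothesis $Z_T\in\mc{H}^{\reg}$ is to rule this out, so it must be used, not bypassed. The fix stays entirely inside your framework: choose $t_0<T$ with $\lVert Z_{t_0}-Z_T\rVert^{\vee}$ small; since every $\sigma_{t_0}$-stable class lies in $\Delta$ (Corollary \ref{cor:delta}) and $|Z_{t_0}(\pmb{v})|\geq \lVert\pmb{v}\rVert/C$ by the uniform constant, one gets $|Z_T(\pmb{v})-Z_{t_0}(\pmb{v})|\leq C\lVert Z_T-Z_{t_0}\rVert^{\vee}\,|Z_{t_0}(\pmb{v})|<\sin(\pi\epsilon)\,|Z_{t_0}(\pmb{v})|$ for all $\pmb{v}\in\Delta$, so Theorem \ref{stab_deform} applied at $\sigma_{t_0}$ with target $W=Z_T$ \emph{produces} the limit $\sigma_T$ directly; uniqueness (Lemma \ref{stab_equi}) then identifies $\sigma_t$, for $t\in[t_0,T)$, with the deformation of $\sigma_{t_0}$ to $Z_t$, so the extended lift is continuous at $T$. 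With that replacement (and a justification of the topological criterion), your argument goes through.
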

\begin{proof}
We first show that $\mc{H}^{\reg}$ is open.  Let $Z \in \mc{H}^{\reg}$. Lemma \ref{lem:supp_prop} shows that there is a constant $C > 0$ (depending on $Z$) such that 
\[
    \lVert \pmb{v} \rVert \leq C |Z(\pmb{v})|
\]
for all $\pmb{v} \in \Delta$. Given $\epsilon >0$, we define an open subset 
\[
    B_{\epsilon}(Z) = \left\{W\in \Hom(\overline{K_0(\ca)},\mb{C}):\lVert W-Z\rVert^{\vee} < \epsilon/C \right\} \subset \Hom(\overline{K_0(\ca)},\mb{C}).
\]
Then for $W\in B_{\epsilon}(Z)$, we have 
\[
|W(\pmb{v})-Z(\pmb{v})|\leq \lVert W-Z\rVert^{\vee}\lVert\pmb{v}\rVert <  \epsilon|Z(\pmb{v})|
\]
for $\pmb{v}\in\Delta$. Therefore if $\epsilon < 1$ then any $W\in B_{\epsilon}(Z)$ satisfies $W(\pmb{v}) \neq 0$ for $\pmb{v} \in \Delta$. Hence $W\in \mc{H}^{\reg}$, this shows that $\mc{H}^{\reg}$ is open. 

Now we fix a positive real number $ \epsilon_0 < \frac{1}{8} $ and assume that $\epsilon < \sin(\pi\epsilon_0)$. Given any $\sigma = (\mc{Z},\mc{P})\in\bigl(\Stab(X)^{\Phi}\bigr)_0$ with $\mc{Z}(\sigma) = Z$, we define the open neighborhood of $\sigma$
$$
    C_{\epsilon}(\sigma) = \left\{ \tau = (W,\mc{Q})\in \mc{Z}^{-1}(B_{\epsilon}(Z)): d(\mc{P}, \mc{Q}) < 1/2 \right\}, 
$$
where $d(-,-)$ is defined in Definition \ref{defi:slice_metric}. By Lemma \ref{stab_equi}, the map
\begin{equation}\label{eqn:map1}
    \mc{Z}:C_{\epsilon}(\sigma) \rightarrow B_{\epsilon}(Z)  
\end{equation}
is injective. Let $W\in B_{\epsilon}(Z)$, then for any $E$ stable for $\sigma$, by Corollary \ref{cor:delta},  we have 
\begin{eqnarray*}
    |W(E) - Z(E)| < \sin(\pi \epsilon_0) |Z(E)|.
\end{eqnarray*}
Using the deformation result Theorem \ref{stab_deform}, we conclude that there is a unique stabilty condition $\tau = (W,\mc{P}') \in C_{\epsilon}(\sigma) $ such that $\mc{Z}(\tau) = W$ and $d(\mc{P},\mc{P}') <\epsilon$. Thus the map (\ref{eqn:map1}) is a homeomorphism. For each $\sigma\in\mc{Z}^{-1}(Z)$, we prove $C_{\epsilon}(\sigma)$ is mapped homeomorphically by $\mc{Z}$ onto $B_{\epsilon}(Z)$ exactly in the same way. 

Finally we check that 
\begin{equation}
    \mc{Z}^{-1}\left(B_{\epsilon}(Z)\right) = \bigcup_{\sigma\in \mc{Z}^{-1}(Z)} C_{\epsilon}(\sigma) 
\end{equation}
is disjoint. Suppose there exists $\tau = (W, \mc{Q}) \in C_{\epsilon}(\sigma) \bigcap C_{\epsilon}(\sigma')$, where we denote by $\sigma = (Z, \mc{P})$ and $\sigma' = (Z,\mc{P}')$. Then 
\begin{equation*}
    d(\mc{P}, \mc{P}') \leq d(\mc{P}, \mc{Q}) + d(\mc{Q}, \mc{P'})  <  1.
\end{equation*}
Therefore by Lemma \ref{stab_equi} again, we have $\sigma = \sigma'$, which means $C_{\epsilon}(\sigma) = C_{\epsilon}(\sigma')$. We have finished the proof.
\end{proof}

{\bf Acknowledgements.}
The work forms part of my PhD thesis.  I am very grateful to my supervisor Tom Bridgeland for sharing this question generously with me and helping translate the physicists' language into math. I would also like to thank Fabrizio Del Monte and Ananyo Dan for helpful discussions. Part of the work was finished while I was visiting Tianyuan Mathematical Center in Southwest China, Sichuan University. I appreciate Prof. Xiaojun Chen for his invitation and hospitality. The work was partially supported by China scholarship council-University of Sheffield, and Sichuan Science and Technology Program (No. 2025ZNSFSC0800).

\bibliographystyle{plain}

\end{document}